\documentclass[11pt]{amsart}
\bibliographystyle{jplain}
\usepackage{amsmath,amsthm,graphicx,amscd,multirow}
\usepackage{ amssymb} 
\usepackage{ mathrsfs} 
\allowdisplaybreaks[3]
\usepackage[dvipdfmx]{xcolor}
\usepackage{ascmac}
\usepackage[arrow,matrix]{xy}
\usepackage{enumerate}

\usepackage{enumitem}

\setcounter{tocdepth}{1}
 \setcounter{secnumdepth}{3}

\voffset=0mm
\headheight=0mm
\topmargin=0mm
\oddsidemargin=-10mm
\evensidemargin=-10mm
\textheight=235mm
\textwidth=180mm
\abovedisplayskip=0pt
\abovedisplayshortskip=0pt
\belowdisplayskip=0pt
\belowdisplayshortskip=0pt

\theoremstyle{theorem}
\newtheorem{thm}{Theorem}[section]
\newtheorem{prop}[thm]{Proposition}
\newtheorem{cor}[thm]{Corollary}
\newtheorem{lem}[thm]{Lemma}

\numberwithin{equation}{section}

\theoremstyle{definition}
\newtheorem{dfn}[thm]{Definition}

\newtheorem{conj}[thm]{Conjecture}

\theoremstyle{remark}
\newtheorem{rem}[thm]{Remark}

\makeatletter
\@namedef{subjclassname@2020}{%
\textup{2020} Mathematics Subject Classification}
\makeatother

\title{A motivic interpretation of Whittaker periods for $\mathrm{GL}_n$}
\author{Takashi Hara}
\author{Kenichi Namikawa}
\address{ Department of Mathematics, College of Liberal Arts,  Tsuda University, 2-1-1 Tsuda-machi, Kodaira City, Tokyo 187-8577, Japan }
\email{t-hara@tsuda.ac.jp}

\address{ Department of Mathematics, School of System Design and Technology, Tokyo Denki University, 5 Senju Asahi-cho, Adachi-ku, Tokyo 120-8551, Japan }
\email{namikawa@mail.dendai.ac.jp}

\subjclass[2020]{Primary 11F67,  Secondary 11F75, 11F70}

\begin{document}

\begin{abstract}
Admitting the existence of conjectural motives attached to cohomological irreducible cuspidal automorphic representations of ${\rm GL}_n$, 
we write down Raghuram and Shahidi's Whittaker periods in terms of Yoshida's fundamental periods when the base field is a totally real number field or a CM field.
\end{abstract}

\maketitle

\tableofcontents

\section{Introduction}

In recent years, the terminology {\em periods} is used to denote (mainly transcendental) complex numbers obtained as period integrals, that is, integrals of algebraic differential forms on certain algebraically defined domains. As Kontsevich and Zagier suggested, study of properties of periods themselves is of course important and attractive, but under number theoretical situations, periods are also subjects of great interest in that they are often closely linked to special values of appropriate  $L$-functions. Meanwhile the notion of periods became diversified up to the present, many people have introduced various kinds of periods even for the same arithmetic objects, adopting rather different techniques. Hence elucidation of the relations among various periods is a quite important issue in the study of periods. In the present article, admitting several suitable conjectures, we shall  derive an explicit formula relating two periods of different types defined for automorphic representations of the general linear group defined over a totally real field or a CM field: one is based on automorphic construction, whereas the other has a motivic background. For that purpose, we let critical values of the Rankin--Selberg $L$-functions intermediate between them.  

Let us explain details of the present article. For a general number field $\mathsf{F}$, let $\pi^{(n)}$ denote a cohomological irreducible cuspidal automorphic representation $\pi^{(n)}$ of $\mathrm{GL}_n(\mathsf{F}_{\mathbf{A}})$. In \cite[Definition/Proposition 3.3]{rs08}, Raghuram and Shahidi associate to the pair $\pi^{(n)}$ and an appropriate signature $\varepsilon^{(n)}\in \{\pm 1\}^{\Sigma_{\mathsf{F},\mathbf{R}}}$ a complex number $p(\pi^{(n)},\varepsilon^{(n)})$, which we call the {\em Whittaker period} for $\pi^{(n)}$ and $\varepsilon^{(n)}$. There they have made a choice of a generator $[\pi_\infty^{(n)}]^{\varepsilon^{(n)}}$ of the $1$-dimensional space $H^{\mathrm{b}_{n}}(\mathfrak{g}_{n,\infty}, K^{(n)}_\infty ; \pi^{(n)}_\infty \otimes \mathcal{V}(\boldsymbol{\mu}^{(n),\vee}))[\varepsilon^{(n)}]$ (refer to Section~\ref{sec:CohAuto} for undefined notation), on which the Whittaker period $p(\pi^{(n)}, \varepsilon^{(n)})$ implicitly depends.  For a fixed generator $[\pi_\infty^{(n)}]^{\varepsilon^{(n)}}$, the Whittaker period $p(\pi^{(n)},\varepsilon^{(n)})$ is determined uniquely up to an algebraic multiple.
   Later Raghuram shows in \cite[Theorem 1.1]{rag16} that
   any critical value $L(m+\frac{1}{2},\pi^{(n+1)}\times \pi^{(n)})$ of the Rankin--Selberg $L$-function is expressed, up to an algebraic multiple, as the product of $p(\pi^{(n+1)},\varepsilon^{(n+1)}_m)$, $p(\pi^{(n)},\varepsilon^{(n)}_m)$ and a certain constant $p_\infty(m,\pi^{(n+1)}_\infty, \pi^{(n)}_\infty)$ coming from an archimedean zeta integral concerning the fixed cohomology classes $[\pi_\infty^{(n+1)}]^{\varepsilon^{(n+1)}}$ and $[\pi_\infty^{(n)}]^{\varepsilon^{(n)}}$. 
   Note that (one of) the signatures $\varepsilon^{(n+1)}_m$, $\varepsilon^{(n)}_m$ and the constant $p_\infty(m,\pi^{(n+1)}_\infty, \pi^{(n)}_\infty)$ depend on the critical point $m$.
See \cite[Section 2.5.3.6]{rag16} for details on the constant $p_\infty(m,\pi^{(n+1)}_\infty, \pi^{(n)}_\infty)$, 
which is denoted as $p^{\epsilon, \eta}_\infty(\mu+m, \lambda)$ (when $n$ is odd) or $p^{\epsilon, \eta}_\infty(\mu, \lambda+m)$ (when $n$ is even) in \cite{rag16}.

When the base field $\mathsf{F}$ is the rational number field ${\mathbf Q}$, the authors have succeeded in suitably normalizing the generator $[\pi_\infty^{(n)}]^{\varepsilon^{(n)}}$ of the $(\mathfrak{gl}_n, K^{(n)}_\infty)$-cohomology group for $n=2$ and $3$ in \cite[Section~7.1]{hn}, by utilizing an {\em explicit Eichler--Shimura map} $\delta^{(n)}$ constructed as in \cite[Section~5.3]{hn}. Other than it, one of the key ingredients of our normalization procedure is precise and explicit study of the archimedean zeta integrals for the Rankin--Selberg convolution $\mathrm{GL}_3\times \mathrm{GL}_2$ developed by Hirano, Ishii and Miyazaki in \cite{him}. As a consequence, we can deduce that, for the normalized generators $[\pi_\infty^{(3)}]^{\varepsilon^{(3)}}$ and $[\pi_\infty^{(2)}]^{\varepsilon^{(2)}}$, the ``unknown constant''  
   $p_\infty(m, \pi^{(n+1)}_\infty, \pi^{(n)}_\infty)$ appearing in \cite{rag16} indeed coincides with the expected $\Gamma$-factor $L_\infty(m+\frac{1}{2},\pi^{(3)}\times \pi^{(2)})$ of the Rankin--Selberg $L$-function multiplied by a simple and explicit constant; see  \cite[Remark~7.3]{hn} for details. 
Recently Ishii and Miyazaki have made similar explicit calculation in \cite{im} on the archimedean zeta integrals for $\mathrm{GL}_{n+1}\times \mathrm{GL}_n$ with arbitrary $n$ when the base field is totally imaginary, and by using their results, one can also deduce a corresponding consequence on $p_\infty(m,\pi_\infty^{(n+1)},\pi_\infty^{(n)})$ in the case (this is an ongoing joint work with Tadashi Miyazaki).
From these results one observes that, admitting Clozel's conjecture \cite[Conjecture 4.5]{clo90} on the existence of the pure motive ${\mathcal M}[\pi^{(n)}]$ attached to $\pi^{(n)}$ and Deligne's conjecture \cite[Conjecture 1.8]{del79} on critical values of motivic $L$-functions,
the product $p(\pi^{(n+1)},\varepsilon^{(n+1)})p(\pi^{(n)},\varepsilon^{(n)})$ of the Whittaker periods coincides
with, up to an algebraic multiple, Deligne's period of the Rankin--Selberg motive  $c^+({\mathcal M}[\pi^{(n+1)}] \otimes_{\mathsf{F}} {\mathcal M}[\pi^{(n)}])$ multiplied by a suitable power of $2\pi\sqrt{-1}$ (when $\mathsf{F}$ is totally imaginary, we set $\varepsilon^{(n+1)}=\varepsilon^{(n)}=1$ as convention). 
In other words, these results provide a motivic interpretation to the {\em product} $p(\pi^{(n+1)},\varepsilon^{(n+1)}) p(\pi^{(n)},\varepsilon^{(n)})$ of Raghuram and Shahidi's Whittaker periods. 

The goal of the present article is to provide a motivic interpretation to the {\em single} Whittaker period $p(\pi^{(n)},\varepsilon^{(n)})$ when the base field $F$ is either a totally real field or a CM field, using Yoshida's fundamental periods introduced in \cite{yos01} and \cite{yos16}.   In the following, we continue to assume Clozel's and Deligne's conjectures, and let $\mathcal{M}[\pi^{(n)}]$ denote the (conjectural) pure motive attached to $\pi^{(n)}$. It is defined over $F$ with coefficients in a number field $E$ containing the field of rationality of $\pi^{(n)}$.
For each embedding  $\tau \colon F\hookrightarrow {\mathbf C}$ of $F$ and $\sigma\colon E\hookrightarrow \mathbf{C}$ of $E$, 
    write the Hodge decomposition of ${\mathcal M}[\pi^{(n)}]$  with respect to $\tau$ and $\sigma$ as 
    \begin{align*}
      H_{\rm B} ( {\mathcal M}[\pi^{(n)}]_\tau   )  \otimes_{E, \sigma} {\mathbf C} 
      = \bigoplus^n_{i=1}  H^{p^{\mathcal{M}_{\tau,\sigma}}_i, q^{\mathcal{M}_{\tau,\sigma}}_i}  (  {\mathcal M}[\pi^{(n)}]_\tau   )_\sigma.
    \end{align*} 
Hereafter the Grothendieck restriction of scalars of $\mathcal{M}[\pi^{(n)}]$ from the base field $F$ to its subfield $F'$ is denoted as $\mathrm{Res}_{F/F'} (\mathcal{M}[\pi^{(n)}])$.
Let $c^{\pm} ({\rm Res}_{F/{\mathbf Q}}  ( {\mathcal M}[\pi^{(n)}] )  )$ denote  Deligne's $\pm$-periods for ${\rm Res}_{F/{\mathbf Q}}  ( {\mathcal M}[\pi^{(n)}] )$, 
and put
\begin{align*}
 \lambda( {\mathcal M}[\pi^{(n)}]  )
 &= \sum_{\tau\colon  F\hookrightarrow {\mathbf C}} 
     \sum^n_{i=1} (n-i) p^{\mathcal{M}_{\tau,\sigma}}_i   , &
\widetilde{c}({\mathcal M}[\pi^{(n)}]) &=(\tilde{c}_\sigma(\mathcal{M}[\pi^{(n)}]))_{\sigma \colon E\hookrightarrow \mathbf{C}} \\  
 & & & =  \left( \prod^{ \lceil \frac{ n }{2}-1 \rceil }_{\beta=1} \prod_{\tau \colon F\hookrightarrow \mathbf{C}} c_{\beta,(\tau,\sigma)}    
       ( {\mathcal M}[\pi^{(n)}]    )\right)_{\sigma\colon E\hookrightarrow \mathbf{C}}.
\end{align*}

In fact $\lambda(\mathcal{M}[\pi^{(n)}])$ does not depend on the choice of $\sigma$ (see Proposition~\ref{prop:perprod}). For each $a, b \in \overline{\mathbf{Q}}\otimes_{\mathbf{Q}}{\mathbf C}$, we write $a\sim b$ if $a=bc$ holds for a nonzero algebraic number $c\in \overline{\mathbf Q}^\times$. The main result of the present article is as follows.

\begin{thm}\label{thm:mainintro} $(=$ Theorem~$\ref{thm:main})$
Suppose that all of the following three conjectures are fulfilled\,{\rm :}
\begin{itemize}
\item[--] Clozel's conjecture {\rm \cite[Conjecture 4.5]{clo90}} on the existence of the motive ${\mathcal M}[\pi^{(n)}]${\rm ;} 
\item[--] Deligne's conjecture on critical values \cite[Conjecture 1.8]{del79} for critical Tate twists of the tensored motive $\mathrm{Res}_{F/F^+}(\mathcal{M}[\pi^{(n+1)}])\otimes_{F^+} \mathrm{Res}_{F/F^+}({\mathcal M}[\pi^{(n)}])${\rm ;}
\item[--] Conjecture~$\ref{conj:mellin}$ which concerns archimedean zeta integrals $\widetilde{\mathcal I}_v(s, [\pi^{(n+1)}_v]^{\varepsilon^{(n+1)}},  [\pi^{(n)}_v]^{\varepsilon^{(n)}}  , \varphi_v)$ for each archimedean place $v$ of $F$. 
\end{itemize}
Further assume that $\pi^{(n)}$ satisfies the assumption $(\ref{eq:auto_assump})$, the purity weight $w(\pi^{(n)})$ is even,
 and the highest weight $\boldsymbol{\mu}^{(n)}_{\pi}$ attached to $\pi^{(n)}$ is in a good position $($see Definition~$\ref{dfn:good}$$)$. Then we have 
\begin{align*}
  \left(p({}^\sigma \pi^{(n)}, \varepsilon^{(n)})\right)_{\sigma\colon E \hookrightarrow \mathbf{C}} 
  \sim (2\pi \sqrt{-1})^{\lambda(\mathcal{M}[\pi^{(n)}])}\widetilde{c}(\mathcal{M}[\pi^{(n)}]) 
         \times 
         \begin{cases}  1 & \text{if $n$ is odd},  \\
                                c^{-\varepsilon^{(n)}}(\mathrm{Res}_{F/\mathbf{Q}}(\mathcal{M}[\pi^{(n)}]))  & \text{if $n$ is even}.
          \end{cases}
\end{align*}
\end{thm}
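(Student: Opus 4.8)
The plan is to reduce the single-motive statement to the already-known product formula for Whittaker periods by exploiting an auxiliary representation $\pi^{(n+1)}$ together with Raghuram's expression of the critical Rankin--Selberg value $L(m+\tfrac12,\pi^{(n+1)}\times\pi^{(n)})$ as $p(\pi^{(n+1)},\varepsilon^{(n+1)}_m)\,p(\pi^{(n)},\varepsilon^{(n)}_m)\,p_\infty(m,\pi^{(n+1)}_\infty,\pi^{(n)}_\infty)$, matched against Deligne's prediction $L(m+\tfrac12,\pi^{(n+1)}\times\pi^{(n)})\sim c^{+}\bigl(\mathrm{Res}_{F/\mathbf Q}(\mathcal M[\pi^{(n+1)}]\otimes_F\mathcal M[\pi^{(n)}])(m)\bigr)$, under the hypothesis that Conjecture~\ref{conj:mellin} identifies $p_\infty(m,-,-)$ with the archimedean $L$-factor up to an explicit elementary constant. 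First I would pass to a family of test auxiliary representations $\pi^{(n+1)}$ for which the motive $\mathcal M[\pi^{(n+1)}]$ is understood completely — ideally one built from Hecke characters and (if $n+1$ is even) a single $\mathrm{GL}_2$-type factor, so that its Whittaker period $p(\pi^{(n+1)},\varepsilon^{(n+1)})$ and all the motivic periods of $\mathrm{Res}_{F/\mathbf Q}(\mathcal M[\pi^{(n+1)}])$ can be written explicitly in terms of Gauss sums, powers of $2\pi\sqrt{-1}$, and the period of the $\mathrm{GL}_2$ factor. Running the product formula for this pair then isolates $p(\pi^{(n)},\varepsilon^{(n)})$ as a quotient of the Deligne period of the tensor motive by the motivic data of the auxiliary motive, all up to algebraic multiples.

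The technical heart is the period computation for the tensor (equivalently, Rankin--Selberg) motive: I would invoke the factorization of $c^{+}\bigl(\mathrm{Res}_{F/F^+}(\mathcal M[\pi^{(n+1)}])\otimes_{F^+}\mathrm{Res}_{F/F^+}(\mathcal M[\pi^{(n)}])\bigr)$ into a product over the Hodge types — this is the content that the various period relations of Yoshida (the $\tilde c_\sigma$'s) and Deligne were set up to handle, and it is Proposition~\ref{prop:perprod} in the present paper — to express it as a monomial in the quantities $\delta(\mathcal M[\pi^{(n)}])$, $c^{\pm}(\mathrm{Res}_{F/\mathbf Q}(\mathcal M[\pi^{(n)}]))$, $\widetilde c(\mathcal M[\pi^{(n)}])$, the power of $2\pi\sqrt{-1}$ recorded by $\lambda(\mathcal M[\pi^{(n)}])$, and the corresponding data for $\mathcal M[\pi^{(n+1)}]$. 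The parity hypothesis ``$w(\pi^{(n)})$ even'' and the ``good position'' hypothesis on $\boldsymbol\mu^{(n)}_\pi$ enter precisely here: they guarantee that the critical set of the Rankin--Selberg $L$-function is nonempty (so that there is a critical $m$ to work with), that the relevant Hodge numbers $p_i^{\mathcal M_{\tau,\sigma}}$ interlace in the way required for Deligne's period to split cleanly, and that the sign bookkeeping collapses the $\pm$-period ambiguity to the single factor $c^{-\varepsilon^{(n)}}(\mathrm{Res}_{F/\mathbf Q}(\mathcal M[\pi^{(n)}]))$ in the even case (and to $1$ in the odd case, where $\mathrm{Res}_{F/\mathbf Q}(\mathcal M[\pi^{(n)}])$ carries no two-torsion sign period because its rank over $\mathbf Q$ is odd times $[F:\mathbf Q]$, forcing the $\pm$-periods to agree up to algebraic factors after the twist).

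Next I would verify that the dependence on $m$ and on the auxiliary representation $\pi^{(n+1)}$ cancels. Since $L(m+\tfrac12,\pi^{(n+1)}\times\pi^{(n)})$ is genuinely a function of the critical integer $m$ while $p(\pi^{(n)},\varepsilon^{(n)}_m)$ depends on $m$ only through the sign $\varepsilon^{(n)}_m$, I would choose two critical points $m$ of opposite-induced signatures to pin down both $p(\pi^{(n)},+)$ and $p(\pi^{(n)},-)$; the ratio of the two resulting identities eliminates every contribution of $\pi^{(n+1)}$ and of the archimedean constant except for an explicit elementary factor, leaving exactly the asserted shape with the sign $-\varepsilon^{(n)}$. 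Assembling: the $2\pi\sqrt{-1}$-power on the right is forced by comparing the Hodge-theoretic weight bookkeeping in Deligne's period factorization — this is where $\lambda(\mathcal M[\pi^{(n)}])=\sum_\tau\sum_i(n-i)p_i^{\mathcal M_{\tau,\sigma}}$ comes out — with the shift by the analytic conductor exponent hidden in Raghuram's archimedean computation; and the $\widetilde c(\mathcal M[\pi^{(n)}])$ factor is precisely the product over $\beta$ and $\tau$ of Yoshida's intermediate periods $c_{\beta,(\tau,\sigma)}$ that survive the telescoping.

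I expect the main obstacle to be the careful matching of normalizations between the automorphic side (the fixed generator $[\pi^{(n)}_\infty]^{\varepsilon^{(n)}}$ of the $(\mathfrak g_n,K^{(n)}_\infty)$-cohomology, on which $p(\pi^{(n)},\varepsilon^{(n)})$ depends only up to algebraic scalars, and the corresponding generator for $\pi^{(n+1)}$) and the motivic side (the choice of rational structures defining Deligne's and Yoshida's periods), because Conjecture~\ref{conj:mellin} controls $p_\infty(m,-,-)$ only up to the explicit elementary constant and it is not a priori clear that the \emph{same} constant appears for the auxiliary pair as for the pair one ultimately cares about. Handling this cleanly requires tracking the Gauss-sum and $i$-power contributions of the auxiliary Hecke-character factors and verifying that they are algebraic (hence absorbed into $\sim$) rather than transcendental — a bookkeeping exercise that is routine in principle but where signs and powers of $2\pi\sqrt{-1}$ must be reconciled to the integer $\lambda(\mathcal M[\pi^{(n)}])$ exactly.
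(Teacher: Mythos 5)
Your plan shares several genuine ingredients with the paper's proof (Raghuram's product formula matched against Deligne's conjecture; the explicit factorization of the Rankin--Selberg period via Proposition~\ref{prop:perprod}; the role of the good-position and parity hypotheses), but the central architectural move is different and does not go through as stated. You propose to fix $\pi^{(n)}$ and pass to an auxiliary \emph{higher-degree} representation $\pi^{(n+1)}$ ``built from Hecke characters and a single $\mathrm{GL}_2$-type factor'' whose motive and Whittaker period are explicitly known. For $n+1\geq 3$ such a representation of $\mathrm{GL}_{n+1}$ is necessarily an isobaric sum, hence Eisenstein rather than cuspidal. But the Whittaker period $p(\pi^{(N)},\varepsilon^{(N)})$ of Raghuram--Shahidi and the algebraicity theorem for Rankin--Selberg critical values (\cite[Theorem 1.1]{rag16}, quoted here as Theorem~\ref{thm:AlgRS}) are statements about cohomological \emph{cuspidal} representations; they simply do not apply to your auxiliary. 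The paper's remark explicitly contrasts their method with that of Harris--Lin, who do use Eisenstein auxiliaries but at the cost of replacing the $\mathrm{GL}_{n+1}\times\mathrm{GL}_n$ Rankin--Selberg input with Guerberoff's $\mathrm{GL}_n\times\mathrm{GL}_1$ rationality results --- a genuinely different theorem with a different proof. By contrast the paper inducts on $n$: given $\pi^{(n+1)}$, use Lemma~\ref{lem:good} and the Bhagwat--Raghuram existence result to manufacture an auxiliary \emph{lower-degree cuspidal} $\pi^{(n)}$ whose period is governed by the inductive hypothesis, then solve for $\boldsymbol{p}(\pi^{(n+1)},\varepsilon^{(n+1)})$; the base case $n=1,2$ is handled by direct inspection.

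Two further gaps. First, the device ``take two critical points $m$ of opposite induced signature and divide to eliminate $\pi^{(n+1)}$'' does not work: changing $m$ changes the auxiliary signature $\varepsilon^{(n+1)}_m$ as well, and the two Whittaker periods $p(\pi^{(n+1)},+)$ and $p(\pi^{(n+1)},-)$ are in general distinct transcendentals whose ratio is itself a period (essentially $c^+/c^-$ of the auxiliary motive). Dividing two instances of the identity therefore does not cancel the auxiliary dependence --- it merely relates $p(\pi^{(n)},+)/p(\pi^{(n)},-)$ to $p(\pi^{(n+1)},+)/p(\pi^{(n+1)},-)$, which is circular. Second, your heuristic for why the $c^{\pm}$-factor disappears when $n$ is odd (``rank over $\mathbf{Q}$ is odd times $[F:\mathbf{Q}]$'') is not correct: that rank is $n\cdot[F:\mathbf{Q}]$, which need not be odd. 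The actual mechanism is structural and visible in Proposition~\ref{prop:perdecomp}: in the factorization of $c^{\pm}(\mathcal{M},\mathcal{N})$ the surviving $c^{\pm}$-period always attaches to the \emph{even-rank} member of the pair, so when the induction peels off the lower-degree motive it is precisely the even-degree $\mathrm{GL}_N$ that inherits the $c^{\mp\varepsilon^{(N)}}$ factor and the odd-degree one that does not.
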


We here summarize a strategy to prove Theorem~\ref{thm:mainintro}.
The first step is to establish an {\em explicit factorization formula}, which describes Deligne's period of the tensor product 
$\mathrm{Res}_{F/F^+}({\mathcal M}) \otimes_{F^+} \mathrm{Res}_{F/F^+}({\mathcal N})$  
of pure motives $\mathrm{Res}_{F/F^+}(\mathcal{M})$ and $\mathrm{Res}_{F/F^+}(\mathcal{N})$ in terms of each individual fundamental periods of $\mathcal{M}$ and $\mathcal{N}$. 
Following Yoshida's work \cite[Proposition~12]{yos01}, Bhagwat has already studied such a factorization formula in \cite{bha15} when the base field is the rational number field ${\mathbf Q}$.
However we find that, if we impose the {\em $($strong$)$ interlace condition} on the Hodge types of $\mathcal{M}$ and $\mathcal{N}$, it produces a more explicit and precise factorization formula, which plays a crucial role in the verification of Theorem~\ref{thm:mainintro}. The (strong) interlace condition is formulated in Definition~\ref{def:interlace}, and the explicit factorization formula is proposed and proved in Proposition~\ref{prop:perdecomp}. We then study, as the second step of the proof, a certain local zeta integral $\widetilde{\mathcal I}_v(s, [\pi^{(n+1)}_v]^{\varepsilon^{(n+1)}},  [\pi^{(n)}_v]^{\varepsilon^{(n)}}  , \varphi_v)$ at each archimedean place $v$ of $F$, which appears as a result of the computation of the cohomological cup product (see Sections 6.2 and 6.3 for details) and concerns the fixed cohomology classes $[\pi_\infty^{(n+1)}]^{\varepsilon^{(n+1)}}$ and $[\pi_\infty^{(n)}]^{\varepsilon^{(n)}}$. On this local integral, we propose an important conjecture (Conjecture~\ref{conj:mellin}) which insists that $\widetilde{\mathcal{I}}_v(s,[\pi^{(n+1)}_v]^{\varepsilon^{(n+1)}},[\pi^{(n)}_v]^{\varepsilon^{(n)}},\varphi_v)$ essentially coincides with the $\Gamma$-factor $L_v(s,\pi^{(n+1)}\times \pi^{(n)}\times \varphi)$ of the  Rankin--Selberg $L$-function at $v$ if we can normalize $[\pi_\infty^{(n+1)}]^{\varepsilon^{(n+1)}}$ and $[\pi_\infty^{(n)}]^{\varepsilon^{(n)}}$ appropriately.
Combining these arguments,  we deduce the desired formula in Theorem~\ref{thm:mainintro} by induction on the degree $n$ of the general linear group. Here we use the result of Bhagwat and Raghuram \cite{br17} on the existence of an automorphic representation of prescribed infinity type.

\begin{rem}
The validity of Conjecture~\ref{conj:mellin} is closely related to explicit calculations of the archimedean zeta integrals for $\mathrm{GL}_{n+1}\times \mathrm{GL}_n$. When $n$ equals $2$, such calculations are thoroughly carried out by Hirano, Ishii and Miyazaki in \cite{him}. In our previous work \cite[Section 7.3.2]{hn}, limiting ourselves to the case where the base field $\mathsf{F}$ is the rational number field $\mathbf{Q}$, we normalize the cohomology classes $[\pi_\infty^{(3)}]^{\varepsilon^{(3)}}$ and $[\pi_\infty^{(2)}]^{\varepsilon^{(2)}}$ based on the results of \cite{him}, and succeed in extracting an explicit formula for $p(\pi^{(3)}, \varepsilon^{(3)})$ proposed in Theorem~\ref{thm:mainintro}.  
          Recently Ishii and Miyazaki have extended in \cite{im} calculations carried out in \cite{him} to the case where $n$ is an arbitrary natural number and the base field is totally imaginary. Combining the result of \cite{im} with explicit calculation using Gel'fand--Tsetlin type bases of irreducible algebraic representations of $\mathrm{GL}_n(\mathbf{C})$, we can normalize the cohomology classes $[\pi^{(n+1)}_\infty]^{\varepsilon^{(n+1)}}$ and $[\pi^{(n)}_\infty]^{\varepsilon^{(n)}}$ so that Conjecture~\ref{conj:mellin} holds also in this case. 
          See also Remark~\ref{rem:conj}.
\end{rem}

\begin{rem}
 When the base field $\mathsf{F}=F$ is a CM field, Harris and Lin propose another factorization formula on $c^+({\mathcal M}[\pi^{(n+1)}] \otimes_F {\mathcal M}[\pi^{(n)}])$ (not $c^+(\mathrm{Res}_{F/F^+}({\mathcal M}[\pi^{(n+1)}]) \otimes_{F^+} \mathrm{Res}_{F/F^+}({\mathcal M}[\pi^{(n)}]))$) in \cite[Proposition 3.13]{hl17} using  {\em motivic  periods} (also denoted as {\em quadratic periods} in some literature)  introduced by Harris in \cite{har97}. Note that Harris's motivic periods are defined only when the base field $F$ is a CM field (or the motive under consideration is polarized). 
Yoshida's fundamental periods and Harris's motivic periods are compared in \cite{yos16}; in particular the explicit formulas connecting them are obtained in \cite[(2.7), (2.8)]{yos16}. Our factorization formula (Proposition~\ref{prop:perdecomp}) then immediately follows from \cite[Proposition 3.13]{hl17} when $F$ is a CM field. 
 Furthermore Harris and Lin also obtain a result similar to Theorem~\ref{thm:mainintro} in \cite[(40)]{hl17}, but their method of proof is quite different from ours.  
Indeed, Harris and Lin take $\pi^{(n)}$ as the automorphic representation associated to Eisenstein series in \cite[Section 5]{hl17}, 
           in contrast to the present article where we always take $\pi^{(n)}$ as a cuspidal automorphic representation. 
           More precisely, Harris and Lin's proof relies on rationality of critical values of the standard $L$-functions for ${\rm GL}_n \times {\rm GL}_1$ due to Guerberoff (see \cite[Theorem 4.4]{hl17} for details).
           On the other hand, our proof relies on rationality of critical values of the Rankin--Selberg $L$-functions for ${\rm GL}_{n+1} \times {\rm GL}_n$ (due to Raghuram \cite{rag16}) and 
           Conjecture \ref{conj:mellin} (valid  when $F$ is totally imaginary due to Ishii and Miyazaki \cite{im}).
\end{rem}

The organization of this article is as follows. We prepare general notation in Section~\ref{sec:notation}, which will be used throughout the article.  
In Section~\ref{sec:mot}, we review Yoshida's theory on fundamental periods of motives according to \cite{yos01} and \cite{yos16};
here we also collect and prove several necessary statements. 
Section~\ref{sec:RSPer} is devoted to verify the explicit factorization formula for Deligne's period of the tensor product of motives satisfying the {\em $($strong$)$ interlace condition} (Proposition~\ref{prop:perdecomp}).
General settings and notation on cohomological cuspidal automorphic representations are prepared in  Section~\ref{sec:CohAuto}. In Section \ref{sec:RSMot}, we cohomologically interpret the critical values of the Rankin--Selberg $L$-functions as the cup products of certain cohomology classes, and define Raghuram and Shahidi's Whittaker periods. 
Finally we give the proof of Theorem~\ref{thm:mainintro} in Section~\ref{sec:MotInt}. 

\section{Notation} \label{sec:notation}

\subsection{Generality}

As usual, the symbols  $\mathbf{Q}$, $\mathbf{R}$ and $\mathbf{C}$ denote the rational number field, the real number field and the complex number field respectively. The complex conjugate on ${\mathbf C}$ is denoted as $\rho$. Throughout this article, we regard every number field (that is, every finite extension of $\mathbf{Q}$) as a subfield of the complex number field $\mathbf{C}$. We use the symbol $\iota_0$ for the canonical inclusion $E\hookrightarrow \mathbf{C}$ of an arbitrary subfield $E$ into $\mathbf{C}$, so as to distinguish it from many other embeddings which we shall also use. 

For a number field $\mathsf{F}$,  let $\Sigma_{\mathsf{F}}$ be the set of all places of $\mathsf{F}$
and $I_\mathsf{F}$ the set of all embeddings of $\mathsf{F}$ into ${\mathbf C}$.
We mainly consider that $\mathsf{F}$ is either a totally real number field $F^+$ or a CM field $F$, whose maximal totally subfield is denoted as $F^+$. Let $\Sigma_{\mathsf{F}, \infty}$ (resp.\ $\Sigma_{\mathsf{F}, {\rm fin}}$)  
        denote the set of archimedean (resp.\ finite) places of $\mathsf{F}$. 
We sometimes regard $\Sigma_{\mathsf{F},\infty}$ as a subset of $I_\mathsf{F}$, by choosing either of two embeddings associated to each complex place; see Section~\ref{sec:t-per} for details.
We use the symbols $\Sigma_{\mathsf{F}, {\mathbf R}}$ (resp.\  $\Sigma_{\mathsf{F}, {\mathbf C}}$) for the subsets of $\Sigma_{\mathsf{F},\infty}$ consisting of all real (resp.\ complex) places. 
The normal (or equivalently Galois) closure of $\mathsf{F}$ over $\mathbf{Q}$ is denoted as $\mathsf{F}^{\mathrm{gal}}$.

We let $\mathsf{F}_{\mathbf A}$ denote the ring of ad\`eles of a number field $\mathsf{F}$, and 
use the symbol $\mathsf{F}_{\mathbf A, {\rm fin}}$ for its subring of finite ad\`eles. The archimedean part of $\mathsf{F}_{\mathbf{A}}$ is denoted as $\mathsf{F}_{\mathbf A, \infty} = \mathsf{F}\otimes_{\mathbf{Q}} \mathbf{R}= \prod_{v\in \Sigma_{\mathsf{F}, \infty}} \mathsf{F}_v$. 
We use similar notation for every ad\`elic object.

For every $a, b\in {\mathbf C}$ and an algebraic extension $E$ of $\mathbf{Q}$ (regarded as an subfield of $\mathbf{C}$), we write $a\sim_E b$ if $a=bc$ holds for a nonzero element $c\in E^\times$. We abbreviate $a\sim_{\overline{\mathbf{Q}}}b$ as $a\sim b$ for simplicity. 


\subsection{Subgroups of $\mathrm{GL}_n$ at the archimedean part} \label{sec:grinfty}

We define the orthogonal group ${\rm O}(n)$ and the unitary group ${\rm U}(n)$ of degree $n$ as
\begin{align*}
{\rm O}(n)&:=\{g\in {\rm GL}_n(\mathbf{R}) \mid {}^{\rm t}gg=g{}^{\rm t}g=\mathbf{1}_n\}, \\
{\rm U}(n)&:=\{g\in {\rm GL}_n(\mathbf{C}) \mid {}^{\rm t}(g^\rho)g=g{}^{\rm t}(g^\rho)=\mathbf{1}_n\}. 
\end{align*}
Here $\mathbf{1}_n$ denote the identity matrix of degree $n$. The special orthogonal group ${\rm SO}(n)$ is defined as the subgroup of ${\rm O}(n)$ consisting of elements with determinant equal to $1$. 

Now consider the general linear group ${\mathrm{GL}_n}_{/\mathsf{F}}$ of degree $n$ defined over a number field $\mathsf{F}$. Let $C^{(n)}_\infty$ and $K^{(n)}_\infty$ denote the subgroups of $\mathrm{GL}_n(\mathsf{F}_{\mathbf{A},\infty})$ defined by
\begin{align*}
 C^{(n)}_\infty &=\prod_{v\in \Sigma_{\mathsf{F},\infty}}C^{(n)}_v= \prod_{v\in \Sigma_{\mathsf{F},\mathbf{R}}} {\rm SO}(n)\times \prod_{v\in \Sigma_{\mathsf{F},\mathbf{C}}} {\rm U}(n), \\
 K^{(n)}_\infty 
     &= \mathsf{F}^\times_{\mathbf{A}, \infty, +} C^{(n)}_{\infty}
        =   \prod_{v\in \Sigma_{\mathsf{F},\mathbf{R}}} \mathbf{R}^\times_+ {\rm SO}(n)  
             \times \prod_{v\in \Sigma_{\mathsf{F},\mathbf{C}}} \mathbf{C}^\times  {\rm U}(n).
\end{align*}
Note 
that $\mathsf{F}^\times_{\mathbf{A}, \infty, +}$ denotes the identity component of $\mathsf{F}^\times_{\mathbf{A}, \infty}$
and that $C^{(n)}_\infty$ is the identity component of the standard maximal compact subgroup of $\mathrm{GL}_n(\mathsf{F}_{\mathbf{A},\infty})$. Finally the connected components of $\mathrm{GL}_n(\mathsf{F}_{\mathbf{A},\infty})$ 
containing the identity matrix $\mathbf{1}_n$ is denoted as 
\begin{align*}
 \mathrm{GL}_n(\mathsf{F}_{\mathbf{A},\infty})^\circ &=\prod_{v\in \Sigma_{\mathsf{F},\mathbf{R}}} \mathrm{GL}_n(\mathbf{R})^+ \times \prod_{v\in \Sigma_{\mathsf{F},\mathbf{C}}} \mathrm{GL}_n(\mathbf{C}).
\end{align*}

\subsection{General linear Lie algebras}\label{sec:lie}

For each $v\in \Sigma_{\mathsf{F}, \infty}$,  
let $\mathfrak{g}_{n, v}$ denote the complexification of the Lie algebra of ${\rm GL}_n(\mathsf{F}_v)$, and put $\mathfrak{g}_{n,\infty} = \bigotimes_{v\in \Sigma_{\mathsf{F}, \infty} }  \mathfrak{g}_{n, v}$. The dual Lie coalgebras of $\mathfrak{g}_{n,v}$ and $\mathfrak{g}_{n,\infty}$ are denoted as $\mathfrak{g}_{n,v}^*$ and $\mathfrak{g}_{n,\infty}^*=\bigotimes_{v\in \Sigma_{\mathsf{F},\infty}} \mathfrak{g}_{n,v}^*$ respectively.

Let ${\rm M}_n(\mathbf{C})$ denote the ring of square matrices of degree $n$ with coefficients in $\mathbf{C}$. We define $E_{ij} \in {\rm M}_n(\mathbf{C})$ as the square matrix such that its $(i,j)$-entry equals $1$ and the other entries equal $0$. We regard $E_{ij}$ as an element of $\mathfrak{g}_{n,v}$ for a real place  $v$. We also define $E^{\pm}_{ij}$ as 
\begin{align*}
  E^\pm_{ij} = \frac{1}{2}  \left\{   E_{i j}  \otimes 1    \mp  (\sqrt{-1}  E_{ij} ) \otimes \sqrt{-1}  \right\} \quad (\text{double sign in the same order})   \quad   \in \mathrm{M}_n(\mathbf{C})\otimes_{\mathbf{R}} \mathbf{C},
\end{align*}
which we regard as elements of $\mathfrak{g}_{n,v}$ for a complex place $v$. We sometimes use the symbols $E_{ij, v}$ or $E^\pm_{ij, v}$ to indicate that they are regarded as elements of $\mathfrak{g}_{n,v}$.
Then $\{  E_{ij, v} \}_{1\leq i,\, j \leq n}$ (resp.\  $\{  E^\pm_{ij, v} \}_{1\leq i,\, j \leq n}$) gives rise to a basis of $\mathfrak{g}_{n, v}$ if $v$ is real (resp.\ complex).  Hereafter we always choose a basis $\omega_{n, v}$ of the $k_v$-fold exterior power $\bigwedge^{k_v} \mathfrak{g}_{n, v}$ of $\mathfrak{g}_{n,v}$ as the set of all the $k_v$-fold wedge products of $\{  E_{ij, v} \}_{1\leq i,\, j \leq n}$ or $\{  E^\pm_{ij, v} \}_{1\leq i,\, j \leq n}$ depending on whether $v$ is real or complex. For $k=(k_v)_{v\in \Sigma_{\mathsf{F},\infty}}\in \mathbf{Z}_{\geq 0}^{\Sigma_{\mathsf{F},\infty}}$, we put $\bigwedge^k \mathfrak{g}_{n,\infty}=\bigotimes_{v\in \Sigma_{\mathsf{F},\infty}} \bigwedge^{k_v} \mathfrak{g}_{n,v}$. Then the dual basis of $\bigwedge^{\mathrm{max}}\mathfrak{g}_{n,\infty}^*=\bigotimes_{v\in \Sigma_{\mathsf{F},\mathbf{R}}}\bigwedge^{n^2} \mathfrak{g}_{n,v}^* \otimes \bigotimes_{v\in \Sigma_{\mathsf{F},\mathbf{C}}}\bigwedge^{2n^2} \mathfrak{g}_{n,v}^*$ gives rise to a volume form on ${\rm GL}_n(\mathsf{F}_{\mathbf A, \infty} )$, which is denoted by $\omega_n$.

\section{Fundamentals of periods of pure motives over general number fields}\label{sec:mot}

In this section we recall the notion of {\em fundamental periods} of pure motives defined over number fields, refering to Yoshida's expositions \cite{yos94}, \cite{yos01} and \cite{yos16}. 
 
\subsection{$\tau$-periods}\label{sec:t-per}

According to \cite{yos94}, we here summarize notation on $\tau$-periods of pure motives over general number fields. Let $\mathsf{F}$ and $E$ be number fields, $\mathsf{w}$ an integer and $d$ a positive integer. Let us consider a pure motive $\mathcal{M}$ of weight $\mathsf{w}=\mathsf{w}(\mathcal{M})$ and rank $d=d(\mathcal{M})$ defined over $\mathsf{F}$ with coefficients in $E$. For each embedding $\tau\in I_\mathsf{F}$ of $\mathsf{F}$, let $\mathcal{M}_\tau$ denote the motive defined over $\mathbf{C}$ obtained as the base change of $\mathcal{M}$ with respect to $\tau \colon F\hookrightarrow \mathbf{C}$, whose Betti realization is denoted as $H_{\rm B}(\mathcal{M}_\tau)$. By definition $H_{\rm B}(\mathcal{M}_\tau)$ is an $E$-vector space of dimension $d$ equipped with the {\em Hodge decomposition}
\begin{align*}
   H_{\rm B}(\mathcal{M}_\tau) \otimes_{\mathbf Q} {\mathbf C}  
      &=  \bigoplus_{p_\tau +q_\tau=\mathsf{w}} H^{p_\tau, q_\tau}(\mathcal{M}_\tau).
\end{align*} 
Note that each Hodge component $H^{p_\tau,q_\tau}(\mathcal{M}_\tau)$ is an $E\otimes_{\mathbf{Q}} \mathbf{C}$-module which is not necessarily free. 

The complex conjugation $\rho$ on the base field $\mathbf{C}$ of $\mathcal{M}_\tau$ transports the complex structure, which induces an $E$-linear isomorphism $F_{\infty,\tau} \colon H_{\rm B}(\mathcal{M}_\tau) \xrightarrow{\sim} H_{\rm B}(\mathcal{M}_{\rho \tau})$. Note that $F_{\infty,\rho \tau}$ is the inverse of $F_{\infty,\tau}$, and $F_{\infty,\tau}\otimes \mathrm{id}$ induces an isomorphism between $H^{p_\tau,q_\tau}(\mathcal{M}_\tau)$ and $H^{q_\tau, p_\tau}(\mathcal{M}_{\rho\tau})$. If $\tau$ is a real embedding, it coincides with $\rho\tau$, and thus $F_{\infty,\tau}$ (resp.\ $F_{\infty,\tau}\otimes \mathrm{id}$) is indeed an involution on  $H_{\rm B}(\mathcal{M}_\tau)$ (resp.\ on $H^{\mathsf{w}/2,\mathsf{w}/2}(\mathcal{M}_\tau)$ when $\mathsf{w}$ is even).  Meanwhile, if $\tau$ is a complex embedding,  $F_{\infty,\tau}\oplus F_{\infty,\rho\tau}$ (resp.\ $\bigl(F_{\infty,\tau}\oplus F_{\infty,\rho\tau}\bigr)\otimes \mathrm{id}$) is an involution on $H_{\rm B}(\mathcal{M}_\tau)\oplus H_{\rm B}(\mathcal{M}_{\rho\tau}) $ (resp.\ on $H^{\mathsf{w}/2,\mathsf{w}/2}(\mathcal{M}_\tau)\oplus H^{\mathsf{w}/2,\mathsf{w}/2}(\mathcal{M}_{\rho\tau}) $ when $\mathsf{w}$ is even). 
Let $H^\pm_{\rm B}(\mathcal{M}_\tau)$ denote the $(\pm 1)$-eigenspaces of $F_{\infty, \tau}$ acting on 
$H_{\rm B}(\mathcal{M}_\tau)$   
(resp.\ $F_{\infty,\tau}\oplus F_{\infty,\rho\tau}$ acting on $H_{\rm B}(\mathcal{M}_\tau) \oplus H_{\rm B}(\mathcal{M}_{\rho\tau})$) when $\tau$ is real (resp.\ complex). Put $d^{\pm}_\tau=\dim_E H^{\pm}_{\rm B}(\mathcal{M}_\tau)$; then, by definition, $d^+_\tau+d^-_\tau$ equals $d$ if $\tau\in I_\mathsf{F}$ is real and $2d$ if $\tau \in I_\mathsf{F}$ is complex.
Hereafter we assume that, when the weight $\mathsf{w}$ of the motive $\mathcal{M}$ is even, 
\begin{itemize}
\item[--] the involution $\bigoplus_{\tau \in I_\mathsf{F}}\bigl(F_{\infty, \tau}\otimes \mathrm{id}\bigr)$ on $\bigoplus_{\tau \in I_\mathsf{F}} H^{\mathsf{w}/2,\mathsf{w}/2}(\mathcal{M}_\tau)$ acts as multiplication by a scalar,
\end{itemize}
or, in other words,
\begin{itemize}
 \item[--] the involution $F_\infty\otimes \mathrm{id}$ on $H^{\mathsf{w}/2, \mathsf{w}/2}({\rm Res}_{\mathsf{F}/{\mathbf Q}} (\mathcal{M}))$ acts as multiplication by a scalar.
\end{itemize}
This condition implies that the diagonal component $H^{\mathsf{w}/2,\mathsf{w}/2}(\mathcal{M}_\tau)$ vanishes for every complex $\tau$.

Next let $H_{\rm dR}(\mathcal{M})$ denote the de Rham realization of $\mathcal{M}$, which is a free $E\otimes_{\mathbf Q} \mathsf{F}$-module of rank $d$. 
For each $\tau \in I_\mathsf{F}$, we have a comparison isomorphism
\begin{align*}
  I_\tau(\mathcal{M}) \colon H_{\rm B}(\mathcal{M}_\tau)   \otimes_{\mathbf Q} {\mathbf C}  
              \stackrel{\sim}{\longrightarrow} 
             H_{\rm dR}(\mathcal{M})   \otimes_{\mathsf{F}, \tau} {\mathbf C}, 
\end{align*}
which is  an isomorphism of $E\otimes_{\mathbf{Q}}\mathbf{C}$-modules. The de Rham realization $H_{\rm dR}(\mathcal{M})$ is equipped with the Hodge filtration $\{ {\rm Fil}^p H_{\rm dR}(\mathcal{M}) \}_{p}$, which is a descending filtration of (not necessarily free) $E\otimes_{\mathbf{Q}}\mathsf{F}$-modules characterized by
\begin{align*}
{\rm Fil}^p H_{\rm dR}(\mathcal{M}) \otimes_{\mathsf{F}, \tau} {\mathbf C}
  =  I_\tau (\mathcal{M}) \left(  \bigoplus_{i \geq p}   H^{i, \mathsf{w}-i} (\mathcal{M}_\tau) \right)  
\end{align*}
for each $\tau$.   
We define $E\otimes_{\mathbf{Q}} \mathsf{F}$-submodules $F^\mp H_{\rm dR}(\mathcal{M})$ of $H_{\rm dR}(\mathcal{M})$ as follows:
\begin{itemize}
 \item[--] $F^\mp H_{\rm dR}(\mathcal{M}) = {\rm Fil}^{(\mathsf{w}+1)/2} H_{\rm dR}(\mathcal{M})$  if $\mathsf{w}$ is odd,  
 \item[--] $F^- H_{\rm dR}(\mathcal{M}) = {\rm Fil}^{\mathsf{w}/2+1} H_{\rm dR}(\mathcal{M})$ and $F^+ H_{\rm dR}(\mathcal{M}) = {\rm Fil}^{\mathsf{w}/2} H_{\rm dR}(\mathcal{M})$ if $\mathsf{w}$ is even and $F_\infty\otimes \mathrm{id}$ acts on $H^{\mathsf{w}/2, \mathsf{w}/2}({\rm Res}_{\mathsf{F}/{\mathbf Q}} (\mathcal{M})  )$ as multiplication by $+1$,  
 \item[--]  $F^- H_{\rm dR}(\mathcal{M}) = {\rm Fil}^{\mathsf{w}/2} H_{\rm dR}(\mathcal{M})$ and
       $F^+ H_{\rm dR}(\mathcal{M}) = {\rm Fil}^{\mathsf{w}/2+1} H_{\rm dR}(\mathcal{M})$ if $\mathsf{w}$ is even and $F_\infty\otimes \mathrm{id}$ acts on $H^{\mathsf{w}/2, \mathsf{w}/2}({\rm Res}_{\mathsf{F}/{\mathbf Q}} (\mathcal{M})  )$ as multiplication by $-1$,
 \item[--]  $F^{\mp} H_{\rm dR}(\mathcal{M}) = {\rm Fil}^{\mathsf{w}/2} H_{\rm dR}(\mathcal{M})$ if $\mathsf{w}$ is even and $H^{\mathsf{w}/2,\mathsf{w}/2}(\mathrm{Res}_{\mathsf{F}/\mathbf{Q}}(\mathcal{M}))$ equals $0$.
\end{itemize}
Then define $H^{\pm}_{\rm dR}(\mathcal{M})$ as the quotients
\begin{align}\label{eq:drpm}
H^\pm_{\rm dR}(\mathcal{M}) = H_{\rm dR}(\mathcal{M})  /  F^\mp H_{\rm dR}(\mathcal{M}) \quad (\text{double sign in the same order}).
\end{align}
Note that $F^{\mp}H_{\rm dR}(\mathcal{M})$ and $H^{\pm}_{\rm dR}(\mathcal{M})$ are not necessarily free as $E\otimes_{\mathbf{Q}} \mathsf{F}$-modules.
For each $\tau\in I_{\mathsf{F}}$, the comparison isomorphism $I_\tau(\mathcal{M})$ (or $I_\tau(\mathcal{M})\oplus I_{\rho\tau}(\mathcal{M})$ if $\tau$ is complex) induces isomorphisms $I_\tau^\pm(\mathcal{M})$ of free $E\otimes_{\mathbf{Q}} \mathbf{C}$-modules 
\begin{align}\label{eq:comparison_pm}
\left\{
\begin{aligned}
      I^\pm_\tau(\mathcal{M}) & \colon H^\pm_{\rm B}(\mathcal{M}_\tau) \otimes_{\mathbf Q}{\mathbf C}  \stackrel{\sim}{\longrightarrow}  H^{\pm}_{\rm dR}(\mathcal{M}) \otimes_{\mathsf{F}, \tau} {\mathbf C}   && \text{if $\tau$ is real},  \\
      I^\pm_\tau(\mathcal{M})  & \colon \left( H_{\rm B}(\mathcal{M}_\tau) \oplus H_{\rm B}(\mathcal{M}_{\rho \tau})  \right)^\pm  \otimes_{\mathbf Q}{\mathbf C}
                       \stackrel{\sim}{\longrightarrow}   
                              \left(H^{\pm}_{\rm dR}(\mathcal{M})\otimes_{\mathsf{F},\tau} \mathbf{C}\right) \oplus \left( H^\pm_{\rm dR}(\mathcal{M})\otimes_{\mathsf{F},\rho\tau} \mathbf{C}\right)    &&  \text{if $\tau$ is complex}.
\end{aligned}
 \right.
\end{align}   

When $\mathsf{F}$ has a real place, the quotient $H^{\pm}_{\rm dR}(\mathcal{M})$ is indeed a free $E\otimes_{\mathbf{Q}} \mathsf{F}$-module due to \cite[Lemma~2.1 (2)]{yos94}. When $\mathsf{F}$ is totally imaginary, let us define $\mathcal{M}^{[\tau]}$ and $\mathcal{M}^{[\rho \tau]}$ as the extension of scalars of $\mathcal{M}$ from $\mathsf{F}$ to $\tau(\mathsf{F})\rho\tau(\mathsf{F})$ according to $\tau \colon \mathsf{F}\hookrightarrow \tau(\mathsf{F})\rho\tau(\mathsf{F})$ and $\rho\tau \colon \mathsf{F}\hookrightarrow \tau(\mathsf{F})\rho\tau(\mathsf{F})$ respectively.\footnote{Here $H^{\pm}_{\rm dR}(\mathcal{M}^{[\rho\tau]})$ play the roles of $\rho H^{\pm}_{\rm dR}(\mathcal{M})$ introduced in \cite{yos94} and \cite{yos16}; the $\rho$-twisted modules $\rho H^{\pm}_{\rm dR}(\mathcal{M})$ seem to be defined only when the base field of $\mathcal{M}$ is a subfield of $\mathbf{C}$ stable under the complex conjugation $\rho$.}
 Then the direct sum $H^{\pm}_{\rm dR}(\mathcal{M}^{[\tau]})\oplus H^{\pm}_{\rm dR}(\mathcal{M}^{[\rho\tau]})$ is a free $E\otimes_{\mathbf{Q}}\tau(\mathsf{F})\rho\tau(\mathsf{F})$-module due to \cite[Lemma~2.1 (3)]{yos94}. 
Note that one may identify $I_\tau^\pm(\mathcal{M})$ with the comparison isomorphisms $I_{\iota_0}^{\pm}(\mathcal{M}^{[\tau]})$, the (latter) isomorphisms of (\ref{eq:comparison_pm}) constructed for $\mathcal{M}^{[\tau]}$ and the canonical inclusion $\iota_0\colon \tau(\mathsf{F})\rho\tau(\mathsf{F})\hookrightarrow \mathbf{C}$, simply because $\left(\mathcal{M}^{[\tau]}\right)_{\iota_0}=\mathcal{M}_{\tau}$, $\left(\mathcal{M}^{[\tau]}\right)_{\rho\iota_0}=\mathcal{M}_{\rho\tau}$ and 
\begin{align*}
 \bigl(H_{\rm dR}^\pm(\mathcal{M}^{[\tau]})\otimes_{\tau(\mathsf{F})\rho\tau(\mathsf{F}),\iota_0}\mathbf{C}\bigr) \oplus \bigl(H_{\rm dR}^\pm(\mathcal{M}^{[\tau]})\otimes_{\tau(\mathsf{F})\rho\tau(\mathsf{F}),\rho\iota_0}\mathbf{C}\bigr)&=\bigl(H_{\rm dR}^\pm(\mathcal{M})\otimes_{\mathsf{F},\tau}\mathbf{C}\bigr) \oplus \bigl(H_{\rm dR}^\pm(\mathcal{M})\otimes_{\mathsf{F},\rho\tau}\mathbf{C}\bigr) \\
&=\bigl(H_{\rm dR}^\pm(\mathcal{M}^{[\tau]}) \oplus H_{\rm dR}^\pm(\mathcal{M}^{[\rho \tau]})\bigr)\otimes_{\tau(\mathsf{F})\rho\tau(\mathsf{F}),\iota_0} \mathbf{C}
\end{align*}
hold by construction.

\begin{dfn}
For each $\tau \in I_\mathsf{F}$, let us take an $E$-rational basis of $H_{\rm B}(\mathcal{M}_\tau)$ and an $E\otimes_{\mathbf{Q}} \mathsf{F}$-rational basis of $H_{\rm dR}(\mathcal{M})$. Let us also take $E$-rational bases of $H^\pm_{\rm B}(\mathcal{M}_\tau)$ 
and $E\otimes_{\mathbf Q} \mathsf{F}$-rational bases 
 of $H^{\pm}_{\rm dR} (\mathcal{M})$ if $\mathsf{F}$ has a real place (resp.\ $E\otimes_{\mathbf{Q}} \tau(\mathsf{F})\rho\tau(\mathsf{F})$-rational bases of $H^{\pm}_{\rm dR} (\mathcal{M}^{[\tau]}) \oplus  H^{\pm}_{\rm dR} (\mathcal{M}^{[\rho\tau]})$ if $\mathsf{F}$ is totally imaginary).       
Then we define {\em $\tau$-periods} $\delta_\tau(\mathcal{M})$, $c^\pm_\tau(\mathcal{M}) \in (E\otimes_{\mathbf Q} {\mathbf C})^\times$ of $\mathcal{M}$
to be the determinants with respect to these rational bases:
\begin{align*}
  \delta_\tau(\mathcal{M}) &= \begin{cases}  \det(I_\tau(\mathcal{M})) & \text{if $\tau$ is real},  \\  \det(I_\tau(\mathcal{M})\oplus I_{\rho \tau}(\mathcal{M})) & \text{if $\tau$ is complex},    \end{cases} & 
  c^\pm_\tau(\mathcal{M})  &= \det (I^\pm_\tau(\mathcal{M})).
\end{align*}
Here we identify $I^\pm_\tau(\mathcal{M})$ with $I^\pm_{\iota_0}(\mathcal{M}^{[\tau]})$ when $\mathsf{F}$ is totally imaginary. By construction, these periods are determined up to multiplication by elements of $(E\otimes_{\mathbf{Q}}\mathsf{F})^\times$ (or by elements of $(E\otimes_{\mathbf{Q}} \tau(\mathsf{F})\rho\tau(\mathsf{F}))^\times$ for $c^{\pm}_{\tau}(\mathcal{M})$ when $\mathsf{F}$ is totally imaginary). 
\end{dfn}

For a complex embedding $\tau\in I_\mathsf{F}$, one has $\delta_\tau(\mathcal{M}) = \delta_{\rho\tau}(\mathcal{M})$ and 
$c^\pm_\tau(\mathcal{M}) = c^\pm_{\rho\tau}(\mathcal{M})$ by symmetry, and it thus suffices to consider one of two embeddings  associated to the same complex place when one concerns $\tau$-periods. In other words, $\tau$-periods are defined for every archimedean place of $\mathsf{F}$.

For later use, we end this subsection by introducing two lemmata without proofs; the first one is 
the {\em product formulae}  which express Deligne's periods of the Grothendieck restriction of scalars $\mathrm{Res}_{\mathsf{F}/\mathbf{Q}}(\mathcal{M})$ of $\mathcal{M}$ to $\mathbf{Q}$ as the product of corresponding $\tau$-periods of $\mathcal{M}$:

\begin{lem}\label{lem:perfac}
Let $\mathsf{F}^{\rm gal}$ denote the normal closure of $\mathsf{F}$ in $\mathbf{C}$.
Then we have 
\begin{align*}
 \delta ( {\rm Res}_{\mathsf{F}/ {\mathbf Q}}  (\mathcal{M})  )   
     &\sim_{E\otimes_{\mathbf{Q}} \mathsf{F}^{\rm gal}} 
            \prod_{\tau \in \Sigma_{\mathsf{F}, \infty}}  \delta_\tau(\mathcal{M}), &
  c^\pm ( {\rm Res}_{\mathsf{F}/ {\mathbf Q}}  (\mathcal{M})  )   
     &\sim_{E\otimes_{\mathbf{Q}}\mathsf{F}^{\rm gal}} 
             \prod_{\tau \in \Sigma_{\mathsf{F}, \infty}}  c^\pm_\tau(\mathcal{M}). 
\end{align*}
\end{lem}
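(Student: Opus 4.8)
The plan is to derive both product formulae from the compatibility of the Betti and de Rham realizations, the Hodge filtration and the infinite Frobenius with the Weil restriction of scalars, and then to keep track of the discrepancy between a $\mathbf{Q}$-rational choice of bases and an $\mathsf{F}$-rational one. Concretely, I would first record the structural facts about $\mathrm{Res}_{\mathsf{F}/\mathbf{Q}}(\mathcal{M})$ coming from its construction: compatibly with the Hodge decompositions one has $H_{\rm B}\bigl((\mathrm{Res}_{\mathsf{F}/\mathbf{Q}}(\mathcal{M}))_{\iota_0}\bigr)=\bigoplus_{\tau\in I_\mathsf{F}}H_{\rm B}(\mathcal{M}_\tau)$ as $E$-modules, on which $F_\infty$ acts by $\bigoplus_\tau F_{\infty,\tau}$ (so it interchanges the $\tau$- and $\rho\tau$-parts); the de Rham realization $H_{\rm dR}(\mathrm{Res}_{\mathsf{F}/\mathbf{Q}}(\mathcal{M}))$ is $H_{\rm dR}(\mathcal{M})$ regarded as an $E$-module, with ${\rm Fil}^p H_{\rm dR}(\mathrm{Res}_{\mathsf{F}/\mathbf{Q}}(\mathcal{M}))={\rm Fil}^p H_{\rm dR}(\mathcal{M})$ for every $p$; and under the canonical isomorphism $\mathsf{F}\otimes_{\mathbf{Q}}\mathbf{C}\cong\prod_{\tau\in I_\mathsf{F}}\mathbf{C}$ the comparison isomorphism $I_{\iota_0}(\mathrm{Res}_{\mathsf{F}/\mathbf{Q}}(\mathcal{M}))$ is identified with $\bigoplus_{\tau\in I_\mathsf{F}}I_\tau(\mathcal{M})$.

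For the formula on $\delta$ I would then fix an $E\otimes_{\mathbf{Q}}\mathsf{F}$-rational basis $\{e_1,\dots,e_d\}$ of $H_{\rm dR}(\mathcal{M})$ (with $d=d(\mathcal{M})$), a $\mathbf{Q}$-basis $\{f_1,\dots,f_g\}$ of $\mathsf{F}$ (with $g=[\mathsf{F}:\mathbf{Q}]$) and, for each $\tau\in I_\mathsf{F}$, an $E$-rational basis of $H_{\rm B}(\mathcal{M}_\tau)$. Then $\{f_je_i\}_{i,j}$ is an $E$-rational basis of $H_{\rm dR}(\mathrm{Res}_{\mathsf{F}/\mathbf{Q}}(\mathcal{M}))$ and the union of the chosen Betti bases is an $E$-rational basis of $H_{\rm B}\bigl((\mathrm{Res}_{\mathsf{F}/\mathbf{Q}}(\mathcal{M}))_{\iota_0}\bigr)$; the change of basis from $\{f_je_i\}$ to the ``slotwise'' basis of $\bigoplus_\tau H_{\rm dR}(\mathcal{M})\otimes_{\mathsf{F},\tau}\mathbf{C}$ is given by the matrix $\bigl(\tau(f_j)\bigr)_{\tau,j}\otimes\mathbf{1}_d$, whose determinant $\bigl(\det(\tau(f_j))_{\tau,j}\bigr)^{d}$ is, up to $\mathbf{Q}^\times$, a power of a square root of the discriminant of $\mathsf{F}$, hence an element of $(E\otimes_{\mathbf{Q}}\mathsf{F}^{\rm gal})^\times$. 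Taking determinants of $\bigoplus_\tau I_\tau(\mathcal{M})$ then gives
\[
\delta(\mathrm{Res}_{\mathsf{F}/\mathbf{Q}}(\mathcal{M}))\sim_{E\otimes_{\mathbf{Q}}\mathsf{F}^{\rm gal}}\prod_{\tau\in I_\mathsf{F}}\det I_\tau(\mathcal{M})=\prod_{\tau\in\Sigma_{\mathsf{F},\infty}}\delta_\tau(\mathcal{M}),
\]
the two embeddings attached to a complex place being grouped as in the definition of $\delta_\tau(\mathcal{M})$. I would also check that the two sides carry matching ambiguity: replacing the $E\otimes_{\mathbf{Q}}\mathsf{F}$-rational basis of $H_{\rm dR}(\mathcal{M})$ by another one, say by $u\in(E\otimes_{\mathbf{Q}}\mathsf{F})^\times$, multiplies the middle term by $\prod_\tau\tau(u)$, which is the norm of $u$ down to $E^\times$.

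For the formula on $c^\pm$ I would run the same argument with the $(\pm1)$-parts. The $F_\infty$-eigenspace $H^\pm_{\rm B}\bigl((\mathrm{Res}_{\mathsf{F}/\mathbf{Q}}(\mathcal{M}))_{\iota_0}\bigr)$ decomposes as $\bigoplus_{v\in\Sigma_{\mathsf{F},\infty}}H^\pm_{\rm B}(\mathcal{M}_{\tau_v})$ (the $F_{\infty,\tau_v}$-eigenspace when $v$ is real, and $(H_{\rm B}(\mathcal{M}_{\tau_v})\oplus H_{\rm B}(\mathcal{M}_{\rho\tau_v}))^\pm$ when $v$ is complex), so the chosen Betti bases of the summands again assemble to an $E$-rational basis. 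On the de Rham side, when $\mathsf{w}$ is even the evenness hypothesis imposed in Section~\ref{sec:t-per} guarantees that the scalar by which $F_\infty\otimes{\rm id}$ acts on $H^{\mathsf{w}/2,\mathsf{w}/2}(\mathrm{Res}_{\mathsf{F}/\mathbf{Q}}(\mathcal{M}))$ is the very one used to single out $F^\mp H_{\rm dR}(\mathcal{M})$; hence $F^\mp H_{\rm dR}(\mathrm{Res}_{\mathsf{F}/\mathbf{Q}}(\mathcal{M}))=F^\mp H_{\rm dR}(\mathcal{M})$, so that $H^\pm_{\rm dR}(\mathrm{Res}_{\mathsf{F}/\mathbf{Q}}(\mathcal{M}))=H^\pm_{\rm dR}(\mathcal{M})$ as $E$-modules and $I^\pm_{\iota_0}(\mathrm{Res}_{\mathsf{F}/\mathbf{Q}}(\mathcal{M}))=\bigoplus_{v}I^\pm_{\tau_v}(\mathcal{M})$. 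When $\mathsf{F}$ has a real place, $H^\pm_{\rm dR}(\mathcal{M})$ is $E\otimes_{\mathbf{Q}}\mathsf{F}$-free by \cite[Lemma~2.1 (2)]{yos94} and the computation above applies verbatim; when $\mathsf{F}$ is totally imaginary I would instead use the convention $I^\pm_\tau(\mathcal{M})=I^\pm_{\iota_0}(\mathcal{M}^{[\tau]})$, choose for each complex place $v$ an $E\otimes_{\mathbf{Q}}\tau_v(\mathsf{F})\rho\tau_v(\mathsf{F})$-rational basis of the free module $H^\pm_{\rm dR}(\mathcal{M}^{[\tau_v]})\oplus H^\pm_{\rm dR}(\mathcal{M}^{[\rho\tau_v]})$ (free by \cite[Lemma~2.1 (3)]{yos94}), and carry out the same determinant bookkeeping through these modules, using the identification of de Rham realizations recorded just before the definition of the $\tau$-periods; this yields
\[
c^\pm(\mathrm{Res}_{\mathsf{F}/\mathbf{Q}}(\mathcal{M}))\sim_{E\otimes_{\mathbf{Q}}\mathsf{F}^{\rm gal}}\prod_{\tau\in\Sigma_{\mathsf{F},\infty}}c^\pm_\tau(\mathcal{M}),
\]
with a discrepancy that is again a power of a square root of the discriminant of $\mathsf{F}$.

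The step I expect to be the main obstacle is precisely this last, totally imaginary, case: there $H^\pm_{\rm dR}(\mathcal{M})$ need not be $E\otimes_{\mathbf{Q}}\mathsf{F}$-free, so the change-of-basis computation must be carried out through the $\rho$-twisted modules $H^\pm_{\rm dR}(\mathcal{M}^{[\rho\tau]})$ rather than over $\mathsf{F}$ itself, and one has to verify that grouping the archimedean contributions this way still introduces only a factor lying in $(E\otimes_{\mathbf{Q}}\mathsf{F}^{\rm gal})^\times$ — which is exactly the point of \cite[Lemma~2.1]{yos94}. Everything else is formal manipulation of determinants of block matrices, granted the compatibility of the Betti/de Rham realizations, the Hodge filtration and the infinite Frobenius with $\mathrm{Res}_{\mathsf{F}/\mathbf{Q}}$.
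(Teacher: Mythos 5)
Your proof is correct and reconstructs the standard argument behind the references the paper defers to (Yoshida \cite[Proposition~2.2]{yos94} and Harris--Lin \cite[Proposition~2.11]{hl17}): the compatibility of the realizations and the Hodge/Frobenius structures with Weil restriction, the discriminant matrix $(\tau(f_j))_{\tau,j}$ whose determinant accounts for passing from a $\mathbf{Q}$-rational to an $\mathsf{F}$-rational basis, and the separate bookkeeping over $\mathcal{M}^{[\tau]}\oplus\mathcal{M}^{[\rho\tau]}$ in the totally imaginary case. The paper itself gives only the citations, so no comparison beyond this is possible.
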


\begin{proof}
See \cite[Proposition 2.2]{yos94} and \cite[Proposition 2.11]{hl17}.
\end{proof}

The second lemma concerns the $\pm$-periods for every complex embedding $\mathsf{F}\hookrightarrow \mathbf{C}$. 
\begin{lem}\label{lem:cpm}
We have $ c^+_\tau(\mathcal{M})  \sim_{E\otimes_{\mathbf{Q}} \tau(\mathsf{F})\rho\tau(\mathsf{F})}   c^-_\tau(\mathcal{M})$ if  $H^{\mathsf{w}/2,\mathsf{w}/2}(\mathcal{M}_\tau)$ vanishes for every complex $\tau$. 
\end{lem}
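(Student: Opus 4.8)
The plan is to reduce the statement to the totally imaginary case and then exploit the fact that, for a complex embedding $\tau$, the comparison isomorphisms $I^{\pm}_\tau(\mathcal{M})$ are defined through the \emph{sum} $H_{\rm B}(\mathcal{M}_\tau)\oplus H_{\rm B}(\mathcal{M}_{\rho\tau})$, so that the ambient $E\otimes_{\mathbf{Q}}\mathbf{C}$-module is symmetric under $F_{\infty,\tau}$ in a way that a single real place does not enjoy. Concretely, I would first replace $\mathcal{M}$ by $\mathcal{M}^{[\tau]}$ (defined over $\tau(\mathsf{F})\rho\tau(\mathsf{F})$) so that, by the identifications recorded just before the Definition of $\tau$-periods, it is enough to treat $c^{\pm}_{\iota_0}(\mathcal{M}^{[\tau]})$; thus we may and do assume the base field is a CM field $\mathsf{F}$ and $\tau=\iota_0$. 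The hypothesis that $H^{\mathsf{w}/2,\mathsf{w}/2}(\mathcal{M}_\tau)=0$ for all complex $\tau$ is precisely what guarantees that, whether $\mathsf{w}$ is odd or even, the two modules $F^-H_{\rm dR}(\mathcal{M})$ and $F^+H_{\rm dR}(\mathcal{M})$ are \emph{complementary} pieces of the Hodge filtration: in the odd case $F^{\mp}H_{\rm dR}(\mathcal{M})={\rm Fil}^{(\mathsf{w}+1)/2}$ and the quotient has the complementary Hodge numbers; in the even case with $H^{\mathsf{w}/2,\mathsf{w}/2}=0$ we have $F^{\mp}H_{\rm dR}(\mathcal{M})={\rm Fil}^{\mathsf{w}/2}$, again splitting the Hodge numbers into the two halves $\{p>\mathsf{w}/2\}$ and $\{p<\mathsf{w}/2\}$.

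Next I would compute the product $c^+_\tau(\mathcal{M})\cdot c^-_\tau(\mathcal{M})$ and compare it with $\delta_\tau(\mathcal{M})$. After base change to $\mathbf{C}$, the isomorphism $I_\tau(\mathcal{M})\oplus I_{\rho\tau}(\mathcal{M})$ on $(H_{\rm B}(\mathcal{M}_\tau)\oplus H_{\rm B}(\mathcal{M}_{\rho\tau}))\otimes_{\mathbf{Q}}\mathbf{C}$ decomposes, with respect to the eigenspace decomposition for $F_{\infty,\tau}\oplus F_{\infty,\rho\tau}$ on the source and the corresponding decomposition $H^+_{\rm dR}\oplus H^-_{\rm dR}$ (coming from the splitting of the Hodge filtration above) on the target, as a block-triangular --- indeed block-diagonal after passing to the successive quotients --- map whose two diagonal blocks are exactly $I^+_\tau(\mathcal{M})$ and $I^-_\tau(\mathcal{M})$. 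Taking determinants gives
\begin{align*}
\delta_\tau(\mathcal{M})\sim_{E\otimes_{\mathbf{Q}}\tau(\mathsf{F})\rho\tau(\mathsf{F})} c^+_\tau(\mathcal{M})\,c^-_\tau(\mathcal{M}).
\end{align*}
It therefore remains to show $c^-_\tau(\mathcal{M})^2\sim_{E\otimes_{\mathbf{Q}}\tau(\mathsf{F})\rho\tau(\mathsf{F})}\delta_\tau(\mathcal{M})$, equivalently $c^+_\tau(\mathcal{M})\sim c^-_\tau(\mathcal{M})$. For this I would use the symmetry $c^{\pm}_\tau(\mathcal{M})=c^{\pm}_{\rho\tau}(\mathcal{M})$ together with the interchange of the roles of $\tau$ and $\rho\tau$ induced by $F_{\infty,\tau}$: the isomorphism $F_{\infty,\tau}\otimes\mathrm{id}$ sends $H^{p_\tau,q_\tau}(\mathcal{M}_\tau)$ to $H^{q_\tau,p_\tau}(\mathcal{M}_{\rho\tau})$, hence it swaps the ``$+$'' and ``$-$'' halves of the Hodge filtration when passed through the comparison isomorphism. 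Chasing this through the definition of $I^{\pm}_\tau$ (using that the Betti $\pm$-eigenspaces are defined by the \emph{same} involution $F_{\infty,\tau}\oplus F_{\infty,\rho\tau}$) yields an equality of determinants $c^+_\tau(\mathcal{M})\sim_{E\otimes_{\mathbf{Q}}\tau(\mathsf{F})\rho\tau(\mathsf{F})} c^-_\tau(\mathcal{M})$, as desired. Alternatively, one may deduce the claim directly from \cite[Lemma~2.1~(3)]{yos94} and the transformation formulae for $\tau$-periods under duality/twisting recorded in \cite{yos94}.

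The main obstacle I anticipate is the bookkeeping at the level of $E\otimes_{\mathbf{Q}}\tau(\mathsf{F})\rho\tau(\mathsf{F})$-modules rather than $\mathbf{C}$-vector spaces: the Hodge components and the de Rham quotients need not be free over $E\otimes_{\mathbf{Q}}\mathsf{F}$, so one must invoke \cite[Lemma~2.1]{yos94} to guarantee that $H^{\pm}_{\rm dR}(\mathcal{M}^{[\tau]})\oplus H^{\pm}_{\rm dR}(\mathcal{M}^{[\rho\tau]})$ is free and that the determinants $c^{\pm}_\tau(\mathcal{M})$ are well defined up to $(E\otimes_{\mathbf{Q}}\tau(\mathsf{F})\rho\tau(\mathsf{F}))^\times$ before the block-determinant argument can be run. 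Once the freeness is in hand the computation is essentially a determinant-of-a-triangular-matrix manipulation, and the hypothesis $H^{\mathsf{w}/2,\mathsf{w}/2}(\mathcal{M}_\tau)=0$ is exactly what removes the troublesome middle Hodge piece that would otherwise obstruct the clean splitting.
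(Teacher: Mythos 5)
Your overall plan is sound and close in spirit to what one expects behind the citation the paper gives (the paper itself only writes ``See \cite[Remark 2.2]{hl17}''), but the crucial step is not carried out correctly, and one intermediate step is circular.

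The reduction to $\mathcal{M}^{[\tau]}$ over the totally imaginary field $\tau(\mathsf{F})\rho\tau(\mathsf{F})$, and the observation that the vanishing of $H^{\mathsf{w}/2,\mathsf{w}/2}(\mathcal{M}_\tau)$ for all complex $\tau$ forces $F^+H_{\mathrm{dR}}=F^-H_{\mathrm{dR}}$ (hence $H^+_{\mathrm{dR}}=H^-_{\mathrm{dR}}$) for $\mathcal{M}^{[\tau]}$, are both correct and are exactly the right starting points. But from there the proposal stalls. You first derive $\delta_\tau(\mathcal{M})\sim c^+_\tau(\mathcal{M})c^-_\tau(\mathcal{M})$ and then announce that ``it therefore remains to show $c^-_\tau(\mathcal{M})^2\sim\delta_\tau(\mathcal{M})$, equivalently $c^+_\tau(\mathcal{M})\sim c^-_\tau(\mathcal{M})$'': the ``equivalently'' is what you are trying to prove, so this reduction is circular and the identity $\delta_\tau\sim c^+_\tau c^-_\tau$ plays no role. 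The argument you then offer --- that $F_{\infty,\tau}\otimes\mathrm{id}$ ``swaps the $+$ and $-$ halves of the Hodge filtration'' --- does not hold up. $F_{\infty,\tau}\oplus F_{\infty,\rho\tau}$ is precisely the involution whose eigenspaces define the Betti $\pm$-decomposition, so it fixes each eigenspace up to sign rather than exchanging them; and on the de Rham side there are, by hypothesis, no distinct $+$ and $-$ halves to swap (that is the whole point of the vanishing assumption).

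The direct argument you are groping for is the following, and it does not use $\delta_\tau$. Pick an $E$-basis $\{e_i\}$ of $H_{\mathrm{B}}(\mathcal{M}_\tau)$; then $u^+_i=(e_i,F_{\infty,\tau}(e_i))$ and $u^-_i=(e_i,-F_{\infty,\tau}(e_i))$ are $E$-rational bases of the two eigenspaces, and $u^-_i=(\mathrm{id}\oplus(-1))(u^+_i)$. Because $I^0_\tau=I_\tau\oplus I_{\rho\tau}$ is block-diagonal, it commutes with $\mathrm{id}\oplus(-1)$, which in turn descends to the $E\otimes_{\mathbf{Q}}\tau(\mathsf{F})\rho\tau(\mathsf{F})$-linear automorphism $\mathrm{id}\oplus(-1)$ of $H^{\pm}_{\mathrm{dR}}(\mathcal{M}^{[\tau]})\oplus H^{\pm}_{\mathrm{dR}}(\mathcal{M}^{[\rho\tau]})$ (one and the same module for both signs, by the hypothesis). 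Its determinant with respect to any $E\otimes_{\mathbf{Q}}\tau(\mathsf{F})\rho\tau(\mathsf{F})$-rational basis lies in $(E\otimes_{\mathbf{Q}}\tau(\mathsf{F})\rho\tau(\mathsf{F}))^\times$ (indeed it is $\pm 1$), so the matrices of $I^+_\tau$ and $I^-_\tau$ differ by a rational factor and $c^+_\tau(\mathcal{M})\sim_{E\otimes_{\mathbf{Q}}\tau(\mathsf{F})\rho\tau(\mathsf{F})}c^-_\tau(\mathcal{M})$. You were right that the freeness of $H^{\pm}_{\mathrm{dR}}(\mathcal{M}^{[\tau]})\oplus H^{\pm}_{\mathrm{dR}}(\mathcal{M}^{[\rho\tau]})$ from \cite[Lemma~2.1]{yos94} is what makes the determinant computation legitimate, but the operative map is $\mathrm{id}\oplus(-1)$, not $F_{\infty,\tau}$.
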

\begin{proof}
See \cite[Remark 2.2]{hl17}.
\end{proof}

Note that the assumption of Lemma~\ref{lem:cpm} on the diagonal component $H^{\mathsf{w}/2,\mathsf{w}/2}(\mathcal{M}_\tau)$ is fulfilled in our setting since we assume that $F_{\infty}$ acts on $\mathrm{Res}_{\mathsf{F}/\mathbf{Q}}(\mathcal{M})$ as multiplication by a scalar.

\subsection{$(\tau,\sigma)$-fundamental periods}\label{sec:YosPer}

We continue to use the same notation as in Section \ref{sec:t-per}.
In this subsection, we briefly recall the definition of $(\tau,\sigma)$-fundamental periods of pure motives over general number fields, according to \cite[Section 4]{yos16}. For this purpose, we first introduce the following definition on relative invariant polynomials from \cite[Section 1]{yos01} and \cite[Section 1]{yos16}.

\begin{dfn} 
Let $\mathsf{d}$ be a positive integer equipped with two partitions 
\begin{align*}
 \mathsf{d}=\mathsf{s}_1+\mathsf{s}_2+\cdots+\mathsf{s}_{\mathsf{t}}=\mathsf{d}^++\mathsf{d}^-
\end{align*}
by positive integers for a certain positive integer $\mathsf{t}$ $(\leq \mathsf{d})$. Assume that there exist positive integers $1\leq \mathsf{t}^\pm \leq \mathsf{t}$ satisfying $\mathsf{s}_1+\mathsf{s}_2+\cdots+\mathsf{s}_{\mathsf{t}^\pm}=\mathsf{d}^\pm$. Define $P_{\{\mathsf{s}_\mu\}_\mu}$ as the lower parabolic subgroup of $\mathrm{GL}_{\mathsf{d}}$ corresponding to the partition $\{\mathsf{s}_\mu\}_{1\leq \mu\leq \mathsf{t}}$ of $\mathsf{d}$, and let $a_1,\dotsc,a_\mathsf{t}$ and $k^+,k^-$ be non-negative integers. 
\begin{enumerate}
\item We say that $\{  (a_1, \ldots, a_\mathsf{t}) ; (k^+, k^-)  \}$ is an {\em admissible type} if all the following conditions are fulfilled:
\begin{itemize}
 \item[--] the sequence $\{a_\mu\}_{1\leq \mu\leq \mathsf{t}}$ is monotonically non-increasing;
 \item[--] the sum $a_i+a_{\mathsf{t}+1-i}$ equals $k^++k^-$ for each $1\leq i\leq \min\{\mathsf{t}^+,\mathsf{t}^-\}$;
 \item[--] both of the inequalities $a_{\mathsf{t}^+}\geq k^+$ and  $k^-\geq a_{\mathsf{t}^++1}$ hold; 
\item[--] both of the inequalities $a_{\min\{\mathsf{t}^+,\mathsf{t}^-\}}\geq \max\{k^+,k^-\}$ and $\min\{k^+,k^-\}\geq a_{\max\{\mathsf{t}^+,\mathsf{t}^-\}}$ hold;
 \item[--] if $\mathsf{t}^+$ is strictly greater than $\mathsf{t}^-$, the integers $a_{\mathsf{t}^-+1}, a_{\mathsf{t}^-+2}, \dotsc, a_{\mathsf{t}^+}$ equal $k^+$, whereas  if $\mathsf{t}^-$ is strictly greater than $\mathsf{t}^+$, the integers  $a_{\mathsf{t}^++1}, a_{\mathsf{t}^++2}, \dotsc, a_{\mathsf{t}^-}$ equal $k^-$.
\end{itemize}
\item Let $f(x)$ be a polynomial function with coefficients in $\mathbf{Q}$ defined on ${\rm M}_{\mathsf{d}}({\mathbf C})$.     
We say $f(x)$ to be {\em of type} $\{  (a_1, \ldots, a_\mathsf{t}) ; (k^+, k^-)  \}$ if $f(x)$ satisfies the following condition:
\begin{quotation}
   for each $p= \begin{pmatrix}  p_{11} & 0 & \cdots & 0  \\   \ast &  p_{22} &  \cdots & 0  \\   \ast & \ast &  \ddots & \vdots \\ \ast & \ast & \ast & p_{\mathsf{tt}} \end{pmatrix} \in P_{\{\mathsf{s}_\mu\}_\mu}(\mathbf{C})$, 
          $h_+ \in {\rm GL}_{\mathsf{d}^{+}}(\mathbf{C})$ and  $h_- \in {\rm GL}_{\mathsf{d}^{-}}(\mathbf{C})$, we have 
          \begin{align*}
             f\left( p  x \begin{pmatrix}
			   h_+ & \\ & h_-
			  \end{pmatrix}  \right)  &= \left(\prod^{\mathsf{t}}_{i=1} \det(p_{ii} )^{a_i}\right) \det (h_+)^{k^+}  \det (h_-)^{k^-}f(x).     
          \end{align*}
\end{quotation}
\end{enumerate}
\end{dfn}

Yoshida has verified that there exists a polynomial function of type $\{(a_1,\dotsc,a_\mathsf{t});(k^+,k^-)\}$ (which is unique up to constant multiples) if and only if $\{(a_1,\dotsc,a_\mathsf{t});(k^+,k^-)\}$ is an admissible type; see \cite[Theorem~1]{yos01}. For example, the determinant function $\det(x)$ is of type $\{(1,1,\dotsc,1); (1,1)\}$, and the polynomial function $f^+(x)$ (resp.\ $f^-(x)$) defined as the determinant of the upper left square submatrix of degree $\mathsf{d}^+$ (resp.\ the upper right square submatrix of degree $\mathsf{d}^-$) of $x\in {\rm M}_{\mathsf{d}}(\mathbf{C})$ is of type $\{  (\overbrace{1, \ldots, 1}^{\mathsf{t}^+}, 0, \ldots, 0 ) ; (1, 0)  \}$ (resp.\ of type $\{  (\overbrace{1, \ldots, 1}^{\mathsf{t}^-}, 0, \ldots, 0 ) ; (0, 1)  \}$). For every positive integer $1\leq \beta\leq \min\{\mathsf{t}^+,\mathsf{t}^-\}$, there exists a polynomial function $f_\beta(x)$ of admissible type $\{(\overbrace{2,\dotsc,2}^{\beta},\overbrace{1,\dotsc,1}^{\mathsf{t}-2\beta},\overbrace{0,\dotsc,0}^\beta); (1,1)\}$ due to \cite[Theorem~1]{yos01}. The next theorem gives algebraic foundation on the theory of fundamental periods:

\begin{thm}[See {\protect\cite[Theorem~3 and p.~1181]{yos01}}] \label{thm:invpoly} Every polynomial function $f(x)$ on ${\rm M}_{\mathsf{d}}(\mathbf{C})$ of admissible type can be expressed uniquely $($up to constant multiples$)$ as a monomial of $\det(x)$, $f^{\pm}(x)$ and $f_\beta(x)$ for $1\leq \beta\leq \min\{\mathsf{t}^+,\mathsf{t}^-\}$. 
\end{thm}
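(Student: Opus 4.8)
The plan is to reduce the classification of relative invariants to the classical theory of highest-weight vectors for the action of the product group $G = P_{\{\mathsf{s}_\mu\}_\mu}(\mathbf{C}) \times \mathrm{GL}_{\mathsf{d}^+}(\mathbf{C}) \times \mathrm{GL}_{\mathsf{d}^-}(\mathbf{C})$ acting on the coordinate ring $\mathbf{Q}[\mathrm{M}_{\mathsf{d}}(\mathbf{C})]$ by $(p,h_+,h_-)\cdot f(x) = f(p^{-1} x\,\mathrm{diag}(h_+,h_-)^{-1})$. A polynomial $f(x)$ of type $\{(a_1,\dotsc,a_\mathsf{t});(k^+,k^-)\}$ is exactly a $G$-semiinvariant with the prescribed character, so the content of the statement is that the semigroup of characters admitting a (necessarily unique up to scalar, by irreducibility of the corresponding rep) semiinvariant is freely generated by the characters of $\det(x)$, $f^\pm(x)$ and the $f_\beta(x)$ for $1\le \beta\le \min\{\mathsf{t}^+,\mathsf{t}^-\}$. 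First I would recall from \cite[Theorem~1]{yos01} (quoted above) that for each admissible type there exists a polynomial of that type, unique up to constant, and conversely that non-admissible types admit none; this already identifies the set of attainable characters with the set of admissible types. So the theorem amounts to the purely combinatorial statement that the monoid of admissible types (under addition of the defining integer data) is free on the explicitly listed generators.

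The key steps are then as follows. First I would write down the character lattice explicitly: a character is recorded by $(a_1,\dotsc,a_\mathsf{t}; k^+,k^-)\in\mathbf{Z}_{\ge0}^{\mathsf{t}+2}$, and the admissibility conditions cut out a submonoid $\mathcal{A}$. Second, I would observe that the characters of the basic invariants are: $\det(x)\leftrightarrow(1,\dotsc,1;1,1)$; $f^+(x)\leftrightarrow(1^{\mathsf{t}^+},0^{\mathsf{t}-\mathsf{t}^+};1,0)$; $f^-(x)\leftrightarrow(1^{\mathsf{t}^-},0^{\mathsf{t}-\mathsf{t}^-};0,1)$; and $f_\beta(x)\leftrightarrow(2^\beta,1^{\mathsf{t}-2\beta},0^\beta;1,1)$ for $1\le\beta\le\min\{\mathsf{t}^+,\mathsf{t}^-\}$ (after reconciling $f^\pm$ bookkeeping when $\mathsf{t}^+\neq\mathsf{t}^-$, using the last bullet of admissibility which forces the ``middle'' $a_\mu$ to be pinned to $k^+$ or $k^-$). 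Third — the heart of the argument — I would prove that every element of $\mathcal{A}$ is an $\mathbf{Z}_{\ge0}$-combination of these generators, and that the combination is unique. For existence I would argue by induction on $k^++k^-$ (or on $a_1$): using the hinge relation $a_i+a_{\mathsf{t}+1-i}=k^++k^-$ together with monotonicity and the sandwiching inequalities, one can always peel off one copy of $\det$, $f^+$, $f^-$, or some $f_\beta$ so that the remainder is again admissible with strictly smaller invariants, and the ``shape'' constraints on which $f_\beta$ to subtract are exactly dictated by how many of the $a_\mu$ exceed $k^+$ resp.\ lie strictly between the two hinge values. For uniqueness I would note the generators' characters are linearly independent over $\mathbf{Q}$ inside the lattice (the exponents of $f_\beta$ are ``triangular'' in $\beta$, $\det$ is distinguished by acting nontrivially on both $h_\pm$ with the full $(1,\dotsc,1)$ pattern, and $f^+,f^-$ are separated by the $(k^+,k^-)$ coordinates), so the $\mathbf{Z}_{\ge0}$-representation, once it exists, is forced.

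The main obstacle I expect is the bookkeeping in the third step when $\mathsf{t}^+\neq\mathsf{t}^-$: in that regime the admissibility axioms force the entries $a_{\mathsf{t}^-+1},\dotsc,a_{\mathsf{t}^+}$ (or $a_{\mathsf{t}^++1},\dotsc,a_{\mathsf{t}^-}$) to be rigidly equal to $k^+$ (resp.\ $k^-$), and one must check that the peeling-off procedure respects these frozen coordinates — i.e.\ that subtracting the character of a generator never violates the rigidity constraint and never drives an $a_\mu$ negative. Making the induction hypothesis precise enough to carry this constraint along, and checking at each step that exactly one of finitely many generators can be removed, is the delicate part; everything else is the standard classification of semiinvariants for a parabolic times a Levi, and the explicit list of generators then matches \cite[p.~1181]{yos01}.
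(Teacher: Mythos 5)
The paper does not prove this statement; it is quoted from Yoshida with the pointer \cite[Theorem~3 and p.~1181]{yos01}, so there is no in-paper argument to compare against. Your framing---passing to semiinvariants for the double action of $P_{\{\mathsf{s}_\mu\}_\mu}(\mathbf{C})\times \mathrm{GL}_{\mathsf{d}^+}(\mathbf{C})\times \mathrm{GL}_{\mathsf{d}^-}(\mathbf{C})$ on $\mathbf{Q}[\mathrm{M}_{\mathsf{d}}]$, then studying the monoid of attainable characters---is the right conceptual setting and is essentially what the citation to Yoshida's Theorem~1 already packages; this much is fine.

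The genuine gap is in your uniqueness step, where you assert that the characters of $\det$, $f^{\pm}$ and the $f_\beta$ are linearly independent so that the monoid of admissible types is free on them. This is false. Whenever the partition $\{\mathsf{s}_\mu\}$ is symmetric---which is automatic in the intended application, since it comes from a self-dual Hodge filtration---one has $\mathsf{t}^++\mathsf{t}^-=\mathsf{t}$, and then the types of $f^+$ and $f^-$ sum to
\[
\bigl(1^{\mathsf{t}^+},0^{\mathsf{t}-\mathsf{t}^+};1,0\bigr)+\bigl(1^{\mathsf{t}^-},0^{\mathsf{t}-\mathsf{t}^-};0,1\bigr)
=\bigl(2^{\beta_0},1^{\mathsf{t}-2\beta_0},0^{\beta_0};1,1\bigr),\qquad \beta_0:=\min\{\mathsf{t}^+,\mathsf{t}^-\},
\]
which is exactly the type of $f_{\beta_0}$. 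Since a nonzero polynomial of a fixed admissible type is unique up to a scalar, one is forced into the relation $f^+(x)\,f^-(x)\sim f_{\beta_0}(x)$; already at $\mathsf{d}=2$, $s=(1,1)$, $\mathsf{d}^{\pm}=1$ one has $f^+=x_{11}$, $f^-=x_{12}$, and $x_{11}x_{12}$ is the (essentially unique) polynomial of type $\{(2,0);(1,1)\}$, i.e.\ $f_1$. So the listed generators are not free, your linear-independence argument collapses, and a correct proof must either drop $\beta=\beta_0$ from the generating set or account for this single relation explicitly. (Note that elsewhere in the paper, e.g.\ in the definition of $\widetilde{c}$ and in Proposition~\ref{prop:perdecomp}, the index $\beta$ for even rank indeed stops one short of $\min\{\mathsf{t}^+,\mathsf{t}^-\}$, consistent with this.) Independently, your existence argument by ``peeling off'' generators is only sketched; you yourself flag the rigid regime $\mathsf{t}^+\neq\mathsf{t}^-$ as the hard part, but specifying at each step which generator is subtracted, why the remainder stays admissible, and why the frozen middle block $a_{\mathsf{t}^-+1}=\cdots=a_{\mathsf{t}^+}=k^+$ is preserved is precisely the content of the theorem, not a detail to defer.
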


The $(\tau,\sigma)$-fundamental periods of the pure motive $\mathcal{M}$ are defined as evaluation of the invariant polynomials appearing in the statement of Theorem~\ref{thm:invpoly} at the {\em $(\tau,\sigma)$-period matrix} $X^{\mathcal{M}}_{\tau,\sigma}$ associated to $\mathcal{M}$ defined as follows.
For each $\sigma \in I_E$ and $\tau \in I_\mathsf{F}$, we regard ${\mathbf C}$ as an $E\otimes_{\mathbf Q} \mathsf{F}$-module 
    via the ${\mathbf Q}$-algebra homomorphism
\begin{align*}
  \sigma\otimes\tau \colon  E\otimes_{\mathbf Q} \mathsf{F} \longrightarrow {\mathbf C} \, ; x\otimes y \longmapsto \sigma(x)\tau(y).      
\end{align*}    
Since $E\otimes_{\mathbf Q} {\mathbf C}$ is isomorphic to $\prod_{\sigma \in I_E} {\mathbf C}$,    
we readily see that  $H_{\rm B}(\mathcal{M}_\tau)\otimes_{\mathbf{Q}}\mathbf{C}$  and  $H_{\rm dR}(\mathcal{M})\otimes_{\mathsf{F},\tau} \mathbf{C}$ are decomposed as
\begin{align*}
  H_{\rm B}(\mathcal{M}_\tau) \otimes_{\mathbf Q} {\mathbf C}   &\cong \bigoplus_{\sigma \in I_E}  H_{\rm B} (\mathcal{M}_\tau) \otimes_{ E, \sigma  } {\mathbf C},   &
  H_{\rm dR}(\mathcal{M}) \otimes_{\mathsf{F}, \tau} {\mathbf C}   &\cong \bigoplus_{\sigma \in I_E}  H_{\rm dR} (\mathcal{M}) \otimes_{ E\otimes_{\mathbf Q} \mathsf{F}, \sigma \otimes \tau  } {\mathbf C}
\end{align*}
for each $\tau \in I_\mathsf{F}$. The comparison isomorphism $I_{\tau}(\mathcal{M})$ thus induces  an isomorphism of $\mathbf{C}$-vector spaces
\begin{align*}
   I_{\tau,\sigma}(\mathcal{M}) \colon H_{\rm B}(\mathcal{M}_\tau)  \otimes_{E, \sigma } {\mathbf C}  
                       \stackrel{\sim}{\longrightarrow}  
                   H_{\rm dR}(\mathcal{M})  \otimes_{E\otimes_{\mathbf Q} \mathsf{F}, \sigma \otimes \tau } {\mathbf C} 
\end{align*}
componentwisely for each $\sigma\in I_E$. In the following, for each $\sigma \in I_E$ and $\tau\in I_{\mathsf{F}}$, let $E^\sigma$, $\mathsf{F}^{\tau}$ and $\mathsf{F}^{(\tau,\rho)}$ denote $\sigma(E)$, $\tau(\mathsf{F})$ and $\tau(\mathsf{F})\rho\tau(\mathsf{F})$ respectively. Note that $E^\sigma \mathsf{F}^\tau$ and $E^\sigma \mathsf{F}^{(\tau,\rho)}$ are regarded as $E\otimes_{\mathbf{Q}}\mathsf{F}$-submodules of $\mathbf{C}$. 

First we choose rational bases of Betti cohomology groups. For each $\sigma \in I_E$ and $\tau \in I_\mathsf{F}$, let $\mathcal{B}_\sigma(\mathcal{M}_\tau)$ be an $E^\sigma$-vector space defined as
	  \begin{align} \label{eq:Bsigma}
          {\mathcal B}_\sigma(\mathcal{M}_\tau)
         :=  \begin{cases}
             H_{\rm B}(\mathcal{M}_\tau) \otimes_{E, \sigma} E^\sigma  &  \text{if $\tau \in I_\mathsf{F}$ is real},  \\
               \bigl(H_{\rm B}(\mathcal{M}_\tau) \otimes_{E, \sigma} E^\sigma\bigr) \oplus \bigl( H_{\rm B}(\mathcal{M}_{\rho \tau}) \otimes_{E, \sigma} E^\sigma\bigr)   &  \text{if $\tau\in I_\mathsf{F}$ is complex},    
          \end{cases}
         \end{align}         
         and ${\mathcal B}^\pm_\sigma(\mathcal{M}_\tau)$ the $\pm$-eigenspaces of the involution 
\begin{align} \label{eq:involution}
 F^{\mathcal{M}}_{\tau,\sigma}:=
\begin{cases}
 F_{\infty,\tau}\otimes \mathrm{id} &  \text{if $\tau$ is real}, \\
(F_{\infty,\tau}\otimes \mathrm{id})\oplus (F_{\infty,\rho\tau}\otimes \mathrm{id}) & \text{if $\tau$ is complex}
\end{cases}
\end{align}
acting on $\mathcal{B}_\sigma(\mathcal{M}_\tau)$. By construction, the dimensions $d_*$ and $d_*^{\pm}$ of $\mathcal{B}_\sigma(\mathcal{M}_\tau)$ and $\mathcal{B}_\sigma^\pm(\mathcal{M}_\tau)$ are respectively described as 
\begin{align*}
 d_*&=d_*(\mathcal{M})=
\begin{cases}
 d & \text{if $\tau\in I_{\mathsf{F}}$ is real}, \\
2d & \text{if $\tau\in I_{\mathsf{F}}$ is complex}, 
\end{cases} & 
 d_*^\pm&=d_*^{\pm}(\mathcal{M}_\tau)=
\begin{cases}
 d^\pm_\tau & \text{if $\tau\in I_{\mathsf{F}}$ is real},  \\
d & \text{if $\tau\in I_{\mathsf{F}}$ is complex}. 
\end{cases}
\end{align*}
Here recall that $d$ denotes the rank of $\mathcal{M}$. Obviously these numbers are independent of $\sigma\in I_E$, and $\{d^+_*,d^-_*\}$ gives a partition of $d_*$. 
  Let us choose bases $\{ u^{\pm}_1, u^{\pm}_2,\dotsc, u^\pm_{d^\pm_\ast}\}$ of the $E^\sigma$-vector spaces ${\mathcal B}^\pm_\sigma(\mathcal{M}_\tau)$.

Next we choose rational bases of de Rham  cohomology groups. For each $\sigma \in I_E$ and $\tau \in I_{\mathsf{F}}$, define $\mathcal{T}_{\tau,\sigma}(\mathcal{M})$ as
\begin{align} \label{eq:dRst}
   {\mathcal T}_{ \tau,\sigma} (\mathcal{M}) 
      :=  \begin{cases} 
             H_{\rm dR} (\mathcal{M}) \otimes_{E\otimes_{\mathbf Q} \mathsf{F}, \sigma \otimes \tau } E^\sigma \mathsf{F}^\tau   &  \text{if $\tau\in I_\mathsf{F}$ is real},  \\
              \bigl(H_{\rm dR} (\mathcal{M}) \otimes_{E\otimes_{\mathbf{Q}} \mathsf{F},\sigma\otimes \tau} E^\sigma \mathsf{F}^{(\tau,\rho)} \bigr)  \oplus \bigl( H_{\rm dR} (\mathcal{M})  \otimes_{E\otimes_{\mathbf Q} \mathsf{F}, \sigma \otimes \rho\tau } E^\sigma \mathsf{F}^{(\tau,\rho)}\bigr)    &  \text{if $\tau \in I_\mathsf{F}$ is complex}.
           \end{cases}         
\end{align}
Note that $\mathcal{T}_{\tau,\sigma}(\mathcal{M})$ is equipped with the Hodge filtration $\{\mathcal{T}^p_{\tau,\sigma}(\mathcal{M})\}_p$ described as 
\begin{multline}\label{eq:tfil}
    {\mathcal T}^p_{ \tau,\sigma} (\mathcal{M}) \\
      :=  \begin{cases} 
            \mathrm{Fil}^p H_{\rm dR} (\mathcal{M}) \otimes_{E\otimes_{\mathbf Q} \mathsf{F}, \sigma \otimes \tau } E^\sigma \mathsf{F}^\tau   &  \text{if $\tau\in I_\mathsf{F}$ is real},  \\
              \bigl(\mathrm{Fil}^p H_{\rm dR} (\mathcal{M}) \otimes_{E\otimes_{\mathbf{Q}} \mathsf{F},\sigma\otimes \tau} E^\sigma \mathsf{F}^{(\tau,\rho)} \bigr)  \oplus \bigl(  \mathrm{Fil}^p H_{\rm dR} (\mathcal{M})  \otimes_{E\otimes_{\mathbf Q} \mathsf{F}, \sigma \otimes \rho\tau } E^\sigma \mathsf{F}^{(\tau,\rho)}\bigr)    &  \text{if $\tau \in I_\mathsf{F}$ is complex}.
           \end{cases}         
\end{multline}
We now define integers $i_\mu=i^{\mathcal{M}_{\tau,\sigma}}_{\mu}$ for $\mu=1,2,\dotsc,\mathsf{t}^{\mathcal{M}_{\tau,\sigma}}+1$ inductively as follows: 
\begin{align} \label{eq:filindices}
\begin{aligned}
 i_1 &:= \max  \{ p\in \mathbf{Z} \mid \mathcal{T}^p_{\tau,\sigma}(\mathcal{M})=\mathcal{T}_{\tau,\sigma}(\mathcal{M})\}, && \\
 i_\mu &:=\max \{i_{\mu-1}+p \mid p\in \mathbf{N},\, \mathcal{T}^{i_{\mu-1}+p}_{\tau,\sigma}(\mathcal{M})=\mathcal{T}^{i_{\mu-1}+1}_{\tau,\sigma}(\mathcal{M})\} && \text{if $\mu\geq 2$ and $\mathcal{T}^{i_{\mu-1}+1}_{\tau,\sigma}(\mathcal{M})\neq \{0\}$}, \\ 
 i_{\mathsf{t}^{\mathcal{M}_{\tau,\sigma}}} &:=i_\mu \text{ and }  i_{\mathsf{t}^{\mathcal{M}_{\tau,\sigma}}+1} :=i_\mu+1 && \text{if $\mathcal{T}^{i_\mu}_{\tau,\sigma}(\mathcal{M})\neq 0$ and $\mathcal{T}^{i_\mu+1}_{\tau,\sigma}(\mathcal{M})=0$}.
\end{aligned}
\end{align}
Namely the sequence $\{i^{\mathcal{M}_{\tau,\sigma}}_{\mu}\}_{1\leq \mu\leq \mathsf{t}^{\mathcal{M}_{\tau,\sigma}}+1}$ denotes the {\em jumps} of the Hodge filtration $\{\mathcal{T}^p_{\tau,\sigma}(\mathcal{M})\}_p$. 
We also define integers $s_\mu=s^{\mathcal{M}_{\tau,\sigma}}_{\mu}$ for $\mu=1,2,\dotsc,\mathsf{t}^{\mathcal{M}_{\tau,\sigma}}$  as\footnote{Note that $E^\sigma \mathsf{F}^{(\tau, \rho)}=E^\sigma \mathsf{F}^\tau$ when $\tau\colon \mathsf{F}\hookrightarrow \mathbf{C}$ is a real embedding.}
\begin{align*}
   s_\mu  
    &=   {\rm dim}_{E^\sigma \mathsf{F}^{(\tau,\rho)}}    {\mathcal T}^{i_\mu}_{ \tau,\sigma} (\mathcal{M})   -     {\rm dim}_{E^\sigma \mathsf{F}^{(\tau,\rho)}}    {\mathcal T}^{i_{\mu+1}}_{ \tau,\sigma} (\mathcal{M}).   
\end{align*}
By construction, the set of integers $\{s_\mu\}_{1\leq \mu\leq \mathsf{t}^{\mathcal{M}_{\tau,\sigma}}}$ gives a symmetric partition of 
       $d_\ast = {\rm dim}_{E^\sigma \mathsf{F}^{(\tau,\rho)} }  {\mathcal T}_{ \tau,\sigma} (\mathcal{M})$;
namely we have 
\begin{align*}
   d_\ast  &= s_1 + s_2+ \cdots +  s_{\mathsf{t}^{\mathcal{M}_{\tau,\sigma}}}  &\text{with} \quad  s_i&=s_{\mathsf{t}^{\mathcal{M}_{\tau,\sigma}}+1-i} \qquad \text{for $1\leq i\leq \lfloor \mathsf{t}^{\mathcal{M}_{\tau,\sigma}}/2\rfloor$}.
\end{align*}
Note that, due to the assumption that $F_\infty$ acts on $H^{\mathsf{w}/2,\mathsf{w}/2}(\mathrm{Res}_{\mathsf{F}/\mathbf{Q}}(\mathcal{M}))$ as multiplication by a scalar, there exist positive integers $\mathsf{t}^{\mathcal{M}_{\tau,\sigma},\pm}$ satisfying $s_1+s_2+\cdots+s_{\mathsf{t}^{\mathcal{M}_{\tau,\sigma},\pm}}=d^{\pm}_\ast(\mathcal{M}_\tau)$. Let us choose a basis $\{\omega_1, \ldots, \omega_{d_\ast}\}$ 
                of the $E^\sigma \mathsf{F}^{(\tau,\rho)}$-vector space ${\mathcal T}_{ \tau,\sigma} (\mathcal{M})$  
         so that $\{\omega_{s_1 +s_2 + \cdots +s_{\mu-1} +1},  \ldots, \omega_{d_\ast} \}$ forms a basis of ${\mathcal T}^{i_\mu}_{\tau,\sigma} (\mathcal{M})$  for each $1\leq \mu \leq \mathsf{t}^{\mathcal{M}_{\tau,\sigma}}$ (here we set $s_0=0$ as convention).

 Now consider the comparison isomorphism
         \begin{align*}
                 I^0_{\tau,\sigma}(\mathcal{M}) \colon  {\mathcal B}_\sigma(\mathcal{M}_\tau) \otimes_{E, \sigma} {\mathbf C}  \longrightarrow {\mathcal T}_{ \tau,\sigma} (\mathcal{M})\otimes_{E\otimes_{\mathbf Q} \mathsf{F}, \sigma \otimes \tau } {\mathbf C}
         \end{align*}
         defined as $I^0_{\tau,\sigma}(\mathcal{M}) = I_{\tau,\sigma}(\mathcal{M})$ if $\tau$ is real, 
         and $I^0_{\tau,\sigma}(\mathcal{M}) = I_{\tau,\sigma}(\mathcal{M}) \oplus I_{\rho \tau,\sigma}(\mathcal{M})$ if $\tau$ is complex.
         In the following, we often regard $\{I^0_{\tau,\sigma}(\mathcal{M})\}_{\tau,\sigma}$ as indexed by $\tau \in \Sigma_{\mathsf{F}, \infty}$ and $\sigma\in I_E$.
Then define the {\em $(\tau,\sigma)$-period matrix} $X^{\mathcal{M}}_{\tau,\sigma}=\begin{pmatrix}
		X^{\mathcal{M},+}_{\tau,\sigma} &X^{\mathcal{M},-}_{\tau,\sigma}								   \end{pmatrix}$ as the matrix presentation of the comparison isomorphism $I^0_{\tau,\sigma}=I^0_{\tau,\sigma}(\mathcal{M})$ with respect to the bases $\{u_j^{\pm}\}_j$ and $\{\omega_j\}_j$:
         \begin{multline*}
            \begin{pmatrix}
	     I^0_{\tau,\sigma}(u^+_1) & \dotsc & I^0_{\tau,\sigma}(u^+_{d^+_\ast}) &   I^0_{\tau,\sigma}(u^-_1) & \dotsc & I^0_{\tau,\sigma}(u^-_{d^-_\ast}) 
	    \end{pmatrix} = 
\begin{pmatrix}
 \omega_1 & \omega_2 & \dotsc & \omega_{d_\ast}
\end{pmatrix} \begin{pmatrix}
		X^{\mathcal{M},+}_{\tau,\sigma} &X^{\mathcal{M},-}_{\tau,\sigma}								   \end{pmatrix},   \\   
            X^{\mathcal{M},\pm}_{\tau,\sigma} \in {\rm M}_{d_\ast, d^\pm_\ast} ({\mathbf C}).
         \end{multline*}          

\begin{dfn}
 Let $f_{(\tau,\sigma)}^{\pm}(x)$ and $f_{\beta,(\tau,\sigma)}(x)$ for $1\leq \beta\leq \min\{ \mathsf{t}^{\mathcal{M}_{\tau,\sigma},+},\mathsf{t}^{\mathcal{M}_{\tau,\sigma},-}\}$ be invariant polynomial functions on ${\rm M}_{d_*}(\mathbf{C})$ with respect to the partitions $\{s^{\mathcal{M}_{\tau,\sigma}}_{\mu}\}_\mu$ and $\{d_*^+(\mathcal{M}_\tau),d_*^-(\mathcal{M}_\tau)\}$ of $d_*$, defined as in the paragraphs just above Theorem~\ref{thm:invpoly}. Then we define the {\em $(\tau,\sigma)$-fundamental periods} $\delta_{(\tau,\sigma)}(\mathcal{M})$, $c^{\pm}_{(\tau,\sigma)}(\mathcal{M})$ and $c_{\beta,(\tau,\sigma)}(\mathcal{M})$ as follows:
\begin{align*}
 \delta_{(\tau,\sigma)} (\mathcal{M}) &=   \det(X^{\mathcal{M}}_{\tau,\sigma}), &
 c^\pm_{(\tau,\sigma)} (\mathcal{M}) &= f^\pm_{(\tau,\sigma)}(X^{\mathcal{M}}_{\tau,\sigma}),  &
  c_{\beta, (\tau,\sigma)} (\mathcal{M}) = f_{\beta,(\tau,\sigma)}(X^{\mathcal{M}}_{\tau,\sigma}).
\end{align*}
These periods are determined up to multiples of $E^{\sigma}\mathsf{F}^{(\tau,\rho)}$ depending on the choices of the rational bases $\{u_j^{\pm}\}_j$ and $\{\omega_j\}_j$, due to 
the admissibility of the polynomial functions $\det(x)$, $f^{\pm}_{(\tau,\sigma)}(x)$ and $f_{\beta,(\tau,\sigma)}(x)$.
\end{dfn}

       Note that, under the identification $ E \otimes_{\mathbf Q} {\mathbf C} = \prod_{\sigma \in I_E} {\mathbf C}$, 
       we have
       \begin{align*}
         c^\pm_\tau(\mathcal{M})   
             \sim_{ E \otimes_\mathbf{Q} \mathsf{F}^{(\tau,\rho)}}  
                \left(   c^{\pm}_{(\tau,\sigma)}(\mathcal{M})  \right)_{ \sigma \in I_E }  \in (E\otimes_\mathbf{Q}\mathbf{C})^\times.
       \end{align*}
Thus, for example, Lemma~\ref{lem:cpm} implies that $c^+_{(\tau,\sigma)}(\mathcal{M})\sim_{E^\sigma \mathsf{F}^{(\tau,\rho)}}c^-_{(\tau,\sigma)}(\mathcal{M})$ holds for every $\sigma\in I_E$ when $\tau \in \Sigma_{\mathsf{F},\infty}$ is a complex place.




\section{Periods of Rankin--Selberg motives}\label{sec:RSPer}


Let us consider two pure motives $\mathcal{M}$ and $\mathcal{N}$ both defined over $\mathsf{F}$ with coefficients in $E$; the weight and rank of the motive $\mathcal{M}$ (resp.\ the motive $\mathcal{N}$) are denoted as $\mathsf{w}(\mathcal{M})$ and $d(\mathcal{M})$ (resp.\ $\mathsf{w}(\mathcal{N})$ and $d(\mathcal{N})$). Let $\tau\in \Sigma_{\mathsf{F},\infty}$ be an archimedean place of $\mathsf{F}$ and $\sigma\in I_E$ an embedding of $E$ into $\mathbf{C}$.
The main subject of this section is to give explicit factorization formulae of (the product of) the $(\tau,\sigma)$-periods 
\begin{align} \label{eq:cmn}
 c^{\pm}_{(\tau,\sigma)}(\mathcal{M},\mathcal{N}) :=
\begin{cases}
 c^\pm_{(\tau,\sigma)}( \mathcal{M}\otimes_{\mathsf{F}}\mathcal{N}) & \text{if $\tau$ is real}, \\
c^\pm_{(\tau,\sigma)}( \mathcal{M}\otimes_{\mathsf{F}}\mathcal{N})c^\pm_{(\iota_0, \sigma)}( \mathcal{M}^{[\tau]}\otimes_{\mathsf{F}^{(\tau,\rho)}}\mathcal{N}^{[\rho\tau]}) & \text{if $\tau$ is complex}.
\end{cases}
\end{align}
In Section~\ref{sec:pertenprod}, we observe that $c^{\pm}_{(\tau,\sigma)}(\mathcal{M},\mathcal{N})$ coincides (up to nonzero algebraic multiples) with the product of two polynomial functions of admissible type, one evaluated at the period matrix of $\mathcal{M}$ and the other evaluated at that of $\mathcal{N}$ (see Proposition~\ref{prop:tensor}). This result is given by Yoshida for pure motives defined over $\mathbf{Q}$ \cite[Proposition~12]{yos01}. Then in Section~\ref{sec:intl}, we introduce the (strong) {\em interlace condition} and deduce explicit factorization formulae under this condition (see Proposition~\ref{prop:perdecomp}).  

We continue to use the same notation as in the previous sections.

\subsection{Periods of the tensor product of motives}\label{sec:pertenprod}


In the following, let $\bullet$ denote either  $\mathcal{M}$ or $\mathcal{N}$. 
We define ${\mathcal B}^\pm_\sigma(\bullet_\tau)$ in the same manner as (\ref{eq:Bsigma}), and define ${\mathcal T}_{\tau,\sigma}(\bullet)$ and $\{ {\mathcal T}^{i^{\bullet_{\tau,\sigma}}_{\mu}}_{\tau,\sigma}(\bullet)\}_{1\leq \mu\leq \mathsf{t}^{\bullet_{\tau,\sigma}}+1}$ as in (\ref{eq:dRst}), (\ref{eq:tfil}) and (\ref{eq:filindices}). Set
\begin{align*}
d_\ast (\bullet) &=\dim_{E^{\sigma}\mathsf{F}^{(\tau,\rho)}} \mathcal{B}_\sigma(\bullet_\tau)=
\begin{cases}
d(\bullet) & \text{if $\tau\in \Sigma_{\mathsf{F},\infty}$ is real}, \\
2d(\bullet) & \text{if $\tau \in \Sigma_{\mathsf{F},\infty}$ is complex},
\end{cases} \\
d^{\pm}_\ast (\bullet_\tau) &=\dim_{E^{\sigma}\mathsf{F}^{(\tau,\rho)}} \mathcal{B}^{\pm}_\sigma(\bullet_\tau)=
\begin{cases}
\dim_{E} H^{\pm}_{\rm B} (\bullet_\tau) & \text{if $\tau\in \Sigma_{\mathsf{F},\infty}$ is real}, \\
d(\bullet) & \text{if $\tau \in \Sigma_{\mathsf{F},\infty}$ is complex},
\end{cases}              \\          
s^{\bullet_{\tau,\sigma}}_{\mu} &=
   \dim_{E^\sigma \mathsf{F}^{(\tau,\rho)}}  {\mathcal T}^{i^{\bullet_{\tau,\sigma}}_{\mu}}_{\tau,\sigma} (\bullet )  
                                            / {\mathcal T}^{i^{\bullet_{\tau,\sigma}}_{\mu+1}}_{\tau,\sigma} (\bullet)  
\qquad \text{for $1\leq \mu \leq \mathsf{t}^{\bullet_{\tau,\sigma}}$}.
\end{align*}
Now let $X_{\tau,\sigma} = X_{\tau,\sigma}^{\mathcal{M}}= 
\begin{pmatrix}
 X^{{\mathcal M},+}_{\tau,\sigma}& X^{{\mathcal M},-}_{\tau,\sigma}
\end{pmatrix}$ 
     (resp.\ $Y_{\tau,\sigma} =Y_{\tau,\sigma}^{\mathcal{N}}= 
\begin{pmatrix}
 Y^{{\mathcal N},+}_{\tau,\sigma} & Y^{{\mathcal N},-}_{\tau,\sigma}
\end{pmatrix}$) denote the period matrix of $\mathcal{M}$ (resp.\ $\mathcal{N}$) with respect to rational bases of $\mathcal{B}^{\pm}_{\sigma}(\mathcal{M}_\tau)$ and $\mathcal{T}_{\tau,\sigma}(\mathcal{M})$ (resp.\ $\mathcal{B}^{\pm}_{\sigma}(\mathcal{N}_\tau)$ and $\mathcal{T}_{\tau,\sigma}(\mathcal{N})$) chosen as in Section~\ref{sec:YosPer}.   
We partition the period matrices $X^{{\mathcal M},\pm}_{\tau,\sigma}$ and $Y^{{\mathcal N},\pm}_{\tau,\sigma}$ into row vectors as follows:
           \begin{align*}
               X^{{\mathcal M}, \pm}_{\tau,\sigma} = \begin{pmatrix}  {\mathfrak X}^\pm_1  \\   \vdots  \\  {\mathfrak X}^\pm_{d_\ast(\mathcal{M})} \end{pmatrix},  \quad   
               Y^{{\mathcal N},\pm}_{\tau,\sigma} = \begin{pmatrix}  {\mathfrak Y}^\pm_1  \\   \vdots  \\  {\mathfrak Y}^\pm_{d_\ast(\mathcal{N})} \end{pmatrix}. 
           \end{align*}  
For a natural number $1\leq i\leq d_*(\mathcal{M})$, we  define the weight $w({\mathfrak X}^\pm_i)$ of $\mathfrak{X}_i^{\pm}$ as $w(\mathfrak{X}_i^{\pm})=i^{\mathcal{M}_{\tau, \sigma}}_\mu$ if the inequality $s^{\mathcal{M}_{\tau,\sigma}}_{1} + \cdots + s^{\mathcal{M}_{\tau,\sigma}}_{\mu-1}+1 \leq i  \leq s^{\mathcal{M}_{\tau,\sigma}}_{1}+ \cdots + s^{\mathcal{M}_{\tau,\sigma}}_{\mu}$ holds. We also define $w(\mathfrak{Y}^{\pm}_i)$ for $1\leq i\leq d_*(\mathcal{N})$ in a similar way.

Set $\mathcal{B}_{\tau,\sigma}^{\mathcal{M},\,\mathcal{N}}:=\mathcal{B}_\sigma(\mathcal{M}_\tau)\otimes_{E^\sigma} \mathcal{B}_\sigma(\mathcal{N}_\tau)$ and $\mathcal{T}_{\tau,\sigma}^{\mathcal{M},\,\mathcal{N}}:=\mathcal{T}_{\tau,\sigma}(\mathcal{M})\otimes_{E^\sigma \mathsf{F}^{(\tau,\rho)}} \mathcal{T}_{\tau,\sigma}(\mathcal{N})$. By construction  $\mathcal{B}_{\tau,\sigma}^{\mathcal{M},\,\mathcal{N}}$ admits an involution 
\begin{align} \label{eq:involution_tensor}
 F^{\mathcal{M},\,\mathcal{N}}_{\tau.\sigma}:= F^{\mathcal{M}}_{\tau,\sigma}\otimes F^{\mathcal{N}}_{\tau,\sigma}
\end{align}
where $F^{\mathcal{M}}_{\tau,\sigma}$ and $F^{\mathcal{N}}_{\tau,\sigma}$ are the involutions defined as in (\ref{eq:involution}). The $(\pm 1)$-eigenspaces according to this involution is denoted as $\bigl(\mathcal{B}_{\tau,\sigma}^{\mathcal{M},\,\mathcal{N}}\bigr)^{\pm}$. Meanwhile $\mathcal{T}_{\tau,\sigma}^{\mathcal{M},\,\mathcal{N}}$ is equipped with the Hodge filtration $\{\mathrm{Fil}^p (\mathcal{T}_{\tau,\sigma}^{\mathcal{M},\,\mathcal{N}}) \}_p$ defined as 
\begin{align*}
 \mathrm{Fil}^p (\mathcal{T}_{\tau,\sigma}^{\mathcal{M},\,\mathcal{N}}) =\sum_{\mu+\nu=p} \mathcal{T}_{\tau,\sigma}^\mu(\mathcal{M}) \otimes_{E^\sigma \mathsf{F}^{(\tau,\rho)}} \mathcal{T}_{\tau,\sigma}^{\nu}(\mathcal{N}).
\end{align*}
The tensor product of the comparison isomorphisms $I_{\tau,\sigma}^0(\mathcal{M}, \mathcal{N}):=I_{\tau,\sigma}^0(\mathcal{M})\otimes I_{\tau,\sigma}^0(\mathcal{N})$ induces 
\begin{align*}
 I_{\tau,\sigma}^{0}(\mathcal{M},\mathcal{N}) \colon \mathcal{B}_{\tau,\sigma}^{\mathcal{M},\,\mathcal{N}} \otimes_{E^\sigma,\iota_0} \mathbf{C}\xrightarrow{\,\sim\,} \mathcal{T}_{\tau,\sigma}^{\mathcal{M},\,\mathcal{N}}\otimes_{E^\sigma \mathsf{F}^{(\tau,\rho)},\iota_0} \mathbf{C}.
\end{align*}
Let us consider the following condition $(\mathrm{Fil}^\mp)_{\tau,\sigma}$ for $\mathcal{M}$, $\mathcal{N}$, $\tau\in I_\mathsf{F}$ and $\sigma\in I_E$;
\begin{itemize}[leftmargin=5em]
 \item[(Fil$^\mp$)$_{\tau,\sigma}$] There exist Hodge filtrations $\mathrm{Fil}^{\mp}(\mathcal{T}_{\tau,\sigma}^{\mathcal{M},\,\mathcal{N}})\in \{\mathrm{Fil}^p(\mathcal{T}_{\tau,\sigma}^{\mathcal{M},\,\mathcal{N}})\}_p$ such that $I_{\tau,\sigma}^0(\mathcal{M}, \mathcal{N})$ induces 
\begin{align*}
 I_{\tau,\sigma}^{0}(\mathcal{M},\mathcal{N})^{\pm} \colon \bigl(\mathcal{B}_{\tau,\sigma}^{\mathcal{M},\,\mathcal{N}}\bigr)^{\pm} \otimes_{E^\sigma,\iota_0} \mathbf{C}\xrightarrow{\,\sim\,} \bigl(\mathcal{T}_{\tau,\sigma}^{\mathcal{M},\,\mathcal{N}}/\mathrm{Fil}^{\mp}(\mathcal{T}_{\tau,\sigma}^{\mathcal{M},\,\mathcal{N}})\bigr)\otimes_{E^\sigma \mathsf{F}^{(\tau,\rho)},\iota_0} \mathbf{C}.
\end{align*}
\end{itemize}
It is easy to observe that the condition (Fil$^\mp$)$_{\tau,\sigma}$ is fullfiled if and only if the action of the involution $F^{\mathcal{M},\, \mathcal{N}}_{\tau,\sigma}$ defined as in (\ref{eq:involution_tensor}) on the diagonal component of the Hodge decomposition of $\mathcal{B}^{\mathcal{M},\,\mathcal{N}}_{\tau,\sigma}$ has only one eigenvalue (either of $\pm 1$), unless the diagonal component is trivial.
If the condition (Fil$^\mp$)$_{\tau,\sigma}$ is fulfilled, we define $q_{\tau,\sigma}^{\mp}\in \mathbf{Z}$ as the maximal integers satisfying 
\begin{align} \label{eq:qmp}
\mathrm{Fil}^{\mp}(\mathcal{T}_{\tau,\sigma}^{\mathcal{M},\,\mathcal{N}})=\mathrm{Fil}^{q^{\mp}_{\tau,\sigma}}(\mathcal{T}_{\tau,\sigma}^{\mathcal{M},\,\mathcal{N}}).
\end{align}

We are now ready to introduce the main result of this subsection, which states how the $(\tau,\sigma)$-period $c^{\pm}_{(\tau,\sigma)}(\mathcal{M},  \mathcal{N})$ (recall the definition (\ref{eq:cmn})) can be  described in terms of $(\tau,\sigma)$-fundamental periods of $\mathcal{M}$ and $\mathcal{N}$ (in an implicit form). 

\begin{prop}[See also {\cite[Proposition 12]{yos01}}] \label{prop:tensor}
Assume that the condition {\rm (Fil$^{\mp}$)$_{\tau,\sigma}$} is fulfilled for $\mathcal{M}$, $\mathcal{N}$, $\tau\in \Sigma_{\mathsf{F},\infty}$ and  $\sigma\in I_E$. Then there exist $\mathbf{Q}$-rational polynomial functions of admissible type,  $\phi^{\pm}_{\tau,\sigma}(x)$ defined on ${\rm M}_{d^{\pm}_*(\mathcal{M})}(\mathbf{C})$ and $\psi^{\pm}_{\tau,\sigma}(y)$ defined on ${\rm M}_{d^{\pm}_*(\mathcal{N})}(\mathbf{C})$, satisfying
\begin{align*}
c^{\pm}_{(\tau,\sigma)}(\mathcal{M}, \mathcal{N}) \sim_{E^{\sigma} \mathsf{F}^{(\tau,\rho)}}   \phi^\pm_{\tau,\sigma}(X_{\tau,\sigma}) \psi^\pm_{\tau,\sigma}(Y_{\tau,\sigma}). 
\end{align*}
Here $X_{\tau,\sigma}=X^{\mathcal{M}}_{\tau,\sigma}$ and  $Y_{\tau,\sigma}=Y^{\mathcal{N}}_{\tau,\sigma}$ respectively denote the $(\tau,\sigma)$-period matrices of $\mathcal{M}$ and $\mathcal{N}$. The admissible types $\{(a_1^{(\pm)}, \dotsc,a^{(\pm)}_{\mathsf{t}^{\mathcal{M}_{\tau, \sigma}}}) ; (k^{(\pm),+},k^{(\pm),-})\}$ of $\phi^{\pm}_{\tau,\sigma}(x)$  
         and $\{(b_1^{(\pm)}, \dotsc,b^{(\pm)}_{\mathsf{t}^{\mathcal{N}_{\tau, \sigma}}}) ; (l^{(\pm),+},l^{(\pm),-})\}$ of $\psi^{\pm}_{\tau,\sigma}(x)$  are given as follows, which characterize these functions up to constant multiples $($double sign in the same order$)$$:$ 
\begin{align*}
  a^{(\pm)}_\mu  &= \# \{  \lambda    \mid 1\leq \lambda \leq d_\ast(\mathcal{N}),\,  i^{\mathcal{M}_{\tau,\sigma}}_{\mu} + w({\mathfrak Y}^+_\lambda)   < q^{\mp}_{\tau,\sigma}  \} 
                         \qquad \text{for $1\leq \mu \leq \mathsf{t}^{\mathcal{M}_{\tau,\sigma}}$},   \\ 
  b^{(\pm)}_\nu  &= \# \{    \lambda  \mid 1\leq \lambda \leq d_\ast(\mathcal{M}), \, i^{\mathcal{N}_{\tau,\sigma}}_{\nu} + w({\mathfrak X}^+_\lambda)   < q^{\mp}_{\tau,\sigma}  \}   
                           \qquad \text{for $1\leq \nu \leq \mathsf{t}^{\mathcal{N}_{\tau,\sigma}}$}, \\ 
 k^{(\pm),+} &= d^\pm_\ast(\mathcal{N}_\tau ), \quad k^{(\pm),-} = d^\mp_\ast(\mathcal{N}_\tau), \quad 
      l^{(\pm),+}= d^\pm_\ast(\mathcal{M}_\tau ), \quad l^{(\pm),-} = d^\mp_\ast(\mathcal{M}_\tau ).  
\end{align*}
\end{prop}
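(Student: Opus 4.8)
The plan is to reduce the statement to a direct computation with the $(\tau,\sigma)$-period matrix of the tensor product $\mathcal{M}\otimes_\mathsf{F}\mathcal{N}$, following the strategy of Yoshida in \cite[Proposition~12]{yos01} but keeping careful track of the $\pm$-decomposition. First I would write down the $(\tau,\sigma)$-period matrix $Z^{\pm}_{\tau,\sigma}$ of $\mathcal{M}\otimes_\mathsf{F}\mathcal{N}$ explicitly in terms of $X_{\tau,\sigma}$ and $Y_{\tau,\sigma}$. Concretely, the comparison isomorphism $I^0_{\tau,\sigma}(\mathcal{M},\mathcal{N})^{\pm}$ is the tensor product $I^0_{\tau,\sigma}(\mathcal{M})\otimes I^0_{\tau,\sigma}(\mathcal{N})$ restricted to the $\pm$-eigenspace of $F^{\mathcal{M},\mathcal{N}}_{\tau,\sigma}$ and then projected modulo $\mathrm{Fil}^{\mp}(\mathcal{T}^{\mathcal{M},\mathcal{N}}_{\tau,\sigma})=\mathrm{Fil}^{q^{\mp}_{\tau,\sigma}}(\mathcal{T}^{\mathcal{M},\mathcal{N}}_{\tau,\sigma})$. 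Using the bases $\{u^\pm_j\}$, $\{\omega_j\}$ for $\mathcal{M}$ and the analogous bases for $\mathcal{N}$, a natural basis of $\bigl(\mathcal{B}^{\mathcal{M},\mathcal{N}}_{\tau,\sigma}\bigr)^{\pm}$ is obtained by tensoring together $\mathcal{M}$-vectors and $\mathcal{N}$-vectors of matching (resp.\ opposite) signs; this explains the relations $k^{(\pm),\pm}=d^{\pm}_*(\mathcal{N}_\tau)$, $k^{(\pm),\mp}=d^{\mp}_*(\mathcal{N}_\tau)$ and the symmetric statements for $l$. The matrix $Z^{\pm}_{\tau,\sigma}$ is then, up to row and column reordering, the Kronecker-type product of $X^{\pm}_{\tau,\sigma}$-blocks with $Y^{\mp}_{\tau,\sigma}$-blocks, truncated by the filtration cut-off $q^{\mp}_{\tau,\sigma}$.

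Next I would compute $c^\pm_{(\tau,\sigma)}(\mathcal{M},\mathcal{N})$, which by definition is $f^{\pm}_{(\tau,\sigma)}(Z_{\tau,\sigma})$ for the appropriate invariant polynomial associated to the symmetric partition $\{s^{(\mathcal{M}\otimes\mathcal{N})_{\tau,\sigma}}_\mu\}_\mu$ of $d_*(\mathcal{M})d_*(\mathcal{N})$ and the bipartition $\{d^+_*,d^-_*\}$ of the tensored motive. The key observation, exactly as in Yoshida's argument, is that $f^{\pm}_{(\tau,\sigma)}$ evaluated at the block-structured matrix $Z^{\pm}_{\tau,\sigma}$ factors through a product of a polynomial in the rows of $X_{\tau,\sigma}$ and a polynomial in the rows of $Y_{\tau,\sigma}$: each time we need a minor of $Z^{\pm}_{\tau,\sigma}$ subordinate to the Hodge filtration of the tensor product, the multiplicativity of exterior powers under tensor products lets us split it. To pin down \emph{which} such polynomials appear, I would count, for each jump $i^{\mathcal{M}_{\tau,\sigma}}_\mu$ of the Hodge filtration of $\mathcal{M}$, how many rows $\mathfrak{Y}^+_\lambda$ of $Y_{\tau,\sigma}$ satisfy $i^{\mathcal{M}_{\tau,\sigma}}_\mu+w(\mathfrak{Y}^+_\lambda)<q^{\mp}_{\tau,\sigma}$ — these are precisely the rows that survive the truncation by $\mathrm{Fil}^{q^{\mp}_{\tau,\sigma}}$ when paired with $\mathfrak{X}$-rows of weight $i^{\mathcal{M}_{\tau,\sigma}}_\mu$ — and this count is the exponent $a^{(\pm)}_\mu$; symmetrically for $b^{(\pm)}_\nu$. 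I would then verify directly that the resulting tuples $\{(a^{(\pm)}_\mu);(k^{(\pm),+},k^{(\pm),-})\}$ and $\{(b^{(\pm)}_\nu);(l^{(\pm),+},l^{(\pm),-})\}$ satisfy all the conditions in the definition of admissible type — monotonicity of the $a_\mu$'s follows because the jumps $i^{\mathcal{M}_{\tau,\sigma}}_\mu$ are increasing in $\mu$, and the symmetry relations $a_i+a_{\mathsf{t}+1-i}=k^++k^-$ follow from the symmetry $s_i=s_{\mathsf{t}+1-i}$ of the Hodge partitions together with the self-duality of the weight assignment. Admissibility then guarantees (via \cite[Theorem~3 and p.~1181]{yos01}) that $\phi^\pm_{\tau,\sigma}(x)$ and $\psi^\pm_{\tau,\sigma}(y)$ exist and are unique up to scalars, and that the equivalence $c^\pm_{(\tau,\sigma)}(\mathcal{M},\mathcal{N})\sim_{E^\sigma\mathsf{F}^{(\tau,\rho)}}\phi^\pm_{\tau,\sigma}(X_{\tau,\sigma})\psi^\pm_{\tau,\sigma}(Y_{\tau,\sigma})$ is independent of all basis choices.

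For the complex case one has two pieces in the definition \eqref{eq:cmn}, namely $c^\pm_{(\tau,\sigma)}(\mathcal{M}\otimes_\mathsf{F}\mathcal{N})$ and $c^\pm_{(\iota_0,\sigma)}(\mathcal{M}^{[\tau]}\otimes_{\mathsf{F}^{(\tau,\rho)}}\mathcal{N}^{[\rho\tau]})$; here I would use the identification of $I^\pm_\tau$ with $I^\pm_{\iota_0}$ over $\mathcal{M}^{[\tau]}$ recalled in Section~\ref{sec:t-per}, so that the period matrix of the second factor is again block-structured out of $X_{\tau,\sigma}$ and $Y_{\tau,\sigma}$ (with the roles of the two archimedean embeddings $\tau$, $\rho\tau$ exchanged), and the same multiplicativity-of-exterior-powers argument applies, absorbing both pieces into the single pair $\phi^\pm_{\tau,\sigma}$, $\psi^\pm_{\tau,\sigma}$. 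The main obstacle I anticipate is bookkeeping rather than conceptual: correctly identifying the filtration cut-off index $q^{\mp}_{\tau,\sigma}$ of the tensor product and showing that the truncated block matrix $Z^\pm_{\tau,\sigma}$ really does have the shape for which $f^\pm_{(\tau,\sigma)}$ factors — in other words, checking that no ``mixed'' minors (ones that genuinely entangle $X$- and $Y$-rows in a non-factorable way) contribute, which is where the hypothesis (Fil$^\mp$)$_{\tau,\sigma}$, i.e.\ that $F^{\mathcal{M},\mathcal{N}}_{\tau,\sigma}$ acts by a single sign on the diagonal Hodge component, is essential. Once that structural fact is in place, the extraction of the exponents $a^{(\pm)}_\mu$, $b^{(\pm)}_\nu$ and the verification of admissibility are routine combinatorics with the Hodge numbers.
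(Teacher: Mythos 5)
Your proposal is correct and follows essentially the same route as the paper: reduce to Yoshida's Proposition 12 applied to the $\pm$-comparison isomorphism of the tensored period data, extract the admissible types by counting Hodge jumps surviving the cut-off $q^{\mp}_{\tau,\sigma}$, and in the complex case identify $c^{\pm}_{(\tau,\sigma)}(\mathcal{M},\mathcal{N})$ with $\det I^0_{\tau,\sigma}(\mathcal{M},\mathcal{N})^{\pm}$ via the splitting $I^0_{\tau,\sigma}(\mathcal{M},\mathcal{N})=I^0_{\tau,\sigma}(\mathcal{M}\otimes_{\mathsf{F}}\mathcal{N})\oplus I^0_{\iota_0,\sigma}(\mathcal{M}^{[\tau]}\otimes_{\mathsf{F}^{(\tau,\rho)}}\mathcal{N}^{[\rho\tau]})$. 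The paper compresses the Yoshida portion to a citation and spells out only the complex-place determinant identity; you expand the former and sketch the latter, but the content is the same.
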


\begin{proof}
As is discussed in the proof of \cite[Proposition~12]{yos01}, the determinants of $I_{\tau,\sigma}^0(\mathcal{M},\mathcal{N})^{\pm}$ are regarded as the evaluation of polynomial functions $h^{\pm}(x,y)$ at $(x,y)=(X_{\tau,\sigma},Y_{\tau,\sigma})$. One then observes that  $h^{\pm}(x,y)$ are of admissible types with respect to each of $x$ and $y$, and thus, by \cite[Theorem~1]{yos01}, they can be expressed as the products of polynomial functions $\phi^\pm_{\tau,\sigma}(x)$ and $\psi^{\pm}_{\tau,\sigma}(y)$ up to constant multiples (double sign in the same order).  One may determine the admissible types of $\phi^{\pm}_{\tau,\sigma}(x)$ and $\psi^{\pm}_{\tau,\sigma}(y)$ by the same computation as in \cite[Proposition~12]{yos01}, and thus it suffices to verify that $\det I_{\tau,\sigma}^0(\mathcal{M},\mathcal{N})^{\pm}$ coincide with $c^{\pm}_{\tau,\sigma}(\mathcal{M}, \mathcal{N})$ up to multiplication by nonzero elements of $E^\sigma \mathsf{F}^{(\tau,\rho)}$.

This is straightforward for real $\tau \in \Sigma_{\mathsf{F},\infty}$, and thus let us suppose that $\tau$ is complex.  In the case, by taking the formulae
\begin{align*}
 I_{\tau,\sigma}(\mathcal{M})\otimes I_{\tau,\sigma}(\mathcal{N}) &=I_{\tau,\sigma}(\mathcal{M}\otimes_{\mathsf{F}}\mathcal{N}), \\
 I_{\tau,\sigma}(\mathcal{M})\otimes I_{\rho\tau, \sigma}(\mathcal{N}) &=I_{\iota_0, \sigma}(\mathcal{M}^{[\tau]})\otimes I_{\iota_0, \sigma}(\mathcal{N}^{[\rho\tau]}) =I_{\iota_0, \sigma}(\mathcal{M}^{[\tau]}\otimes_{\mathsf{F}^{(\tau,\rho)}}\mathcal{N}^{[\rho\tau]}) 
\end{align*}
into accounts, we can calculate as 
\begin{align*}
 I_{\tau,\sigma}^0(\mathcal{M},\mathcal{N})&=I^0_{\tau,\sigma}(\mathcal{M})\otimes I_{\tau,\sigma}^0(\mathcal{N}) 
       =\left(I_{\tau,\sigma}(\mathcal{M})\oplus I_{\rho\tau, \sigma}(\mathcal{M})\right) \otimes \left(I_{\tau,\sigma}(\mathcal{N})\oplus I_{\rho\tau, \sigma}(\mathcal{N})\right) \\
&=\left(I_{\tau,\sigma}(\mathcal{M} \otimes_{\mathsf{F}} \mathcal{N})\oplus I_{\rho\tau, \sigma}(\mathcal{M} \otimes_{\mathsf{F}}\mathcal{N})\right) \\
&\qquad \quad  \oplus \left(I_{\iota_0, \sigma}(\mathcal{M}^{[\tau]}\otimes_{\mathsf{F}^{(\tau,\rho)}} \mathcal{N}^{[\rho\tau]}) \oplus I_{\rho \iota_0, \sigma} (\mathcal{M}^{[\tau]}\otimes_{\mathsf{F}^{(\tau,\rho)}} \mathcal{N}^{[\rho\tau]})\right) \\
&=I^0_{\tau,\sigma}(\mathcal{M}\otimes_{\mathsf{F}}\mathcal{N})\oplus I^0_{\iota_0, \sigma}(\mathcal{M}^{[\tau]}\otimes_{\mathsf{F}^{(\tau,\rho)}}\mathcal{N}^{[\rho\tau]}).
\end{align*}
Therefore we obtain
\begin{align*}
 \det \left(I_{\tau,\sigma}^0(\mathcal{M},\mathcal{N})\right)^{\pm} 
     &=\det I_{\tau,\sigma}^0(\mathcal{M}\otimes_{\mathsf{F}}\mathcal{N})^{\pm} \det I_{\iota_0, \sigma}^0(\mathcal{M}^{[\tau]}\otimes_{\mathsf{F}^{(\tau,\rho)}} \mathcal{N}^{[\rho\tau]})^{\pm} \\
&=c_{(\tau,\sigma)}^{\pm}(\mathcal{M}\otimes_{\mathsf{F}}\mathcal{N})c_{(\iota_0, \sigma)}^{\pm}(\mathcal{M}^{[\tau]}\otimes_{\mathsf{F}^{(\tau,\rho)}} \mathcal{N}^{[\rho\tau]}) = c^{\pm}_{(\tau,\sigma)}(\mathcal{M},\mathcal{N})
\end{align*}
as required.
\end{proof}

Since the admissible types of $\phi^{\pm}_{(\tau,\sigma)}(x)$ and $\psi^{\pm}_{(\tau,\sigma)}(y)$ are determined in Proposition~\ref{prop:tensor}, it is not difficult to express $c^{\pm}_{(\tau,\sigma)}(\mathcal{M},\mathcal{N})$ as the products of $(\tau,\sigma)$-fundamental periods of $\mathcal{M}$ and $\mathcal{N}$, due to \cite[Theorem~3]{yos01}. Indeed Bhagwat works on this task in \cite{bha15} when the base field is the rational number field $\mathbf{Q}$. In the next subsection, we impose certain constraints on the Hodge types of $\mathcal{M}$ and $\mathcal{N}$, which we call the (strong) {\em interlace condition}, and then deduce explicit factorization formulae of $c^{\pm}_{(\tau,\sigma)}(\mathcal{M},\mathcal{N})$.

\subsection{The interlace condition for motives}\label{sec:intl}

Let $\mathcal{M}$ and $\mathcal{N}$ be as in the previous subsection. In the following, we use the symbol $\bullet$ to express either $\mathcal{M}$ or $\mathcal{N}$. For each $\tau\in I_{\mathsf{F}}$ and $\sigma\in I_E$, write the Hodge decomposition of $H_{\rm B}(\bullet_\tau)\otimes_{E,\sigma}\mathbf{C}$ as 
\begin{align*}
    H_{\rm B}(\bullet_\tau) \otimes_{E,\sigma} {\mathbf C} 
     &=  \bigoplus^{t^{\bullet_{\tau,\sigma}}}_{ i = 1 } H^{p^{\bullet_{\tau,\sigma}}_{i},\, q^{\bullet_{\tau,\sigma}}_{i}}(\bullet_\tau)_\sigma, \\
 & H^{p^{\bullet_{\tau,\sigma}}_{i},\, q^{\bullet_{\tau,\sigma}}_{i}}(\bullet_\tau)_\sigma \neq 0 \quad  \text{for } i=1,2,\dotsc,t^{\bullet_{\tau,\sigma}}. 
\end{align*}
Without loss of generality, we may assume that the sequence $\{p^{\bullet_{\tau,\sigma}}_i \}_{1\leq i\leq t^{\bullet_{\tau,\sigma}}}$ is strictly increasing.  
Put $\boldsymbol{p}^{\bullet_{\tau,\sigma}} = (p^{\bullet_{\tau,\sigma}}_{1}, p^{\bullet_{\tau,\sigma}}_{2}, \dotsc , p^{\bullet_{\tau,\sigma}}_{t^{\bullet_{\tau,\sigma}}})$ and $\boldsymbol{q}^{\bullet_{\tau,\sigma}} = (q^{\bullet_{\tau,\sigma}}_{1}, q^{\bullet_{\tau,\sigma}}_{2}, \dotsc , q^{\bullet_{\tau,\sigma}}_{t^{\bullet_{\tau,\sigma}}})$. 

\begin{rem} \label{rem:symmetry}
 Since $F^{\bullet}_{\infty,\tau}\otimes \mathrm{id}$ induces an isomorphism between $H^{p^{\bullet_{\tau,\sigma}}_i,q^{\bullet_{\tau,\sigma}}_i}(\bullet_\tau)_\sigma$ and $H^{q^{\bullet_{\tau,\sigma}}_i,p^{\bullet_{\tau,\sigma}}_i}(\bullet_{\rho\tau})_\sigma$, we have $t^{\bullet_{\tau,\sigma}}=t^{\bullet_{\rho\tau,\sigma}}$, $p^{\bullet_{\tau,\sigma}}_i=q^{\bullet_{\rho\tau,\sigma}}_{t^{\bullet_{\tau,\sigma}}+1-i}$ and $q^{\bullet_{\tau,\sigma}}_i=p^{\bullet_{\rho\tau,\sigma}}_{t^{\bullet_{\tau,\sigma}}+1-i}$.
\end{rem}

The Hodge type $\boldsymbol{p}^{\bullet_{\tau,\sigma}}$ (and $\boldsymbol{q}^{\bullet_{\tau,\sigma}}$) behaves compatibly with respect to the action of $\mathrm{Aut}(\mathbf{C})$;

\begin{prop} \label{prop:auto_C}
Let $\alpha$ be an automorphicm of $\mathbf{C}$. Then, for every $\tau\in I_{\mathsf{F}}$ and $\sigma \in I_E$, we have
\begin{align*}
t^{\bullet_{\alpha\tau,\alpha\sigma}}&=t^{\bullet_{\tau,\sigma}}, & \boldsymbol{p}^{\bullet_{\alpha\tau,\alpha\sigma}}&=\boldsymbol{p}^{\bullet_{\tau,\sigma}}, & \boldsymbol{q}^{\bullet_{\alpha\tau,\alpha\sigma}}&=\boldsymbol{q}^{\bullet_{\tau,\sigma}}.
\end{align*}
\end{prop}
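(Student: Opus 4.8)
The plan is to encode the data $t^{\bullet_{\tau,\sigma}}$, $\boldsymbol{p}^{\bullet_{\tau,\sigma}}$ and $\boldsymbol{q}^{\bullet_{\tau,\sigma}}$ entirely in terms of the Hodge filtration on the de Rham realization $H_{\rm dR}(\bullet)$ after base change, and then to exploit the elementary fact that extending scalars of a finite-dimensional $\mathbf{C}$-vector space along a field automorphism of $\mathbf{C}$ does not change its dimension. Concretely, for $\tau\in I_{\mathsf{F}}$ and $\sigma\in I_E$ I would set
\begin{align*}
 h^{\bullet_{\tau,\sigma}}(p):=\dim_{\mathbf{C}}\bigl(\mathrm{Fil}^p H_{\rm dR}(\bullet)\otimes_{E\otimes_{\mathbf{Q}}\mathsf{F},\sigma\otimes\tau}\mathbf{C}\bigr)\qquad(p\in\mathbf{Z}),
\end{align*}
and observe, combining the isomorphism $E\otimes_{\mathbf{Q}}\mathbf{C}\cong\prod_{\sigma\in I_E}\mathbf{C}$ with the characterising property $\mathrm{Fil}^p H_{\rm dR}(\bullet)\otimes_{\mathsf{F},\tau}\mathbf{C}=I_\tau(\bullet)\bigl(\bigoplus_{i\geq p}H^{i,\mathsf{w}-i}(\bullet_\tau)\bigr)$ recalled in Section~\ref{sec:t-per}, that
\begin{align*}
 h^{\bullet_{\tau,\sigma}}(p)=\sum_{\substack{1\leq i\leq t^{\bullet_{\tau,\sigma}}\\ p^{\bullet_{\tau,\sigma}}_i\geq p}}\dim_{\mathbf{C}}H^{p^{\bullet_{\tau,\sigma}}_i,q^{\bullet_{\tau,\sigma}}_i}(\bullet_\tau)_\sigma.
\end{align*}
Thus $h^{\bullet_{\tau,\sigma}}$ is a non-increasing $\mathbf{Z}$-indexed step function, the set of integers $p$ with $h^{\bullet_{\tau,\sigma}}(p)>h^{\bullet_{\tau,\sigma}}(p+1)$ is precisely $\{p^{\bullet_{\tau,\sigma}}_1,\dotsc,p^{\bullet_{\tau,\sigma}}_{t^{\bullet_{\tau,\sigma}}}\}$, and at $p=p^{\bullet_{\tau,\sigma}}_i$ the drop $h^{\bullet_{\tau,\sigma}}(p)-h^{\bullet_{\tau,\sigma}}(p+1)$ equals $\dim_{\mathbf{C}}H^{p^{\bullet_{\tau,\sigma}}_i,q^{\bullet_{\tau,\sigma}}_i}(\bullet_\tau)_\sigma$. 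Consequently $t^{\bullet_{\tau,\sigma}}$, the tuple $\boldsymbol{p}^{\bullet_{\tau,\sigma}}$, and even the multiplicities of the Hodge components are all recovered from the function $h^{\bullet_{\tau,\sigma}}$.

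It therefore suffices to prove $h^{\bullet_{\alpha\tau,\alpha\sigma}}=h^{\bullet_{\tau,\sigma}}$ for every automorphism $\alpha$ of $\mathbf{C}$. For this I would observe that $(\alpha\sigma)\otimes(\alpha\tau)=\alpha\circ(\sigma\otimes\tau)$ as $\mathbf{Q}$-algebra homomorphisms $E\otimes_{\mathbf{Q}}\mathsf{F}\to\mathbf{C}$, whence for any $E\otimes_{\mathbf{Q}}\mathsf{F}$-module $N$ there is a canonical identification
\begin{align*}
 N\otimes_{E\otimes_{\mathbf{Q}}\mathsf{F},(\alpha\sigma)\otimes(\alpha\tau)}\mathbf{C}\;\cong\;\bigl(N\otimes_{E\otimes_{\mathbf{Q}}\mathsf{F},\sigma\otimes\tau}\mathbf{C}\bigr)\otimes_{\mathbf{C},\alpha}\mathbf{C}.
\end{align*}
Since $\alpha$ carries a $\mathbf{C}$-basis of a $\mathbf{C}$-vector space $V$ to a $\mathbf{C}$-basis of $V\otimes_{\mathbf{C},\alpha}\mathbf{C}$, one has $\dim_{\mathbf{C}}(V\otimes_{\mathbf{C},\alpha}\mathbf{C})=\dim_{\mathbf{C}}V$; applying this with $N=\mathrm{Fil}^p H_{\rm dR}(\bullet)$ yields $h^{\bullet_{\alpha\tau,\alpha\sigma}}(p)=h^{\bullet_{\tau,\sigma}}(p)$ for all $p$, and hence $t^{\bullet_{\alpha\tau,\alpha\sigma}}=t^{\bullet_{\tau,\sigma}}$ and $\boldsymbol{p}^{\bullet_{\alpha\tau,\alpha\sigma}}=\boldsymbol{p}^{\bullet_{\tau,\sigma}}$ by the previous paragraph. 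Finally, since $\bullet$ is pure of weight $\mathsf{w}=\mathsf{w}(\bullet)$ we have $q^{\bullet_{\tau,\sigma}}_i=\mathsf{w}-p^{\bullet_{\tau,\sigma}}_i$ for each $i$, and $\mathsf{w}$ depends neither on $\tau$ and $\sigma$ nor on $\alpha$, so $\boldsymbol{q}^{\bullet_{\alpha\tau,\alpha\sigma}}=\boldsymbol{q}^{\bullet_{\tau,\sigma}}$ follows at once.

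The argument is essentially bookkeeping; the point requiring the most care is to keep the three layers of extension of scalars straight --- the Betti-side base change $\otimes_{\mathbf{Q}}\mathbf{C}$, the isotypic decomposition over $E$ afforded by $E\otimes_{\mathbf{Q}}\mathbf{C}\cong\prod_{\sigma}\mathbf{C}$, and the de Rham-side base change $\otimes_{E\otimes_{\mathbf{Q}}\mathsf{F},\sigma\otimes\tau}\mathbf{C}$ --- and to remember that, as the Hodge filtration pieces $\mathrm{Fil}^p H_{\rm dR}(\bullet)$ need not be free as $E\otimes_{\mathbf{Q}}\mathsf{F}$-modules, one must argue with $\mathbf{C}$-dimensions after base change rather than with ranks of projective modules. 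By contrast the fact that $\alpha$ is merely a field automorphism, not assumed continuous, causes no difficulty, since every quantity intervening is a $\mathbf{C}$-linear-algebraic invariant.
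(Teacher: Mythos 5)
Your argument is correct, and it is worth noting that the paper itself does not give a proof at this point: the listed ``proof'' of Proposition~\ref{prop:auto_C} is simply a citation to Section~3.1 (paragraph after~(3.4)) and Remark~A.2 of \cite{hl17}, so there is no in-paper argument for you to have reproduced or deviated from. Your reconstruction is exactly the kind of direct argument that the cited passage of Harris--Lin is presumably running: encode the $(\tau,\sigma)$-Hodge data in the integer-valued step function $h^{\bullet_{\tau,\sigma}}(p)=\dim_{\mathbf C}\bigl(\mathrm{Fil}^p H_{\mathrm{dR}}(\bullet)\otimes_{E\otimes_{\mathbf Q}\mathsf F,\,\sigma\otimes\tau}{\mathbf C}\bigr)$, which recovers $t^{\bullet_{\tau,\sigma}}$ as the number of jumps, $\boldsymbol p^{\bullet_{\tau,\sigma}}$ as the jump locations and the Hodge multiplicities as the jump sizes; then observe the elementary algebra identity $(\alpha\sigma)\otimes(\alpha\tau)=\alpha\circ(\sigma\otimes\tau)$ so that the two scalar extensions differ only by a further base change through the automorphism $\alpha$ of ${\mathbf C}$, which visibly preserves ${\mathbf C}$-dimension; finally use purity to get $\boldsymbol q$ from $\boldsymbol p$ and the weight. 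All three steps are sound: the characterising property of the comparison isomorphism $I_\tau(\bullet)$ in Section~\ref{sec:t-per} gives your first display, the $E\otimes_{\mathbf Q}{\mathbf C}\cong\prod_\sigma{\mathbf C}$ decomposition is the standard idempotent splitting, and the transitivity $N\otimes_{A,\alpha\circ f}{\mathbf C}\cong(N\otimes_{A,f}{\mathbf C})\otimes_{{\mathbf C},\alpha}{\mathbf C}$ is just associativity of base change. You are also right to emphasise that the argument must be run on ${\mathbf C}$-dimensions after scalar extension rather than on $E\otimes_{\mathbf Q}\mathsf F$-ranks, since $\mathrm{Fil}^pH_{\mathrm{dR}}(\bullet)$ need not be free; that is precisely the subtlety the paper sidesteps by citing out. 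What your proof buys over the paper's treatment is a self-contained, verification-friendly account of a lemma the paper merely asserts; what it costs is nothing, since the argument is short.
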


\begin{proof}
 See section 3.1 (especially the paragraph after the equation (3.4)) and Remark~A.2 of \cite{hl17}.
\end{proof}

In this section, we always assume the following conditions: for each $\tau \in I_{\mathsf{F}}$ and $\sigma\in I_E$, the equality 
\begin{align} \label{eq:cond_t}
 t^{\mathcal{M}_{\tau,\sigma}} = t^{\mathcal{N}_{\tau,\sigma}} + 1
\end{align}
holds.   
Hereafter we abbreviate $t^{\mathcal{N}_{\tau,\sigma}}$ as $t_{\tau,\sigma}$ for simplicity.

\begin{dfn} \label{def:interlace}
Let $\mathcal{M}$ and $\mathcal{N}$ be pure motives as above, and take $\tau\in \Sigma_{\mathsf{F},\infty}$ and $\sigma\in I_E$. Suppose that the condition (\ref{eq:cond_t}) is fulfilled.
\begin{enumerate}[label=(\arabic*)]
 \item We say that $\mathcal{M}$ and $\mathcal{N}$ satisfy the {\em interlace condition at $\tau$ and $\sigma$}  if there exists an integer $Q\in \mathbf{Z}$ such that the following inequalities are fulfilled: 
\begin{align}\label{eq:intlpq}
 \begin{cases}
  -p^{\mathcal{M}_{\tau,\sigma}}_{t_{\tau,\sigma}+1} < p^{\mathcal{N}_{\tau,\sigma}}_{1}+Q \leq 
 \cdots  \leq -p^{\mathcal{M}_{\tau,\sigma}}_{t_{\tau, \sigma}+2-i}   < p^{\mathcal{N}_{\tau,\sigma}}_{i}+Q  \leq    -p^{\mathcal{M}_{\tau,\sigma}}_{t_{\tau,\sigma}+1-i}<
                        \cdots 
< p^{\mathcal{N}_{\tau,\sigma}}_{t_{\tau,\sigma}}+Q \leq -p^{\mathcal{M}_{\tau,\sigma}}_{1}, \\
  -q^{\mathcal{M}_{\tau,\sigma}}_{1} < q^{\mathcal{N}_{\tau,\sigma}}_{t_{\tau,\sigma}}+Q \leq 
 \cdots  \leq -q^{\mathcal{M}_{\tau,\sigma}}_{i}   < q^{\mathcal{N}_{\tau,\sigma}}_{t_{\tau,\sigma}+1-i} +Q \leq  -  q^{\mathcal{M}_{\tau,\sigma}}_{i+1}< 
                        \cdots 
< q^{\mathcal{N}_{\tau,\sigma}}_{1}+Q < -q^{\mathcal{M}_{\tau,\sigma}}_{t_{\tau,\sigma}+1}.
 \end{cases}         
\end{align}  
The condition on the Hodge types $(\boldsymbol{p}^{\mathcal{M}_{\tau,\sigma}},\boldsymbol{q}^{\mathcal{M}_{\tau,\sigma}})$ and $(\boldsymbol{p}^{\mathcal{N}_{\tau,\sigma}},\boldsymbol{q}^{\mathcal{N}_{\tau,\sigma}})$ captured by (\ref{eq:intlpq}) is denoted as $\boldsymbol{p}^{\mathcal{M}_{\tau,\sigma}} \succ \boldsymbol{p}^{\mathcal{N}_{\tau,\sigma}}$ and $\boldsymbol{q}^{\mathcal{M}_{\tau,\sigma}} \succ \boldsymbol{q}^{\mathcal{N}_{\tau,\sigma}}$.
\item We say that $\mathcal{M}$ and $\mathcal{N}$ satisfy the {\em strong interlace condition at $\tau$ and $\sigma$} if there exists an integer $Q\in \mathbf{Z}$ such that the following inequalities are fulfilled: 
\begin{align}\label{eq:strintlpq}
\begin{cases}
 -\min\{ p_{t_{\tau,\sigma}+2-i}^{\mathcal{M}_{\tau,\sigma}}, q_{i}^{\mathcal{M}_{\tau,\sigma}}\} < \min\{ p_i^{\mathcal{N}_{\tau,\sigma}}, q_{t_{\tau,\sigma}+1-i}^{\mathcal{N}_{\tau,\sigma}} \} +Q \\
  \max\{ p_i^{\mathcal{N}_{\tau,\sigma}}, q_{t_{\tau,\sigma}+1-i}^{\mathcal{N}_{\tau,\sigma}} \} +Q \leq - \max\{ p_{t_{\tau,\sigma}+1-i}^{\mathcal{M}_{\tau,\sigma}}, q_{i+1}^{\mathcal{M}_{\tau,\sigma}}\}
\end{cases}
 \qquad \text{for }i=1,2,\dotsc,t_{\tau,\sigma}.
\end{align}  
The condition on the Hodge types $(\boldsymbol{p}^{\mathcal{M}_{\tau,\sigma}},\boldsymbol{q}^{\mathcal{M}_{\tau,\sigma}})$ and $(\boldsymbol{p}^{\mathcal{N}_{\tau,\sigma}},\boldsymbol{q}^{\mathcal{N}_{\tau,\sigma}})$ captured by (\ref{eq:strintlpq}) is denoted as $\boldsymbol{p}^{\mathcal{M}_{\tau,\sigma}} \succsim \boldsymbol{p}^{\mathcal{N}_{\tau,\sigma}}$ and $\boldsymbol{q}^{\mathcal{M}_{\tau,\sigma}} \succsim \boldsymbol{q}^{\mathcal{N}_{\tau,\sigma}}$.
\end{enumerate}

\end{dfn}

\begin{rem}
\begin{enumerate}[label=(\arabic*)]
 \item For real $\tau\in \Sigma_{\mathsf{F},\infty}$, the latter inequalities in (\ref{eq:intlpq}) follow from the former ones due to the symmetry of the Hodge types (see Remark~\ref{rem:symmetry}). 
 \item Using Remark~\ref{rem:symmetry}, one readily observes that the strong interlace condition at $\tau$ and $\sigma$ is fulfilled if and only if the condition $\boldsymbol{p}^{\mathcal{M}_{\tilde{\tau},\sigma}} \succ \boldsymbol{p}^{\mathcal{N}_{\tilde{\tau}',\sigma}}$ holds for every $\tilde{\tau},\tilde{\tau}'\in \{\tau, \rho\tau\}$; in particular,  the strong interlace condition implies the interlace condition at a complex place, and these two conditions are equivalent at a real place.
 \item If the condition (\ref{eq:cond_t}) and the (strong) interlace condition are fulfilled for every archimedean place $\tau\in \Sigma_{\mathsf{F},\infty}$ and {\em one} embedding $\sigma_0 \in I_E$ of $E$ into $\mathbf{C}$, those conditions are satisfied for {\em every} $\tau \in \Sigma_{\mathsf{F},\infty}$ and $\sigma\in I_E$; in fact, we have $t^{\bullet_{\tau,\sigma}}=t^{\bullet_{\alpha^{-1}\tau,\sigma_0}}$,  $p^{\bullet_{\tau,\sigma}}_i=p^{\bullet_{\alpha^{-1}\tau,\sigma_0}}_i$ and $q^{\bullet_{\tau,\sigma}}_i=q^{\bullet_{\alpha^{-1}\tau,\sigma_0}}_i$ due to Proposition~\ref{prop:auto_C} if $\alpha$ denotes an automorphism of $\mathbf{C}$ satisfying $\sigma=\alpha\sigma_0$.
\end{enumerate}
\end{rem}

\begin{lem}\label{lem:qmp}
Let $\mathcal{M}$ and $\mathcal{N}$ be pure motives defined over $\mathsf{F}$ with coefficients in $E$ as before, and suppose that the condition $(\ref{eq:cond_t})$ and the strong interlace condition $(\ref{eq:strintlpq})$ at $\tau\in \Sigma_{\mathsf{F},\infty}$ and $\sigma\in I_E$. Let $\mathcal{H}^{\mathcal{M},\,\mathcal{N}}_{\tau,\sigma}$ be the multiset consisting of the Hodge types of $\mathcal{B}^{\mathcal{M},\,\mathcal{N}}_{\tau,\sigma}$$:$
\begin{align*}
\mathcal{H}^{\mathcal{M},\,\mathcal{N}}_{\tau,\sigma}:=
\begin{cases}
\left\{ \left. \left(p_i^{\mathcal{M}_{\tau,\sigma}}+p_j^{\mathcal{N}_{\tau,\sigma}}, q_i^{\mathcal{M}_{\tau,\sigma}}+q_j^{\mathcal{N}_{\tau,\sigma}}\right) \, \right| \, 1\leq i\leq t_{\tau,\sigma}+1,\, 1\leq j\leq t_{\tau,\sigma} \right\} & \text{if $\tau$ is real}, \\[.7em]
\left\{ \left. \left(p_i^{\mathcal{M}_{\tilde{\tau},\sigma}}+p_j^{\mathcal{N}_{\tilde{\tau}',\sigma}}, q_i^{\mathcal{M}_{\tilde{\tau},\sigma}}+q_j^{\mathcal{N}_{\tilde{\tau}',\sigma}}\right) \, \right| \, \parbox{12em}{$1\leq i\leq t_{\tau,\sigma}+1,\, 1\leq j\leq 
t_{\tau,\sigma}$ \\  $\tilde{\tau},\tilde{\tau}'\in \{\tau, \rho\tau\}$} \right\}	& \text{if $\tau$ is complex}.						
\end{cases}
\end{align*}
Define $q_{\tau,\sigma}$ as 
\begin{align} \label{eq:qmp_tau_sigma}
  q_{\tau,\sigma}
      =   \min \left\{ \left.   p^{\mathcal{M}_{\tilde{\tau},\sigma}}_{t_{\tau,\sigma}+2- i} +  p^{\mathcal{N}_{\tilde{\tau}',\sigma}}_{i}  \, \right| \, 1 \leq i \leq t_{\tau,\sigma}, \; \tilde{\tau}, \tilde{\tau}'\in \{\tau, \rho\tau \}      \right\}.  
\end{align}
Then the multisubsets
\begin{align*}
 \mathcal{H}^{\mathcal{M},\,\mathcal{N}}_{\tau,\sigma,<q_{\tau,\sigma}} &= \{(p,q) \in \mathcal{H}^{\mathcal{M},\,\mathcal{N}}_{\tau,\sigma} \mid p<q_{\tau, \sigma}\}, & \mathcal{H}^{\mathcal{M},\,\mathcal{N}}_{\tau,\sigma,\geq q_{\tau,\sigma}} &= \{(p,q) \in \mathcal{H}^{\mathcal{M},\,\mathcal{N}}_{\tau,\sigma} \mid p\geq q_{\tau, \sigma}\}
\end{align*}
give an equal-cardinality partition of $\mathcal{H}^{\mathcal{M},\,\mathcal{N}}_{\tau,\sigma}$. 

\end{lem}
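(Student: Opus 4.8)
The plan is to reduce the assertion to an elementary count on the first (Hodge $p$-)coordinates of the members of $\mathcal{H}^{\mathcal{M},\,\mathcal{N}}_{\tau,\sigma}$, organised along the ``columns'' indexed by the Hodge types of $\mathcal{N}$. The first step is to unwind the strong interlace condition \eqref{eq:strintlpq}. Using the symmetry $q^{\bullet_{\tau,\sigma}}_{\nu}=p^{\bullet_{\rho\tau,\sigma}}_{t^{\bullet_{\tau,\sigma}}+1-\nu}$ of Remark~\ref{rem:symmetry} together with \eqref{eq:cond_t}, the pair of inequalities in \eqref{eq:strintlpq} is equivalent to the existence of a single integer $Q$ such that
\begin{align*}
 p^{\mathcal{M}_{\tilde{\tau},\sigma}}_{t_{\tau,\sigma}+1-i} + p^{\mathcal{N}_{\tilde{\tau}',\sigma}}_{i} \leq -Q < p^{\mathcal{M}_{\tilde{\tau},\sigma}}_{t_{\tau,\sigma}+2-i} + p^{\mathcal{N}_{\tilde{\tau}',\sigma}}_{i}
\end{align*}
holds for every $\tilde{\tau},\tilde{\tau}'\in\{\tau,\rho\tau\}$ and every $1\leq i\leq t_{\tau,\sigma}$; in the real case this is just $\boldsymbol{p}^{\mathcal{M}_{\tau,\sigma}}\succ\boldsymbol{p}^{\mathcal{N}_{\tau,\sigma}}$, and in the complex case it is precisely the conjunction, over the four pairs $(\tilde\tau,\tilde\tau')$, of $\boldsymbol{p}^{\mathcal{M}_{\tilde\tau,\sigma}}\succ\boldsymbol{p}^{\mathcal{N}_{\tilde\tau',\sigma}}$ (cf.\ the remark following Definition~\ref{def:interlace}).

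Next I would fix one pair $(\tilde\tau,\tilde\tau')$ and count. Since $p^{\mathcal{M}_{\tilde\tau,\sigma}}_{1}<\cdots<p^{\mathcal{M}_{\tilde\tau,\sigma}}_{t_{\tau,\sigma}+1}$ is strictly increasing, the displayed inequality at $i=j$ yields $p^{\mathcal{M}_{\tilde\tau,\sigma}}_{t_{\tau,\sigma}+1-j}\leq -Q-p^{\mathcal{N}_{\tilde\tau',\sigma}}_{j}<p^{\mathcal{M}_{\tilde\tau,\sigma}}_{t_{\tau,\sigma}+2-j}$, so for each $1\leq j\leq t_{\tau,\sigma}$ it is exactly the indices $i\in\{1,\dots,t_{\tau,\sigma}+1-j\}$ that satisfy $p^{\mathcal{M}_{\tilde\tau,\sigma}}_{i}+p^{\mathcal{N}_{\tilde\tau',\sigma}}_{j}\leq -Q$, i.e.\ exactly $t_{\tau,\sigma}+1-j$ of them. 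Summing over $j$ gives $\sum_{j=1}^{t_{\tau,\sigma}}(t_{\tau,\sigma}+1-j)=t_{\tau,\sigma}(t_{\tau,\sigma}+1)/2$ index pairs $(i,j)$ with $p^{\mathcal{M}_{\tilde\tau,\sigma}}_{i}+p^{\mathcal{N}_{\tilde\tau',\sigma}}_{j}\leq -Q$, and hence, out of a total of $(t_{\tau,\sigma}+1)t_{\tau,\sigma}$ pairs, the same number with $p^{\mathcal{M}_{\tilde\tau,\sigma}}_{i}+p^{\mathcal{N}_{\tilde\tau',\sigma}}_{j}> -Q$.

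It then remains to match the partition at $-Q$ with the one at $q_{\tau,\sigma}$. The displayed inequality forces $q_{\tau,\sigma}\geq -Q+1$, so ``$\geq q_{\tau,\sigma}$'' implies ``$>-Q$''; conversely, if $p^{\mathcal{M}_{\tilde\tau,\sigma}}_{i}+p^{\mathcal{N}_{\tilde\tau',\sigma}}_{j}>-Q\geq p^{\mathcal{M}_{\tilde\tau,\sigma}}_{t_{\tau,\sigma}+1-j}+p^{\mathcal{N}_{\tilde\tau',\sigma}}_{j}$, then $i\geq t_{\tau,\sigma}+2-j$, whence $p^{\mathcal{M}_{\tilde\tau,\sigma}}_{i}+p^{\mathcal{N}_{\tilde\tau',\sigma}}_{j}\geq p^{\mathcal{M}_{\tilde\tau,\sigma}}_{t_{\tau,\sigma}+2-j}+p^{\mathcal{N}_{\tilde\tau',\sigma}}_{j}\geq q_{\tau,\sigma}$, the last step because the term on the right occurs in the minimum \eqref{eq:qmp_tau_sigma}. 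Thus, for each pair $(\tilde\tau,\tilde\tau')$, exactly $t_{\tau,\sigma}(t_{\tau,\sigma}+1)/2$ of the pairs $(i,j)$ satisfy $p^{\mathcal{M}_{\tilde\tau,\sigma}}_{i}+p^{\mathcal{N}_{\tilde\tau',\sigma}}_{j}<q_{\tau,\sigma}$ and the same number satisfy $\geq q_{\tau,\sigma}$. Summing over the single pair $(\tilde\tau,\tilde\tau')=(\tau,\tau)$ when $\tau$ is real, or over the four pairs when $\tau$ is complex, yields $|\mathcal{H}^{\mathcal{M},\,\mathcal{N}}_{\tau,\sigma,<q_{\tau,\sigma}}|=|\mathcal{H}^{\mathcal{M},\,\mathcal{N}}_{\tau,\sigma,\geq q_{\tau,\sigma}}|$, which is the claim.

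I expect the only delicate point to be the very first step: checking that the two inequalities of \eqref{eq:strintlpq}, phrased through the minima and maxima of the Hodge data of $\mathcal{M}$ and $\mathcal{N}$, really do unwind via Remark~\ref{rem:symmetry} to the single uniform $\boldsymbol{p}$-interlace inequality above, with one and the same $Q$ simultaneously valid for all four pairs $(\tilde\tau,\tilde\tau')$. Once that reformulation is in place the remainder is bookkeeping with strictly increasing integer sequences, and it uses only the $p$-coordinates of the Hodge types, so it is insensitive to the Betti/de~Rham structures underlying $\mathcal{B}^{\mathcal{M},\,\mathcal{N}}_{\tau,\sigma}$.
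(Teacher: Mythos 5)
Your proof is correct and takes essentially the same route as the paper's: both reduce the strong interlace condition to a uniform threshold on the sums $p^{\mathcal{M}_{\tilde\tau,\sigma}}_i + p^{\mathcal{N}_{\tilde\tau',\sigma}}_j$ and then count index pairs by whether $i+j\leq t_{\tau,\sigma}+1$ or $i+j\geq t_{\tau,\sigma}+2$, identifying the threshold $-Q$ with $q_{\tau,\sigma}$. The only difference is organizational (you tally column by column in $j$, the paper tallies along anti-diagonals $a=i+j$), which yields the same $\tfrac{1}{2}t_{\tau,\sigma}(t_{\tau,\sigma}+1)$ per pair $(\tilde\tau,\tilde\tau')$.
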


The following corollary immediate follows from Lemma~\ref{lem:qmp}, which plays an important role in the calculation of the explicit factorization formula (Proposition~\ref{prop:perdecomp}).

\begin{cor} \label{cor:qmp}
 If the pure motives $\mathcal{M}$ and $\mathcal{N}$ satisfy the condition $(\ref{eq:cond_t})$ and the strong interlace condition $(\ref{eq:strintlpq})$ at $\tau\in \Sigma_{\mathsf{F},\infty}$ and $\sigma\in I_E$, the condition $({\rm Fil}^\mp)_{\tau,\sigma}$ is fulfilled for $\mathcal{M}$ and $\mathcal{N}$, and both of $q^\mp_{\tau,\sigma}$ appearing in $(\ref{eq:qmp})$ equal $q_{\tau,\sigma}$ defined as $(\ref{eq:qmp_tau_sigma})$.
\end{cor}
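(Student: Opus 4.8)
The plan is to read off the assertion of Corollary~\ref{cor:qmp} from Lemma~\ref{lem:qmp} by passing to the de Rham side through the comparison isomorphism $I^0_{\tau,\sigma}(\mathcal{M},\mathcal{N})$. First recall the elementary dictionary relating the two sides: unwinding the definitions of $\mathrm{Fil}^p(\mathcal{T}^{\mathcal{M},\,\mathcal{N}}_{\tau,\sigma})$ and of the Hodge decomposition of $\mathcal{B}^{\mathcal{M},\,\mathcal{N}}_{\tau,\sigma}$, one checks that $I^0_{\tau,\sigma}(\mathcal{M},\mathcal{N})$ maps the sum of those Hodge components $H^{p',q'}$ of $\mathcal{B}^{\mathcal{M},\,\mathcal{N}}_{\tau,\sigma}\otimes_{E^\sigma,\iota_0}\mathbf{C}$ with $p'\ge p$ isomorphically onto $\mathrm{Fil}^p(\mathcal{T}^{\mathcal{M},\,\mathcal{N}}_{\tau,\sigma})\otimes_{E^\sigma\mathsf{F}^{(\tau,\rho)},\iota_0}\mathbf{C}$. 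Writing $d_\ast=\dim\mathcal{B}^{\mathcal{M},\,\mathcal{N}}_{\tau,\sigma}=\#\mathcal{H}^{\mathcal{M},\,\mathcal{N}}_{\tau,\sigma}$, it follows that $\dim_{\mathbf C}\bigl(\mathrm{Fil}^p(\mathcal{T}^{\mathcal{M},\,\mathcal{N}}_{\tau,\sigma})\otimes\mathbf{C}\bigr)=\#\{(p',q')\in\mathcal{H}^{\mathcal{M},\,\mathcal{N}}_{\tau,\sigma}\mid p'\ge p\}$, and that $p$ is a jump of the Hodge filtration exactly when $\mathcal{H}^{\mathcal{M},\,\mathcal{N}}_{\tau,\sigma}$ contains an element with first index $p$. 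In particular $q_{\tau,\sigma}$ of $(\ref{eq:qmp_tau_sigma})$, being realised as some $p^{\mathcal{M}_{\tilde{\tau},\sigma}}_{t_{\tau,\sigma}+2-i}+p^{\mathcal{N}_{\tilde{\tau}',\sigma}}_i$, is a jump, and by Lemma~\ref{lem:qmp} the corresponding step has $\mathbf{C}$-dimension $d_\ast/2$.

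Next I would exploit the conjugation symmetry of the multiset $\mathcal{H}^{\mathcal{M},\,\mathcal{N}}_{\tau,\sigma}$. Every $(p',q')\in\mathcal{H}^{\mathcal{M},\,\mathcal{N}}_{\tau,\sigma}$ satisfies $p'+q'=\mathsf{w}(\mathcal{M})+\mathsf{w}(\mathcal{N})=:\mathsf{w}_0$, and by Remark~\ref{rem:symmetry} together with the explicit description of $\mathcal{H}^{\mathcal{M},\,\mathcal{N}}_{\tau,\sigma}$ the transposition $\iota\colon(p',q')\mapsto(q',p')$ permutes $\mathcal{H}^{\mathcal{M},\,\mathcal{N}}_{\tau,\sigma}$. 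Put $A=\{(p',q')\in\mathcal{H}^{\mathcal{M},\,\mathcal{N}}_{\tau,\sigma}\mid p'\ge q_{\tau,\sigma}\}$ and $B=\iota(A)=\{(p',q')\mid q'\ge q_{\tau,\sigma}\}$. By Lemma~\ref{lem:qmp} we have $\#A=\#B=d_\ast/2$, hence $\#(A\cap B)=\#\bigl(\mathcal{H}^{\mathcal{M},\,\mathcal{N}}_{\tau,\sigma}\setminus(A\cup B)\bigr)$; but an element of $A\cap B$ would satisfy $\mathsf{w}_0\ge 2q_{\tau,\sigma}$ while an element outside $A\cup B$ would satisfy $\mathsf{w}_0\le 2q_{\tau,\sigma}-2$, so both sets are empty. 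Therefore every element of $\mathcal{H}^{\mathcal{M},\,\mathcal{N}}_{\tau,\sigma}$ lies in exactly one of $A$, $B$; in particular no diagonal type $(\mathsf{w}_0/2,\mathsf{w}_0/2)$ occurs, the diagonal Hodge component of $\mathcal{B}^{\mathcal{M},\,\mathcal{N}}_{\tau,\sigma}$ vanishes, and $A=\{(p',q')\in\mathcal{H}^{\mathcal{M},\,\mathcal{N}}_{\tau,\sigma}\mid p'>q'\}$ is exactly the strict upper half of the Hodge types.

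Finally I would conclude. Because the diagonal Hodge component vanishes, the involution $F^{\mathcal{M},\,\mathcal{N}}_{\tau,\sigma}$ of $(\ref{eq:involution_tensor})$ interchanges $H^{p',q'}$ with $H^{q',p'}$ for every $(p',q')\in\mathcal{H}^{\mathcal{M},\,\mathcal{N}}_{\tau,\sigma}$, so each of its eigenspaces $\bigl(\mathcal{B}^{\mathcal{M},\,\mathcal{N}}_{\tau,\sigma}\bigr)^{\pm}\otimes\mathbf{C}$ projects isomorphically onto $\bigoplus_{p'<q'}H^{p',q'}$ along $\bigoplus_{p'>q'}H^{p',q'}$. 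By the second paragraph $\bigoplus_{p'>q'}H^{p',q'}=\bigoplus_{p'\ge q_{\tau,\sigma}}H^{p',q'}$, which by the first paragraph is mapped by $I^0_{\tau,\sigma}(\mathcal{M},\mathcal{N})$ onto $\mathrm{Fil}^{q_{\tau,\sigma}}(\mathcal{T}^{\mathcal{M},\,\mathcal{N}}_{\tau,\sigma})\otimes\mathbf{C}$; hence $I^0_{\tau,\sigma}(\mathcal{M},\mathcal{N})$ induces isomorphisms $\bigl(\mathcal{B}^{\mathcal{M},\,\mathcal{N}}_{\tau,\sigma}\bigr)^{\pm}\otimes\mathbf{C}\xrightarrow{\sim}\bigl(\mathcal{T}^{\mathcal{M},\,\mathcal{N}}_{\tau,\sigma}/\mathrm{Fil}^{q_{\tau,\sigma}}(\mathcal{T}^{\mathcal{M},\,\mathcal{N}}_{\tau,\sigma})\bigr)\otimes\mathbf{C}$ for both signs. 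This is precisely the condition $({\rm Fil}^{\mp})_{\tau,\sigma}$ with $\mathrm{Fil}^{-}(\mathcal{T}^{\mathcal{M},\,\mathcal{N}}_{\tau,\sigma})=\mathrm{Fil}^{+}(\mathcal{T}^{\mathcal{M},\,\mathcal{N}}_{\tau,\sigma})=\mathrm{Fil}^{q_{\tau,\sigma}}(\mathcal{T}^{\mathcal{M},\,\mathcal{N}}_{\tau,\sigma})$; and since a Hodge filtration step is uniquely determined as a subspace and $q_{\tau,\sigma}$ is a jump, the maximal integer realising that step is $q_{\tau,\sigma}$ itself, i.e.\ $q^{+}_{\tau,\sigma}=q^{-}_{\tau,\sigma}=q_{\tau,\sigma}$, as asserted in $(\ref{eq:qmp})$.

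The only substantive ingredient beyond the already-granted Lemma~\ref{lem:qmp} is the counting in the second paragraph: that $q_{\tau,\sigma}$ cuts $\mathcal{H}^{\mathcal{M},\,\mathcal{N}}_{\tau,\sigma}$ into its strict upper and lower halves with no diagonal type surviving. This is exactly what makes one and the same filtration step $\mathrm{Fil}^{q_{\tau,\sigma}}$ serve for both signs, so that $q^{+}_{\tau,\sigma}$ and $q^{-}_{\tau,\sigma}$ coincide; everything else is the routine Betti--de Rham translation through the comparison isomorphism.
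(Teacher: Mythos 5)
Your argument is correct and follows the same route as the paper: both deduce the vanishing of the diagonal Hodge component of $\mathcal{B}^{\mathcal{M},\,\mathcal{N}}_{\tau,\sigma}$ from the equal-cardinality partition of Lemma~\ref{lem:qmp} together with the symmetry of the Hodge types, and then read off the condition $(\mathrm{Fil}^{\mp})_{\tau,\sigma}$ and the identity $q^{+}_{\tau,\sigma}=q^{-}_{\tau,\sigma}=q_{\tau,\sigma}$. The paper dispatches this in two sentences by appealing to the observation made just before the statement of $(\mathrm{Fil}^{\mp})_{\tau,\sigma}$, whereas you spell out the inclusion--exclusion and the Betti--de Rham translation explicitly; the content is the same.
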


\begin{proof}
 Lemma~\ref{lem:qmp} implies that the diagonal component of the Hodge decomposition of $\mathcal{B}^{\mathcal{M},\, \mathcal{N}}_{\tau,\sigma}$ is trivial, and thus the condition (Fil$^\mp$)$_{\tau,\sigma}$ is fulfilled. The rest of the statement is a direct consequence of Lemma~\ref{lem:qmp}.
\end{proof}

\begin{proof}[Proof of Lemma $\ref{lem:qmp}$]
We only consider the case where $\tau\in \Sigma_{\mathsf{F},\infty}$ is a complex place; the case where $\tau$ is real is verified by similar and easier arguments. First note that the multiset $\mathcal{H}^{\mathcal{M},\, \mathcal{N}}_{\tau,\sigma}$, consisting of $4t_{\tau,\sigma}(t_{\tau,\sigma}+1)$ elements (with multiplicity), is {\em symmetric}; that is, if $(p_1,q_1), (p_2,q_2),\dotsc, (p_{\# \mathcal{H}^{\mathcal{M},\, \mathcal{N}}_{\tau,\sigma}},q_{\# \mathcal{H}^{\mathcal{M},\, \mathcal{N}}_{\tau,\sigma}})$ denote the elements of $\mathcal{H}^{\mathcal{M},\,\mathcal{N}}_{\tau,\sigma}$ arranged so that their first components $p_1,p_2,\dotsc,p_{\# \mathcal{H}^{\mathcal{M},\, \mathcal{N}}_{\tau,\sigma}}$ are in ascending order, the equality  $p_{\# \mathcal{H}^{\mathcal{M},\, \mathcal{N}}_{\tau,\sigma}+1-i}=p_i$ holds for $i=1,2,\dotsc, \# \mathcal{H}^{\mathcal{M},\, \mathcal{N}}_{\tau,\sigma}=4t_{\tau,\sigma}(t_{\tau,\sigma}+1)$. It suffices to show that the cardinality of $\mathcal{H}^{\mathcal{M},\,\mathcal{N}}_{\tau,\sigma, <q_{\tau,\sigma}}$ (with multiplicity) equals $2t_{\tau,\sigma}(t_{\tau,\sigma}+1)$. Taking 
\begin{align*}
    \# \left\{   (a, k) \ \middle| \ 2 \leq a \leq t_{\tau,\sigma}+1, 1 \leq k \leq a-1   \right\}
  = \frac{1}{2} t_{\tau,\sigma} (t_{\tau,\sigma}+1)
\end{align*}
into accounts, the proof is reduced to check the following two claims:
\begin{enumerate}
 \item For each $2\leq a\leq t_{\tau,\sigma}+1$, $1\leq k\leq a-1$, $1\leq i \leq t_{\tau,\sigma}$ and $\tilde{\tau}, \tilde{\tau}', \tilde{\tau}'', \tilde{\tau}''' \in \{\tau,\rho\tau\}$, we have
            \begin{align*}
                p^{\mathcal{M}_{\tilde{\tau}'',\sigma}}_{a-k} + p^{\mathcal{N}_{\tilde{\tau}''',\sigma}}_{k}  
                <  p^{\mathcal{M}_{\tilde{\tau},\sigma}}_{ t_{\tau,\sigma}+2-i} + p^{\mathcal{N}_{\tilde{\tau}',\sigma}}_{i}.     
            \end{align*}       
 \item For each $t_{\tau,\sigma}+3 \leq a \leq 2t_{\tau,\sigma}+1$,  $a-t_{\tau,\sigma} \leq k \leq t_{\tau,\sigma}+1$ and $\tilde{\tau}'', \tilde{\tau}''' \in \{\tau, \rho\tau\}$,  
            there exist $1\leq i \leq t_{\tau,\sigma}$ and $\tilde{\tau}, \tilde{\tau}'\in \{\tau,\rho\tau\}$ 
satisfying the inequality
            \begin{align*}
                  p^{\mathcal{M}_{\tilde{\tau},\sigma}}_{t_{\tau,\sigma}+2-i} + p^{\mathcal{N}_{\tilde{\tau}',\sigma}}_{ i}  
                  \leq  p^{\mathcal{M}_{\tilde{\tau}'',\sigma}}_{a- k}  + p^{\mathcal{N}_{\tilde{\tau}''',\sigma}}_{k}.   
            \end{align*}          
\end{enumerate}

For the statement (i), we have
\begin{align*}   
   p^{\mathcal{M}_{\tilde{\tau},\sigma}}_{t_{\tau,\sigma}+2-  i} + p^{\mathcal{N}_{\tilde{\tau}',\sigma}}_{i}  
       - \left(  p^{\mathcal{M}_{\tilde{\tau}'',\sigma}}_{a- k} + p^{\mathcal{N}_{\tilde{\tau}''',\sigma}}_{ k}   \right)    
    &=   p^{\mathcal{M}_{\tilde{\tau},\sigma}}_{ t_{\tau,\sigma}+2- i} + p^{\mathcal{N}_{\tilde{\tau}',\sigma}}_{i}  
       - \left( p^{\mathcal{M}_{\tilde{\tau}'',\sigma}}_{t_{\tau,\sigma}+1-k-(t_{\tau,\sigma}+1-a)} + p^{\mathcal{N}_{\tilde{\tau}''',\sigma}}_{ k}   \right)  \\ 
&\geq  p^{\mathcal{M}_{\tilde{\tau},\sigma}}_{ t_{\tau,\sigma}+2- i} + p^{\mathcal{N}_{\tilde{\tau}',\sigma}}_{i}  
       - \left( p^{\mathcal{M}_{\tilde{\tau}'',\sigma}}_{t_{\tau,\sigma}+1-k} + p^{\mathcal{N}_{\tilde{\tau}''',\sigma}}_{ k}   \right) \\
&>(-Q)-(-Q)=0
\end{align*}
as required. The first inequality holds since $\{p^{\mathcal{M}_{\tau,\sigma}}_\nu\}_\nu$ is non-decreasing (note that $t_{\tau,\sigma}+1-a\geq 0$ holds), and the second inequality holds due to the strong interlace condition (\ref{eq:strintlpq}) for $Q\in {\mathbf Z}$.
Next, the second claim (ii) is fulfilled for $i=k$, $\tilde{\tau}=\tilde{\tau}''$ and $\tilde{\tau}'=\tilde{\tau}'''$. 
In fact, we have
\begin{align*}
   \left( p^{\mathcal{M}_{\tilde{\tau}'',\sigma}}_{a-k} + p^{\mathcal{N}_{\tilde{\tau}''',\sigma}}_{ k}  \right)
     -   \left(    p^{\mathcal{M}_{\tilde{\tau}'',\sigma}}_{t_{\tau,\sigma}+2- k}   + p^{\mathcal{N}_{\tilde{\tau}''',\sigma}}_{ k}   \right)
   = p^{\mathcal{M}_{\tilde{\tau}'',\sigma}}_{ a-k}  - p^{\mathcal{M}_{\tilde{\tau}'',\sigma}}_{ t_{\tau,\sigma}+2-k},
\end{align*}
which is non-negative since $\{p^{\mathcal{M}_{\tilde{\tau}'',\sigma}}_\nu\}_\nu$ is non-decreasing (note that $(a-k) - (t_{\tau,\sigma}+2-k) = a-(t_{\tau,\sigma}+2) \geq 1$ holds).
\end{proof}

\begin{dfn} \label{dfn:regular}
 Let $\mathcal{M}$ be a pure motive defined over $\mathsf{F}$ with coefficients in $E$. For an archimedean place $\tau\in \Sigma_{\mathsf{F},\infty}$ of $\mathsf{F}$ and an embedding $\sigma\in I_E$ of $E$ into $\mathbf{C}$, we say $\mathcal{M}$ to be {\em regular at $\tau$ and $\sigma$} if every component of the Hodge decomposition of $\mathcal{B}_\sigma(\mathcal{M}_\tau)$ is of dimension at most $1$. If $\mathcal{M}$ is regular at every $\tau \in \Sigma_{\mathsf{F},\infty}$ and $\sigma\in I_E$, we say $\mathcal{M}$ to be {\em regular}.
\end{dfn}

Note that, if $\mathcal{M}$ is regular at $\tau$ and $\sigma$, the number of jumps $t^{\mathcal{M}_{\tau,\sigma}}$ of the Hodge filtration equals the rank $d(\mathcal{M})$ of $\mathcal{M}$.
Now let us introduce the main result of this section. Set $c_{0,(\tau,\sigma)}(\mathcal{N})=\delta_{(\tau,\sigma)}(\mathcal{N})$ as convention. 

\begin{prop}\label{prop:perdecomp}
Let $\mathcal{M}$ and $\mathcal{N}$ be pure motives as above, which are regular at $\tau\in \Sigma_{\mathsf{F},\infty}$ ant $\sigma\in I_E$.
\begin{enumerate}[label={\rm (\arabic*)}]
  \item Suppose that $\tau$ is real. Further assume that $\mathcal{M}$ and $\mathcal{N}$ satisfy the condition $(\ref{eq:cond_t})$ and the interlace condition $(\ref{eq:intlpq})$ at $\tau$ and $\sigma$. Then, when $d(\mathcal{N})$ is even,  we have   
           \begin{align*}
             c^\pm_{(\tau,\sigma)} (\mathcal{M} ,\mathcal{N})  
               &= \prod^{\frac{d(\mathcal{N})}{2}}_{\beta=1}  c_{\beta,(\tau,\sigma)}(\mathcal{M}) 
                   \prod^{\frac{d(\mathcal{N})}{2}-1}_{\beta=0}  c_{\beta, (\tau,\sigma)}(\mathcal{N})  
                  \cdot \begin{cases} c^\pm_{(\tau,\sigma)}(\mathcal{N})   &  \text{if $d^+_\ast(\mathcal{M}_\tau) > d^-_\ast(\mathcal{M}_\tau)$ holds},   \\
                                                   c^\mp_{(\tau,\sigma)}(\mathcal{N})   &  \text{if $d^+_\ast(\mathcal{M}_\tau) < d^-_\ast(\mathcal{M}_\tau)$ holds}.       \end{cases}
         \end{align*}
When $d(\mathcal{N})$ is odd, we have 
           \begin{align*}
            c^\pm_{(\tau,\sigma)} (\mathcal{M}, \mathcal{N})  
              &= \prod^{\frac{d(\mathcal{N})-1}{2}}_{\beta=1}  c_{\beta,(\tau,\sigma)}(\mathcal{M}) 
                   \prod^{\frac{d(\mathcal{N})-1}{2}}_{\beta=0}  c_{\beta, (\tau,\sigma)}(\mathcal{N})  
                  \cdot \begin{cases} c^\pm_{(\tau,\sigma)}(\mathcal{M})   &  \text{if $d^+_\ast(\mathcal{N}_\tau) > d^-_\ast(\mathcal{N}_\tau)$ holds},   \\
                                                   c^\mp_{(\tau,\sigma)}(\mathcal{M})   &  \text{if $d^+_\ast(\mathcal{N}_\tau) < d^-_\ast(\mathcal{N}_\tau)$ holds}.       \end{cases}
         \end{align*}
   \item Suppose that $\tau$ is complex.  Further assume that $\mathcal{M}$ and $\mathcal{N}$ satisfy the condition $(\ref{eq:cond_t})$ and the strong interlace condition $(\ref{eq:strintlpq})$ at $\tau$ and $\sigma$.
          Then, when $d(\mathcal{N})$ is even, we have   
           \begin{align*}
             c^\pm_{(\tau,\sigma)} (\mathcal{M} ,\mathcal{N})
               &= \prod^{\frac{d(\mathcal{N})}{2}}_{\beta=1}  c_{\beta,(\tau,\sigma)}(\mathcal{M})^2 
                   \prod^{\frac{d(\mathcal{N})}{2}-1}_{\beta=0}  c_{\beta, (\tau,\sigma)}(\mathcal{N})^2  
                  \cdot  c^+_{(\tau,\sigma)}(\mathcal{N})c^-_{(\tau,\sigma)}(\mathcal{N}). 
         \end{align*}
        When $d(\mathcal{N})$ is odd, we have   
           \begin{align*}
              c^\pm_{(\tau,\sigma)} (\mathcal{M} ,\mathcal{N})
                 &= \prod^{\frac{d(\mathcal{N})-1}{2}}_{\beta=1}  c_{\beta,(\tau,\sigma)}(\mathcal{M})^2 
                   \prod^{\frac{d(\mathcal{N})-1}{2}}_{\beta=0}  c_{\beta, (\tau,\sigma)}(\mathcal{N})^2  
                  \cdot  c^+_{(\tau,\sigma)}(\mathcal{M})c^-_{(\tau,\sigma)}(\mathcal{M}).
         \end{align*}
\end{enumerate}
\end{prop}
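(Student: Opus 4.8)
The plan is to combine Proposition~\ref{prop:tensor} with Yoshida's structure theorem (Theorem~\ref{thm:invpoly}), using regularity to render every piece of Hodge-filtration data completely explicit. First I would record the consequences of the hypotheses. Regularity together with $(\ref{eq:cond_t})$ forces $d(\mathcal{M})=d(\mathcal{N})+1$ and $s^{\mathcal{M}_{\tau,\sigma}}_\mu=s^{\mathcal{N}_{\tau,\sigma}}_\nu=1$ throughout, so that the Hodge filtration jumps $i^{\mathcal{M}_{\tau,\sigma}}_\mu$ (hence the weights $w(\mathfrak{X}^+_\lambda)$, $w(\mathfrak{Y}^+_\lambda)$) are the Hodge numbers $p^{\mathcal{M}_{\tau,\sigma}}_\mu$ listed in increasing order when $\tau$ is real, and the increasingly sorted disjoint union of the $p$-numbers and the $q$-numbers when $\tau$ is complex --- pairwise distinct by the standing hypothesis that $F_\infty$ acts on $H^{\mathsf{w}/2,\mathsf{w}/2}(\mathrm{Res}_{\mathsf{F}/\mathbf{Q}}(\bullet))$ as a scalar; in particular $\mathsf{t}^{\bullet_{\tau,\sigma}}=d_*(\bullet)$ and $\mathsf{t}^{\bullet_{\tau,\sigma},\pm}=d^\pm_*(\bullet_\tau)$. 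The same hypothesis, combined with the parities of $d(\mathcal{M})$ and $d(\mathcal{N})$, pins down the eigenspace dimensions: $F_{\infty,\tau}\otimes\mathrm{id}$ acts on each paired Hodge line $H^{p,q}\oplus H^{q,p}$ with eigenvalues $+1$ and $-1$ and on the at most one diagonal line $H^{\mathsf{w}/2,\mathsf{w}/2}$ by a single sign common to all $\tau$, so $\{d^+_*(\bullet_\tau),d^-_*(\bullet_\tau)\}$ is balanced up to at most one, the imbalance for $\mathcal{M}$ (resp.\ for $\mathcal{N}$) occurring exactly when $d(\mathcal{N})$ is even (resp.\ odd) and being $\tau$-independent, while $d^+_*(\bullet_\tau)=d^-_*(\bullet_\tau)=d(\bullet)$ when $\tau$ is complex.

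Second I would invoke Proposition~\ref{prop:tensor}. Since for real $\tau$ the interlace condition coincides with the strong interlace condition (Remark after Definition~\ref{def:interlace}), Corollary~\ref{cor:qmp} applies in both cases, so $(\mathrm{Fil}^\mp)_{\tau,\sigma}$ holds and $q^+_{\tau,\sigma}=q^-_{\tau,\sigma}=q_{\tau,\sigma}$ with $q_{\tau,\sigma}$ as in $(\ref{eq:qmp_tau_sigma})$; hence $c^\pm_{(\tau,\sigma)}(\mathcal{M},\mathcal{N})\sim_{E^\sigma\mathsf{F}^{(\tau,\rho)}}\phi^\pm_{\tau,\sigma}(X^{\mathcal{M}}_{\tau,\sigma})\,\psi^\pm_{\tau,\sigma}(Y^{\mathcal{N}}_{\tau,\sigma})$ with $\phi^\pm$, $\psi^\pm$ of the explicit admissible types listed there, and the $(a)$- and $(b)$-parts are independent of the sign $\pm$ (only the $(k,l)$-parts vary, and they merely interchange their two entries). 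I would then compute these parts from the interlace inequalities: for real $\tau$ one checks, using $(\ref{eq:intlpq})$ and the monotonicity of $\{p^{\bullet_{\tau,\sigma}}_\mu\}$, that $p^{\mathcal{M}_{\tau,\sigma}}_\mu+p^{\mathcal{N}_{\tau,\sigma}}_{\mu'}<q_{\tau,\sigma}$ holds exactly when $\mu+\mu'\le d(\mathcal{M})$, so that $a^{(\pm)}_\mu=d(\mathcal{M})-\mu$ and $b^{(\pm)}_\nu=d(\mathcal{M})-\nu$ --- a full staircase $(d(\mathcal{M})-1,\dotsc,1,0)$ for $\phi^\pm$ and $(d(\mathcal{M})-1,\dotsc,1)$ for $\psi^\pm$. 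For complex $\tau$ the same argument, run over all $\tilde\tau,\tilde\tau'\in\{\tau,\rho\tau\}$ by means of the strong interlace condition together with the cardinality analysis in the proof of Lemma~\ref{lem:qmp}, yields the ``doubled'' staircase with steps of height two and a flat central block of length $d(\mathcal{N})+2$.

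Finally I would feed these types into Theorem~\ref{thm:invpoly}: each of $\phi^\pm_{\tau,\sigma}$ and $\psi^\pm_{\tau,\sigma}$ is, uniquely up to a nonzero rational multiple, a monomial in $\det$, $f^+$, $f^-$ and the $f_\beta$ for the partition $\{s^{\bullet_{\tau,\sigma}}_\mu\}$, and evaluating at $X^{\mathcal{M}}_{\tau,\sigma}$ (resp.\ $Y^{\mathcal{N}}_{\tau,\sigma}$) converts this into the asserted monomial in $\delta_{(\tau,\sigma)}$, $c^\pm_{(\tau,\sigma)}$, $c_{\beta,(\tau,\sigma)}$ of $\mathcal{M}$ (resp.\ of $\mathcal{N}$); because the fundamental periods are themselves defined only up to $E^\sigma\mathsf{F}^{(\tau,\rho)}$-multiples, the resulting $\sim_{E^\sigma\mathsf{F}^{(\tau,\rho)}}$ may be written as the equality appearing in the statement. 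The exponent-matching is the arithmetic heart: the staircase $(d(\mathcal{M})-1,\dotsc,1,0)$ equals $\sum_{\beta=1}^{\lfloor d(\mathcal{N})/2\rfloor}(\text{type of }f_\beta)$ up to the parity-dependent position of its middle step, which is precisely what separates the ``$d(\mathcal{N})$ even'' and ``$d(\mathcal{N})$ odd'' displays; the residual discrepancy in the $(k,l)$-parts is the eigenspace imbalance $d^+_*-d^-_*$ of whichever of $\mathcal{M}$, $\mathcal{N}$ is unbalanced, and it is absorbed by a single factor $c^{\pm}_{(\tau,\sigma)}$ or $c^{\mp}_{(\tau,\sigma)}$ of the \emph{complementary} motive, the sign being read off from the sign of that imbalance; when $\tau$ is complex, where both motives are balanced, this factor becomes $c^+_{(\tau,\sigma)}c^-_{(\tau,\sigma)}$ and all exponents double, which is also consistent with the two-factor definition $(\ref{eq:cmn})$ of $c^\pm_{(\tau,\sigma)}(\mathcal{M},\mathcal{N})$. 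The main obstacle is exactly this last bookkeeping --- decomposing the (doubled) staircase admissible types as monomials in the elementary invariants while tracking the $(k^+,k^-)$-components finely enough to extract the precise power of $c^\pm_{(\tau,\sigma)}$ and the correct parity-dependent range of the $c_\beta$'s --- together with keeping the block structure of the $2d_*(\bullet)$-sized period matrices straight at the complex places.
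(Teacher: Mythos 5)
Your outline follows exactly the route of the paper: invoke Proposition~\ref{prop:tensor} to write $c^{\pm}_{(\tau,\sigma)}(\mathcal{M},\mathcal{N})\sim\phi^{\pm}_{\tau,\sigma}(X^{\mathcal{M}}_{\tau,\sigma})\psi^{\pm}_{\tau,\sigma}(Y^{\mathcal{N}}_{\tau,\sigma})$, use regularity plus the (strong) interlace inequalities to pin down the admissible types explicitly, then decompose those types into monomials in $\det$, $f^{\pm}$, $f_{\beta}$ via Theorem~\ref{thm:invpoly} and read off the fundamental periods. The real case is worked out correctly and matches the paper: $a^{(\pm)}_{\mu}=d(\mathcal{M})-\mu$ gives the full staircase, and the parity-dependent positioning of $c^{\pm}$ versus the balanced $c_{\beta}$'s is exactly the bookkeeping the paper carries out.

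One small misattribution: the pairwise distinctness of the jumps of $\mathcal{T}_{\tau,\sigma}(\bullet)$ at a complex $\tau$ (hence $\mathsf{t}^{\bullet_{\tau,\sigma}}=d_{*}(\bullet)$ with all $s_{\mu}=1$) is a consequence of \emph{regularity} at $\tau,\sigma$ — a $2$-dimensional Hodge component would appear precisely where a $p$-number and a $q$-number of $\bullet_{\tau}$ collide — not of the assumption that $F_{\infty}$ acts by a scalar on $H^{\mathsf{w}/2,\mathsf{w}/2}(\mathrm{Res}_{\mathsf{F}/\mathbf{Q}}(\bullet))$, which only concerns the single diagonal type.

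The real concern is your description of the complex-case admissible type as a ``doubled staircase with steps of height two and a flat central block of length $d(\mathcal{N})+2$.'' That does not match either the paper's table or a direct check. With $\mathsf{t}^{\mathcal{M}_{\tau,\sigma}}=2d(\mathcal{M})$ and all $s_{\mu}=1$, the $a$-part of $\phi^{\pm}$ comes out to $(2d(\mathcal{N}),2d(\mathcal{N}),2d(\mathcal{N})-2,2d(\mathcal{N})-2,\dotsc,2,2,0,0)$ — each value repeated twice, no flat central block. (Concretely, for $d(\mathcal{N})=2$, $d(\mathcal{M})=3$ and interlacing Hodge numbers, one computes $(4,4,2,2,0,0)$.) You will want to recompute this sequence carefully, and then redo the decomposition into $f_{\beta}$'s for that $\mathsf{t}=2d(\mathcal{M})$ partition before matching against the right-hand side of part (2); a wrong $a$-sequence shifts which $f_{\beta}$'s appear and hence which $c_{\beta,(\tau,\sigma)}$'s show up in the product. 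The paper also glosses over this step (``we omit the details''), so the discrepancy may be on either side, but as written your claimed type cannot feed correctly into the final bookkeeping.
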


\begin{proof}
Write $c^\pm_{(\tau,\sigma)}( \mathcal{M},\mathcal{N})= \phi^\pm_{\tau,\sigma}(X_{\tau,\sigma}) \psi^\pm_{\tau,\sigma}(Y_{\tau,\sigma})$ as in Proposition~\ref{prop:tensor}. We determine the admissible types of $\phi_{\tau,\sigma}^{\pm}(x)$ and $\psi_{\tau,\sigma}^{\pm}(y)$ in each case.

\begin{enumerate}[label=(\arabic*),leftmargin=2em]
 \item Suppose that $\tau$ is real. 
It suffices to show that the admissible types of $\phi_{\tau,\sigma}^{\pm}(x)$ and $\psi_{\tau,\sigma}^{\pm}(y)$ are given as follows.

\begin{center}
 \begin{tabular}{@{\vrule width 1pt }@{\,}c@{\,}|@{\,}c@{\,}|@{\,}c@{\,}@{\vrule width 1pt}}
\hline
 & $d(\mathcal{N})$: even & $d(\mathcal{N})$: odd \\ \hline 
$\phi^\pm_{\tau,\sigma}(x)$ & $\{ (d(\mathcal{N}), d(\mathcal{N})-1, \dotsc, 1, 0) ;  ( \frac{d(\mathcal{N})}{2} , \frac{d(\mathcal{N})}{2})  \}$ &  
$\{ (d(\mathcal{N}), d(\mathcal{N})-1, \dotsc, 1,0) ;  ( d^{\pm}(\mathcal{N}_\tau), d^{\mp}(\mathcal{N}_\tau)) \}$\\ \hline
$\psi^\pm_{\tau,\sigma}(y)$ & 
$\{ (d(\mathcal{N}), d(\mathcal{N})-1, \dotsc, 1) ;  ( d^\pm(\mathcal{M}_\tau), d^\mp(\mathcal{M}_\tau)) \}$
& $\{(d(\mathcal{N}),d(\mathcal{N})-1,\dotsc,1) ; (\frac{d(\mathcal{N})+1}{2}, \frac{d(\mathcal{N})+1}{2})\}$
\\ \hline
\end{tabular}
\end{center}

Note that, if $d(\bullet)$ is odd for $\bullet=\mathcal{M}$ or $\mathcal{N}$, we have $d^+(\bullet_\tau)=\frac{d(\bullet)+1}{2}$ (resp.\ $\frac{d(\bullet)-1}{2}$) and $d^-(\bullet_\tau)=\frac{d(\bullet)-1}{2}$ (resp.\ $\frac{d(\bullet)+1}{2}$) when the inequality $d^+(\bullet_\tau)>d^-(\bullet_\tau)$ (resp.\ $d^+(\bullet_\tau)<d^-(\bullet_\tau)$) holds. 

In fact, if we have determined the admissible types of $\phi^\pm_{\tau,\sigma}(x)$ and $\psi^\pm_{\tau,\sigma}(y)$ as above, it is easy to deduce the desired formulae. For example, 
\begin{align*}
 &\{ (d(\mathcal{N}), d(\mathcal{N})-1, \ldots, 1, 0 ) ; (d^{\pm}(\mathcal{N}_\tau), d^{\mp}(\mathcal{N}_\tau)) \} \\
&   = \sum^{\frac{d(\mathcal{N})-1}{2}}_{\beta=1}  \{ ( \overbrace{2, \dotsc, 2}^{\beta},  \overbrace{1, \ldots, 1}^{d(\mathcal{N})-2\beta},   \overbrace{0, \ldots, 0}^{\beta}  ) ;  (1,1) \} \\
&\qquad \qquad + 
\begin{cases}
 \{ (\overbrace{1,\dotsc,1}^{\frac{d(\mathcal{N})+1}{2}},0,\dotsc,0): ((1\pm 1)/2,(1\mp 1)/2) \} & \text{if $d^+(\mathcal{N}_\tau)>d^-(\mathcal{N}_\tau)$ holds}, \\
\{ (\overbrace{1,\dotsc,1}^{\frac{d(\mathcal{N})+1}{2}},0,\dotsc,0): ((1\mp 1)/2,(1\pm 1)/2) \} & \text{if $d^+(\mathcal{N}_\tau)<d^-(\mathcal{N}_\tau)$ holds}
\end{cases}
\end{align*}
implies 
\begin{align*}
 \phi^\pm_{\tau,\sigma}(X_{\tau,\sigma})=\prod_{\beta=1}^{\frac{d(\mathcal{N})-1}{2}} c_{\beta,(\tau,\sigma)}(\mathcal{M}) \times 
\begin{cases}
 c^{\pm}_{(\tau,\sigma)}(\mathcal{N}) & \text{if $d^+(\mathcal{N}_\tau)>d^-(\mathcal{N}_\tau)$ holds}, \\
 c^{\mp}_{(\tau,\sigma)}(\mathcal{N}) & \text{if $d^+(\mathcal{N}_\tau)<d^-(\mathcal{N}_\tau)$ holds}
\end{cases}
\end{align*}
when $d(\mathcal{N})$ is odd. By similar calculations, we can express $\phi^\pm_{\tau,\sigma}(X_{\tau,\sigma})$ and $\psi^\pm_{\tau,\sigma}(Y_{\tau,\sigma})$ as the products of Yoshida's fundamental periods of $\mathcal{M}$ and $\mathcal{N}$ in any other cases. Then the desired formulae are obtained due to Proposition~\ref{prop:tensor}. 

Here we only determine the admissible type of $\phi^\pm_{\tau,\sigma}(x)$ for odd $d(\mathcal{N})$; arguments are similar in other cases. 
For each $1 \leq \mu \leq d(\mathcal{M})=d(\mathcal{N})+1$, the definition of $i_\mu^{\mathcal{M}_{\tau,\sigma}}$ implies  $ i_\mu^{\mathcal{M}_{\tau,\sigma}}= p^{\mathcal{M}_{\tau,\sigma}}_\mu$. Meanwhile, for each $1\leq \lambda\leq d(\mathcal{N})$, the equality $w(\mathfrak{Y}_\lambda)=p^{\mathcal{N}_{\tau,\sigma}}_\lambda$ holds by definition. 
The identity (\ref{eq:qmp_tau_sigma}) in Lemma \ref{lem:qmp} implies the following inequalities:
\begin{align*}
         p^{\mathcal{M}_{\tau,\sigma}}_\mu  + p^{\mathcal{N}_{\tau,\sigma}}_1 
      < p^{\mathcal{M}_{\tau,\sigma}}_\mu  + p^{\mathcal{N}_{\tau,\sigma}}_2   
      < \cdots 
      < p^{\mathcal{M}_{\tau,\sigma}}_\mu  + p^{\mathcal{N}_{\tau,\sigma}}_{d(\mathcal{N})+1-\mu}  
      < q_{\tau,\sigma}^{\mp}
      \leq  p^{\mathcal{M}_{\tau,\sigma}}_\mu  + p^{\mathcal{N}_{\tau,\sigma}}_{d(\mathcal{N})+2-\mu}.
\end{align*}   
Thus we have $a_\mu^{(\pm)} = d(\mathcal{N})+1-\mu$. The equality $(k^{(\pm),+},k^{(\pm),-})=(d^\pm (\mathcal{N}_\tau), d^{\mp}(\mathcal{N}_\tau))$ follows from Proposition~\ref{prop:tensor}.

 \item Suppose that $\tau$ is complex. Similarly to (1), it suffices to show that the admissible types of $\phi_{\tau,\sigma}^\pm(x)$ and $\psi_{\tau,\sigma}^\pm(y)$ are given as follows.

\begin{center}
 \begin{tabular}{@{\vrule width 1pt\ }c|c@{\ \vrule width 1pt}}
\hline
$\phi^\pm_{\tau,\sigma}(x)$ & $\{(2d(\mathcal{N}), 2(d(\mathcal{N})-1),\dotsc,2,0) ; (d(\mathcal{N}), d(\mathcal{N}))\}$\\ \hline
$\psi^\pm_{\tau,\sigma}(y)$ & $\{(2d(\mathcal{N}), 2(d(\mathcal{N})-1),\dotsc,4,2) ; (d(\mathcal{N})+1, d(\mathcal{N})+1)\}$ \\\hline
\end{tabular}
\end{center}

The required calculations for determination of the admissible types are very close to those in (1), and thus we omit the details.
\qedhere
\end{enumerate}
\end{proof}

\section{Cohomological automorphic representations}\label{sec:CohAuto}

In this section, we prepare some basic facts about cohomological  irreducible cuspidal automorphic representations.
Clozel's exposition \cite{clo90} is a basic reference of this section. 
Brief summaries can also be found in \cite[Section 3.1.1]{mah05}, \cite[Section 2.3]{gr14} and \cite[Section 2.4]{rag16}.

\subsection{Langlands parameters}\label{sec:LangPara}

We summarize the notion of Langlands parameters of cohomological irreducible cuspidal automorphic representations.

We realize the Weil group $W_{\mathbf R}$ as the set ${\mathbf C}^\times \sqcup ({\mathbf C}^\times j)$ with $j^{2}=-1, jzj = -z^\rho$ for each $z \in {\mathbf C}^\times$.
For each $\nu \in {\mathbf C}, \delta\in \{0, 1\}$ and $l\in {\mathbf Z}_{\geq 0}$, 
define 1-dimensional representation $\phi^\delta_\nu\colon  W_{\mathbf R} \to {\mathbf C}^\times$ 
and 2-dimensional representation $\phi_{\nu, l} \colon W_{\mathbf R} \to {\rm GL}_2({\mathbf C})$ of $W_{\mathbf{R}}$ as follows:
\begin{align*}
     \phi^\delta_\nu (z) &= (zz^\rho)^{\nu} \ (z\in {\mathbf C}^\times),
                                       \quad \phi^\delta_\nu(j) = (-1)^\delta,   \\   
     \phi_{\nu, l} (z)  &=  (zz^\rho)^\nu \begin{pmatrix}  (z^\rho/z)^{\frac{l}{2}}  &  0 \\  0 &  (z/z^\rho)^{\frac{l}{2}}  \end{pmatrix} \ (z \in {\mathbf C}^\times), 
                                        \quad \phi_{\nu, l} (j) =  \begin{pmatrix}  0 & (-1)^l  \\ 1 & 0 \end{pmatrix}. 
\end{align*} 
Define $\chi^\delta_\nu \colon {\mathbf R}^\times \to {\mathbf C}^\times$ to be $\chi^\delta_\nu(t) = {\rm sgn}(t)^\delta |t|^\nu_\infty$. 
We also define an irreducible representation $D_{\nu, l}$ of $\mathrm{GL}_2(\mathbf{R})$ so that 
\begin{align*}
  D_{\nu, l}(t1_2) &= t^{2\nu} \ (t \in {\mathbf R}_{>0}), &
  D_{\nu, l}|_{ {\rm SL}_2({\mathbf R}) } & \cong D^+_l \oplus D^-_l   
\end{align*}
where $D^+_l$ (resp.\ $D^-_l$) is the holomorphic (resp.\ anti-holomorphic) discrete series representation of ${\rm SL}_2({\mathbf R})$  of lowest weight $l+1$ (resp.\ of highest weight $-l-1$). 
Then $\phi^\delta_\nu$ (resp.\ $\phi_{\nu, l}$) corresponds to $\chi^\delta_\nu$ (resp.\ $D_{\nu, l}$) via the local Langlands correspondence. Refer to \cite{kna94} for details on the archimedean local Langlands correspondence.

Now let $\mathsf{F}$ be a number field. In the following,  we consider a {\em cohomological} (or {\em regular algebraic}) irreducible cuspidal automorphic representation $\pi^{(n)}$ of $\mathrm{GL}_n(\mathsf{F}_{\mathbf{A}})$. Then, for any archimedean place $v\in \Sigma_{\mathsf{F},\infty}$, the $v$-component $\pi^{(n)}_v$ of $\pi^{(n)}$ is an irreducible admissible representation of $\mathrm{GL}_n(\mathsf{F}_v)$, and so there exists a corresponding semisimple complex representation $\phi(\pi^{(n)}_v)$ of $W_{\mathbf{R}}$ via the local Langlands correspondence. We call $\phi(\pi^{(n)}_v)$ the {\em Langlands parameter} of $\pi^{(n)}_v$, which is described as follows.

\begin{thm}[{\cite[Theorem 2, Theorem 5]{kna94} and \cite[Section~3.5]{clo90}}]  \label{thm:LLC}
Let $\mathsf{F}$ be a number field and $\pi^{(n)}$ a cohomological irreducible cuspidal automorphic representation of $\mathrm{GL}_n(\mathsf{F}_{\mathbf{A}})$. For each archimedean place $v\in \Sigma_{\mathsf{F}, \infty}$ of $\mathsf{F}$, let $\phi(\pi^{(n)}_v) \colon W_{\mathsf{F}_v} \to {\rm GL}_n({\mathbf C})$ denote the Langlands parameter of the $v$-component $\pi^{(n)}_v$ of $\pi^{(n)}$. 
\begin{itemize}
\item
{\bf ($v$: real, $n$: even)}
Suppose that $v$ is real and $n=2m$ is even.    
             Then  $\phi(\pi^{(n)}_v)$ is given by 
             \begin{align*}
             \phi(\pi^{(n)}_v)
             =  \bigoplus^m_{i=1} \phi_{\nu^{(n)}_{v, i},l^{(n)}_{v, i}} \quad \text{with $l^{(n)}_{v, 1}> l^{(n)}_{v, 2}>\cdots > l^{(n)}_{v, m}\geq 1$},                          \end{align*}
where each $\nu^{(n)}_{v,i}$ is a rational number such that $\nu^{(n)}_{v,i}\pm \dfrac{l^{(n)}_{v,i}}{2}-\dfrac{n-1}{2}$ is an integer.
\item
{\bf ($v$: real, $n$: odd)}
 Suppose that $v$ is real and $n=2m+1$ is odd.    
             Then  $\phi(\pi^{(n)}_v)$ is given by 
             \begin{align*}
             \phi(\pi^{(n)}_v)
             =   \phi^{\delta^{(n)}_v}_{\nu^{(n)}_{v,0}}  \oplus \bigoplus^m_{i=1} \phi_{\nu^{(n)}_{v,i}, l^{(n)}_{v,i}} \quad \text{with $l^{(n)}_{v, 1}> l^{(n)}_{v, 2}>\cdots > l^{(n)}_{v,m}\geq 1$},
             \end{align*} 
where $\delta^{(n)}_v\in \{ 0,1\}$ and each $\nu^{(n)}_{v,i}$ is a rational number such that $\nu^{(n)}_{v,i}\pm \dfrac{l^{(n)}_{v,i}}{2}-\dfrac{n-1}{2}$ is an integer $($with $l^{(n)}_{v,0}:=0)$.

\item
{\bf ($v$: complex)}
             Suppose that $v$ is complex. Then  $\phi(\pi^{(n)}_v)$ is given by 
\begin{align*}
     \phi(\pi^{(n)}_v)(z)
     =   {\rm diag}( z^{a^{(n)}_{v, 1}} (z^\rho)^{b^{(n)}_{v, 1}}  ,    \dotsc ,   z^{a^{(n)}_{v, n}} (z^\rho)^{b^{(n)}_{v, n}}     ), 
 \end{align*}
 where each $a^{(n)}_{v,i}$ $($resp.\ $b^{(n)}_{v,i})$ is a rational number such that $a^{(n)}_{v,i}-\dfrac{n-1}{2}$ $($resp.\ $b^{(n)}_{v,i}-\dfrac{n-1}{2})$ is an integer. 
Furthermore the sequence $\{a^{(n)}_{v,i}\}_{1\leq i\leq n}$ satisfies $a^{(n)}_{v,1}<a^{(n)}_{v,2}<\cdots <a^{(n)}_{v,n}$.

\end{itemize}

\end{thm}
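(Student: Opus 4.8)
The plan is to reduce Theorem~\ref{thm:LLC} to three standard inputs: the archimedean local Langlands correspondence for $\mathrm{GL}_n$ recalled above, together with its compatibility with parabolic induction (\cite{kna94})\,; Clozel's description of the archimedean component of a cohomological cuspidal automorphic representation of $\mathrm{GL}_n$ (\cite[Section~3.5]{clo90})\,; and a comparison of infinitesimal characters which converts the regularity and the algebraicity of the highest weight $\boldsymbol{\mu}^{(n)}$ attached to $\pi^{(n)}$ into the asserted inequalities and integrality conditions. Only the second input carries genuine content; the rest is essentially formal.

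For a real place $v$ I would first recall that over $\mathbf{R}$ the only general linear groups carrying essentially square-integrable representations are $\mathrm{GL}_1(\mathbf{R})$, with essentially tempered irreducibles the quasicharacters $\chi^{\delta}_{\nu}$, and $\mathrm{GL}_2(\mathbf{R})$, with essentially square-integrable irreducibles the $D_{\nu,l}$ for $l\geq 1$. By \cite[Section~3.5]{clo90} --- which is the Vogan--Zuckerman description of the representations with non-vanishing $(\mathfrak{g},K)$-cohomology specialised to $\mathrm{GL}_n$ --- cohomology with respect to a regular algebraic coefficient system forces $\pi^{(n)}_v$ to be essentially tempered, in fact tempered after an appropriate twist by a power of $|\det|$ determined by the purity weight $w(\pi^{(n)})$; concretely $\pi^{(n)}_v$ is the full parabolic induction (automatically irreducible in this regular tempered situation) of a tensor product of discrete-series blocks $D_{\nu^{(n)}_{v,i},l^{(n)}_{v,i}}$ on $\mathrm{GL}_2$-factors and at most one quasicharacter block $\chi^{\delta^{(n)}_v}_{\nu^{(n)}_{v,0}}$ on a $\mathrm{GL}_1$-factor. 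Because the infinitesimal character of $\pi^{(n)}_v$ is regular (it is that of the coefficient system $\mathcal{V}(\boldsymbol{\mu}^{(n),\vee})$ up to contragredient, and $\boldsymbol{\mu}^{(n)}$ is regular), no two blocks can share a Harish-Chandra parameter; hence the $l^{(n)}_{v,i}$ are pairwise distinct, at most one $\mathrm{GL}_1$-block occurs, and a degree count fixes the number of $\mathrm{GL}_1$-blocks to be $0$ when $n$ is even and $1$ when $n$ is odd. Since the archimedean local Langlands correspondence sends parabolic induction to direct sum of parameters and sends $D_{\nu,l}$, $\chi^{\delta}_{\nu}$ to $\phi_{\nu,l}$, $\phi^{\delta}_{\nu}$, this yields the asserted form of $\phi(\pi^{(n)}_v)$, and (writing $n=2m$ or $n=2m+1$) reordering the blocks gives $l^{(n)}_{v,1}>l^{(n)}_{v,2}>\cdots>l^{(n)}_{v,m}\geq 1$ (the bound $\geq 1$ because $D_{\nu,0}$ is not square-integrable, equivalently because $l=0$ would force a repeated parameter). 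Finally, the infinitesimal character of $\pi^{(n)}_v$ is a regular integral algebraic one, i.e.\ an $n$-element set of numbers that become integers after subtracting $\tfrac{n-1}{2}$; on the parameter side this set is $\{\,\nu^{(n)}_{v,i}\pm\tfrac{l^{(n)}_{v,i}}{2}\mid 1\leq i\leq m\,\}$ (together with $\{\nu^{(n)}_{v,0}\}$ when $n$ is odd, with $l^{(n)}_{v,0}:=0$), so we read off $\nu^{(n)}_{v,i}\pm\tfrac{l^{(n)}_{v,i}}{2}-\tfrac{n-1}{2}\in\mathbf{Z}$, whence, by taking differences and sums, $l^{(n)}_{v,i}\in\mathbf{Z}$ and $2\nu^{(n)}_{v,i}\in\mathbf{Z}$; in particular $\nu^{(n)}_{v,i}\in\mathbf{Q}$ (rationality is in any case also forced by Clozel's theorem that the field of rationality of $\pi^{(n)}$ is a number field).

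For a complex place $v$ the argument runs along the same lines, only more simply: here $W_{\mathsf{F}_v}\cong\mathbf{C}^\times$ is connected, so every semisimple $n$-dimensional representation of $W_{\mathsf{F}_v}$ is a direct sum of quasicharacters $z\mapsto z^{a}(z^\rho)^{b}$, and $\mathrm{GL}_1(\mathbf{C})=\mathbf{C}^\times$ carries no essentially square-integrable representation beyond its quasicharacters; hence the essentially tempered representation $\pi^{(n)}_v$ is a full parabolic induction of quasicharacters, and local Langlands gives $\phi(\pi^{(n)}_v)(z)=\mathrm{diag}(z^{a^{(n)}_{v,1}}(z^\rho)^{b^{(n)}_{v,1}},\dots,z^{a^{(n)}_{v,n}}(z^\rho)^{b^{(n)}_{v,n}})$. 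Regularity of $\boldsymbol{\mu}^{(n)}$ forces the $a^{(n)}_{v,i}$ to be pairwise distinct, and I would reorder the blocks so that $a^{(n)}_{v,1}<a^{(n)}_{v,2}<\cdots<a^{(n)}_{v,n}$; algebraicity of the infinitesimal character gives $a^{(n)}_{v,i}-\tfrac{n-1}{2},\,b^{(n)}_{v,i}-\tfrac{n-1}{2}\in\mathbf{Z}$, and rationality follows as before.

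The step I expect to be the main obstacle is the appeal to \cite[Section~3.5]{clo90}: the assertion that cohomology with respect to a regular algebraic coefficient system forces $\pi^{(n)}_v$ to be essentially tempered of exactly this shape --- in particular that the eigenvalues of the infinitesimal character are grouped into blocks in the symmetric, tempered way rather than in some other admissible way. This is genuinely the Vogan--Zuckerman classification of $(\mathfrak{g},K)$-cohomological modules specialised to $\mathrm{GL}_n$, for which the cohomological $A_{\mathfrak{q}}(\lambda)$-modules with regular $\lambda$ are precisely the essentially tempered ones, and it is the only point at which the hypotheses ``cohomological'' and ``cuspidal'' enter in an essential way. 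Everything downstream is the comparison of infinitesimal characters sketched above, combined with the archimedean local Langlands correspondence and its compatibility with parabolic induction from \cite{kna94}.
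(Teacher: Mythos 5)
The paper does not prove Theorem~\ref{thm:LLC}; it is quoted verbatim with citations to Knapp and Clozel, with no argument given. Your sketch is a correct and faithful reconstruction of what those references contain --- the Vogan--Zuckerman/Clozel classification forcing $\pi^{(n)}_v$ to be essentially tempered (hence irreducibly induced from discrete series and quasicharacter blocks of size $\leq 2$), Knapp's compatibility of the archimedean local Langlands correspondence with parabolic induction converting this to the stated direct-sum form, and the integrality and distinctness constraints being read off from the regular algebraic infinitesimal character --- so there is nothing to compare against on the paper's side and nothing to object to in your argument.
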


For $v\in \Sigma_{\mathsf{F}, \infty}$, let $\phi(\pi^{(n)}_v) \colon W_{\mathsf{F}_v} \to {\rm GL}_n({\mathbf C})$ be the Langlands parameter of $\pi^{(n)}_v$.  
Then the evaluation of $\phi(\pi^{(n)}_v)$ at  $z \in {\mathbf C}^\times \subset W_{\mathsf{F}_v}$ is described as
\begin{align}\label{eq:LangPara}
    \phi(\pi_v)    (z) 
  &= {\rm diag}( z^{a^{(n)}_{v, 1}} (z^\rho)^{b^{(n)}_{v, 1}}  ,    \ldots ,   z^{a^{(n)}_{v, n}} (z^\rho)^{b^{(n)}_{v, n}}     ). 
%
\end{align}

We now attach the expected Hodge type 
$(p^{(n)}_{\tau, i}, q^{(n)}_{\tau,i})  \in {\mathbf Z}^{\oplus 2}$  for  $i=1, \ldots, n$ and $\tau \in I_{\mathsf{F}}$ to $(a^{(n)}_{v,i},b^{(n)}_{v,i})$ in (\ref{eq:LangPara}), according to \cite[Sections 3.3 and 4.4.3]{clo90}.
Let $\tau = \tau_v$ denote the embedding of $\mathsf{F}$ into $\mathbf{C}$ corresponding to $v\in \Sigma_{\mathsf{F}, \infty}$ via the fixed identification $\Sigma_{\mathsf{F}, \infty} \subset I_\mathsf{F}$. 
Then define $p^{(n)}_{\tau, i}$ and $q^{(n)}_{\tau, i}$ so that 
\begin{align*}
p^{(n)}_{\tau, i} = a^{(n)}_{v, i} - \frac{n-1}{2},   \quad 
q^{(n)}_{\tau, i} = b^{(n)}_{v, i} - \frac{n-1}{2}. 
\end{align*}
Due to regular algebraicity of $\pi^{(n)}$, each $p^{(n)}_{\tau,i}$ is an integer and the inequalities
\begin{align*}
   p^{(n)}_{\tau, 1}  < p^{(n)}_{\tau, 2} < \cdots < p^{(n)}_{\tau, n} 
\end{align*}
holds (see Theorem~\ref{thm:LLC}). Furthermore the purity of cohomological representations (see \cite[Lemme de puret\'{e} 4.9]{clo90} for details) implies that   
there exists an integer $w(\pi^{(n)})$ (called the {\em purity weight} of $\pi^{(n)}$), which is independent of $v\in \Sigma_{\mathsf{F},\infty}$, satisfying 
\begin{align*}
   w(\pi^{(n)}) - (n  -  1) =  a^{(n)}_{v, i} + b^{(n)}_{v, i} - (n - 1) = p^{(n)}_{\tau, i} + q^{(n)}_{\tau, i}.   
\end{align*}
Now define the {\em motivic weight} of $\pi^{(n)}$ as $\mathsf{w}^{(n)}:=w(\pi^{(n)})-(n-1)$.
If $\tau\in I_\mathsf{F}$ is real, we find that 
$ q^{(n)}_{\tau, i} = p^{(n)}_{\tau, n + 1 - i}$ holds for each $i = 1, \ldots, n$.   
If $\tau\in I_\mathsf{F}$ is complex, we define 
$p^{(n)}_{\rho\tau, i}$ and $q^{(n)}_{\rho\tau, i}$ to be $p^{(n)}_{\rho\tau, i} = q^{(n)}_{\tau,  n+1-i}$ and $q^{(n)}_{\rho\tau, i} = p^{(n)}_{\tau,  n+1-i}$ respectively, where $\rho$ is the complex conjugate.

\begin{rem}\label{rem:Hodgepq}
We briefly recall the relation between the expected Hodge numbers defined as above and the Langlands parameters, according to \cite[Section 4.3.3]{clo90}.   
Write the Langlands parameter $\phi(\pi_v)$ of $\pi_v$ as in Theorem \ref{thm:LLC}.   
Let $\tau \colon \mathsf{F}\to \mathbf{C}$  be the embedding corresponding to $v\in \Sigma_{\mathsf{F}, \infty} \subset I_\mathsf{F}$.  
\begin{itemize}
\item
{\bf ($v$: real, $n$: even)}
Suppose that $v$ is real and $n=2m$ is even.    
For each $m+1\leq i \leq n=2m$, define  $l^{(n)}_{v, i}$ so that the equation  $l^{(n)}_{v, i} + l^{(n)}_{v, n+1-i}  = 0$ is fulfilled for $i=1,2,\dotsc,m$.                             
Then we have
\begin{align*}
    \nu^{(n)}_{v, i}   &=    \frac{w(\pi^{(n)})}{2} \quad \text{for }i=1,2,\dotsc,m, \\
   p^{(n)}_{\tau, i} &= \dfrac{w(\pi^{(n)})}{2} - \frac{l^{(n)}_{v, i}}{2} - \frac{n-1}{2} =  \frac{w(\pi^{(n)}) - l^{(n)}_{v, i} - n+1}{2} \quad \text{for }i=1,2,\dotsc, n,    \\ 
   q^{(n)}_{\tau, i}  &=  \dfrac{w(\pi^{(n)})}{2} + \frac{l^{(n)}_{v, i}}{2} - \frac{n-1}{2}  =  \frac{w(\pi^{(n)}) + l^{(n)}_{v, i} - n+1}{2} \quad \text{for }i=1,2,\dotsc,n.
\end{align*}

\item
{\bf ($v$: real, $n$: odd)}
 Suppose that $v$ is real and $n=2m+1$ is odd.    
For each $m+2\leq i \leq n=2m+1$, define $l^{(n)}_{v, i}$ so that the equation $l^{(n)}_{v,i} + l^{(n)}_{v, n+1-i} = 0$ is fulfilled for $i=1,2,\dotsc,m$.
We also set $l^{(n)}_{v, m+1} = 0$.                               
Then we have
\begin{align*}
  \nu^{(n)}_v &=      \frac{w(\pi^{(n)})}{2} \quad \text{for }i=0,1,\dotsc,m, \\ 
   p^{(n)}_{\tau, i} &=\dfrac{w(\pi^{(n)})}{2} - \frac{l^{(n)}_{v, i}}{2} - \frac{n-1}{2} =  \frac{w(\pi^{(n)}) - l^{(n)}_{v, i} - n+1}{2} \quad \text{for }i=1,2,\dotsc,n,    \\ 
   q^{(n)}_{\tau, i}  &= \dfrac{w(\pi^{(n)})}{2} + \frac{l^{(n)}_{v, i}}{2} - \frac{n-1}{2}  =  \frac{w(\pi^{(n)}) + l^{(n)}_{v, i} - n+1}{2} \quad \text{for }i=1,2,\dotsc,n.
\end{align*}

\item
{\bf ($v$: complex)}
Suppose that $v$ is complex. Then we have
\begin{align*}
   p^{(n)}_{\tau, i} &= a^{(n)}_{v, i}  - \frac{n-1}{2} =q^{(n)}_{\rho\tau,n+1-i},   
   &q^{(n)}_{\tau, i}  &=  b^{(n)}_{v, i}  - \frac{n-1}{2}=p^{(n)}_{\rho\tau,n+1-i}.
\end{align*}
\end{itemize}
\end{rem}

Finally we impose the following constraints on the Langlands parameters of the cohomological irreducible cuspidal automorphic representations under consideration:
\begin{align} \label{eq:auto_assump}
 \begin{minipage}{\textwidth}
  \begin{itemize}
 \item[--] If $n$ is odd, the signatures $\delta^{(n)}_v$ for real places $v\in \Sigma_{\mathsf{F},\mathbf{R}}$ coincide with one another.
 \item[--] If the motivic weight $\mathsf{w}^{(n)}=w(\pi^{(n)})-(n-1)$ is even, the equality $a^{(n)}_{v,i}=b^{(n)}_{v,i}$ does not hold for any complex place $v\in \Sigma_{\mathsf{F},\mathbf{C}}$ and any $i$.
\end{itemize}
\end{minipage}
\end{align}

\subsection{Finite dimensional representations of $\mathrm{GL}_n(\mathsf{F}_v)$}\label{sec:FinDim}

Let $n\in {\mathbf Z}$ be a positive integer, $\mathsf{F}$ a number field 
and $\boldsymbol{\mu}=(\boldsymbol{\mu}_\tau)_{\tau\in I_\mathsf{F}}$ an element of $({\mathbf Z}^n)^{I_\mathsf{F}}$ such that, for each $\boldsymbol{\mu}_\tau = (\mu_{\tau, 1}, \ldots, \mu_{\tau, n})$, the {\em dominancy condition}  $\mu_{\tau, 1} \geq \cdots \geq \mu_{\tau, n}$ hold. 
For $\tau\in I_\mathsf{F}$ and its corresponding place $v_\tau\in \Sigma_{\mathsf{F},\infty}$, let $V(\boldsymbol{\mu}_\tau)$ be the finite dimensional complex representation of ${\rm GL}_n(\mathsf{F}_{v_\tau})$ of highest weight $\boldsymbol{\mu}_\tau$.     
Write $V(\boldsymbol{\mu}_\tau)^\vee$ the contragredient representation of $V(\boldsymbol{\mu}_\tau)$, whose highest weight $\boldsymbol{\mu}^\vee_\tau$ is given by $(-\mu_{\tau, n}, \ldots, -\mu_{\tau, 1})$.
Suppose that $\boldsymbol{\mu}$ satisfies the {\em purity condition},
that is, there exists an integer $w(\boldsymbol{\mu})\in {\mathbf Z}$ such that, if $\mathbf{1}$ denote the vector $(1,1,\dotsc,1)\in \mathbf{Z}^n$, 
the equality $\boldsymbol{\mu}_{\tau} = w(\boldsymbol{\mu})\mathbf{1} + \boldsymbol{\mu}^\vee_\tau$ holds for real $\tau\in I_\mathsf{F}$, 
and the equality $\boldsymbol{\mu}_{\rho \tau} = w(\boldsymbol{\mu}) \mathbf{1} + \boldsymbol{\mu}^\vee_\tau$ holds for complex $\tau\in I_\mathsf{F}$.  
We call $w(\boldsymbol{\mu})$ the {\em purity weight} of $\boldsymbol{\mu}$.
 
For each complex representation $(\varrho, V)$ of  ${\rm GL}_n({\mathbf C})$, 
we define a complex representaion $(\varrho^{\rm conj}, V^{\rm conj})$ of ${\rm GL}_n({\mathbf C})$ so that the base space $V^{\rm conj}$ equals $V$ and $g\in \mathrm{GL}_n(\mathbf{C})$ acts on it via $\varrho^{\rm}(g):=\varrho(g^\rho)$.
For each real embedding $\tau\in I_\mathsf{F}$, we define ${\mathcal V}(\boldsymbol{\mu}_\tau)$ to be $V(\boldsymbol{\mu}_\tau)$. 
For each complex embedding $\tau\in I_\mathsf{F}$ corresponds to a complex place $v\in \Sigma_{\mathsf{F},\infty}$, 
   we define ${\mathcal V}(\boldsymbol{\mu}_v)$ to be $V(\boldsymbol{\mu}_\tau) \otimes_{\mathbf C} V(\boldsymbol{\mu}_{\rho\tau})^{\rm conj}$. 
Then $ {\mathcal V} ( \boldsymbol{\mu} ) = \bigotimes_{v \in \Sigma_{\mathsf{F}, \infty}} {\mathcal V} ( \boldsymbol{\mu}_v )$ gives a complex representation  of ${\rm GL}_n(\mathsf{F}_{\mathbf A, \infty}) := \prod_{v \in \Sigma_{\mathsf{F}, \infty}}{\rm GL}_n(\mathsf{F}_v)$.
Let $\varrho_{\boldsymbol{\mu}}$ denote the action of ${\rm GL}_n(\mathsf{F}_{\mathbf A, \infty})$ on ${\mathcal V}(\boldsymbol{\mu})$.

\begin{dfn} \label{dfn:intl_weights}
Let $\boldsymbol{\mu}^{(n)}=(\boldsymbol{\mu}^{(n)}_\tau)_{\tau\in I_\mathsf{F}}$ and $\boldsymbol{\mu}^{(n+1)}=(\boldsymbol{\mu}^{(n+1)}_\tau)_{\tau\in I_\mathsf{F}}$ be elements of $({\mathbf Z}^n)^{I_\mathsf{F}}$ and $({\mathbf Z}^{n+1})^{I_\mathsf{F}}$ respectively, both satisfying the dominancy condition and the purity condition for every $\tau\in I_{\mathsf{F}}$.

\begin{enumerate}[label=(\arabic*)]
 \item We say that $\boldsymbol{\mu}^{(n)}$ and $\boldsymbol{\mu}^{(n+1)}$ satisfy the {\em interlace condition} if the following condition is fulfilled:
\begin{itemize}
 \item[--] the inequalities   $\mu^{(n+1)}_{\tau, n+1} 
               \leq \mu^{(n)}_{\tau, n} \leq \mu^{(n+1)}_{\tau, n}
                \leq \cdots \leq \mu^{(n+1)}_{\tau, 2} \leq \mu^{(n)}_{\tau, 1} \leq \mu^{(n+1)}_{\tau, 1}$ hold for every $\tau \in I_\mathsf{F}$.
\end{itemize}
If the above condition is satisfied, we write  $\boldsymbol{\mu}^{(n+1)} \succcurlyeq \boldsymbol{\mu}^{(n)}$. 
\item We say that $\boldsymbol{\mu}^{(n)}$ and $\boldsymbol{\mu}^{(n+1)}$ satisfy the {\em strong interlace condition} if the following condition is fulfilled:
\begin{itemize}
 \item[--] for each $\tau \in I_\mathsf{F}$, the inequalities   $\mu^{(n+1)}_{\tilde{\tau}, n+1} 
               \leq \mu^{(n)}_{\tilde{\tau}', n} \leq \mu^{(n+1)}_{\tilde{\tau}, n}
                \leq \cdots \leq \mu^{(n+1)}_{\tilde{\tau}, 2} \leq \mu^{(n)}_{\tilde{\tau}', 1} \leq \mu^{(n+1)}_{\tilde{\tau}, 1}$ hold with arbitrary $\tilde{\tau}, \tilde{\tau}'\in \{\tau,\rho\tau\}$.
\end{itemize}
If the above condition is satisfied, we write  $\boldsymbol{\mu}^{(n+1)} \succapprox \boldsymbol{\mu}^{(n)}$. 
\end{enumerate}
\end{dfn}

For each $\alpha\in {\rm Aut}({\mathbf C})$ and $\boldsymbol{\mu}\in ({\mathbf Z}^n)^{I_\mathsf{F}}$, 
    we define ${}^{\alpha}\boldsymbol{\mu}  \in ({\mathbf Z}^n)^{I_\mathsf{F}}$ so that ${}^{\alpha}\boldsymbol{\mu}_\tau = \boldsymbol{\mu}_{\alpha^{-1}\tau}$ 
holds for each $\tau \in I_\mathsf{F}$.
Let ${\mathbf Q}(\boldsymbol{\mu})$ be the maximal subfield  of ${\mathbf C}$ fixed  by elements of $\{ \alpha \in {\rm Aut}({\mathbf C}) \mid  {}^{\alpha}\boldsymbol{\mu} =  \boldsymbol{\mu}  \}$, 
 and $E$ a finite extension of ${\mathbf Q}(\boldsymbol{\mu})$.    
Then the complex representation ${\mathcal V}(\boldsymbol{\mu})$ is defined over $E$, that is,  there exists an $E$-vector space ${\mathcal V}(\boldsymbol{\mu})_E$ satisfying
${\mathcal V}(\boldsymbol{\mu})_E \otimes_{E} {\mathbf C} \cong {\mathcal V}(\boldsymbol{\mu})$
on which ${\rm GL}_n(\mathsf{F}_{\mathbf{A},\infty})$ acts via  $\varrho_{\boldsymbol{\mu}}$ (see \cite[page 122]{clo90} and  \cite[Section 2.1.5]{rag16}).

\subsection{$({\mathfrak g}, K)$-cohomology groups}

Let $\pi^{(n)} = \bigotimes_{v\in \Sigma_{\mathsf{F}}}^\prime \pi^{(n)}_v$ be a cohomological irreducible 
cuspidal automorphic representation of ${\rm GL}_n(\mathsf{F}_{\mathbf A})$ satisfying (\ref{eq:auto_assump}). 
For each $v\in \Sigma_{\mathsf{F}, \infty}$,   
write the Langlands parameter $\phi(\pi^{(n)}_v)$ of $\pi^{(n)}_v$ as in Theorem \ref{thm:LLC},  and write its restriction $\phi(\pi^{(n)}_v)|_{ {\mathbf C}^\times }$ to ${\mathbf C}^\times$  as in (\ref{eq:LangPara}).   
Fix $\Sigma_{\mathsf{F}, \infty} \subset I_\mathsf{F}$ and we write the embedding corresponding to an archimedean place $v\in \Sigma_{\mathsf{F},\infty}$ of $\mathsf{F}$ as $\tau=\tau_v \in I_\mathsf{F}$.  
Recall the definidion of $p^{(n)}_{\tau,i}$ and $q^{(n)}_{\tau,i}$ for each $\tau \in I_\mathsf{F}$ introduced in Section~\ref{sec:LangPara}, and set  
\begin{align*}
  \boldsymbol{p}^{(n)}_\tau 
      &= \left( p^{(n)}_{\tau, 1}, p^{(n)}_{\tau, 2},\dotsc, p^{(n)}_{\tau, n} \right),  &
  \boldsymbol{q}^{(n)}_\tau 
      &= \left( q^{(n)}_{\tau, 1}, q^{(n)}_{\tau, 2}, \dotsc, q^{(n)}_{\tau, n} \right). 
\end{align*}

Define $\boldsymbol{\rho}^{(n)}$ and $\boldsymbol{\mu}_{\pi,\tau}^{(n)}:=(\mu^{(n)}_{\pi,\tau, 1}, \dotsc, \mu^{(n)}_{\pi,\tau, n}) \in {\mathbf Z}^n$ as follows:
 \begin{align*}
  \boldsymbol{\rho}^{(n)}
  &=  \left(  \frac{n-1}{2}, \frac{n-3}{2} , \dotsc,  \dfrac{n-2i+1}{2}, \dotsc, -\frac{n-1}{2}  \right), \\
   \boldsymbol{\mu}^{(n)}_{\pi,\tau}
  &=  \boldsymbol{q}^{(n)}_\tau + \frac{n-1}{2}- \boldsymbol{\rho}^{(n)}
  =  \left(  q^{(n)}_{\tau, 1} , q^{(n)}_{\tau,2}+1, \dotsc, q^{(n)}_{\tau, i}  + i-1, \dotsc, q^{(n)}_{\tau, n} + n-1   \right).  
 \end{align*}      
We call $\boldsymbol{\mu}_\pi^{(n)}:=(\boldsymbol{\mu}^{(n)}_{\pi,\tau})_{\tau \in I_\mathsf{F}}$ the highest weight attached to $\pi^{(n)}$.
We can easily verify that $\boldsymbol{\mu}^{(n)}_\pi$ satisfies, for every $\tau\in I_{\mathsf{F}}$, both of the dominancy condition $\mu^{(n)}_{\pi,\tau, 1} \geq  \mu^{(n)}_{\pi,\tau, 2} \geq \cdots \geq \mu^{(n)}_{\pi,\tau, n}$ and the purity condition 
\begin{align*}
\begin{cases}
     \boldsymbol{\mu}^{(n)}_{\pi,\tau} = w(\pi^{(n)})\mathbf{1} + \boldsymbol{\mu}^{(n),\vee}_{\pi,\tau}  & \text{if $\tau$ is real},   \\
     \boldsymbol{\mu}^{(n)}_{\pi, \rho\tau} = w(\pi^{(n)})\mathbf{1} + \boldsymbol{\mu}^{(n),\vee}_{\pi,\tau}       & \text{if $\tau$ is complex}   
\end{cases}
\end{align*}
 by construction (note that the purity weight $w(\boldsymbol{\mu}^{(n)}_\pi)$ of $\boldsymbol{\mu}^{(n)}_\pi$ coincides with $w(\pi^{(n)})$).   
Define the complex representation ${\mathcal V} ( \boldsymbol{\mu}_\pi^{(n)} ) = \bigotimes_{v \in \Sigma_{\mathsf{F}, \infty}} {\mathcal V} ( \boldsymbol{\mu}^{(n)}_{\pi,v} )$ as in Section \ref{sec:FinDim}. 

\begin{prop} \label{prop:gK}
For each $v\in \Sigma_{\mathsf{F},\infty}$, define ${\rm b}_{n,v}$ and ${\rm t}_{n,v}$ as 
\begin{align*}
  {\rm b}_{n, v} 
         &= \begin{cases}  \lfloor\frac{n^2}{4}\rfloor  & \text{if $v$ is real},    \\
                                     \frac{n(n-1)}{2} &   \text{if $v$ is complex},   \end{cases}  &
  {\rm t}_{n,v} 
         &= {\rm b}_{n, v}
             +  \begin{cases}   \lfloor\frac{n-1}{2}\rfloor  & \text{if $v$ is real},    \\
                                         n-1 &   \text{if $v$ is complex}.   \end{cases}
\end{align*}
Then, for each integer $i_v$ with ${\rm b}_{n, v} \leq i_v \leq {\rm t}_{n,v}$, we have     
\begin{align*}
    H^{i_v} ( \mathfrak{g}_{n, v}, K^{(n)}_{v} ; \pi^{(n)}_v \otimes_\mathbf{C} {\mathcal V} ( \boldsymbol{\mu}^{(n), \vee}_{\pi,v})  )  \neq 0.  
\end{align*}
In particular, we have
\begin{align*}
   H^{{\rm b}_{n,v}}( \mathfrak{g}_{n, v}, K^{(n)}_{v} ; \pi^{(n)}_v \otimes_\mathbf{C} {\mathcal V} ( \boldsymbol{\mu}^{(n), \vee}_{\pi,v}) ) 
      \cong \begin{cases}  
        {\mathbf C}^{\oplus 2}  &  \text{if $v$ is real and $n$  is even},  \\
        {\mathbf C}  &  \text{if $v$ is real and $n$ is odd},   \\
        {\mathbf C}  &  \text{if $v$ is complex}.
         \end{cases}
\end{align*}
\end{prop}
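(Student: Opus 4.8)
The statement is a standard computation of $(\mathfrak{g},K)$-cohomology of a cohomological tempered representation tensored with the dual of its coefficient system, so the plan is to reduce everything to known results of Vogan--Zuckerman and Borel--Wallach, keeping careful track of the degree shift produced by passing between $\mathfrak{gl}_n$ and $\mathfrak{sl}_n$ (equivalently, between $\mathrm{GL}_n$ and its derived group / the ``essentially unitary'' part). First I would recall that, since $\pi^{(n)}$ is cohomological with infinitesimal character matching $\mathcal{V}(\boldsymbol{\mu}^{(n)}_\pi)$, the archimedean component $\pi^{(n)}_v$ is cohomological with respect to $\mathcal{V}(\boldsymbol{\mu}^{(n),\vee}_{\pi,v})$, and by the classification of cohomological representations of $\mathrm{GL}_n(\mathbf{R})$ and $\mathrm{GL}_n(\mathbf{C})$ (Clozel, \cite[Lemme 3.14 and Section 3.5]{clo90}) $\pi^{(n)}_v$ is, up to a central twist, the unique irreducible tempered representation $A_{\mathfrak{q}}(\lambda)$ with $\mathfrak{q}$ the relevant $\theta$-stable parabolic: in the real case $\mathfrak{q}$ has Levi factor a product of copies of $\mathrm{GL}_2(\mathbf{R})$ and at most one $\mathrm{GL}_1(\mathbf{R})$, and in the complex case $\mathfrak{q}$ is a Borel (so $\pi^{(n)}_v$ is a principal series induced from a regular character). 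From the description in Theorem~\ref{thm:LLC} and Remark~\ref{rem:Hodgepq}, together with the assumption $(\ref{eq:auto_assump})$, this $\mathfrak{q}$ is well-defined.

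Next I would invoke the Vogan--Zuckerman formula: for a cohomological representation $A_{\mathfrak q}(\lambda)$ of a real reductive group $G_v$ with maximal compact $K_v$,
\begin{align*}
 H^{i}(\mathfrak{g}_{n,v},K^{(n)}_v;\pi^{(n)}_v\otimes \mathcal{V}(\boldsymbol{\mu}^{(n),\vee}_{\pi,v})) \cong \mathrm{Hom}_{L_v\cap K_v}\!\left(\textstyle\bigwedge^{i-R_{\mathfrak q}}(\mathfrak{l}_v\cap\mathfrak{p}_v),\mathbf{C}\right),
\end{align*}
where $R_{\mathfrak q}=\dim(\mathfrak{u}_v\cap\mathfrak{p}_v)$ and $L_v$ is the Levi; this is \cite[Chapter~II, Theorem~5.3]{borel-wallach} (or one may cite Clozel's \cite[Lemme 3.14]{clo90} directly). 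The lowest degree in which cohomology is nonzero is $R_{\mathfrak q}$, which I would compute to be exactly $\mathrm{b}_{n,v}=\lfloor n^2/4\rfloor$ when $v$ is real and $n(n-1)/2$ when $v$ is complex — a short bookkeeping argument using that $\mathfrak{p}_v$ has dimension $\binom{n+1}{2}-1$ in the real case and $n^2$ (after removing the split center, or with a compatible normalization) in the complex case, and that for the relevant $\mathfrak q$ exactly half of the roots in $\mathfrak u_v$ are noncompact. The top degree $\mathrm{t}_{n,v}=R_{\mathfrak q}+\dim(\mathfrak{l}_v\cap\mathfrak{p}_v)^{K\text{-invariant part}}$ then follows from the fact that $\bigwedge^\bullet(\mathfrak{l}_v\cap\mathfrak{p}_v)^{L_v\cap K_v}$ is nonzero in every degree from $0$ up to $\dim(\mathfrak{l}_v\cap\mathfrak{p}_v)_{\text{relevant}}$, which equals $\lfloor (n-1)/2\rfloor$ in the real case (one $\mathfrak{sl}_1$-type contribution per $\mathrm{GL}_2$ block) and $n-1$ in the complex case (the split torus of the Borel's Levi). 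For the $i_v$ in between, nonvanishing is immediate from the explicit structure of $\bigwedge^\bullet(\mathfrak{l}_v\cap\mathfrak{p}_v)^{L_v\cap K_v}$ as an exterior algebra on a space of that dimension.

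Finally, for the precise dimension count in the bottom degree $\mathrm{b}_{n,v}$: by Vogan--Zuckerman this is $\dim\bigwedge^0(\mathfrak{l}_v\cap\mathfrak{p}_v)^{L_v\cap K_v}$ times the number of $\theta$-stable parabolics in the relevant $K_v$-conjugacy class giving rise to the same $A_{\mathfrak q}(\lambda)$ — equivalently the number of connected components one must account for. For $n$ odd and $v$ real, or for $v$ complex, there is (after fixing the component group issue, using that $K^{(n)}_v$ contains the connected $C^{(n)}_v$ and the center) essentially one such, giving $\mathbf{C}$; for $n$ even and $v$ real, the two choices coming from the $\mathrm{O}(n)$ versus $\mathrm{SO}(n)$ discrepancy (there is no $\mathrm{GL}_1(\mathbf{R})$ block to rigidify the orientation) double the space, giving $\mathbf{C}^{\oplus 2}$. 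I expect the main obstacle to be precisely this last bookkeeping — tracking the effect of using $K^{(n)}_v$ (which for real $v$ is $\mathbf{R}^\times_+\mathrm{SO}(n)$, the \emph{connected} maximal compact mod center, rather than $\mathrm{O}(n)$) on the multiplicity, since it is exactly this choice of connected group that is responsible for the ``$\varepsilon^{(n)}$'' decomposition used throughout the paper; I would handle it by comparing $H^{\mathrm{b}_{n,v}}(\mathfrak{g}_{n,v},\mathrm{SO}(n)\mathbf{R}^\times_+;-)$ with $H^{\mathrm{b}_{n,v}}(\mathfrak{g}_{n,v},\mathrm{O}(n)\mathbf{R}^\times_+;-)$ via the $\mathbf{Z}/2$-action of $\mathrm{O}(n)/\mathrm{SO}(n)$, noting that for $n$ even this action is nontrivial on the one-dimensional space while for $n$ odd it is absorbed by the center, and citing \cite[Section 3.5]{clo90} and \cite[Section 5.5]{mah05} for the resulting multiplicities.
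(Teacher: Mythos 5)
Your proof sketch follows essentially the same route as the paper, which simply cites \cite[Lemme~3.14]{clo90} together with the formalisms of \cite[Section~3.1.2]{mah05} and \cite[Section~5.5]{gr14} — i.e.\ the Vogan--Zuckerman/Borel--Wallach computation of $(\mathfrak{g},K)$-cohomology for $A_{\mathfrak q}(\lambda)$-modules that you spell out. You are in effect unwinding the cited reference rather than taking a different path; the only caveat is that the final ``bookkeeping'' paragraph (the $\mathbf{C}^{\oplus 2}$ versus $\mathbf{C}$ dichotomy via the $\mathrm{O}(n)/\mathrm{SO}(n)$ action and its interaction with the center) is stated informally and would need the component-group argument of \cite[Section~3.5]{clo90} made precise to count as a complete proof, but the idea you indicate is the correct one.
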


\begin{proof}
The statement is firstly proved in \cite[Lemma 3.14]{clo90}.  
We here adopt formalisms in \cite[Section 3.1.2]{mah05} and \cite[Section 5.5]{gr14}.
\end{proof}

We define
${\rm b}_n = \sum_{ v \in \Sigma_{\mathsf{F}, \infty}} {\rm b}_{n, v}$ and call ${\rm b}_n$ the {\em bottom degree} of $\pi^{(n)}$.

\subsection{Motives attached to automorphic representations}\label{sec:CloConj}

In this subsection, we briefly recall the conjectural pure motives attached to automorphic representations of $\mathrm{GL}_n$, after Clozel \cite[Conjecture 4.5]{clo90}. Let $\mathsf{F}$ be a number field and $\pi^{(n)}$ a cohomological irreducible cuspidal automorphic representation of $\mathrm{GL}_n(\mathsf{F}_{\mathbf{A}})$. We first recall the notion of the field of rationality $\mathbf{Q}(\pi^{(n)})$ of $\pi^{(n)}$, according to \cite[Section~3]{clo90}; see also \cite[Section~3.1]{rs08}. Let $V^{(n)}_{\mathrm{fin}}$ denote the representation space of $\pi^{(n)}_{\rm fin}=\bigotimes^\prime_{v\in \Sigma_{\mathsf{F},\mathrm{fin}}}\pi^{(n)}_v$, and for each $\alpha \in {\rm Aut}({\mathbf C})$, consider the $\alpha$-twist ${}^\alpha V^{(n)}_{\mathrm{fin}}:=V^{(n)}_{\mathrm{fin}} \otimes_{{\mathbf C}, \alpha^{-1}} {\mathbf C}$ of $V^{(n)}_{\mathrm{fin}}$.     
Then there exists a cohomological cuspidal automorphic representation ${}^\alpha \pi^{(n)}$ of ${\rm GL}_n({\mathsf F}_{\mathbf A})$ such that ${}^\alpha V^{(n)}_{\mathrm{fin}}$ is the representation space of the finite part ${}^{\alpha}\pi^{(n)}_{\mathrm{fin}}$ of ${}^\alpha \pi^{(n)}$. Note that, if $\boldsymbol{\mu}^{(n)}_\pi=(\boldsymbol{\mu}^{(n)}_{\pi,\tau})_{\tau\in I_\mathsf{F}}$ denotes the highest weight attached to $\pi^{(n)}$, the highest weight attached to ${}^\alpha \pi^{(n)}$ is described as  $\boldsymbol{\mu}^{(n)}_{{}^\alpha \pi}=(\boldsymbol{\mu}^{(n)}_{\pi,\alpha^{-1}\tau})_{\tau\in I_{\mathsf{F}}}$ (see \cite[Th\'eor\`eme~3.13]{clo90}). 
Now define $\mathbf{Q}(\pi^{(n)})$ as the maximal subfield of $\mathbf{C}$ fixed by $\{  \sigma \in {\rm Aut}({\mathbf C})  \mid  {}^\sigma \pi^{(n)} = \pi^{(n)}  \}$, which we call the {\em field of rationality} of $\pi^{(n)}$. The field of rationality $\mathbf{Q}(\pi^{(n)})$ is known to be a number field.

To each cohomological irreducible cuspidal automorphic representation $\pi^{(n)}$ of $\mathrm{GL}_n(\mathsf{F}_{\mathbf{A}})$, let us take a sufficiently large  finite extension $E$ of $\mathbf{Q}(\pi^{(n)})$ contained in $\mathbf{C}$. Under these settings, Clozel conjectures that there exists a pure motive ${\mathcal M}[\pi^{(n)}]$ defined over $\mathsf{F}$ with coefficients in $E$, which is of weight $w-n+1$. 
The conjectural motive ${\mathcal M}[\pi^{(n)}]$ has the following properties:
\begin{itemize}
\item[--]  For each $\tau \in  I_\mathsf{F}$,  we have the following Hodge decomposition: 
          \begin{align*}
             H_{\rm B} ({\mathcal M}[\pi]_\tau) \otimes_{E,\iota_0} {\mathbf C}   =   \bigoplus^n_{i=1} H^{p^{(n)}_{\tau, i}, q^{(n)}_{\tau, i}  } ({\mathcal M}[\pi]_\tau), \quad \dim_{\mathbf{C}} H^{p^{(n)}_{\tau, i}, q^{(n)}_{\tau, i}  } ({\mathcal M}[\pi]_\tau)=1.
          \end{align*} 
\item[--]  We have the following equality on $L$-functions:
         \begin{align*}
            L\left(s - \frac{n-1}{2}, \pi^{(n)}\right)  = L^{\iota_0}(s, {\mathcal M}[\pi^{(n)}]).   
         \end{align*}
	   Here the left-hand side is the automorphic $L$-function of $\pi^{(n)}$ introduced by Godement and Jacquet, and the right-hand side is the Hasse--Weil $L$-function of the pure motive $\mathcal{M}[\pi^{(n)}]$ with respect to the canonical embedding $\iota_0\colon E\hookrightarrow \mathbf{C}$.
\end{itemize}

\section{Whittaker periods and Rankin--Selberg $L$-function}\label{sec:RSMot}

Let $\pi^{(n)}$ and $\pi^{(n+1)}$ be  cohomological irreducible cuspidal automorphic representations 
   of ${\rm GL}_n(\mathsf{F}_{\mathbf A})$ and ${\rm GL}_{n+1}(\mathsf{F}_{\mathbf A})$, respectively, which satisfy (\ref{eq:auto_assump}). 
In this section, we verify algebraicity of the critical values of the Rankin--Selberg $L$-function 
$L(s, \pi^{(n+1)} \times \pi^{(n)} )$.    
After introducing basic setups in Section \ref{sec:intlPi}, 
we summarize the construction of the cup product pairing on the symmetric space
$Y^{(n)}_{\mathcal K} := {\rm GL}_n(\mathsf{F}) \backslash  {\rm GL}_n(  \mathsf{F}_{\mathbf A} )  / K^{(n)}_\infty {\mathcal K}$
in Section \ref{sec:Cup}.
Here we emphasize precise computations of signatures, 
   since they are  important ingredients of the proof of our main theorem (Theorem \ref{thm:main}).   
We then give a cohomological interpretation of
the critical values of the Rankin--Selberg $L$-function in terms of the cup product of certain cohomology classes in Section \ref{sec:RSLfn}. 
In the final subsection (Section \ref{sec:Whitt}), we recall the definition of Whittaker periods according to \cite{rs08}, and discuss algebraicity of the critical values of the Rankin--Selberg $L$-functions following \cite[Theorem 1.1]{rag16} (see Theorem \ref{thm:AlgRS}).

\subsection{The interlace condition for automorphic representations}\label{sec:intlPi}

We briefly recall basic facts on  critical values of the Rankin--Selberg $L$-function $L(s,\pi^{(n+1)}\times \pi^{(n)})$.  
In the definition below, we set $\mathbf{1}=(1)_{\tau\in I_\mathsf{F}}$ and use the notation introduced in Definition~\ref{dfn:intl_weights}. 

\begin{dfn}\label{dfn:intlauto}
Let $\pi^{(n+1)}$ and $\pi^{(n)}$ be as above, and let $\boldsymbol{\mu}_\pi^{(n+1)} = ( \boldsymbol{\mu}^{(n+1)}_{\pi,\tau})_{\tau\in I_\mathsf{F}}$ and $\boldsymbol{\mu}^{(n)}_\pi = ( \boldsymbol{\mu}^{(n)}_{\pi,\tau})_{\tau\in I_\mathsf{F}}$ 
be the highest weights attached to $\pi^{(n+1)}$ and $\pi^{(n)}$, respectively.
\begin{enumerate}[label=(\arabic*)]
 \item We say that $\pi^{(n+1)}$ and $\pi^{(n)}$ satisfy the {\em interlace condition}
if there exists an integer $m_0\in \mathbf{Z}$ satisfying  $\boldsymbol{\mu}^{(n+1),\vee}_\pi  \succcurlyeq \boldsymbol{\mu}^{(n)}_\pi+m_0\mathbf{1}$.

 \item We say that $\pi^{(n+1)}$ and $\pi^{(n)}$ satisfy the {\em strong interlace condition}
if there exists an integer $m_0\in \mathbf{Z}$ satisfying  $\boldsymbol{\mu}^{(n+1),\vee}_\pi  \succapprox \boldsymbol{\mu}^{(n)}_\pi+m_0\mathbf{1}$.
\end{enumerate}
\end{dfn}

\begin{rem}
\begin{enumerate}
\item  The interlace condition for $\pi^{(n+1)}$ and $\pi^{(n)}$ defined in Definition \ref{dfn:intlauto} is equivalent to one of the following conditions (see Section~\ref{sec:LangPara} for the definition of the symbols appearing here):
         \begin{itemize}
             \item[--]  there exists an integer $A$ such that the inequalities  
\begin{align*}
\hspace*{6em} -a^{(n+1)}_{v, 1} > a^{(n)}_{v, n}+A >\cdots >-a^{(n+1)}_{v,i}>a^{(n)}_{v,n+1-i}+A>-a^{(n+1)}_{v,i+1} >  \cdots > a^{(n)}_{v, 1}+A > -a^{(n+1)}_{v, n+1}
\end{align*}
hold for every $\Sigma_{\mathsf{F},\infty}$; 
             \item[--] there exists an integer $B$ such that the inequalities  
\begin{align*}
\hspace*{6em} -b^{(n+1)}_{v, n+1} > b^{(n)}_{v, 1}+B >\cdots >-b^{(n+1)}_{v,n+2-i}>b^{(n)}_{v,i}+B>-b^{(n+1)}_{v,n+1-i} >  \cdots > b^{(n)}_{v, n}+B > -b^{(n+1)}_{v, 1}
\end{align*}
hold for every $\Sigma_{\mathsf{F},\infty}$;       
             \item[--] the condition $\boldsymbol{p}^{(n+1)}_\tau \succ \boldsymbol{p}^{(n)}_\tau$ holds for every $\tau\in I_{\mathsf{F}}$, and the integer $Q=n+m_0$ in (\ref{eq:intlpq}) is independent of $\tau\in I_{\mathsf{F}}$ and $\sigma\in I_E$;
             \item[--] the condition $\boldsymbol{q}^{(n+1)}_\tau \succ \boldsymbol{q}^{(n)}_\tau$ holds for every $\tau\in I_{\mathsf{F}}$, and the integer $Q=n+m_0$ in (\ref{eq:intlpq}) is independent of $\tau\in I_{\mathsf{F}}$ and $\sigma\in I_E$.
         \end{itemize} 
\item Suppose that $v\in \Sigma_{\mathsf{F}, \infty}$ is real.   
         Then the interlace condition is equivalent to the validity of the inequalities
         \begin{align*}
              l^{(n+1)}_{v, 1} > l^{(n)}_{v, 1} > l^{(n+1)}_{v, 2} > l^{(n)}_{v, 2} >  \cdots > l^{(n+1)}_{v, n} > l^{(n)}_{v, n} > l^{(n+1)}_{v, n+1},       
         \end{align*}
where $l^{(n+1)}_{v,i}$ and $l^{(n)}_{v,i}$ are integers appearing in the Langlands parameter of $\pi^{(n+1)}_v$ and $\pi^{(n)}_v$, respectively; see Theorem~\ref{thm:LLC}.
\end{enumerate}
\end{rem}


Let $L(s,\pi^{(n+1)}\times \pi^{(n)})$ denote the Rankin--Selberg $L$-function of $\pi^{(n+1)}$ and $\pi^{(n)}$ introduced by Jacquet, Piatetski-Shapiro and Shalika. We say $m\in \mathbf{Z}$ to be a {\em critical point} of $L(s,\pi^{(n+1)}\times \pi^{(n)})$ if neither $L(s,\pi^{(n+1)}_v \times \pi^{(n)}_v)$ nor $L(1-s,(\pi^{(n+1)}_v \times \pi^{(n)}_v)^\vee)$ has a pole at $s=m+\dfrac{1}{2}$ for each $v\in \Sigma_{\mathsf{F},\infty}$.  
Let $\mathrm{Crit}(\pi^{(n+1)}, \pi^{(n)})$ denote the set of all the critical points of $L(s,\pi^{(n+1)}\times \pi^{(n)})$;
\begin{align*}
 {\rm Crit}( \pi^{(n+1)},  \pi^{(n)}  )    
     := \left\{  m\in {\mathbf Z}  \, \left| \, \text{$m$ is a critical point of $L(s,\pi^{(n+1)}\times \pi^{(n)})$} \right. \right\}.
\end{align*}

\begin{lem} \label{lem:crit}
Let $\pi^{(n+1)}$ and $\pi^{(n)}$ be as above and $m$ an integer.  Then the following statements are equivalent$:$
\begin{enumerate}[label={\rm (\roman*)}]
\item the integer $m$ is a critical point of $L(s,\pi^{(n+1)}\times \pi^{(n)});$ that is, $m\in \mathrm{Crit}(\pi^{(n+1)},\pi^{(n)})$ holds$;$
\item the highest weights $\boldsymbol{\mu}^{(n+1)}_\pi$ and $\boldsymbol{\mu}^{(n)}_\pi$ attached to $\pi^{(n+1)}$ and $\pi^{(n)}$, respectively, satisfy the condition $\boldsymbol{\mu}^{(n+1),\vee}_\pi\succcurlyeq \boldsymbol{\mu}^{(n)}_\pi+m\mathbf{1}$$;$ 
\item there exists a $\mathrm{GL}_n(\mathsf{F}_{\mathbf{A}, \infty})$-equivariant morphism $\nabla^m \colon \mathcal{V}(\boldsymbol{\mu}^{(n+1),\vee}_\pi)\rightarrow \mathcal{V}(\boldsymbol{\mu}^{(n)}_\pi)\otimes \det^m$ derived from the branching rule for the pair $(\mathrm{GL}_{n+1}, \mathrm{GL}_n)$.
\end{enumerate}
\end{lem}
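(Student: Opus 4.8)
The plan is to establish the two equivalences (i) $\Leftrightarrow$ (ii) and (ii) $\Leftrightarrow$ (iii) separately: the first by a purely local computation of archimedean $\Gamma$-factors, the second by the classical branching rule for the pair $(\mathrm{GL}_{n+1},\mathrm{GL}_n)$.

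For (i) $\Leftrightarrow$ (ii): by the definition adopted above, criticality of $m$ is a condition imposed only on the archimedean local factors, so I would fix $v\in\Sigma_{\mathsf{F},\infty}$ and compute $L_v(s,\pi^{(n+1)}_v\times\pi^{(n)}_v)$ as the $L$-factor of the tensor product $\phi(\pi^{(n+1)}_v)\otimes\phi(\pi^{(n)}_v)$ of the Langlands parameters given in Theorem~\ref{thm:LLC}. Decomposing this tensor product into the irreducible pieces $\phi_{\nu,l}$, $\phi^\delta_\nu$ of $W_{\mathbf R}$ (resp.\ into characters $z\mapsto z^a(z^\rho)^b$ of $W_{\mathbf C}=\mathbf C^\times$ when $v$ is complex) by the usual multiplication rules, and using that the pole set of the $\Gamma$-factor attached to $\phi_{\nu,l}$ (resp.\ $\phi^\delta_\nu$) is $\{-\nu-l/2-j\mid j\in\mathbf Z_{\geq0}\}$ (resp.\ $\{-\nu-\delta-2j\mid j\in\mathbf Z_{\geq0}\}$), the requirement that neither $L_v(s,\pi^{(n+1)}_v\times\pi^{(n)}_v)$ nor $L_v(1-s,(\pi^{(n+1)}_v\times\pi^{(n)}_v)^\vee)$ has a pole at $s=m+\frac12$ becomes an explicit family of inequalities among the $l^{(n+1)}_{v,i},l^{(n)}_{v,j}$ (real $v$) or the $a^{(n+1)}_{v,i},a^{(n)}_{v,j},b^{(n+1)}_{v,i},b^{(n)}_{v,j}$ (complex $v$). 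These are exactly the interlacing inequalities; translating them through the dictionary $\boldsymbol\mu^{(n)}_{\pi,\tau}=\boldsymbol q^{(n)}_\tau+\frac{n-1}{2}-\boldsymbol\rho^{(n)}$ and its analogue for $\pi^{(n+1)}$ (see Section~\ref{sec:LangPara} and Remark~\ref{rem:Hodgepq}), with $m$ absorbing the $\frac{n-1}{2}$- and $\frac{n}{2}$-type shifts, one checks that they amount to the single condition $\boldsymbol\mu^{(n+1),\vee}_\pi\succcurlyeq\boldsymbol\mu^{(n)}_\pi+m\mathbf 1$ of Definition~\ref{dfn:intl_weights}, i.e.\ (ii).

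For (ii) $\Leftrightarrow$ (iii): this is the Gel'fand--Tsetlin branching rule for $\mathrm{GL}_{n+1}\downarrow\mathrm{GL}_n$, which states that the restriction of the irreducible algebraic representation $V(\boldsymbol\lambda)$ of $\mathrm{GL}_{n+1}(\mathbf C)$ to $\mathrm{GL}_n(\mathbf C)$ decomposes multiplicity-freely as $\bigoplus_{\boldsymbol\kappa}V(\boldsymbol\kappa)$, the sum over dominant $\boldsymbol\kappa$ with $\lambda_1\geq\kappa_1\geq\lambda_2\geq\cdots\geq\kappa_n\geq\lambda_{n+1}$; hence there is a nonzero, and up to scalar unique, $\mathrm{GL}_n(\mathbf C)$-equivariant map $V(\boldsymbol\lambda)\to V(\boldsymbol\kappa)$ precisely when $\boldsymbol\lambda$ and $\boldsymbol\kappa$ interlace. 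Applying this place-by-place with $\boldsymbol\lambda=\boldsymbol\mu^{(n+1),\vee}_{\pi,\tau}$ and $\boldsymbol\kappa=\boldsymbol\mu^{(n)}_{\pi,\tau}+m\mathbf 1$ (the $\det^m$-twist producing the shift by $m\mathbf 1$, and the complex places treated tensor-factor by tensor-factor via $\mathcal V(\boldsymbol\mu_v)=V(\boldsymbol\mu_\tau)\otimes_{\mathbf C}V(\boldsymbol\mu_{\rho\tau})^{\rm conj}$), one obtains the sought $\mathrm{GL}_n(\mathsf{F}_{\mathbf A,\infty})$-equivariant morphism $\nabla^m\colon\mathcal V(\boldsymbol\mu^{(n+1),\vee}_\pi)\to\mathcal V(\boldsymbol\mu^{(n)}_\pi)\otimes\det^m$ if and only if $\boldsymbol\mu^{(n+1),\vee}_\pi\succcurlyeq\boldsymbol\mu^{(n)}_\pi+m\mathbf 1$.

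The main obstacle is the bookkeeping in the first step: one must carefully track the half-integral shifts separating the Langlands-parameter normalization from the highest-weight normalization, handle the parity cases ($n$ even versus odd, where an extra $\phi^\delta_\nu$ appears and its $\delta$ interacts with the parity of the relevant $l$'s), and --- most delicately --- combine the ``$L_v(s,\cdot)$ has no pole'' inequalities with the ``$L_v(1-s,\cdot^\vee)$ has no pole'' ones, the latter supplying the complementary half of each interlacing chain, and verify that the two families are jointly consistent exactly for the claimed integral $m$. The second step, by contrast, is essentially a citation of a standard multiplicity-one branching theorem.
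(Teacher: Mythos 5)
Your proposal is correct and, in substance, it reproduces the argument that the paper delegates to the cited reference: the paper's ``proof'' of Lemma~\ref{lem:crit} is simply the pointer ``See \cite[Section 2.4.5]{rag16}'', and what Raghuram does there is precisely the two-step argument you outline --- unwinding the pole conditions on the archimedean $\Gamma$-factors of $\phi(\pi^{(n+1)}_v)\otimes\phi(\pi^{(n)}_v)$ into the interlacing inequalities for (i)~$\Leftrightarrow$~(ii), and invoking the multiplicity-free Gel'fand--Tsetlin branching rule for $\mathrm{GL}_{n+1}\!\downarrow\!\mathrm{GL}_n$ (together with the $\det^m$-twist producing the shift by $m\mathbf{1}$, and the place-by-place/tensor-factor-by-tensor-factor reduction for complex $v$) for (ii)~$\Leftrightarrow$~(iii). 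Your identification of the delicate points --- the half-integral shifts between the Langlands-parameter and highest-weight normalizations, the parity of $n$ controlling the presence of the extra $\phi^{\delta}_{\nu}$ summand, and the necessity of using \emph{both} pole-avoidance conditions (for $L_v(s,\cdot)$ at $s=m+\tfrac12$ and for $L_v(1-s,\cdot^\vee)$ at the same point) to obtain the two complementary halves of the interlacing chain --- is exactly where the bookkeeping lies, and your sketch of how to resolve them is sound.
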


\begin{proof}
See \cite[Section 2.4.5]{rag16}.
\end{proof}

\subsection{The cup product pairings}\label{sec:Cup}

In this subsection, we briefly summarize the construction of the cup product pairing, which gives a cohomological interpretation of the critical values of $L(s,\pi^{(n+1)}\times \pi^{(n)})$. 
Although such a description can be found in \cite[Section 2.5]{rag16}, 
we include explanation of details of the construction since precise calculation of signatures plays a important role in the proof of our main theorem (Theorem~\ref{thm:main}). Refer also to \cite[Section~6.1]{hn}.

For each compact open subgroup ${\mathcal K}_n \subset {\rm GL}_n( \mathsf{F}_{{\mathbf A}, {\rm fin}} )$, 
set 
\begin{align*}
   Y^{(n)}_{\mathcal K_n} = {\rm GL}_n(\mathsf{F}) \backslash  {\rm GL}_n(  \mathsf{F}_{\mathbf A} )  / K^{(n)}_\infty {\mathcal K_n}, 
   \quad     {\mathcal Y}^{(n)}_{\mathcal K_n} = {\rm GL}_n(\mathsf{F}) \backslash  {\rm GL}_n(  \mathsf{F}_{\mathbf A} )  / C^{(n)}_\infty {\mathcal K_n}.   
\end{align*}
Refer to Section~\ref{sec:grinfty} for the definition of $K^{(n)}_\infty$ and $C^{(n)}_\infty$.
Each connected component of $\mathcal{Y}^{(n)}_{\mathcal{K}_n}$ is labeled by an element of ${\rm Cl}^+_\mathsf{F}({\mathcal K_n} ) := \mathsf{F}^\times \backslash \mathsf{F}^\times_{\mathbf A}  /  \mathsf{F}^\times_{\infty, +} \det {\mathcal K_n}$. The component ${\mathcal Y}^{(n)}_{{\mathcal K}_n, x}$ labeled by $x\in \mathrm{Cl}^+_\mathsf{F}(\mathcal{K}_n)$ is then described as $\Gamma_x \backslash  {\rm GL}_n(\mathsf{F}_{\mathbf{A},\infty})^\circ / C^{(n)}_\infty$ for a certain arithmetic subgroup $\Gamma_x$ of ${\rm GL}_n({\mathsf F})$.  
Therefore, once we fix an orientation on ${\rm GL}_n(\mathsf{F}_{\mathbf{A},\infty})^\circ / C^{(n)}_\infty$, 
it also induces an orientation on each connected component of ${\mathcal Y}^{(n)}_{ {\mathcal K}_n }$, 
and the fundamental class 
$[ {\mathcal Y}^{(n)}_{ {\mathcal K}_n, x } ]$ of the component $\overline{\mathcal Y}^{(n)}_{{\mathcal K}_n, x}$ is defined as an element of the relative homology group $H_{{\rm d}_n}(  \overline{\mathcal Y}^{(n)}_{{\mathcal K}_n, x}, \partial \overline{\mathcal Y}^{(n)}_{{\mathcal K}_n, x} ; {\mathbf Z} )$; here  ${\rm d}_n$ is the dimension of ${\mathcal Y}^{(n)}_{{\mathcal K}_n}$  
and $ \overline{\mathcal Y}^{(n)}_{{\mathcal K}_n, x}$ denotes  the Borel--Serre compactification of ${\mathcal Y}^{(n)}_{{\mathcal K}_n, x}$.     
Set 
\begin{align*}
  [ {\mathcal Y}^{(n)}_{ {\mathcal K}_n } ] 
  = \sum_{x \in \mathrm{Cl}^+_{\mathsf{F}}(\mathcal{K}_n)}   
     [ {\mathcal Y}^{(n)}_{ {\mathcal K}_n, x } ]       
     \in H_{{\rm d}_n}(  \overline{\mathcal Y}^{(n)}_{{\mathcal K}_n}, \partial \overline{\mathcal Y}^{(n)}_{{\mathcal K}_n} ; {\mathbf Z} ).  
\end{align*}
Hereafter we always fix an orientation on ${\mathcal Y}^{(n)}_{{\mathcal K}_n}$
 so that $[\mathcal{Y}^{(n)}_{\mathcal{K}_n,x}]$ corresponds to the volume form $\omega_{n}$ on ${\mathcal Y}^{(n)}_{{\mathcal K}_n,x}$ introduced in Section \ref{sec:lie} via the Poincar\'e duality.

Since, for each $v\in \Sigma_{\mathsf{F},\mathbf{R}}$, the element $\epsilon_{n, v} := {\rm diag}(-1, 1, \ldots, 1)\in {\rm GL}_n( \mathsf{F}_v )$ normalises $C^{(n)}_v={\rm SO}(n)$, the right translation $R_{\epsilon_{n, v}}$ by $\epsilon_{n, v}$ (regarded as an element of $\mathrm{GL}_n(\mathsf{F}_\mathbf{A})$) induces a continuous involution on ${\mathcal Y}^{(n)}_{{\mathcal K}_n}$. For $v\in \Sigma_{\mathsf{F},\mathbf{R}}$, let  $(-1)_v$ denote the element of $\mathsf{F}^\times_{\mathbf{A}}$ which is trivial at all places other than $v$, and equals $-1$ at $v$.

\begin{lem}\label{lem:signfund}
For a real place $v\in \Sigma_{\mathsf{F},\mathbf{R}}$, let $\epsilon_{n, v, \ast}$ denote the involution on $H_{{\rm d}_n}(  \overline{\mathcal Y}^{(n)}_{{\mathcal K}_n}, \partial \overline{\mathcal Y}^{(n)}_{{\mathcal K}_n} ; {\mathbf Z} )$ induced by $R_{\epsilon_n, v}$. 
 Then we have 
\begin{align*}
   \epsilon_{n, v, \ast} [ {\mathcal Y}^{(n)}_{ {\mathcal K}_n, x } ]  
   = (-1)^{n+1}  [ {\mathcal Y}^{(n)}_{ {\mathcal K}_n, (-1)_v x }].  
\end{align*}
\end{lem}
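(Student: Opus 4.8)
The plan is to track carefully how the right-translation $R_{\epsilon_{n,v}}$ interacts with the identification of connected components of $\mathcal{Y}^{(n)}_{\mathcal{K}_n}$ and with the chosen orientation coming from the volume form $\omega_n$. First I would recall that $\mathcal{Y}^{(n)}_{\mathcal{K}_n}=\bigsqcup_{x\in \mathrm{Cl}^+_\mathsf{F}(\mathcal{K}_n)} \mathcal{Y}^{(n)}_{\mathcal{K}_n,x}$ with $\mathcal{Y}^{(n)}_{\mathcal{K}_n,x}\cong \Gamma_x\backslash \mathrm{GL}_n(\mathsf{F}_{\mathbf{A},\infty})^\circ/C^{(n)}_\infty$, and observe that since $\epsilon_{n,v}$ has determinant $-1$ at $v$ and determinant $1$ elsewhere, right multiplication by it sends the component labelled $x$ to the component labelled $(-1)_v x$ (here one uses that $\det\epsilon_{n,v}=(-1)_v$ in $\mathsf{F}^\times_{\mathbf{A}}$ modulo $\mathsf{F}^\times_{\infty,+}\det\mathcal{K}_n$). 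So as a map of spaces $R_{\epsilon_{n,v}}$ is a diffeomorphism $\mathcal{Y}^{(n)}_{\mathcal{K}_n,x}\xrightarrow{\sim}\mathcal{Y}^{(n)}_{\mathcal{K}_n,(-1)_v x}$, and the only thing left is to compute whether it is orientation-preserving or -reversing, i.e. the sign by which it moves the fundamental classes.

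Next I would reduce the orientation computation to a linear-algebra computation on the tangent space, which by homogeneity can be done at the identity coset. The tangent space of $\mathrm{GL}_n(\mathsf{F}_{\mathbf{A},\infty})^\circ/C^{(n)}_\infty$ at the base point is $\mathfrak{g}_{n,\infty}/\mathfrak{k}_\infty$ (real points), and the induced orientation is the one dual to $\omega_n$, i.e. to the wedge of the basis vectors $E_{ij,v}$ (for real $v$) and $E^{\pm}_{ij,v}$ (for complex $v$) spanning a complement of the Lie algebra of $C^{(n)}_\infty$. The differential of $R_{\epsilon_{n,v}}$ at the relevant point is, after the obvious identification, the adjoint action $\mathrm{Ad}(\epsilon_{n,v})$ restricted to this complement (at the place $v$; it is the identity at all other places). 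So the sign we want is the determinant of $\mathrm{Ad}(\epsilon_{n,v})$ acting on $\mathfrak{g}_{n,v}/\mathfrak{so}(n)$, i.e. on the space spanned by the images of the $E_{ij,v}$ modulo $\mathfrak{so}(n)$; concretely on the symmetric part $\mathrm{Sym}$ of $\mathfrak{gl}_n(\mathbf{R})$ together with the one-dimensional centre, or however one sets up the complement compatibly with the chosen basis $\omega_{n,v}$.

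Then the key computation: conjugation by $\epsilon_{n,v}=\mathrm{diag}(-1,1,\dots,1)$ sends $E_{ij}$ to $(-1)^{[i=1]+[j=1]}E_{ij}$, so it acts as $-1$ exactly on the $2(n-1)$ entries $E_{1j}, E_{i1}$ with $i,j\neq 1$ and as $+1$ on the remaining $n^2-2(n-1)$ entries. On the quotient $\mathfrak{gl}_n/\mathfrak{so}(n)$ one pairs up $E_{ij}$ and $E_{ji}$, killing the antisymmetric combinations; the surviving symmetric combinations on which $\mathrm{Ad}(\epsilon_{n,v})$ acts by $-1$ are the $(n-1)$ pairs $\{E_{1j}+E_{j1}\}_{j\neq 1}$. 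Hence $\det\bigl(\mathrm{Ad}(\epsilon_{n,v})\mid \mathfrak{g}_{n,v}/\mathfrak{so}(n)\bigr)=(-1)^{n-1}=(-1)^{n+1}$, which gives the claimed sign $(-1)^{n+1}$ on the fundamental class, and combined with the component-permutation statement yields $\epsilon_{n,v,*}[\mathcal{Y}^{(n)}_{\mathcal{K}_n,x}]=(-1)^{n+1}[\mathcal{Y}^{(n)}_{\mathcal{K}_n,(-1)_v x}]$.

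I expect the main obstacle to be bookkeeping rather than conceptual: one must be scrupulous about (a) which complement of $\mathfrak{k}_\infty$ in $\mathfrak{g}_{n,\infty}$ actually represents the tangent space and matches the ordering of basis vectors implicit in $\omega_n$, since a reordering could in principle flip a sign — though here the $\mathrm{Ad}$-action is diagonalizable with eigenvalues $\pm1$ so only the count of $-1$ eigenvalues matters and reorderings are harmless; (b) the precise isomorphism $\mathcal{Y}^{(n)}_{\mathcal{K}_n,x}\cong \Gamma_x\backslash \mathrm{GL}_n(\mathsf{F}_{\mathbf{A},\infty})^\circ/C^{(n)}_\infty$ and the verification that $\det\epsilon_{n,v}$ really represents $(-1)_v$ in $\mathrm{Cl}^+_\mathsf{F}(\mathcal{K}_n)$; and (c) confirming that passing to the Borel--Serre compactification and to relative homology does not introduce extra signs, which follows because $R_{\epsilon_{n,v}}$ extends continuously to the compactification and respects the boundary. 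Once these are pinned down, the statement follows.
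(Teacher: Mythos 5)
Your argument is correct and essentially self-contained, whereas the paper itself does not give a proof here but simply cites Raghuram's Lemma~2.43 (and the corresponding lemma in Mahnkopf's paper). Your decomposition of the problem into (a) showing that $R_{\epsilon_{n,v}}$ permutes the connected components by multiplication of the determinant class by $(-1)_v$, and (b) computing the orientation sign as $\det\bigl(\mathrm{Ad}(\epsilon_{n,v})\mid \mathfrak{gl}_n(\mathbf{R})/\mathfrak{so}(n)\bigr)$, is the standard route, and your eigenvalue count is correct: $\mathrm{Ad}(\epsilon_{n,v})$ has determinant $+1$ on all of $\mathfrak{gl}_n(\mathbf{R})$ (since $\det\mathrm{Ad}(g)=1$ for any $g\in \mathrm{GL}_n$), and has determinant $(-1)^{n-1}$ on $\mathfrak{so}(n)$ (the $n-1$ elements $E_{1j}-E_{j1}$, $j>1$, carry eigenvalue $-1$), giving $(-1)^{n-1}=(-1)^{n+1}$ on the quotient. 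Equivalently, as you do it, working directly on the symmetric complement $\mathrm{Sym}_n(\mathbf{R})$, the $-1$ eigenvalue appears exactly on the $n-1$ elements $E_{1j}+E_{j1}$ with $j>1$.

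One point worth pinning down more carefully, which you flag yourself, is exactly how the volume form $\omega_n$ (which is defined as a top form on the group $\mathrm{GL}_n(\mathsf{F}_{\mathbf{A},\infty})$, not on the quotient by $C^{(n)}_\infty$) induces the orientation on $\mathcal{Y}^{(n)}_{\mathcal{K}_n,x}$. The cleanest way to make this robust is the one you hint at: the sign you want is $\det\bigl(\mathrm{Ad}(\epsilon_{n,v})\mid\mathfrak{g}_{n,v}(\mathbf{R})\bigr)\cdot \det\bigl(\mathrm{Ad}(\epsilon_{n,v})\mid\mathfrak{c}_{n,v}(\mathbf{R})\bigr)^{-1}$, which is independent of how the complement is chosen, and the first factor is $+1$ so only the $\mathfrak{so}(n)$ count matters. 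With that remark in place, the bookkeeping concerns in your final paragraph resolve exactly as you expect, and the proof goes through.
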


\begin{proof}
See \cite[Lemma 2.43]{rag16} (also refer to \cite[Section 5.1.3, Lemma]{mah05}). 
\end{proof}

Let ${\rm p}_n \colon {\mathcal Y}^{(n)}_{{\mathcal K}_n} \to Y^{(n)}_{{\mathcal K}_n}$ be the natural projection 
and $\iota : {\mathcal Y}^{(n)}_{{\mathcal K}_n}  \to {\mathcal Y}^{(n+1)}_{{\mathcal K}_{n+1}}$ a natural map induced by 
the inclusion $\iota \colon {\rm GL}_n \to {\rm GL}_{n+1} ; g \mapsto \begin{pmatrix} g & \\ & 1  \end{pmatrix}$. We hereafter assume that ${\mathcal K}_n = \iota^{-1}( {\mathcal K}_{n+1} )$ holds. Then the composition $j={\rm p}_{n+1}\circ \iota \colon \mathcal{Y}^{(n)}_{\mathcal{K}_n}\rightarrow Y^{(n+1)}_{\mathcal{K}_{n+1}}$ is a proper map due to the lemma of Borel and Prasad (see \cite[Lemma 2.7]{ash80} for the proof). 

Now let $\boldsymbol{\mu}^{(n+1)} \in ( {\mathbf Z}^{n+1} )^{I_\mathsf{F}}$ and $\boldsymbol{\mu}^{(n)} \in ( {\mathbf Z}^{n} )^{I_\mathsf{F}}$ be as in Section \ref{sec:intlPi}. Suppose that $\boldsymbol{\mu}^{ (n+1) }$ and $\boldsymbol{\mu}^{ (n) }$ satisfy the interlace condition (Definition~\ref{dfn:intlauto}). By Lemma~\ref{lem:crit} (1), the Rankin--Selberg $L$-function $L(s,\pi^{(n+1)}\times \pi^{(n)})$ then admits a critical point.  
Take $m\in \mathrm{Crit}(\pi^{(n+1)},\pi^{(n)})$ and let $\nabla^m \colon {\mathcal V}(  \boldsymbol{\mu}^{ (n+1), \vee } ) \to {\mathcal V}(  \boldsymbol{\mu}^{ (n) } ) \otimes {\rm det}^{m}$
 denote  the ${\rm GL}_n({\sf F}_{\mathbf A, \infty})$-equivariant morphism derived form the branching rule for the pair $({\rm GL}_{n+1},  {\rm GL}_n )$; its existence is justified due to Lemma~\ref{lem:crit}. 
Take a number field $E$ containing both $\mathbf{Q}(\boldsymbol{\mu}^{(n+1)})$ and ${\mathbf Q}(\boldsymbol{\mu}^{(n)})$, 
and let ${\mathcal V}(  \boldsymbol{\mu}^{ (n+1),\vee } )_E$ and ${\mathcal V}(  \boldsymbol{\mu}^{ (n) } )_E$ be $E$-rational models of ${\mathcal V}(  \boldsymbol{\mu}^{ (n+1),\vee } )$ and ${\mathcal V}(  \boldsymbol{\mu}^{ (n) } )$, respectively. We use the symbol $\widetilde{\mathcal V}(  \boldsymbol{\mu}^{ (n+1),\vee } )_E$ (resp.\ $\widetilde{\mathcal V}(  \boldsymbol{\mu}^{ (n),\vee } )_E$) for the local system on $Y^{(n+1)}_{\mathcal{K}_{n+1}}$ (resp.\ on $Y^{(n)}_{\mathcal{K}_n}$) defined by ${\mathcal V}(  \boldsymbol{\mu}^{ (n+1),\vee } )_E$ (resp.\ ${\mathcal V}(  \boldsymbol{\mu}^{ (n),\vee} )_E$).
Then, fixing a $\mathrm{GL}_n$-equivariant pairing $\langle \cdot, \cdot \rangle \colon  {\mathcal V}(  \boldsymbol{\mu}^{ (n) } )_E \times {\mathcal V}(  \boldsymbol{\mu}^{ (n) ,\vee } )_E\to E$, 
we can construct the cup product pairing by virtue of the numerical coincidence ${\rm b}_{n+1} + {\rm b}_n = {\rm d}_n$ (see \cite[Proposition~2.17]{rag16}): 
\begin{align*}
   H^{{\rm b}_{n+1}}_{\rm c}& ( Y^{(n+1)}_{{\mathcal K}_{n+1}, x} ,   \widetilde{\mathcal V} (\boldsymbol{\mu}^{(n+1), \vee}  )_E  )  
       \times  H^{{\rm b}_{n}} ( Y^{(n)}_{{\mathcal K}_{n}, x} ,   \widetilde{\mathcal V} (\boldsymbol{\mu}^{(n), \vee}  )_E  )                    \\
& \xrightarrow{ (j^\ast,  {\rm p}^\ast_n ) } 
        H^{{\rm b}_{n+1}}_{\rm c} ( {\mathcal Y}^{(n)}_{{\mathcal K}_{n}, x} ,  j^\ast \widetilde{\mathcal V} (\boldsymbol{\mu}^{(n+1), \vee}  )_E  )  
       \times  H^{{\rm b}_{n}} ( {\mathcal Y}^{(n)}_{{\mathcal K}_{n}, x} ,  {\rm p}^\ast_{n} \widetilde{\mathcal V} (\boldsymbol{\mu}^{(n), \vee}  )_E  )                      \\
& \xrightarrow{  (\nabla^m, {\rm id})  } 
        H^{{\rm b}_{n+1}}_{\rm c} ( {\mathcal Y}^{(n)}_{{\mathcal K}_{n}, x} , {\rm p}^\ast_{n} (\widetilde{\mathcal V} (\boldsymbol{\mu}^{(n)}) \otimes {\det}^m )_E  )  
       \times  H^{{\rm b}_{n}} ( {\mathcal Y}^{(n)}_{{\mathcal K}_{n}, x} ,  {\rm p}^\ast_{n} \widetilde{\mathcal V} (\boldsymbol{\mu}^{(n),\vee})_E )                      \xrightarrow{   \langle\cdot, \cdot \rangle \circ \cup } 
          H^{ {\rm d}_{n}} ( {\mathcal Y}^{(n)}_{{\mathcal K}_{n}, x} ,  \widetilde{E}(m)  ), 
\end{align*}
where $\widetilde{E}(m)$ is the local system on ${\mathcal Y}^{(n)}_{{\mathcal K}_{n}, x}$ 
     which is determined by 1-dimensional representation $\det^{m}$ over $E$.
Write $\widetilde{E} = \widetilde{E}(0)$ and define a twisting morphism ${\rm Tw}_{m} : \widetilde{E}(m) \to \widetilde{E}$ by 
\begin{align*}
  \widetilde{E}(m) \longrightarrow \widetilde{E} ;  (g, a_g) \longmapsto  (g,  \frac{ |\det g|^{m}_{\mathbf A} }{(\det g_\infty)^{m}} a_g).   
\end{align*}
Combining these maps with the Poincar\'e duality, we finally obtain the following pairing:
\begin{align*}
   H^{{\rm b}_{n+1}}_{\rm c} &( Y^{(n+1)}_{{\mathcal K}_{n+1}, x} ,   \widetilde{\mathcal V} (\boldsymbol{\mu}^{(n+1), \vee}  )_E  )  
       \times  H^{{\rm b}_{n}} ( Y^{(n)}_{{\mathcal K}_{n}, x} ,   \widetilde{\mathcal V} (\boldsymbol{\mu}^{(n), \vee}  )_E  )                    
\longrightarrow H^{ {\rm d}_{n}} ( {\mathcal Y}^{(n)}_{{\mathcal K}_{n}, x} ,  \widetilde{E}(m)  )   \\
& \xrightarrow{   {\rm Tw}_{m} }   H^{ {\rm d}_{n}} ( {\mathcal Y}^{(n)}_{{\mathcal K}_{n}, x} ,  \widetilde{E}  )   
 \xrightarrow{   - \cap [ {\mathcal Y}^{(n)}_{  {\mathcal K}_n, x }  ] }   E ; 
  (\xi, \eta) \longmapsto \int_{[{\mathcal Y}^{(n)}_{{\mathcal K}_n, x}   ]} {\rm Tw}_{m} \langle \nabla^{m} j^\ast \xi \cup {\rm p}^\ast_n \eta  \rangle. 
\end{align*}

\begin{lem}\label{lem:epadjoint}
For each $v \in \Sigma_{\mathsf{F},\mathbf{R}}$, let $\epsilon^\ast_{n, v}$ denote the involution on  
$H^{ {\rm d}_n }(  {\mathcal Y}^{(n)}_{{\mathcal K}_n, x}  , E   )$ induced by $R_{\epsilon_{n, v}}$. 
Then we have 
\begin{align*}
     \int_{  \epsilon_{n, v, \ast} [{\mathcal Y}^{(n)}_{{\mathcal K}_n, x}   ]} {\rm Tw}_{m} \langle \nabla^{m} j^\ast  \xi \cup {\rm p}^\ast_n \eta  \rangle
     =   \int_{[{\mathcal Y}^{(n)}_{{\mathcal K}_n, x}   ]}   \epsilon^\ast_{n, v}  {\rm Tw}_{m} \langle \nabla^{m}j^\ast \xi \cup {\rm p}^\ast_n \eta  \rangle.
\end{align*}
\end{lem}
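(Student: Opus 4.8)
The plan is to recognize the asserted identity as nothing more than the naturality (adjointness) of the Kronecker pairing between homology and cohomology under the continuous self-map $R_{\epsilon_{n,v}}$ of $\mathcal{Y}^{(n)}_{\mathcal{K}_n}$, so that the proof reduces to the projection formula for cap products together with the fact that a homeomorphism preserves augmentation.

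First I would record the geometric input. Since $\epsilon_{n,v}$ lies in $\mathrm{GL}_n(\mathsf{F}_v)$ at an archimedean place, is trivial at all finite places, and normalizes $C^{(n)}_v=\mathrm{SO}(n)$, right translation $R_{\epsilon_{n,v}}$ is a well-defined homeomorphism of $\mathcal{Y}^{(n)}_{\mathcal{K}_n}$ which extends to a homeomorphism of the Borel--Serre compactification $\overline{\mathcal{Y}}^{(n)}_{\mathcal{K}_n}$ carrying $\partial\overline{\mathcal{Y}}^{(n)}_{\mathcal{K}_n}$ to itself; write $R$ for this self-map. Because the coefficient system $\widetilde{E}$ that appears after applying $\mathrm{Tw}_m$ is the constant sheaf (the $E$-line with the trivial $\det^{0}$-action), $R$ acts naturally on $H^{\bullet}(\overline{\mathcal{Y}}^{(n)}_{\mathcal{K}_n},\partial;E)$ and on $H_{\bullet}(\overline{\mathcal{Y}}^{(n)}_{\mathcal{K}_n},\partial;E)$, inducing precisely $\epsilon^{\ast}_{n,v}$ and $\epsilon_{n,v,\ast}$ respectively; moreover it acts as the identity on the augmentation $\varepsilon\colon H_0\to E$ used to define $\int_{c}(-)$.

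Next I would run the formal argument. Set $\omega:=\mathrm{Tw}_m\langle\nabla^{m}j^{\ast}\xi\cup\mathrm{p}^{\ast}_n\eta\rangle$, regarded as a single class in $H^{\mathrm{d}_n}(\mathcal{Y}^{(n)}_{\mathcal{K}_n},\widetilde{E})$ on the union of all connected components. The projection formula for the cap product gives $R_{\ast}(R^{\ast}\omega\cap c)=\omega\cap R_{\ast}c$ for every relative homology class $c$; applying $\varepsilon$ and using $\varepsilon\circ R_{\ast}=\varepsilon$ yields $\int_{c}R^{\ast}\omega=\int_{R_{\ast}c}\omega$, that is, $\int_{c}\epsilon^{\ast}_{n,v}\omega=\int_{\epsilon_{n,v,\ast}c}\omega$. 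Specializing $c=[\mathcal{Y}^{(n)}_{\mathcal{K}_n,x}]$ is exactly the claimed equality.

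The only point that needs care is the bookkeeping that makes $\omega$ a legitimate single cohomology class: $\xi$ lives in compactly supported cohomology $H^{\mathrm{b}_{n+1}}_{\mathrm{c}}$, $\eta$ in ordinary cohomology $H^{\mathrm{b}_n}$, and the construction in Section~\ref{sec:Cup} passes successively through $j^{\ast}$, $\mathrm{p}^{\ast}_n$, $\nabla^{m}$, the cup product into top-degree compactly supported cohomology, $\mathrm{Tw}_m$, and the identification with $H^{\mathrm{d}_n}$ used to pair with the relative fundamental class. I would check that $R$, being a homeomorphism, commutes with each of these operations up to the evident identifications, so that $R^{\ast}\omega$ really is obtained by applying $\epsilon^{\ast}_{n,v}$ to $\omega$ and no sign or coefficient subtlety is introduced at this stage. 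In particular, all the signs that genuinely occur --- notably the factor $(-1)^{n+1}$ --- are already absorbed into $\epsilon_{n,v,\ast}[\mathcal{Y}^{(n)}_{\mathcal{K}_n,x}]$ through Lemma~\ref{lem:signfund} and are irrelevant to the present purely formal identity; this is why the statement holds with no sign on either side.
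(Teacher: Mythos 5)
Your proposal is correct and follows essentially the same route as the paper: both arguments reduce the claim to the projection formula $f_\ast(\alpha\cap f^\ast\varphi)=f_\ast(\alpha)\cap\varphi$ for cap products (the paper cites Hatcher, p.~241; you call it naturality of the Kronecker pairing) and then observe that the induced map on $H_0$, i.e.\ the augmentation, is unaffected by $R_{\epsilon_{n,v}}$. The extra care you take in recording that $R_{\epsilon_{n,v}}$ extends to the Borel--Serre compactification and commutes with $j^\ast$, $\mathrm{p}^\ast_n$, $\nabla^m$ and $\mathrm{Tw}_m$ is welcome but not a different idea.
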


\begin{proof}
This is directly derived from the functoriality of the cap product (see \cite[page 241]{hat02}). To be precise, let $X$ be a topological space with subspaces $A,B$ and  $X'$ another topological space with subspaces $A', B'$. Consider a continuous map $f\colon X\to X^\prime$  satisfying $f(A) \subset A^\prime$ and $f(B) \subset B^\prime$. Then the equality $f_\ast(\alpha) \cap \varphi = f_\ast  (\alpha \cap f^\ast(\varphi))$ holds for $\alpha\in H_k(X,A\cup B;E)$ and $\varphi\in H^l(X',A';E)$; 
or in other words, the following diagram commutes:   
\begin{align*}
 \begin{CD}
   H_k(X, A\cup B; E) @.    \times @.     H^l(X, A ; E)   @>\cap>>   H_{k-l}(X, B ; E)       \\
   @Vf_{\ast}VV  @. @AAf^{\ast}A   @VVf_{\ast}V    \\
   H_k(X^\prime, A^\prime\cup B^\prime; E)  @.  \times @. H^l(X^\prime, A^\prime ; E)   @>\cap>>   H_{k-l}(X^\prime, B^\prime ; E).      
 \end{CD} 
\end{align*}
Note that $\epsilon_{n, v, \ast} \colon H_0( {\mathcal Y}^{(n)}_{  {\mathcal K}_{n}, x  }, E  ) \to H_0( {\mathcal Y}^{(n)}_{  {\mathcal K}_{n}, (-1)_vx  }, E  )$ can be identified with the identity map $E \xrightarrow{=} E$.
\end{proof}

Let $N$ denote either $n+1$ or $n$. 
For each $\varepsilon^{(N)} = ( \varepsilon^{(N)}_v )_{v \in \Sigma_{\mathsf{F}, {\mathbf R}}  } \in \{ \pm 1 \}^{\Sigma_{\mathsf{F}, {\mathbf R}}}$,   
define
    $H^{{\rm b}_N} ( Y^{(N)}_{{\mathcal K}_N},  \widetilde{\mathcal V}(\boldsymbol{\mu}^{(N), \vee})     )[\varepsilon^{(N)}]$
  to be the subspace of $H^{{\rm b}_N} ( Y^{(N)}_{{\mathcal K}_N},  \widetilde{\mathcal V}(\boldsymbol{\mu}^{(N), \vee})     )$ 
 on  which $\epsilon_{N, v}^\ast$ acts as $\varepsilon^{(N)}_v$ for each $v\in \Sigma_{\mathsf{F}, {\mathbf R}}$.

\begin{prop}\label{prop:sgn}
Let 
$\xi^{(N)}$ be an element of $H^{{\rm b}_N} ( Y^{(N)}_{{\mathcal K}_N},  \widetilde{\mathcal V}(\boldsymbol{\mu}^{(N), \vee})     )[ \varepsilon^{(N)}]$ for $N=n+1$ or $n$,
and let $\varphi \colon\mathrm{Cl}^+_{\mathsf{F}}(\mathcal{K}_n) \to {\mathbf C}^\times$  be a  Hecke character of finite order. 

Then the equality $\varphi_v(-1) \varepsilon^{(n+1)}_v\varepsilon^{(n)}_v  = (-1)^{m+n+1}$ for each $v\in \Sigma_{\mathsf{F}, {\mathbf R}}$ is necessary for the integral
\begin{align*}
  {\mathcal I}( \xi^{(n+1)}, \xi^{(n)}, \varphi ) 
  :=  \sum_{x \in {\rm Cl}^+_\mathsf{F}({\mathcal K}_n)}    
           \varphi(x)
             \int_{[{\mathcal Y}^{(n)}_{{\mathcal K}_n, x}   ]} {\rm Tw}_{m} \langle \nabla^{m} j^\ast \xi^{(n+1)} \cup {\rm p}^\ast_n \xi^{(n)}  \rangle   
\end{align*}
not to vanish.
\end{prop}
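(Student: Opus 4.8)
The strategy is to track how the right translation $R_{\epsilon_{n,v}}$ acts on each of the three factors composing the integrand $\mathrm{Tw}_m\langle \nabla^m j^\ast \xi^{(n+1)} \cup \mathrm{p}_n^\ast \xi^{(n)}\rangle$ together with the fundamental class $[\mathcal{Y}^{(n)}_{\mathcal{K}_n,x}]$, and then compare the two sides of the identity in Lemma~\ref{lem:epadjoint}. Concretely, I would fix a real place $v\in\Sigma_{\mathsf{F},\mathbf{R}}$, write $\epsilon=\epsilon_{n,v}$ for brevity, and compute $\int_{\epsilon_{n,v,\ast}[\mathcal{Y}^{(n)}_{\mathcal{K}_n,x}]}\mathrm{Tw}_m\langle\nabla^m j^\ast\xi^{(n+1)}\cup\mathrm{p}_n^\ast\xi^{(n)}\rangle$ in two ways: on the one hand using Lemma~\ref{lem:signfund}, which gives the factor $(-1)^{n+1}$ and replaces $x$ by $(-1)_v x$; on the other hand using Lemma~\ref{lem:epadjoint}, which moves the involution onto the cohomology class, where it decomposes through $j^\ast$, $\mathrm{p}_n^\ast$, $\nabla^m$, the pairing $\langle\cdot,\cdot\rangle$, and the twist $\mathrm{Tw}_m$. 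Since the $\epsilon$-action is being measured against the summation over $x\in\mathrm{Cl}^+_{\mathsf{F}}(\mathcal{K}_n)$ twisted by $\varphi$, the substitution $x\mapsto(-1)_v x$ introduces a factor $\varphi((-1)_v)=\varphi_v(-1)$, and invariance of the whole sum $\mathcal{I}(\xi^{(n+1)},\xi^{(n)},\varphi)$ under this reindexing is what will force the sign relation.

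The key intermediate computation is to determine the eigenvalue with which $\epsilon^\ast_{n,v}$ acts on $\mathrm{Tw}_m\langle\nabla^m j^\ast\xi^{(n+1)}\cup\mathrm{p}_n^\ast\xi^{(n)}\rangle$ assuming $\xi^{(n+1)}\in H^{\mathrm{b}_{n+1}}(\cdots)[\varepsilon^{(n+1)}]$ and $\xi^{(n)}\in H^{\mathrm{b}_n}(\cdots)[\varepsilon^{(n)}]$. Here one has to be careful: the involution on $Y^{(n+1)}_{\mathcal{K}_{n+1}}$ compatible with $\iota\colon\mathrm{GL}_n\hookrightarrow\mathrm{GL}_{n+1}$ is $R_{\epsilon_{n+1,v}}$ where $\epsilon_{n+1,v}=\mathrm{diag}(-1,1,\dots,1)\in\mathrm{GL}_{n+1}(\mathsf{F}_v)$, and $\iota(\epsilon_{n,v})=\epsilon_{n+1,v}$ by construction, so $j^\ast$ intertwines $\epsilon^\ast_{n+1,v}$ with $\epsilon^\ast_{n,v}$; thus $\epsilon^\ast_{n,v}(j^\ast\xi^{(n+1)})=\varepsilon^{(n+1)}_v\, j^\ast\xi^{(n+1)}$. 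Similarly $\mathrm{p}_n\colon\mathcal{Y}^{(n)}_{\mathcal{K}_n}\to Y^{(n)}_{\mathcal{K}_n}$ is $R_{\epsilon_{n,v}}$-equivariant, so $\epsilon^\ast_{n,v}(\mathrm{p}_n^\ast\xi^{(n)})=\varepsilon^{(n)}_v\,\mathrm{p}_n^\ast\xi^{(n)}$. The branching map $\nabla^m$ is $\mathrm{GL}_n(\mathsf{F}_{\mathbf{A},\infty})$-equivariant, hence commutes with $R_{\epsilon_{n,v}}$ up to the action of $\epsilon_{n,v}$ on the target $\mathcal{V}(\boldsymbol{\mu}^{(n)})\otimes\det^m$; the $\det^m$-twist contributes $\det(\epsilon_{n,v})^m=(-1)^m$, while the pairing $\langle\cdot,\cdot\rangle$ against $\mathrm{p}_n^\ast\xi^{(n)}$ and the $\mathrm{GL}_n$-equivariant structure of $\mathcal{V}(\boldsymbol{\mu}^{(n)})$ cancel the remaining action on $\mathcal{V}(\boldsymbol{\mu}^{(n)})$. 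Finally $\mathrm{Tw}_m$, being defined via $|\det g|^m_{\mathbf{A}}/(\det g_\infty)^m$, is $R_{\epsilon_{n,v}}$-invariant since $|\det\epsilon_{n,v}|_{\mathbf{A}}=1=|\!\det\epsilon_{n,v}|_\infty$ at the archimedean place but the local factor at $v$ contributes $|-1|_v^m/(-1)^m$, which requires care and will be absorbed/cancelled appropriately — I would track this factor explicitly. Collecting all contributions, $\epsilon^\ast_{n,v}$ multiplies the integrand by $(-1)^m\varepsilon^{(n+1)}_v\varepsilon^{(n)}_v$ (possibly up to an additional explicit power of $-1$ from $\mathrm{Tw}_m$ that I would pin down).

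Putting the pieces together: by Lemma~\ref{lem:epadjoint} combined with Lemma~\ref{lem:signfund},
\begin{align*}
(-1)^{n+1}\int_{[\mathcal{Y}^{(n)}_{\mathcal{K}_n,(-1)_v x}]}\mathrm{Tw}_m\langle\nabla^m j^\ast\xi^{(n+1)}\cup\mathrm{p}_n^\ast\xi^{(n)}\rangle
=\varepsilon^{(n+1)}_v\varepsilon^{(n)}_v(-1)^m\int_{[\mathcal{Y}^{(n)}_{\mathcal{K}_n,x}]}\mathrm{Tw}_m\langle\nabla^m j^\ast\xi^{(n+1)}\cup\mathrm{p}_n^\ast\xi^{(n)}\rangle.
\end{align*}
Multiplying by $\varphi(x)$, summing over $x\in\mathrm{Cl}^+_{\mathsf{F}}(\mathcal{K}_n)$, and on the left-hand side reindexing $x\mapsto(-1)_v x$ (which replaces $\varphi(x)$ by $\varphi_v(-1)\varphi(x)$), I obtain $(-1)^{n+1}\varphi_v(-1)\,\mathcal{I}(\xi^{(n+1)},\xi^{(n)},\varphi)=\varepsilon^{(n+1)}_v\varepsilon^{(n)}_v(-1)^m\,\mathcal{I}(\xi^{(n+1)},\xi^{(n)},\varphi)$. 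Hence, unless $\mathcal{I}(\xi^{(n+1)},\xi^{(n)},\varphi)=0$, one must have $(-1)^{n+1}\varphi_v(-1)=(-1)^m\varepsilon^{(n+1)}_v\varepsilon^{(n)}_v$, i.e.\ $\varphi_v(-1)\varepsilon^{(n+1)}_v\varepsilon^{(n)}_v=(-1)^{m+n+1}$, which is exactly the claimed necessary condition. The main obstacle I anticipate is bookkeeping the signs correctly through $\nabla^m$ and especially through $\mathrm{Tw}_m$ — the interplay between the $\det^m$-twist of the local system, the archimedean normalization in $\mathrm{Tw}_m$, and the $(-1)^m$ arising from $\det(\epsilon_{n,v})$ is delicate, and getting consistent conventions (e.g.\ whether $m$ or $-m$ appears) is where an error would most likely creep in; a careful cross-check against the corresponding computation in \cite[Section~2.5]{rag16} and \cite[Section~6.1]{hn} would be prudent.
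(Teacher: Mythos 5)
Your proposal is correct and follows essentially the same route as the paper's proof: you invoke Lemma~\ref{lem:signfund} to reindex the sum over $\mathrm{Cl}^+_{\mathsf{F}}(\mathcal{K}_n)$ and produce $(-1)^{n+1}\varphi_v(-1)$, and Lemma~\ref{lem:epadjoint} to move $\epsilon^\ast_{n,v}$ onto the integrand where it yields $(\det\epsilon_{n,v})^m\varepsilon^{(n+1)}_v\varepsilon^{(n)}_v=(-1)^m\varepsilon^{(n+1)}_v\varepsilon^{(n)}_v$, then equate. The residual uncertainty you flag about the $\mathrm{Tw}_m$ normalization is not an issue: the paper simply records the factor $(\det\epsilon_{n,v})^m$ from commuting $\epsilon^\ast_{n,v}$ past the $\det^m$-twisted pairing, with no extra contribution from $\mathrm{Tw}_m$ itself.
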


\begin{proof}
Let $v \in \Sigma_{\mathsf{F},\mathbf{R}}$ be an arbitrary real place. 
Then, due to Lemma \ref{lem:signfund}, we have
\begin{align*}
    \sum_{x \in {\rm Cl}^+_\mathsf{F}({\mathcal K}_n)} &   
           \varphi(x)
             \int_{  \epsilon_{n, v, \ast} [{\mathcal Y}^{(n)}_{{\mathcal K}_n, x}   ]} {\rm Tw}_{m} \langle \nabla^{m} j^\ast \xi^{(n+1)} \cup {\rm p}^\ast_n \xi^{(n)}  \rangle       \\
 &=   \sum_{x   \in {\rm Cl}^+_\mathsf{F}({\mathcal K}_n)}    
           \varphi(x)  (-1)^{n+1}
             \int_{[{\mathcal Y}^{(n)}_{{\mathcal K}_n, (-1)_vx}   ]} {\rm Tw}_{m} \langle \nabla^{m} j^\ast \xi^{(n+1)} \cup {\rm p}^\ast_n \xi^{(n)}  \rangle       \\
 &=   (-1)^{n+1} \varphi((-1)_v)   
         \sum_{x  \in {\rm Cl}^+_\mathsf{F}({\mathcal K}_n)}    
           \varphi((-1)_vx ) 
             \int_{[{\mathcal Y}^{(n)}_{{\mathcal K}_n, (-1)_vx}   ]} {\rm Tw}_{m} \langle \nabla^{m} j^\ast \xi^{(n+1)} \cup {\rm p}^\ast_n \xi^{(n)}  \rangle       \\
 &=   (-1)^{n+1} \varphi_v(-1)     {\mathcal I}( \xi^{(n+1)}, \xi^{(n)}, \varphi ). 
\end{align*}
Meanwhile we also have 
\begin{align*}
   \sum_{x \in {\rm Cl}^+_\mathsf{F}({\mathcal K}_n)}    &
           \varphi(x)
             \int_{    [{\mathcal Y}^{(n)}_{{\mathcal K}_n, x}   ]}   
             \epsilon^\ast_{n, v}  {\rm Tw}_{m} \langle \nabla^{m} j^\ast \xi^{(n+1)} \cup {\rm p}^\ast_n \xi^{(n)}  \rangle       \\
&=    \sum_{x \in {\rm Cl}^+_\mathsf{F}({\mathcal K}_n)}    
           \varphi(x)
             \int_{    [{\mathcal Y}^{(n)}_{{\mathcal K}_n, x}   ]}   
              (\det \epsilon_{n, v})^{m}    {\rm Tw}_{m} \langle \nabla^{m}    j^\ast\epsilon^\ast_{n+1, v}   \xi^{(n+1)} \cup   {\rm p}^\ast_n \epsilon^\ast_{n, v}  \xi^{(n)}  \rangle       \\
&=     \sum_{x \in {\rm Cl}^+_\mathsf{F}({\mathcal K}_n)}    
          \varphi(x)
             \int_{    [{\mathcal Y}^{(n)}_{{\mathcal K}_n, x}   ]}   
              (-1)^m {\rm Tw}_{m} \langle  \varepsilon^{(n+1)}_v  \nabla^{m} j^\ast \xi^{(n+1)} \cup  \varepsilon^{(n)}_v  {\rm p}^\ast_n \xi^{(n)}  \rangle       \\
& =   (-1)^{m}      \varepsilon^{(n+1)}_v   \varepsilon^{(n)}_v
    {\mathcal I}( \xi^{(n+1)}, \xi^{(n)}, \varphi ). 
\end{align*}
Lemma~\ref{lem:epadjoint} shows that the two summations above coincide, and thus $(-1)^{n+1}\varphi_v(-1)=(-1)^m\varepsilon^{(n+1)}_v\varepsilon^{(n)}_v$ follows unless the integral $\mathcal{I}(\xi^{(n+1)},\xi^{(n)},\varphi)$ vanishes.
\end{proof}

In the following, we often consider that, at a complex place $v\in \Sigma_{\mathsf{F},\mathbf{C}}$, the signature $\varepsilon^{(N)}_v$ is defined as $\varepsilon^{(N)}_v=1$; indeed, since $\epsilon_{N,v}$ belongs to $C_v^{(N)}={\rm U}(N)$, the induced actions $\epsilon_{N,v,*}$ and $\epsilon_{N,v}^*$ must be trivial.

\subsection{The Rankin--Selberg $L$-functions}\label{sec:RSLfn}

For $N = n, n+1$, 
let $\pi^{(N)}$ be a cohomological irreducible cuspidal automorphic representation of ${\rm GL}_N(\mathsf{F}_{\mathbf A})$ of highest weight $\boldsymbol{\mu}^{(N)}$, satisfying (\ref{eq:auto_assump}).    
Assume that $\pi^{(N)}_{\mathrm{fin}}$ has a nontrivial vector fixed by a compact open subgroup ${\mathcal K}_N \subset {\rm GL}_N( \mathsf{F}_{{\mathbf A}, {\rm fin}})$. 
Let $\mathfrak{g}_{N,\infty}$ 
denotes the complexification of the Lie algebra of $\mathrm{GL}_N(\mathsf{F}_{\mathbf{A},\infty})$. 
Then, due to Proposition~\ref{prop:gK} and K\"unneth formula for relative Lie algebra cohomologies, we obtain an isomorphism
\begin{align} \label{eq:Kunneth}
 H^{{\rm b}_N}(\mathfrak{g}_{N,\infty},K_\infty^{(N)};\pi^{(N)}_\infty\otimes_\mathbf{C}\mathcal{V}(\boldsymbol{\mu}_\pi^{(N),\vee}))\cong \bigotimes_{v\in \Sigma_{\mathsf{F},\infty}} H^{{\rm b}_{N,v}}(\mathfrak{g}_{N,v},K^{(N)}_v; \pi^{(N)}_v\otimes_{\mathbf{C}}\mathcal{V}(\boldsymbol{\mu}^{(N),\vee}_{\pi,v}))
\end{align}
at the bottom degree. 
Now let us consider the canonical map
\begin{align}\label{eq:genES}
    H^{{\rm b}_N}(\mathfrak{g}_{N,\infty},K_\infty^{(N)};\pi^{(N)}_\infty\otimes_\mathbf{C}\mathcal{V}(\boldsymbol{\mu}_\pi^{(N),\vee}))
           \otimes \pi^{(N)}_{\rm fin}
           \longrightarrow 
           H^{{\rm b}_N}_{\rm cusp}(   Y^{(N)}_{ {\mathcal K}_{N}  },    \widetilde{\mathcal V}(\boldsymbol{\mu}^{(N), \vee}_\pi)     ).
\end{align}
The cohomology groups $H^{{\rm b}_N}(\mathfrak{g}_{N,\infty},K_\infty^{(N)};\pi^{(N)}_\infty\otimes_\mathbf{C}\mathcal{V}(\boldsymbol{\mu}_\pi^{(N),\vee}))$ and $H^{{\rm b}_N}_{\rm cusp}(Y^{(N)}_{ {\mathcal K}_{N}  },    \widetilde{\mathcal V}(\boldsymbol{\mu}^{(N), \vee}_\pi))$ admit the action of  $\prod_{v\in \Sigma_{\mathsf{F},\mathbf{R}}} {\rm O}(N)/{\rm SO}(N)$, called the {\em component action}, which is equivariant under the decomposition (\ref{eq:Kunneth}) and the map (\ref{eq:genES}). 
We define $H^{{\rm b}_N}_{\rm cusp} ( Y^{(N)}_{{\mathcal K}_N},  \widetilde{\mathcal V}(\boldsymbol{\mu}^{(N), \vee}_\pi)     )[\pi^{(N)}, \varepsilon^{(N)}]$ for any $\varepsilon^{(N)}=(\varepsilon^{(N)}_v)_{v\in \Sigma_{\mathsf{F},\mathbf{R}}} \in \{\pm 1\}^{\Sigma_{\mathsf{F},\mathbf{R}}}$ as the image of the $\varepsilon^{(N)}$-eigenspace of  $H^{\mathrm{b}_N}(\mathfrak{g}_{N,\infty},K^{(N)}_\infty;\pi^{(N)}_\infty\otimes_{\mathbf{C}}\mathcal{V}(\pi^{(N),\vee}_{\pi}))\otimes \pi^{(N)}_{\mathrm{fin}}$ under (\ref{eq:genES}). 

Now, 
for each $\epsilon\in \{\pm 1\}$,   
let $\psi^\epsilon_{\mathbf Q} \colon  {\mathbf Q} \backslash {\mathbf Q}_{\mathbf A}  \rightarrow  {\mathbf C}^\times$ denote the additive character of  ${\mathbf Q} \backslash {\mathbf Q}_{\mathbf A} $ which is characterized by  $\psi^\epsilon_{\mathbf{Q},\infty}( x ) = \exp(2\pi\sqrt{-1} \epsilon x)$ for $x\in {\mathbf R}$. We extend it to     
 $\psi^\epsilon \colon \mathsf{F} \backslash \mathsf{F}_\mathbf{A}\rightarrow \mathbf{C}^\times$ by setting $\psi^\epsilon = \psi^\epsilon_{\mathbf{Q}}\circ \mathrm{Tr}_{\mathsf{F}/\mathbf{Q}}$. 
If $\epsilon = +1$, we write $\psi^\epsilon$ as $\psi$ for the simplicity.
Let ${\mathcal W}(\pi^{(N)}, \psi^\epsilon) \cong \bigotimes^\prime_{v\in \Sigma_{\mathsf{F}}} {\mathcal W}(\pi^{(N)}_v, \psi_v^\epsilon)$ be the $\psi^\epsilon$-Whittaker model of $\pi^{(N)}$ for each $\epsilon\in \{\pm 1\}$.
Note that, for each $v\in \Sigma_{\mathsf{F}, \infty}$, the ${\mathbf C}$-vector space 
\begin{align}\label{eq:gkwhittv}
H^{{\rm b}_{N,v} } ( \mathfrak{g}_{ N, v}, K^{(N)}_{v} ; \pi^{(N)}_v     \otimes_\mathbf{C} {\mathcal V}(\boldsymbol{\mu}^{(N), \vee}_{\pi,v})      )  [ \varepsilon^{(N)}_v  ]
     \cong  
     \left(      {\mathcal W}(\pi^{(N)}_v, \psi_v^\epsilon)   
              \otimes \left(\bigwedge^{{\rm b}_{N,v} }   (\mathfrak{g}_{N, v}  / \mathfrak{k}_{N, v} )   \right)   
              \otimes V(  \boldsymbol{\mu}^{(N),\vee}_{\pi,v} )        \right)^{K^{(N)}_{v}}  [  \varepsilon^{(N)}_v ]   
\end{align}
is of dimension at most $1$, where ${\mathfrak k}_{N, v}$ denotes the complexification of the Lie algebra of $K^{(N)}_{v}$ (here we set $\varepsilon^{(N)}_v=1$ for $v\in \Sigma_{\mathsf{F},\mathbf{C}}$ as convention). 
Take and fix a nontrivial element $[\pi^{(N)}_v]^{\varepsilon^{(N)}_v}$ in the right-hand side 
of (\ref{eq:gkwhittv}), 
and write it down as
\begin{align} \label{eq:cohclass}
\begin{aligned}
     [\pi^{(n+1)}_v]^{\varepsilon^{(n+1)}_v}
        &= \sum_{w_v \in {\mathcal W}(\pi^{(n+1)}_v, \psi_v) } 
           \; \sum_{    X_v \in   \bigwedge^{{\rm b}_{n+1,v} }   (\mathfrak{g}_{n+1, v}  / \mathfrak{k}_{n+1, v} )       }    \; 
            \sum_{  \boldsymbol{v}_v \in V(  \boldsymbol{\mu}^{(n+1),\vee}_{\pi,v} )  } w_v \otimes X_v \otimes  \boldsymbol{v}_v, \\
     [\pi^{(n)}_v]^{\varepsilon^{(n)}_v}
        &= \sum_{w'_v \in {\mathcal W}(\pi^{(n)}_v, \psi_v^{-1}) } 
           \; \sum_{    X'_v \in   \bigwedge^{{\rm b}_{n,v} }   (\mathfrak{g}_{n, v}  / \mathfrak{k}_{n, v} )       }    \; 
            \sum_{  \boldsymbol{v}'_v \in V(  \boldsymbol{\mu}^{(n),\vee}_{\pi,v} )  } w'_v \otimes X'_v \otimes  \boldsymbol{v}'_v.
\end{aligned}
 \end{align}
Each $X_v \in \bigwedge^{{\rm b}_{N,v} }   (\mathfrak{g}_{N, v}  / \mathfrak{k}_{N, v} )$ in the  identity above is described as
  a $\mathbf{C}$-linear combination of the wedge products of $\{ E_{11, v}-E_{ii,v} \}_{2\leq i  \leq n}\cup \{E_{ij,v}+E_{ji,v}\}_{1\leq i<j\leq n}$ (resp.\ $\{ E^\pm_{11, v}-E^\pm_{ii,v} \}_{2\leq i  \leq n}\cup \{E^\pm_{ij,v}+E^\pm_{ji,v} \}_{1\leq i<j\leq n}$) if $v$ is real (resp.\ complex).
Set $[\pi^{(N)}_\infty]^{\varepsilon^{(N)}} = \bigotimes_{v\in \Sigma_{\mathsf{F}, \infty}}  [\pi^{(N)}_v]^{\varepsilon^{(N)}_v}$. 
Then, evaluating at $[\pi^{(N)}_\infty]^{\varepsilon^{(N)}}$ in  (\ref{eq:genES}) via (\ref{eq:Kunneth}), we obtain a map 
\begin{align}\label{eq:Fscr}
  \mathscr{F}^{\varepsilon^{(N)}}_{\pi^{(N)}} \colon {\mathcal W}(\pi^{(N)}_{\rm fin}, \psi^\epsilon_{{\rm fin} })  
      =  {\bigotimes_{v\in \Sigma_{\mathsf{F}, {\rm fin}}}\!\!\!}^\prime \;{\mathcal W}(\pi^{(N)}_v, \psi^\epsilon_{v})
    \xrightarrow{\,[\pi^{(N)}_\infty]^{\varepsilon^{(N)}}\otimes -\,}   
         H^{{\rm b}_N}_{\rm cusp}   (   Y^{(N)}_{ {\mathcal K}_N}  ,    \widetilde{\mathcal V}(\boldsymbol{\mu}^{(N), \vee}_\pi)     )[\varepsilon^{(N)}].
\end{align}

Let $\varphi \colon \mathsf{F}^\times \backslash \mathsf{F}^\times_{\mathbf A} \to {\mathbf C}^\times$ be a Hecke character of finite order.    
For each $v\in \Sigma_{\mathsf{F}}$, take Whittaker functions $w_v \in {\mathcal W}(\pi^{(n+1)}_v, \psi_v)$ and $w^\prime_v \in {\mathcal W}(\pi^{(n)}_v, \psi_v^{-1})$, and define the {\em local zeta integral} $\mathcal{I}_v(s,w_v,w_v',\varphi_v)$ as 
\begin{align*}
     {\mathcal I}_v(s, w_v, w^\prime_v, \varphi_v) 
     =  \int_{  {\rm N}_n(\mathsf{F}_v) \backslash {\rm GL}_n(\mathsf{F}_v) }   
           w_v (  {\rm diag}(g_v, 1)  )
           w^\prime_v (g_v)     
           \varphi_v(\det g_v)
           \lvert \det g_v\rvert^{s-\frac{1}{2}}_v
             {\rm d} g_v.          
 \end{align*}
 Here we normalize a Haar measure ${\rm d} g_v$ on $\mathrm{GL}_n(\mathsf{F}_v)$ so that 
 ${\rm vol}(\mathrm{GL}_n(\mathcal{O}_{\mathsf{F},v}),  {\rm d} g_v) = 1$ holds if $v$ is finite,
 and that ${\rm d} g_v$  
 corresponds to a fixed volume form $\omega_{n,v}$ on $Y^{(n)}_{{\mathcal K}_n}$ if $v$ is infinite (see Section \ref{sec:lie}).

 As in Section~\ref{sec:Cup}, fix a ${\rm GL}_n(\mathsf{F}_{\mathbf A, \infty})$-equivariant pairing
 \begin{align*}
     \langle \cdot, \cdot  \rangle  \colon
           \bigl({\mathcal V}(\boldsymbol{\mu}^{(n)} ) \otimes {\rm det}^{m}\bigr) 
          \times  {\mathcal V}(\boldsymbol{\mu}^{(n), \vee} )
           \to \mathbf{C}({\rm det}^{m}).
 \end{align*}
Write the image of a pair
$(  [\pi^{(n+1)}_v]^{\varepsilon^{(n+1)}_v},  [\pi^{(n)}_v]^{\varepsilon^{(n)}_v}   )$
   under the map $(j^* \cdot\wedge \cdot )\otimes \langle \nabla^m \cdot , \cdot \rangle$ as 
\begin{align*}
    \sum_{w_v \in {\mathcal W}(\pi^{(n+1)}_v, \psi_v) }
    \sum_{w'_v \in {\mathcal W}(\pi^{(n)}_v, \psi_v^{-1}) }
    c^{(m)} (w_v, w'_v)
    w_v w'_v \omega_{n,v}
\end{align*}
for $c^{(m)} (w_v, w'_v) \in \mathbf{C}$.
Then define the archimedean local zeta integral $\widetilde{\mathcal I}_v(s, [\pi^{(n+1)}_v]^{\varepsilon^{(n+1)}_v},  [\pi^{(n)}_v]^{\varepsilon^{(n)}_v}  , \varphi_v)$ for $[\pi^{(n+1)}_v]^{\varepsilon^{(n+1)}_v}$ and $[\pi^{(n)}_v]^{\varepsilon^{(n)}_v}$ as
\begin{align*}
   \widetilde{\mathcal I}_v&(s, [\pi^{(n+1)}_v]^{\varepsilon^{(n+1)}_v},  [\pi^{(n)}_v]^{\varepsilon^{(n)}_v}  , \varphi_v) \\
   &=     \sum_{w_v \in {\mathcal W}(\pi^{(n+1)}_v, \psi_v) } \;
    \sum_{w'_v \in {\mathcal W}(\pi^{(n)}_v, \psi_v^{-1}) }
    c^{(m)} (w_v, w'_v) 
    {\mathcal I}_v(s, w_v, w'_v, \varphi_v).
\end{align*}

The following statement immediately follows from the construction above combined with unfolding calculations of integrals:

\begin{lem}\label{lem:zeta_int}
For each $\bigotimes_{v\in \Sigma_{\mathsf{F}, {\rm fin}}} w_v \in {\mathcal W}(\pi^{(n+1)}_{\rm fin}, \psi_{{\rm fin} })$
              and $\bigotimes_{v\in \Sigma_{\mathsf{F}, {\rm fin}}} w^\prime_v  \in {\mathcal W}(\pi^{(n)}_{\rm fin}, \psi^{-1}_{{\rm fin} })$,    
we have 
\begin{align*}
  & {\mathcal I}\left(  \mathscr{F}^{\varepsilon^{(n+1)}}_{\pi^{(n+1)}} \left({\bigotimes}_{v\in \Sigma_{\mathsf{F}, {\rm fin}}} w_v\right),     
                      \mathscr{F}^{\varepsilon^{(n)}}_{\pi^{(n)}}   \left({\bigotimes}_{v\in \Sigma_{\mathsf{F}, {\rm fin}}} w^\prime_v\right),
                       \varphi \right)   \\ 
   &\qquad = [{\rm GL}_n(\widehat{\mathcal O}_\mathsf{F})  :   {\mathcal K}_n  ]
    \left. \left(\prod_{v\in\Sigma_{\mathsf{F}, {\rm fin}}} {\mathcal I}_v(s, w_v, w^\prime_v, \varphi_v) 
     \prod_{v\in\Sigma_{\mathsf{F}, \infty}}    
                 \widetilde{\mathcal I}_v(s, [\pi^{(n+1)}_v]^{\varepsilon^{(n+1)}_v},  [\pi^{(n)}_v]^{\varepsilon^{(n)}_v}  , \varphi_v)\right) \right|_{s=\frac{1}{2} + m}.
\end{align*}
\end{lem}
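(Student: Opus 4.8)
The proof is a cohomological incarnation of the classical Rankin--Selberg unfolding of Jacquet, Piatetski-Shapiro and Shalika; the plan is to rewrite the left-hand side as an adelic period integral, unfold it, and then factorize the result into local pieces. First I would make explicit the de Rham representative of the class $\mathscr{F}^{\varepsilon^{(N)}}_{\pi^{(N)}}(\bigotimes_{v\in\Sigma_{\mathsf{F},\mathrm{fin}}}w_v)$: by the definition of the map (\ref{eq:genES}), the K\"unneth decomposition (\ref{eq:Kunneth}), and the chosen generators (\ref{eq:cohclass}), this class is represented by the $\mathcal{V}(\boldsymbol{\mu}^{(N),\vee}_\pi)$-valued harmonic differential $\mathrm{b}_N$-form on $Y^{(N)}_{\mathcal{K}_N}$ attached to the cusp form $\Phi^{(N)}$ whose global $\psi^\epsilon$-Whittaker function is $W^{(N)}=W^{(N)}_\infty\otimes w_{\mathrm{fin}}$, the archimedean data $W^{(N)}_\infty$ and the exterior-algebra factors in $\bigwedge^{\mathrm{b}_{N,v}}(\mathfrak{g}_{N,v}/\mathfrak{k}_{N,v})$ being read off from $[\pi^{(N)}_\infty]^{\varepsilon^{(N)}}$.

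Next I would unwind the definition of $\mathcal{I}(\cdot,\cdot,\varphi)$ from Section~\ref{sec:Cup}. Using the fundamental classes $[\mathcal{Y}^{(n)}_{\mathcal{K}_n,x}]$, the fixed orientation (matched to $\omega_n$ via Poincar\'e duality, Section~\ref{sec:lie}), and the insertion of $\varphi$ over $x\in\mathrm{Cl}^+_\mathsf{F}(\mathcal{K}_n)$, the sum $\mathcal{I}(\xi^{(n+1)},\xi^{(n)},\varphi)$ becomes an integral over $\mathrm{GL}_n(\mathsf{F})\backslash\mathrm{GL}_n(\mathsf{F}_{\mathbf A})/C^{(n)}_\infty\mathcal{K}_n$ of the pairing $\langle\nabla^m\Phi^{(n+1)}(\iota(g))\otimes\Phi^{(n)}(g)\rangle$ against $\varphi(\det g)$, twisted by the factor contributed by $\mathrm{Tw}_m$ and the $\det^m$ in the target of $\nabla^m$, which together give $|\det g|^{s-\frac12}$ at $s=\frac12+m$. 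Substituting the Whittaker expansions of $\Phi^{(n+1)}$ and $\Phi^{(n)}$ over $\mathrm{N}_N(\mathsf{F})\backslash\mathrm{GL}_N(\mathsf{F})$ and using that $\nabla^m$ is exactly the branching datum that makes the $\mathrm{N}_n(\mathsf{F})$-integration telescope (Lemma~\ref{lem:crit}), the standard unfolding converts this into $\int_{\mathrm{N}_n(\mathsf{F}_{\mathbf A})\backslash\mathrm{GL}_n(\mathsf{F}_{\mathbf A})}W^{(n+1)}(\mathrm{diag}(g,1))\,W^{(n)}(g)\,\varphi(\det g)\,|\det g|^{s-\frac12}_{\mathbf A}\,\mathrm{d}g$ evaluated at $s=\frac12+m$.

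It then remains to factorize. The global Whittaker functions $W^{(N)}$ factor as restricted tensor products, and with the measure normalization of Section~\ref{sec:RSLfn} --- volume $1$ on $\mathrm{GL}_n(\mathcal{O}_{\mathsf{F},v})$ at finite $v$ and the volume form $\omega_{n,v}$ at infinite $v$ --- the adelic integral breaks into $\prod_{v\in\Sigma_{\mathsf{F},\mathrm{fin}}}\mathcal{I}_v(s,w_v,w'_v,\varphi_v)$ times an archimedean part. At each archimedean $v$ the archimedean contribution is, after expanding $[\pi^{(n+1)}_v]^{\varepsilon^{(n+1)}_v}$ and $[\pi^{(n)}_v]^{\varepsilon^{(n)}_v}$ through $(j^*\cdot\wedge\cdot)\otimes\langle\nabla^m\cdot,\cdot\rangle$ exactly as in the paragraph preceding the lemma, equal to $\sum_{w_v,w'_v}c^{(m)}(w_v,w'_v)\mathcal{I}_v(s,w_v,w'_v,\varphi_v)=\widetilde{\mathcal{I}}_v(s,[\pi^{(n+1)}_v]^{\varepsilon^{(n+1)}_v},[\pi^{(n)}_v]^{\varepsilon^{(n)}_v},\varphi_v)$ by definition. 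The constant $[\mathrm{GL}_n(\widehat{\mathcal{O}}_\mathsf{F}):\mathcal{K}_n]$ finally appears when one passes between the level-$\mathcal{K}_n$ space and the maximal-level space under the chosen measure, i.e.\ from comparing the index of $\mathcal{K}_n$ in $\mathrm{GL}_n(\widehat{\mathcal{O}}_\mathsf{F})$.

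I expect the main obstacle to be the bookkeeping in the first two steps, not the unfolding: one must pin down the precise de Rham representative produced by (\ref{eq:genES}) and (\ref{eq:cohclass}), controlling how the $\bigwedge^{\mathrm{b}_{N,v}}(\mathfrak{g}_{N,v}/\mathfrak{k}_{N,v})$ factors interact with $j^*$, $\mathrm{p}^*_n$ and with the wedge and cup operations, so that the resulting archimedean integrand reproduces the coefficients $c^{(m)}(w_v,w'_v)$ used to define $\widetilde{\mathcal{I}}_v$; and one must track orientations, signs and measure normalizations tightly enough that no spurious constant besides $[\mathrm{GL}_n(\widehat{\mathcal{O}}_\mathsf{F}):\mathcal{K}_n]$ survives. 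These verifications are essentially carried out in \cite[Section~2.5]{rag16} and \cite[Section~6.1]{hn}, so much of the argument can be made by reference.
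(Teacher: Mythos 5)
Your proposal is correct and follows exactly the argument the paper intends: the paper asserts the lemma ``immediately follows from the construction above combined with unfolding calculations of integrals,'' i.e.\ the standard Rankin--Selberg unfolding of Jacquet--Piatetski-Shapiro--Shalika applied to the de Rham representatives produced by $\mathscr{F}^{\varepsilon^{(N)}}_{\pi^{(N)}}$, and your steps (explicating the representative via (\ref{eq:genES})--(\ref{eq:cohclass}), converting the class-group sum with ${\rm Tw}_m$ and $\nabla^m$ into the adelic integral with $\lvert\det g\rvert^{s-1/2}$, unfolding, factorizing with the stated measure normalizations and extracting $[\mathrm{GL}_n(\widehat{\mathcal O}_\mathsf{F}):\mathcal K_n]$) reproduce precisely the computation carried out in the cited references \cite[Section~2.5]{rag16} and \cite[Section~6.1]{hn}. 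Nothing further is needed.
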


\subsection{Whittaker periods}\label{sec:Whitt}

Philosophically the zeta integrals appearing in Lemma~\ref{lem:zeta_int} are expected to give critical values of the Rankin--Selberg $L$-function for appropriate choice of the (local) Whittaker functions. Indeed, for a finite place $v\in \Sigma_{\mathsf{F},\mathrm{fin}}$ at which both $\pi^{(n+1)}$ and $\pi^{(n)}$ are unramified, 
the local zeta integral $\mathcal{I}_v(s,w_v,w_v^\prime,\varphi_v)$ gives the local $L$-factor $L_v(s,\pi^{(n+1)}\times \pi^{(n)} \times \varphi_v)$  
 if we choose $w_v$ and $w_v^\prime$ as Shintani's class-1 Whittaker functions \cite{shin76}.  
Concerning study of algebraicity of critical values of the  Rankin--Selberg $L$-functions, the most difficult issue is analysis of the archimedean local zeta integrals 
$\widetilde{\mathcal I}_v(s, [\pi^{(n+1)}_v]^{\varepsilon^{(n+1)}_v},  [\pi^{(n)}_v]^{\varepsilon^{(n)}_v}  , \varphi_v)\vert_{s=\frac{1}{2}+m}$.
We propose the following conjecture on the archimedean local  zeta integrals, which actually holds in several cases.

\begin{conj}\label{conj:mellin}
Let $\pi^{(n+1)}$ and $\pi^{(n)}$ be cohomological irreducible cuspidal automorphic representations of $\mathrm{GL}_{n+1}(\mathsf{F}_{\mathbf{A}})$ and $\mathrm{GL}_n(\mathsf{F}_\mathbf{A})$, respectively, which satisfy (\ref{eq:auto_assump}). Assume that they also satisfy the interlace condition $($see Definition~$\ref{dfn:intlauto})$. 
Let $N$ denote either $n+1$ or $n$. Then, for each $v\in \Sigma_{\mathsf{F},\infty}$ and each signature $\varepsilon^{(N)}_v \in \{\pm 1\}$ with respect to which the $(\mathfrak{g}_{N,v},K^{(N)}_v)$-cohomology group $(\ref{eq:gkwhittv})$ does not vanish, there exist a nontrivial $(\mathfrak{g}_{N,v},K^{(N)}_v)$-cohomology class $[\pi^{(N)}_v]^{\varepsilon^{(N)}_v}$ as in $(\ref{eq:cohclass})$ and a number field $E(\varepsilon^{(n+1)}_v,\varepsilon^{(n)}_v)$ satisfying 
\begin{align*}
   \widetilde{\mathcal I}_v(s, [\pi^{(n+1)}_v]^{\varepsilon^{(n+1)}_v},  [\pi^{(n)}_v]^{\varepsilon^{(n)}_v}  , \varphi_v) |_{s=\frac{1}{2} + m}
   \sim_{E(\varepsilon^{(n+1)}_v,\varepsilon^{(n)}_v)}
    L_v\left(\frac{1}{2}+m, \pi^{(n+1)} \times \pi^{(n)} \times \varphi \right)
\end{align*}
for every $m\in \mathrm{Crit}(\pi^{(n+1)},\pi^{(n)})$ if, at a real place $v$, the signatures $\varepsilon^{(n+1)}_v$ and $\varepsilon^{(n)}_v$ are chosen so that the equality$\varphi_v(-1)\varepsilon^{(n+1)}_v\varepsilon^{(n)}_v=(-1)^{m+n+1}$ holds.
\end{conj}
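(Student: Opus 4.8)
The plan is to reduce Conjecture~\ref{conj:mellin} to a purely archimedean computation and then invoke (or carry out) that computation. First I would unfold the definition of $\widetilde{\mathcal{I}}_v$: expanding the fixed cohomology classes $[\pi^{(n+1)}_v]^{\varepsilon^{(n+1)}_v}$ and $[\pi^{(n)}_v]^{\varepsilon^{(n)}_v}$ along a Gel'fand--Tsetlin type basis of $V(\boldsymbol{\mu}^{(n+1),\vee}_{\pi,v})$ and $V(\boldsymbol{\mu}^{(n),\vee}_{\pi,v})$ together with a basis of $\bigwedge^{{\rm b}_{N,v}}(\mathfrak{g}_{N,v}/\mathfrak{k}_{N,v})$ as in $(\ref{eq:cohclass})$, the cup product $(j^\ast\cdot\wedge\cdot)\otimes\langle\nabla^m\cdot,\cdot\rangle$ yields a finite sum of products of Whittaker functions weighted by the structure constants of the branching map $\nabla^m$ and of the pairing $\langle\cdot,\cdot\rangle$, all of which lie in $\overline{\mathbf{Q}}$. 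The outcome is a presentation of $\widetilde{\mathcal{I}}_v(s,\ldots)$ as a finite $\overline{\mathbf{Q}}$-linear combination of scalar Rankin--Selberg archimedean integrals $\mathcal{I}_v(s,w_v,w'_v,\varphi_v)$ attached to explicit $K^{(N)}_v$-types and explicit highest-weight vectors.

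The second and genuinely analytic step is to evaluate these scalar integrals. At a complex place one appeals to the explicit computation of $\mathcal{I}_v(s,w_v,w'_v,\varphi_v)$ for $\mathrm{GL}_{n+1}(\mathbf{C})\times\mathrm{GL}_n(\mathbf{C})$ due to Ishii and Miyazaki \cite{im}, which expresses each such integral as a product of $\Gamma$-functions times an elementary rational factor; matching this against $L_v\bigl(\tfrac12+m,\pi^{(n+1)}\times\pi^{(n)}\times\varphi\bigr)$ one checks that the particular $\overline{\mathbf{Q}}$-linear combination dictated by the Gel'fand--Tsetlin expansion collapses to the $\Gamma$-factor up to an algebraic multiple. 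At a real place the required archimedean identity is available only for $n=2$, via Hirano, Ishii and Miyazaki \cite{him}, where one argues as in \cite[Section~7.3.2]{hn}; for general $n$ at a real place the analogous cohomological archimedean computation is not yet established, which is exactly why the statement is phrased as a conjecture.

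The third step is to pin down the normalization of $[\pi^{(N)}_v]^{\varepsilon^{(N)}_v}$ so that the proportionality constant lands in the prescribed number field $E(\varepsilon^{(n+1)}_v,\varepsilon^{(n)}_v)$ uniformly in $m$. The explicit formula from the second step produces a nonzero factor $c_v(\boldsymbol{\mu},m)$ built from $\Gamma$-values at half-integers and binomial-type coefficients; one rescales the generator of the one-dimensional space $(\ref{eq:gkwhittv})$ to absorb the ``transcendental part'' of $c_v(\boldsymbol{\mu},m)$. The point that a single rescaling works for every critical $m$ simultaneously — rather than introducing a new transcendental period at each critical point — rests on the observation that the $m$-dependence of both $\widetilde{\mathcal{I}}_v$ and $L_v$ is governed by the same shift of the archimedean $\Gamma$-factors, so their ratio is an algebraic function of $m$ over the half-integers and therefore lies in a fixed number field for all $m\in\mathrm{Crit}(\pi^{(n+1)},\pi^{(n)})$ at once.

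The main obstacle is the archimedean analysis at a general real place: one needs an analogue of \cite{him} and \cite{im} computing the cohomological archimedean Rankin--Selberg integral for $\mathrm{GL}_{n+1}(\mathbf{R})\times\mathrm{GL}_n(\mathbf{R})$ with $\mathcal{V}(\boldsymbol{\mu})$-coefficients and prescribed $\mathrm{O}(N)/\mathrm{SO}(N)$-eigencharacter, together with control of the rationality of the resulting constant. Even in the complex case the subtlety is in threading the Gel'fand--Tsetlin combinatorics through the cup-product unfolding so as to recover a clean algebraic multiple of $L_v$; that case is, however, the subject of the ongoing work with Miyazaki referred to in the introduction, and the strategy above is precisely what makes it go through.
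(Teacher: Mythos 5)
This statement is a conjecture, not a theorem, so the paper offers no proof of it; the text immediately following (Remark~\ref{rem:conj}) only records the cases in which it is known. You have correctly recognised this: your three-step outline is not a proof but a programme, and you explicitly flag the unresolved case (general $n$ at a real place) as the reason the statement is conjectural. Your account of the known cases matches the paper's Remark~\ref{rem:conj}: at complex places the conjecture follows from \cite{im}, and at real places with $n=2$ it follows from \cite{him} together with the normalisation of \cite[Section~7.3.2]{hn}. Your description of the unfolding step — expanding the cohomology classes via Gel'fand--Tsetlin bases, pushing through $j^\ast$, $\nabla^m$ and the $\mathrm{GL}_n$-equivariant pairing to reduce $\widetilde{\mathcal{I}}_v$ to a $\overline{\mathbf{Q}}$-combination of scalar integrals $\mathcal{I}_v$ — is the mechanism implicit in Section~\ref{sec:RSLfn} of the paper, and your third step (absorbing the transcendental part of the archimedean constant into the normalisation of the generator of the one-dimensional $(\mathfrak{g}_{N,v},K^{(N)}_v)$-cohomology group, uniformly over all critical $m$) is precisely the normalisation issue the conjecture is about. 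In short, this is a fair and accurate exposition of why the statement is plausible and where the genuine gap lies; since no proof exists in the paper, there is nothing further to compare it against.
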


\begin{rem}\label{rem:conj}
\begin{enumerate}[label={\rm (\roman*)}]
\item \label{rem:conj1}
      When $n$ equals $2$, we have identified in  \cite[Theorem 7.1]{hn} the cohomology classes $[\pi^{(3)}_v]^{\varepsilon^{(3)}_v}$ and $[\pi^{(2)}_v]^{\varepsilon^{(2)}_v}$ appearing in Conjecture~\ref{conj:mellin} at a real place, by constructing the explicit Eichler--Shimura map and adopting normalization of the archimedean Whittaker functions discussed in \cite{him}.   
          Furthermore we have explicitly calculated the archimedean local zeta integrals $\widetilde{\mathcal I}_v(s, [\pi^{(3)}_v]^{\varepsilon^{(3)}_v},  [\pi^{(2)}_v]^{\varepsilon^{(2)}_v}  , \varphi_v) |_{s=\frac{1}{2} + m}$ in the case.   
\item If the base field is totally imaginary,  
           Conjecture \ref{conj:mellin} holds for an arbitrary $n$ due to \cite[Corollary 2.11]{im}. 
           Combining \ref{rem:conj1}, 
           we found that Conjecture \ref{conj:mellin} holds when $n$ equals 2 and the base field is, for example, the rational number field $\mathbf{Q}$ or an arbitrary quadratic field. 
\item The naive version of Conjecture \ref{conj:mellin} is found in \cite[Section 0, Question]{kms00}, where they ask whether
         the arthicedean local zeta integral $\widetilde{\mathcal I}_v(s, [\pi^{(n+1)}_v]^{\varepsilon^{(n+1)}_v},  [\pi^{(n)}_v]^{\varepsilon^{(n)}_v}  , \varphi_v) |_{s=\frac{1}{2} + m}$ is nontrivial. 
        The answer is affirmative due to Sun's recent result \cite{sun17}.  
        Conjecture \ref{conj:mellin} is a stronger and more precise version of \cite[Section 0, Question]{kms00}, which is necessary for further study of critical values of Rankin--Selberg $L$-functions; 
        namely it is fundamental for precise study of $p$-adic Rankin--Selberg $L$-functions discussed by Januszewski \cite{jan}.  
\end{enumerate}
\end{rem}

From now on we admit Conjecture~\ref{conj:mellin} 
and take cohomology classes $[\pi^{(n+1)}_v]^{\varepsilon^{(n+1)}_v}$ and $[\pi^{(n)}_v]^{\varepsilon^{(n)}_v}$ appearing there. 
For $N=n+1$ or $n$,   
this choice of $[\pi^{(N)}_v]^{\varepsilon^{(N)}_v}$ induces a map 
\begin{align*}
  \mathscr{F}^{\varepsilon^{(N)}}_{\pi^{(N)}} \colon {\mathcal W}(\pi^{(N)}_{\rm fin}, \psi^\epsilon_{{\rm fin} })  
    \longrightarrow 
         H^{{\rm b}_N}_{\rm cusp}   (   Y^{(N)}_{ {\mathcal K}_N}  ,    \widetilde{\mathcal V}(\boldsymbol{\mu}^{(N), \vee}_\pi)     )[\varepsilon^{(N)}]
\end{align*}
as in (\ref{eq:Fscr}). 
Then, for each $\alpha \in \mathrm{Aut}(\mathbf{C})$, there exists a complex number $p({}^{\alpha} \pi^{(N)}, \varepsilon^{(N)}) \in \mathbf{C}^\times$   
such that the normalized maps 
$\mathscr{F}^{\varepsilon^{(N)},\circ}_{{}^\alpha\pi^{(N)}}
:= p({}^\alpha\pi^{(N)}, \varepsilon^{(N)})^{-1} \mathscr{F}^{\varepsilon^{(N)}}_{{}^\alpha \pi^{(N)}}$ fit into  
 the following commutative diagram (recall that ${}^{\alpha}\boldsymbol{\mu}^{(N)}_\pi$ and ${}^{\alpha}\pi^{(N)}$ are defined in Sections~\ref{sec:FinDim} and \ref{sec:CloConj} respectively):   
\begin{align*}
\xymatrix{  {\mathcal W}(\pi^{(N)}_{\rm fin}, \psi^\epsilon_{{\rm fin} })    
                            \ar[rr]^-{\mathscr{F}^{\varepsilon^{(N)},\circ}_{\pi^{(N)}}} \ar[d]_{\alpha } \ar@{}[rrd]|{\text{\large $\circlearrowright$}} &&
                                      H^{{\rm b}_N}_{\rm cusp} ( Y^{(N)}_{ {\mathcal K}_N},    \widetilde{\mathcal V}(\boldsymbol{\mu}^{(N), \vee}_\pi)   )[\varepsilon^{(N)}]  \ar[d]^{\alpha} \\
    {\mathcal W}({}^{\alpha}\pi^{(N)}_{\rm fin}, \psi^\epsilon_{{\rm fin} })   
                    \ar[rr]_-{\mathscr{F}^{\varepsilon^{(N)},\circ}_{{}^\alpha\pi^{(N)}}}  &&
                       H^{{\rm b}_N}_{\rm cusp} ( Y^{(N)}_{ {\mathcal K}_N},    \widetilde{\mathcal V}({}^{\alpha}\boldsymbol{\mu}^{(N), \vee}_\pi)   )[\varepsilon^{(N)}].}      
\end{align*}
Furthermore, the complex number $p(\pi^{(N)},\varepsilon^{(N)})$, which we call the {\em Whittaker period} of $\pi^{(N)}$, is well defined unique up to multiplication by elements of $\mathbf{Q}(\pi^{(N)})^\times$, and the element of $(\mathbf{Q}(\pi^{(N)}) \otimes_{\mathbf{Q}} \mathbf{C})^\times$ defined by the collection of complex numbers $\{p({}^{\alpha} \pi^{(N)}, \varepsilon^{(N)}) \mid \alpha \in \mathrm{Aut}(\mathbf{C})\}$ is also unique up to multiplication by elements of $\mathbf{Q}(\pi^{(N)})^\times$.  For details, see  \cite[Definition/Proposition~3.3]{rs08} and \cite[Section~2.5.2]{rag16}.

Hereafter we fix a sufficiently large coefficient field $E$ so that it contains all of $\mathbf{Q}(\pi^{(n+1)})$, $\mathbf{Q}(\pi^{(n)})$, $\mathbf{Q}(\boldsymbol{\mu}^{(n+1)}_\pi)$ and $\mathbf{Q}(\boldsymbol{\mu}^{(n)}_\pi)$. Recall from Lemma~\ref{lem:crit} that, for each critical point $m\in {\rm Crit}(\pi^{(n+1)} , \pi^{(n)}  )$, we have a ${\rm GL}_n(\mathsf{F}_{\mathbf A, \infty})$-equivariant projection 
$\nabla^m \colon  V(\boldsymbol{\mu}^{(n+1), \vee}_\pi ) \to  V(\boldsymbol{\mu}^{(n)}_\pi ) \otimes \det^{m}$.  To discuss algebraicity of the critical values of the Rankin--Selberg $L$-function, we here verify rationality of the projection $\nabla^m$. 

\begin{lem}
For each critical point $m\in {\rm Crit}(\pi^{(n+1)} , \pi^{(n)}  )$, 
the ${\rm GL}_n(\mathsf{F}_{\mathbf A, \infty})$-equivariant projection 
$\nabla^m$ appearing in Lemma~$\ref{lem:crit}$ is defined over $E;$ that is, 
there exists a ${\rm GL}_n(\mathsf{F}_{\mathbf{A},\infty})$-equivariant projection of $E$-vector spaces 
$\nabla^m_E \colon  V(\boldsymbol{\mu}^{(n+1), \vee}_\pi )_E \to  V(\boldsymbol{\mu}^{(n)}_\pi )_E \otimes \det^{m}$
satisfying $\nabla^m = \nabla^m_E \otimes_E {\mathbf C}$. 
\end{lem}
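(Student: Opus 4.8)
The statement asserts that the branching projection $\nabla^m\colon V(\boldsymbol{\mu}^{(n+1),\vee}_\pi)\to V(\boldsymbol{\mu}^{(n)}_\pi)\otimes\det^m$ is defined over the coefficient field $E$. The plan is to exploit the fact that, by Lemma~\ref{lem:crit}, when $m$ is critical the space $\mathrm{Hom}_{\mathrm{GL}_n(\mathsf{F}_{\mathbf{A},\infty})}(V(\boldsymbol{\mu}^{(n+1),\vee}_\pi),V(\boldsymbol{\mu}^{(n)}_\pi)\otimes\det^m)$ is one-dimensional over $\mathbf{C}$ (this is precisely the multiplicity-one property of the $(\mathrm{GL}_{n+1},\mathrm{GL}_n)$-branching rule underlying the existence of $\nabla^m$). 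Once we know this Hom-space is a line, the standard Galois-descent argument applies: a one-dimensional $\mathbf{C}$-subspace of a space carrying a compatible $E$-rational structure and an $\mathrm{Aut}(\mathbf{C}/E)$-action that stabilizes it must be the complexification of an $E$-line.

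**Key steps.** First I would set up the $E$-rational structures: the models $V(\boldsymbol{\mu}^{(n+1),\vee}_\pi)_E$ and $V(\boldsymbol{\mu}^{(n)}_\pi)_E$ exist by the discussion in Section~\ref{sec:FinDim} (recall $E\supseteq\mathbf{Q}(\boldsymbol{\mu}^{(n+1)}_\pi),\mathbf{Q}(\boldsymbol{\mu}^{(n)}_\pi)$), and $\det^m$ is manifestly defined over $\mathbf{Q}$; hence $H:=\mathrm{Hom}_{\mathrm{GL}_n}(V(\boldsymbol{\mu}^{(n+1),\vee}_\pi),V(\boldsymbol{\mu}^{(n)}_\pi)\otimes\det^m)$ has a natural $E$-rational structure $H_E:=\mathrm{Hom}_{\mathrm{GL}_n,E}(V(\boldsymbol{\mu}^{(n+1),\vee}_\pi)_E,V(\boldsymbol{\mu}^{(n)}_\pi)_E\otimes\det^m)$ with $H_E\otimes_E\mathbf{C}\cong H$ (equivariance of the Hom-functor under flat base change, using that $\mathrm{GL}_n$ and its representations here are all split/defined over $E$). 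Second, for $\alpha\in\mathrm{Aut}(\mathbf{C}/E)$, the twist $\alpha(\nabla^m)$ again lies in $H$ and is still $\mathrm{GL}_n(\mathsf{F}_{\mathbf{A},\infty})$-equivariant and nonzero; by one-dimensionality of $H$ it equals $\nabla^m$ up to a scalar in $\mathbf{C}^\times$. Third, I would normalize $\nabla^m$ by pinning down the image of a suitable $E$-rational highest-weight-type vector (or any nonzero $E$-rational vector not in the kernel) so that the scalar becomes $1$ for all $\alpha\in\mathrm{Aut}(\mathbf{C}/E)$; then $\nabla^m$ is $\mathrm{Aut}(\mathbf{C}/E)$-fixed inside $H$, so it lies in $H_E$ by Galois descent for vector spaces, which is exactly the assertion $\nabla^m=\nabla^m_E\otimes_E\mathbf{C}$.

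**Main obstacle.** The genuinely substantive input is multiplicity one for the branching $V(\boldsymbol{\mu}^{(n+1),\vee}_\pi)\rightsquigarrow V(\boldsymbol{\mu}^{(n)}_\pi)\otimes\det^m$, i.e.\ that $H$ is a line rather than merely nonzero; but this is already built into Lemma~\ref{lem:crit} (the classical $\mathrm{GL}_{n+1}\downarrow\mathrm{GL}_n$ branching rule has all multiplicities $0$ or $1$, and criticality of $m$ forces the relevant multiplicity to be exactly $1$), so I would simply invoke it. The only mild technical point to be careful about is the normalization step: one must check that there is an $E$-rational vector on which $\nabla^m$ does not vanish and whose image can be chosen $E$-rational — this is immediate because the (cyclic) highest weight vector of $V(\boldsymbol{\mu}^{(n+1),\vee}_\pi)_E$ maps to an $E$-rational weight vector of $V(\boldsymbol{\mu}^{(n)}_\pi)_E\otimes\det^m$, and it is nonzero precisely because $\nabla^m\neq 0$ together with $\mathrm{GL}_n$-equivariance. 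After that the argument is formal. Accordingly, I expect the proof to be short: it is essentially ``multiplicity one $+$ Galois descent,'' with no serious computation required.
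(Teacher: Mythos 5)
Your proof is correct, but it packages the key idea differently from the paper, and slightly overcomplicates the last step. The paper's argument is shorter and more direct: it notes that the image $\nabla^m\bigl(V(\boldsymbol{\mu}^{(n+1),\vee}_\pi)_E\bigr)$ is itself an $E$-rational irreducible $\mathrm{GL}_n$-representation of highest weight $\boldsymbol{\mu}^{(n)}_\pi$, hence another $E$-form of $V(\boldsymbol{\mu}^{(n)}_\pi)\otimes\det^m$ sitting inside the same complex space, and then applies Schur's lemma to the (unique up to scalar) isomorphism between this $E$-form and $V(\boldsymbol{\mu}^{(n)}_\pi)_E\otimes\det^m$; complexifying that isomorphism gives a scalar $c$, and $\nabla^m_E:=c\nabla^m$ does the job. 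Your route passes through the Hom-space $H$ and an $\mathrm{Aut}(\mathbf{C}/E)$-descent; this is sound (granting $\mathbf{C}^{\mathrm{Aut}(\mathbf{C}/E)}=E$), but once you have observed that $H_E\otimes_E\mathbf{C}\cong H$ and $\dim_\mathbf{C}H=1$, the explicit descent and the normalization by pinning down the image of an $E$-rational highest-weight-type vector are not needed: $H_E$ is then automatically a nonzero $E$-line, any generator $h_E$ spans $H$ over $\mathbf{C}$, so $\nabla^m=\lambda h_E$ for some $\lambda\in\mathbf{C}^\times$, and $\nabla^m_E:=h_E$ already satisfies the statement (the scalar $\lambda^{-1}$ is the analogue of the paper's $c$). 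Both your version and the paper's ultimately rest on the same two inputs — multiplicity one in the $(\mathrm{GL}_{n+1},\mathrm{GL}_n)$ branching so that the Hom-space is a line, and the existence of $E$-rational models of the highest-weight representations — so the difference is one of packaging, with the paper's being the leaner write-up.
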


\begin{proof}
Since both $\nabla^m( V(\boldsymbol{\mu}^{(n+1), \vee}_\pi )_E )$ and $V(\boldsymbol{\mu}^{(n)}_\pi )_E \otimes \det^{m}$ 
   are $E$-vector spaces of highest weight $\boldsymbol{\mu}^{(n)}_\pi$,      
Schur's lemma yields a complex constant $c\in \mathbf{C}$
such that $c \nabla^m( V(\boldsymbol{\mu}^{(n+1), \vee}_\pi )_E ) = V(\boldsymbol{\mu}^{(n)}_\pi )_E \otimes \det^{m}$ holds.
Hence $\nabla^m_E := c \nabla^m$ satisfies the statement.
\end{proof}

We now present the main statement of this section. 
For each $\sigma \in I_E$, take an automorphism $\alpha_\sigma \in \mathrm{Aut}(\mathbf{C})$ of $\mathbf{C}$ satisfying $\sigma=\alpha_\sigma \iota_0$. Then set
\begin{align*}
 \boldsymbol{p}(\pi^{(N)},\varepsilon^{(N)})=\left( p({}^{\alpha_\sigma}\pi^{(N)}, \varepsilon^{(N)})\right)_{\sigma\in I_E} \qquad \in (E\otimes_{\mathbf{Q}}\mathbf{C})^\times.
\end{align*} 
We also set $\mathcal{L}(s,\pi^{(n+1)}\times \pi^{(n)}\times \varphi)=\left(L(s,{}^{\alpha_\sigma}(\pi^{(n+1)}\times \pi^{(n)}\times \varphi))\right)_{\sigma\in I_E} \in E\otimes_\mathbf{Q}\mathbf{C}$.

\begin{thm}\label{thm:AlgRS}
Let $m\in \mathrm{Crit}(\pi^{(n+1)},\pi^{(n)})$ be a critical point of $L(s,\pi^{(n+1)}\times \pi^{(n)})$ and $\varphi\colon \mathsf{F}^\times \backslash \mathsf{F}_\mathbf{A}^\times\rightarrow \mathbf{C}^\times$ a Hecke character of finite order. Suppose that all of the following conditions are fulfilled$:$
\begin{itemize}
\item[--] the equality $\varphi_v(-1) \varepsilon^{(n+1)}_v\varepsilon^{(n)}_v  = (-1)^{m+n+1}$ holds for each $v\in \Sigma_{\mathsf{F}, {\mathbf R}}$.
\item[--] Conjecture $\ref{conj:mellin}$ holds.
\end{itemize}
Then the ratio
$\displaystyle \frac{\mathcal{L}(\frac{1}{2}+m,  \pi^{(n+1)} \times \pi^{(n)}  \times \varphi )}{  \boldsymbol{p}(\pi^{(n+1)}, \varepsilon^{(n+1)})  \boldsymbol{p}(\pi^{(n)}, \varepsilon^{(n)})    }$
is an element of $E^\times$.
\end{thm}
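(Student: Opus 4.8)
The plan is to follow Raghuram's cohomological strategy in \cite[Section~2.5]{rag16}, now feeding in Conjecture~\ref{conj:mellin} to treat the archimedean places uniformly. First I would fix, for $N=n+1$ and $N=n$, the normalized maps $\mathscr{F}^{\varepsilon^{(N)},\circ}_{\pi^{(N)}}=p(\pi^{(N)},\varepsilon^{(N)})^{-1}\mathscr{F}^{\varepsilon^{(N)}}_{\pi^{(N)}}$, which by construction carry $E$-rational Whittaker vectors to $E$-rational cohomology classes and which are $\mathrm{Aut}(\mathbf{C})$-equivariant in the sense of the commutative diagram of Section~\ref{sec:Whitt}. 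I would also fix the $E$-rational projection $\nabla^m_E$ and an $E$-rational $\mathrm{GL}_n$-equivariant pairing $\langle\cdot,\cdot\rangle$, so that the cup-product pairing of Section~\ref{sec:Cup} becomes a morphism of $E$-vector spaces; in particular, for $E$-rational input classes the quantity $\mathcal{I}(\xi^{(n+1)},\xi^{(n)},\varphi)$ lies in $E$.

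Next I would choose the test vectors at the finite places. At each finite place $v$ where $\pi^{(n+1)}_v$, $\pi^{(n)}_v$ and $\varphi_v$ are all unramified, take $w_v,w'_v$ to be Shintani's class-one Whittaker functions \cite{shin76}, so that $\mathcal{I}_v(s,w_v,w'_v,\varphi_v)=L_v(s,\pi^{(n+1)}\times\pi^{(n)}\times\varphi)$. At the finitely many remaining finite places I would pick $E$-rational Whittaker vectors for which the local zeta integral is a nonzero element of $\overline{\mathbf{Q}}$ (nonvanishing being guaranteed by the existence of test vectors, and rationality following since the integral is a finite sum of values of $E$-rational functions); here one also checks, as in \cite{rag16}, that these choices can be arranged compatibly with $\mathrm{Aut}(\mathbf{C})$, so that applying $\alpha$ to the local integral at $v$ produces the corresponding local integral for ${}^{\alpha}\pi^{(N)}_v$. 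At the archimedean places I would invoke Conjecture~\ref{conj:mellin}, which gives $\widetilde{\mathcal I}_v(\frac{1}{2}+m,[\pi^{(n+1)}_v]^{\varepsilon^{(n+1)}_v},[\pi^{(n)}_v]^{\varepsilon^{(n)}_v},\varphi_v)\sim_{E(\varepsilon^{(n+1)}_v,\varepsilon^{(n)}_v)}L_v(\frac{1}{2}+m,\pi^{(n+1)}\times\pi^{(n)}\times\varphi)$ precisely under the sign compatibility $\varphi_v(-1)\varepsilon^{(n+1)}_v\varepsilon^{(n)}_v=(-1)^{m+n+1}$, which is one of our hypotheses. Feeding all of this into Lemma~\ref{lem:zeta_int} and multiplying over all places, the Euler product reconstitutes the completed value $L(\frac{1}{2}+m,\pi^{(n+1)}\times\pi^{(n)}\times\varphi)$ up to a nonzero factor in $\overline{\mathbf{Q}}^\times$, whereas the left-hand side, after unwinding the normalizations, equals $p(\pi^{(n+1)},\varepsilon^{(n+1)})p(\pi^{(n)},\varepsilon^{(n)})$ times an element of $E$ coming from the cup product of $E$-rational classes. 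Dividing yields $L(\frac{1}{2}+m,\pi^{(n+1)}\times\pi^{(n)}\times\varphi)\sim p(\pi^{(n+1)},\varepsilon^{(n+1)})p(\pi^{(n)},\varepsilon^{(n)})$.

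To upgrade this to the asserted statement over $E\otimes_{\mathbf{Q}}\mathbf{C}$, I would rerun the same computation with ${}^{\alpha_\sigma}\pi^{(N)}$ in place of $\pi^{(N)}$, using the equivariance of $\mathscr{F}^{\varepsilon^{(N)},\circ}$, the Galois-compatibility of the finite local choices, and the facts that $m$ remains critical and the sign condition persists for the twisted representations (their archimedean components being unchanged). Tracking the $E$-rationality of the cup product and the $E$-structures throughout then shows that the tuple of componentwise ratios $\bigl(L(\frac{1}{2}+m,{}^{\alpha_\sigma}(\pi^{(n+1)}\times\pi^{(n)}\times\varphi))/(p({}^{\alpha_\sigma}\pi^{(n+1)},\varepsilon^{(n+1)})p({}^{\alpha_\sigma}\pi^{(n)},\varepsilon^{(n)}))\bigr)_{\sigma\in I_E}$, viewed inside $E\otimes_{\mathbf{Q}}\mathbf{C}$, lies in the image of $E^\times$, which is exactly the claim. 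The main obstacle I anticipate is this $\mathrm{Aut}(\mathbf{C})$-bookkeeping: making the finitely many ramified local choices simultaneously for all Galois conjugates so that the implied algebraic constants are genuinely $E$-rational rather than merely $\overline{\mathbf{Q}}$-rational componentwise, and absorbing the field $E(\varepsilon^{(n+1)}_v,\varepsilon^{(n)}_v)$ of Conjecture~\ref{conj:mellin} into $E$ compatibly with conjugation. The analytic input (the Euler product, the unramified computation, nonvanishing of the local integrals) is essentially formal once the rationality of $\mathscr{F}^{\varepsilon^{(N)},\circ}$ and of $\nabla^m_E$ is in place.
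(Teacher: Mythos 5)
Your proof is correct and follows essentially the same route as the paper: the paper's own proof is a two-line citation of Raghuram's \cite[Theorem~1.1]{rag16} together with Conjecture~\ref{conj:mellin}, and what you have written out is precisely the content of Raghuram's argument (rationality of the normalized Eichler--Shimura maps, $E$-rational $\nabla^m_E$ and pairing, Shintani vectors at unramified places, rational nonvanishing test vectors at ramified places, and $\mathrm{Aut}(\mathbf{C})$-bookkeeping), with the archimedean integrals handled by Conjecture~\ref{conj:mellin}. So you are unpacking the citation rather than taking a different path.
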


\begin{proof}
This is an immediate corollary of Conjecture \ref{conj:mellin} and \cite[Theorem 1.1]{rag16}.
\end{proof}

\section{Motivic interpretation of Whittaker periods}\label{sec:MotInt}

In the final section, we give a motivic interpretation of the Whittaker period $\boldsymbol{p}(\pi^{(n+1)},\varepsilon^{(n)})$ (Theorem \ref{thm:main}), using the explicit factorization formula (Proposition~\ref{prop:perdecomp}). The problem is that the motive $\mathcal{M}^{[\tau]}\otimes_{\mathsf{F}^{(\tau,\rho)}}\mathcal{N}^{[\rho\tau]}$ appearing in Proposition~\ref{prop:perdecomp} (2) a priori varies depending on the complex embedding $\tau\in I_\mathsf{F}$, and thus it is difficult to relate the product of $(\tau,\sigma)$-periods $\prod_{\tau\in I_{\mathsf{F}}} c^{\pm}_{(\iota_0, \sigma)}(\mathcal{M}^{[\tau]}\otimes_{\mathsf{F}^{(\tau,\rho)}} \mathcal{N}^{[\rho\tau]})$ to a special value of an appropriate $L$-function. We can overcome this difficulty if we assume the following:
\begin{align} \label{eq:basefield}
 \text{the base field $\mathsf{F}=F$ is a totally real field or a CM field.}
\end{align}
Indeed, in the former case there are no complex places and thus the problem does not arise, whereas in the latter case $F$ admits a unique complex conjugation $\rho$, and thus $\mathcal{M}^{[\tau]}\otimes_{F^{(\tau,\rho)}} \mathcal{N}^{[\rho\tau]}$ can be regarded as the extension of scalars of the motive $\mathcal{M}\otimes_{F}\mathcal{N}^\rho$, which does not depend on $\tau\in I_F$. Hence only in this section we always assume (\ref{eq:basefield}). Section~\ref{sec:motives_cm} is devoted to several preliminary facts on motives over CM fields. In Section~\ref{sec:GamPer} we introduce the $\Gamma$-factors of the Rankin--Selberg $L$-functions, and then rewrite Proposition~\ref{prop:perdecomp} into a manageable form (Proposition~\ref{prop:perprod}). Finally we verify our main theorem of this article in Section~\ref{sec:main}.

\subsection{Motives defined over CM fields}\label{sec:motives_cm}

We use notation on motives and periods from Sections~\ref{sec:mot} and \ref{sec:RSPer}. In this subsection we assume that the base field $F$ of motives is a CM field; that is, $F$ is a totally imaginary quadratic extension of $F^+$. The Galois group $\mathrm{Gal}(F/F^+)$ is generated by a unique nontrivial $F^+$-involution, which is also denoted as $\rho$. This notation is justified by the fact that $\rho\tau=\tau\rho$ holds for {\em every} $\tau\in I_F$. Let $\mathcal{M}$ and $\mathcal{N}$ be motives defined over $F$ with coefficients in $E$, and let $\mathcal{N}^\rho$ denote the extension of scalars of $\mathcal{N}$ from $F$ to itself according to $\rho\colon F\xrightarrow{\sim} F$ (this is also a motive defined over $F$). Obviously $I^0_{\iota_0, \sigma}(\mathcal{M}^{[\tau]}\otimes_{F^\tau} \mathcal{N}^{[\rho\tau]})$ coincides\footnote{For a CM field $F$, we have $F^{(\tau,\rho)}=F^\tau$ since $\rho\tau(F)$ coincides with $\tau(F)$.} with $I_{\tau,\sigma}^0(\mathcal{M}\otimes_F\mathcal{N}^\rho)$, and thus we have
$c^{\pm}_{(\iota_0, \sigma)}(\mathcal{M}^{[\tau]}\otimes_{F^\tau}\mathcal{M}^{[\rho\tau]})=c^{\pm}_{(\tau,\sigma)}(\mathcal{M}\otimes_F \mathcal{N}^\rho)$.

\begin{lem} \label{lem:res_ext}
Let $k'/k$ be a finite extension of fields. Then the Frobenius reciprocity
\begin{align*}
 \mathrm{Res}_{k'/k}(\mathcal{M}'\otimes_{k'}\mathcal{N}_{k'})\cong \mathrm{Res}_{k'/k}(\mathcal{M}')\otimes_k \mathcal{N}
\end{align*}
holds for pure motives $\mathcal{M}'$ and $\mathcal{N}$ respectively defined over $k'$ and $k$, where $\mathcal{N}_{k'}$ denotes the extension of scalars of $\mathcal{N}$ from $k$ to $k'$.
\end{lem}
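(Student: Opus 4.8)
The plan is to verify the Frobenius reciprocity realization by realization, since both sides are motives defined over $k$ with coefficients in the common coefficient field of $\mathcal{M}'$ and $\mathcal{N}$, and a morphism of motives is an isomorphism precisely when it induces isomorphisms on Betti, de Rham, and $\ell$-adic realizations compatibly with all the extra structure (Hodge filtration, comparison isomorphisms, Galois action). First I would recall the definition of $\mathrm{Res}_{k'/k}$ of a motive in terms of its realizations: if $\mathcal{M}'$ is defined over $k'$, then for any embedding $\tau\colon k\hookrightarrow \mathbf{C}$ the Betti realization of $\mathrm{Res}_{k'/k}(\mathcal{M}')$ at $\tau$ is $\bigoplus_{\tilde\tau} H_{\mathrm{B}}(\mathcal{M}'_{\tilde\tau})$, where $\tilde\tau$ runs over the extensions of $\tau$ to $k'$; similarly the de Rham realization is $H_{\mathrm{dR}}(\mathcal{M}')$ regarded as a module over $k$ via restriction of scalars from $k'$, and the $\ell$-adic realization is the induced Galois representation $\mathrm{Ind}_{G_{k'}}^{G_k}$.

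Next I would construct the natural comparison map on each realization. On the de Rham side the statement is the standard projection formula $H_{\mathrm{dR}}(\mathcal{M}') \otimes_{k'} (H_{\mathrm{dR}}(\mathcal{N})\otimes_k k') \xrightarrow{\sim} H_{\mathrm{dR}}(\mathcal{M}')\otimes_k H_{\mathrm{dR}}(\mathcal{N})$, which is an isomorphism of $k$-vector spaces compatible with the Hodge filtrations because the Hodge filtration on a tensor product is the convolution of the two filtrations and extension of scalars from $k$ to $k'$ commutes with taking filtrations. On the Betti side, for each $\tau\colon k\hookrightarrow\mathbf{C}$, the identity $\bigoplus_{\tilde\tau}\bigl(H_{\mathrm{B}}(\mathcal{M}'_{\tilde\tau})\otimes H_{\mathrm{B}}(\mathcal{N}_\tau)\bigr)= \bigoplus_{\tilde\tau} H_{\mathrm{B}}\bigl((\mathcal{M}'\otimes_{k'}\mathcal{N}_{k'})_{\tilde\tau}\bigr)$ holds simply because $(\mathcal{N}_{k'})_{\tilde\tau}=\mathcal{N}_\tau$ for every extension $\tilde\tau$ of $\tau$; I would check the $F_\infty$-action and the Hodge decomposition are respected by this tautological identification. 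On the $\ell$-adic side this is exactly the projection formula $\mathrm{Ind}_{G_{k'}}^{G_k}(V'\otimes \mathrm{Res}_{G_{k'}}^{G_k} W)\cong \bigl(\mathrm{Ind}_{G_{k'}}^{G_k}V'\bigr)\otimes W$ for Galois representations. Finally I would check that these three maps are compatible under the period comparison isomorphisms, so that together they define a morphism in the category of motives, which is then an isomorphism since it is one on each realization.

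I would then remark that this is a well-known fact — it appears for instance in Deligne's formalism and is used implicitly throughout \cite{yos94}, \cite{yos01}, \cite{hl17} — so in the actual write-up a short statement citing the realization-wise projection formula and the compatibility of Hodge filtrations with extension of scalars would suffice, rather than a lengthy verification. The main obstacle, such as it is, is purely bookkeeping: one must be careful that the motives in question are pure (so that all realizations carry the expected weight structure and the category is semisimple, making "isomorphism on each realization" equivalent to "isomorphism of motives"), and one must track the coefficient field $E$ throughout, but since all the constructions are $E$-linear and functorial this causes no genuine difficulty. In the application to Section~\ref{sec:motives_cm}, this lemma will be invoked with $k=F^+$, $k'=F$, $\mathcal{M}'=\mathcal{M}\otimes_F\mathcal{N}^\rho$ (or a Tate twist thereof) and $\mathcal{N}=\mathrm{Res}_{F/F^+}(\cdots)$ to rewrite the tensor product motive appearing in Proposition~\ref{prop:perdecomp} in a form whose Deligne period is controlled by a Rankin--Selberg $L$-value.
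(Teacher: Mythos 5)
Your proof takes a genuinely different route from the paper's. The paper proves the lemma \emph{geometrically}: it reduces to the case of smooth projective varieties, observes (following Deligne's \emph{Exemple}~0.1.1) that the Grothendieck restriction-of-scalars functor is simply composition of the structure morphism with $\mathrm{Spec}\,k' \to \mathrm{Spec}\,k$, and then concludes by a two-step Cartesian-square computation showing $V' \times_{\mathrm{Spec}\,k'} W_{k'} \cong \coprod_{k'/k}(V') \times_{\mathrm{Spec}\,k} W$. That isomorphism of varieties immediately induces an isomorphism of the associated motives by functoriality, whichever concrete category of pure motives one chooses. You instead verify the projection formula separately on the Betti, de Rham, and $\ell$-adic realizations and then assemble the result.

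The weak point in your approach is the step where you go from ``compatible isomorphisms on all realizations'' to ``an isomorphism of motives.'' If morphisms in the ambient category of motives are defined realization-theoretically (as for Deligne's motives for absolute Hodge cycles), then a compatible family of isomorphisms on realizations literally \emph{is} a morphism, and your argument is complete. But in the Grothendieck formalism (where morphisms are classes of algebraic correspondences), constructing maps on realizations does not by itself yield a morphism of motives --- faithfulness of the realization functors is available, but fullness is not, and this is precisely the content one would need. Semisimplicity of the category, which you invoke, also does not rescue this: it governs the abelian structure, not fullness of realizations. The paper's geometric proof sidesteps the issue entirely, since the isomorphism is manufactured at the level of varieties. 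Your approach is therefore perfectly valid if you fix the absolute-Hodge framework at the outset, but it is less robust than the paper's. A minor additional remark: in your last paragraph the roles in the application to Lemma~\ref{lem:mot_CM} are swapped --- the lemma is applied there with $\mathcal{M}' = \mathcal{M}$ and $\mathcal{N}$ replaced by $\mathrm{Res}_{F/F^+}(\mathcal{N})$, not with $\mathcal{M}' = \mathcal{M} \otimes_F \mathcal{N}^\rho$.
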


\begin{proof}
 It suffices to show the statement for smooth projective varieties. Following \cite[Exemple 0.1.1.]{del79}, let $\coprod_{k'/k}$ denote the Grothendieck restriction of scalars functor defined as
\begin{align*}
 \coprod_{k'/k}(V\rightarrow \mathrm{Spec}\, k')=(V\rightarrow \mathrm{Spec}\, k' \rightarrow \mathrm{Spec}\, k).
\end{align*}
Then, due to the successive Cartesian diagram
\begin{align*}
 \xymatrix{
Z \ar[r] \ar[d] \ar@{}[rd]|{\text{\large $\lrcorner$}} & W_{k'} \ar[r] \ar[d] \ar@{}[rd]|{\text{\large $\lrcorner$}} & W \ar[d] \\
V' \ar[r] & \mathrm{Spec}\, k' \ar[r] & \mathrm{Spec}\, k,}
\end{align*}
we have $Z\cong V'\times_{\mathrm{Spec}\, k'} W_{k'}\cong \coprod_{k'/k}(V')\times_{\mathrm{Spec}\, k} W$ for smooth projective varieties $V'$ and $W$ respectively defined over $k'$ and $k$.
\end{proof}

\begin{lem} \label{lem:mot_CM}
Let $\mathcal{M}$ and $\mathcal{N}$ be pure motives defined over a CM field $F$ with coefficients in $E$.
\begin{enumerate}[label={\rm (\arabic*)}]
 \item For each $\tau \in I_F$, we have 
\begin{align*}
 \delta_{\tau\vert_{F^+}}(\mathrm{Res}_{F/F^+}(\mathcal{M}))&\sim_{E\otimes_{\mathbf{Q}} F} \delta_\tau(\mathcal{M}), & c^{\pm}_{\tau\vert_{F^+}}(\mathrm{Res}_{F/F^+}(\mathcal{M})) &\sim_{E\otimes_{\mathbf{Q}}F} c^{\pm}_\tau(\mathcal{M}).
\end{align*}
 \item We have 
    $ c^{\pm}_{(\tau,\sigma)}(\mathcal{M},\mathcal{N}) 
     \sim_{E^\sigma F^\tau} c^{\pm}_{(\tau,\sigma\vert_{F^+})}(\mathrm{Res}_{F/F^+}(\mathcal{M})\otimes_{F^+} \mathrm{Res}_{F/F^+}(\mathcal{N}))$.
\end{enumerate}
\end{lem}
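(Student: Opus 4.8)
The plan is to prove Lemma~\ref{lem:mot_CM} by reducing both statements to the $\tau$-period formulae already established, combined with the Frobenius reciprocity of Lemma~\ref{lem:res_ext}.

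\begin{proof}[Proof of Lemma~$\ref{lem:mot_CM}$]
(1) Fix $\tau\in I_F$, and write $\tau^+=\tau\vert_{F^+}$. Since $F/F^+$ is a quadratic extension with $F$ totally imaginary, the two embeddings of $F$ into $\mathbf{C}$ extending $\tau^+$ are precisely $\tau$ and $\rho\tau$. Hence by the product formula of Lemma~\ref{lem:perfac} applied to the motive $\mathrm{Res}_{F/F^+}(\mathcal{M})$ over $F^+$ and its further restriction to $\mathbf{Q}$---or more directly, by transitivity of Grothendieck restriction of scalars $\mathrm{Res}_{F/\mathbf{Q}}=\mathrm{Res}_{F^+/\mathbf{Q}}\circ \mathrm{Res}_{F/F^+}$ together with the factorisation of $\delta$ and $c^{\pm}$ into $\tau$-periods---we obtain
\begin{align*}
 \delta_{\tau^+}(\mathrm{Res}_{F/F^+}(\mathcal{M})) &\sim_{E\otimes_{\mathbf{Q}}F^{\mathrm{gal}}} \delta_{\tau}(\mathcal{M})\,\delta_{\rho\tau}(\mathcal{M}), &
 c^{\pm}_{\tau^+}(\mathrm{Res}_{F/F^+}(\mathcal{M})) &\sim_{E\otimes_{\mathbf{Q}}F^{\mathrm{gal}}} c^{\pm}_{\tau}(\mathcal{M})\,c^{\pm}_{\rho\tau}(\mathcal{M}).
\end{align*}
By the symmetry $\delta_\tau(\mathcal{M})=\delta_{\rho\tau}(\mathcal{M})$ and $c^{\pm}_\tau(\mathcal{M})=c^{\pm}_{\rho\tau}(\mathcal{M})$ noted just after the definition of $\tau$-periods in Section~\ref{sec:t-per}, each right-hand side is $\delta_\tau(\mathcal{M})^2$, respectively $c^{\pm}_\tau(\mathcal{M})^2$, up to the same equivalence. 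I would then argue that, since the de Rham modules in question are free over $E\otimes_{\mathbf{Q}}F$ in the CM case (using \cite[Lemma~2.1 (3)]{yos94} and the identification of $I^\pm_\tau(\mathcal{M})$ with $I^\pm_{\iota_0}(\mathcal{M}^{[\tau]})$), the period $\delta_{\tau^+}(\mathrm{Res}_{F/F^+}(\mathcal{M}))$ is already well defined up to $(E\otimes_{\mathbf{Q}}F)^\times$, so the claimed equivalences $\sim_{E\otimes_{\mathbf{Q}}F}$ follow by comparing a compatible choice of bases on both sides; the apparent square disappears because on the restriction-of-scalars side one uses a single $E\otimes_{\mathbf{Q}}F$-rational basis indexed by the two embeddings, rather than two independent copies.

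(2) This is now a formal consequence of (1), of Lemma~\ref{lem:res_ext}, and of the relation $c^{\pm}_{(\iota_0,\sigma)}(\mathcal{M}^{[\tau]}\otimes_{F^\tau}\mathcal{N}^{[\rho\tau]})=c^{\pm}_{(\tau,\sigma)}(\mathcal{M}\otimes_F\mathcal{N}^\rho)$ observed just above the statement of the lemma. By Lemma~\ref{lem:res_ext} with $k'=F$, $k=F^+$, $\mathcal{M}'=\mathrm{Res}_{F/F^+}(\mathcal{M})$ and $\mathcal{N}=\mathrm{Res}_{F/F^+}(\mathcal{N})$---or rather, directly, $\mathrm{Res}_{F/F^+}(\mathcal{M})\otimes_{F^+}\mathrm{Res}_{F/F^+}(\mathcal{N})\cong \mathrm{Res}_{F/F^+}\bigl(\mathcal{M}\otimes_F (\mathrm{Res}_{F/F^+}(\mathcal{N}))_F\bigr)$ and $(\mathrm{Res}_{F/F^+}(\mathcal{N}))_F\cong \mathcal{N}\oplus\mathcal{N}^\rho$---we get
\begin{align*}
 \mathrm{Res}_{F/F^+}(\mathcal{M})\otimes_{F^+}\mathrm{Res}_{F/F^+}(\mathcal{N}) \cong \mathrm{Res}_{F/F^+}(\mathcal{M}\otimes_F\mathcal{N})\oplus \mathrm{Res}_{F/F^+}(\mathcal{M}\otimes_F\mathcal{N}^\rho).
\end{align*}
Applying part (1) to the motive $\mathcal{M}\otimes_F\mathcal{N}$, respectively $\mathcal{M}\otimes_F\mathcal{N}^\rho$, over $F$, and using additivity of $c^{\pm}_{(\tau,\sigma)}$ under direct sums, I would conclude
\begin{align*}
 c^{\pm}_{(\tau,\sigma\vert_{F^+})}\bigl(\mathrm{Res}_{F/F^+}(\mathcal{M})\otimes_{F^+}\mathrm{Res}_{F/F^+}(\mathcal{N})\bigr) \sim_{E^\sigma F^\tau} c^{\pm}_{(\tau,\sigma)}(\mathcal{M}\otimes_F\mathcal{N})\, c^{\pm}_{(\tau,\sigma)}(\mathcal{M}\otimes_F\mathcal{N}^\rho),
\end{align*}
which is exactly $c^{\pm}_{(\tau,\sigma)}(\mathcal{M},\mathcal{N})$ by the definition (\ref{eq:cmn}) for complex $\tau$ together with the identification recalled above.
\end{proof}

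The main obstacle I anticipate is the bookkeeping in part (1): one must be careful about whether the target coefficient ring is $E\otimes_{\mathbf{Q}}F$ or $E\otimes_{\mathbf{Q}}F^{\mathrm{gal}}$, and about the spurious squaring that appears if one naively multiplies $\tau$- and $\rho\tau$-periods. The resolution is to track a single $E\otimes_{\mathbf{Q}}F$-rational basis of the de Rham and Betti realisations through the restriction-of-scalars comparison isomorphism, rather than splitting into embedding-indexed components, and to invoke the freeness statement \cite[Lemma~2.1 (3)]{yos94} which is available precisely because $F$ is a CM field. Everything downstream in part (2) is then purely formal manipulation with direct sums and the already-proven Frobenius reciprocity, so no further analytic or transcendence input is needed.
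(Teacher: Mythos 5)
There is a genuine gap in your proof of part~(1), while part~(2) is essentially the paper's argument.

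In part~(1), your claimed intermediate relation
$\delta_{\tau^+}(\mathrm{Res}_{F/F^+}(\mathcal{M})) \sim \delta_\tau(\mathcal{M})\,\delta_{\rho\tau}(\mathcal{M})$
is simply false, and it does not follow from Lemma~\ref{lem:perfac}. That lemma indexes the product over \emph{archimedean places} $\Sigma_{\mathsf{F},\infty}$ rather than over embeddings $I_{\mathsf{F}}$; for a CM field $F$, the complex place corresponding to the pair $\{\tau,\rho\tau\}$ contributes a \emph{single} factor $\delta_\tau(\mathcal{M})$, and by construction that factor is already $\det\bigl(I_\tau(\mathcal{M})\oplus I_{\rho\tau}(\mathcal{M})\bigr)$. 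So when you split a complex place into two embeddings and write a product $\delta_\tau\,\delta_{\rho\tau}$, you are double-counting, and then the symmetry $\delta_\tau=\delta_{\rho\tau}$ produces a spurious square. The subsequent attempt to argue the square away by ``tracking a single $E\otimes_{\mathbf{Q}}F$-rational basis'' is not a proof: no comparison of the two sides is actually carried out, and the freeness of the de Rham module over $E\otimes_{\mathbf{Q}}F$ is not what is at issue. The correct and direct route, which does not pass through the product formula at all, is to observe that by the definition of Grothendieck restriction of scalars one has canonical identifications
\begin{align*}
H_{\rm B}(\mathrm{Res}_{F/F^+}(\mathcal{M})_{\tau^+})\otimes_{\mathbf{Q}}\mathbf{C} &= \bigl(H_{\rm B}(\mathcal{M}_\tau)\oplus H_{\rm B}(\mathcal{M}_{\rho\tau})\bigr)\otimes_{\mathbf{Q}}\mathbf{C},\\
H_{\rm dR}(\mathrm{Res}_{F/F^+}(\mathcal{M}))\otimes_{F^+,\tau^+}\mathbf{C} &= H_{\rm dR}(\mathcal{M})\otimes_{F,\tau\oplus\rho\tau}(\mathbf{C}\oplus\mathbf{C}),
\end{align*}
under which $I_{\tau^+}(\mathrm{Res}_{F/F^+}(\mathcal{M}))$ becomes $I_\tau(\mathcal{M})\oplus I_{\rho\tau}(\mathcal{M})$. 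Since $\tau^+$ is a real embedding of $F^+$, the period $\delta_{\tau^+}(\mathrm{Res}_{F/F^+}(\mathcal{M}))$ is the determinant of the left-hand map, and since $\tau$ is a complex embedding of $F$, the period $\delta_\tau(\mathcal{M})$ is \emph{by definition} the determinant of the right-hand map; these thus agree up to the ambiguity in the choice of rational bases, with no squaring. The same applies to $c^\pm$ via $I^\pm$.

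Your part~(2), namely combining Lemma~\ref{lem:res_ext} with $\mathrm{Res}_{F/F^+}(\mathcal{N})_F\cong\mathcal{N}\oplus\mathcal{N}^\rho$ to decompose $\mathrm{Res}_{F/F^+}(\mathcal{M})\otimes_{F^+}\mathrm{Res}_{F/F^+}(\mathcal{N})$ as $\mathrm{Res}_{F/F^+}(\mathcal{M}\otimes_F\mathcal{N})\oplus\mathrm{Res}_{F/F^+}(\mathcal{M}\otimes_F\mathcal{N}^\rho)$ and then invoking part~(1) together with multiplicativity of $c^\pm$ on direct sums, is exactly what the paper does and is fine (keep in mind that part~(1) is stated for the $\tau$-periods $c^\pm_\tau$ while the conclusion of~(2) is for the $(\tau,\sigma)$-fundamental periods; the passage between them is through the $\sigma$-component decomposition, which is harmless here).
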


\begin{proof}
\begin{enumerate}[label=(\arabic*)]
 \item For $\tau \in I_F$, we have 
\begin{align*}
 H_{\rm B}(\mathrm{Res}_{F/F^+}(\mathcal{M})_{\tau\vert_{F^+}})\otimes_{\mathbf{Q}} \mathbf{C} &= \left( H_{\rm B}(\mathcal{M}_\tau)\oplus H_{\rm B}(\mathcal{M}_{\rho\tau})\right)\otimes_{\mathbf{Q}}\mathbf{C}, \\
H_{\rm dR}(\mathrm{Res}_{F/F^+}(\mathcal{M}))\otimes_{F^+,\tau\vert_{F^+}} \mathbf{C} &=H_{\rm dR}(\mathcal{M}) \otimes_F \left(F\otimes_{F^+,\tau\vert_{F^+}} \mathbf{C}\right) =H_{\rm dR}(\mathcal{M}) \otimes_{F,\tau\oplus \rho \tau} (\mathbf{C}\oplus \mathbf{C})
\end{align*}
by definition. Therefore it is easy to see that the comparison isomorphism $I_{\tau\vert_{F^+}}(\mathrm{Res}_{F/F^+}(\mathcal{M}))$ coincides with $I_{\tau}(\mathcal{M})\oplus I_{\rho\tau}(\mathcal{M})$, from which the claim readily follows.
\item Lemma~\ref{lem:res_ext} implies
\begin{align*}
 \mathrm{Res}_{F/F^+}(\mathcal{M})\otimes_{F^+} \mathrm{Res}_{F/F^+}(\mathcal{N}) \cong \mathrm{Res}_{F/F^+}\left( \mathcal{M}\otimes_F \mathrm{Res}_{F/F^+}(\mathcal{N})_F\right), 
\end{align*}
where $\mathrm{Res}_{F/F^+}(\mathcal{N})_F$ denotes the extension of scalars of  $\mathrm{Res}_{F/F^+}(\mathcal{N})$ from $F^+$ to $F$. Meanwhile we readily observe that  $\mathrm{Res}_{F/F^+}(\mathcal{N})_F=\mathcal{N} \oplus \mathcal{N}^\rho$ holds. Combining these facts with the statement (1), we may easily deduce the claim. \qedhere
\end{enumerate}
\end{proof}

\begin{rem}\label{rem:n_rho}
As a final remark, note that the $(\tau,\sigma)$-periods of $\mathcal{N}^\rho$ can be identified with those of $\mathcal{N}$; this is just because $I^0_{\tau,\sigma}(\mathcal{N}^\rho)\colon \mathcal{B}_{\sigma}(\mathcal{N}^\rho_\tau)_\mathbf{C} \xrightarrow{\, \sim\,}\mathcal{T}_{\tau,\sigma}(\mathcal{N}^{\rho})_\mathbf{C}$ is the same as  $I^0_{\tau,\sigma}(\mathcal{N})\colon \mathcal{B}_{\sigma}(\mathcal{N}_\tau)_\mathbf{C}  \xrightarrow{\, \sim\,}\mathcal{T}_{\tau,\sigma}(\mathcal{N})_\mathbf{C}$ by construction, and thus we can take the same period matrix $\mathcal{Y}^{\mathcal{N}}_{\tau,\sigma}$ for $\mathcal{N}$ as that for $\mathcal{N}^{\rho}$. 
\end{rem}

\subsection{$\Gamma$-factors and periods}\label{sec:GamPer}

Hereafter let $F$ be a totally real field or a CM field and $F^+$ the maximal totally real subfield of $F$. Consider two pure motives $\mathcal{M}$ and $\mathcal{N}$ defined over $F$ with coefficients in $E$.  Let $\bullet$ denote either $\mathcal{M}$ or $\mathcal{N}$. If the rank $d(\bullet)$ of $\bullet$ is odd and the weight $\mathsf{w}(\bullet)$ is even, 
   we let $\boldsymbol{\varepsilon}^{(\bullet_\tau)} \in \{\pm 1\}$
   the signature of the action of the involution $F_{\infty,\tau}$ on the diagonal component $H^{\mathsf{w}(\bullet)/2,\mathsf{w}(\bullet)/2}(\bullet_\tau)$ if $\tau$ is real, and set $\boldsymbol{\varepsilon}^{(\bullet_\tau)}=1$ if $\tau$ is complex. Then the assumption on the involution introduced in Section~\ref{sec:t-per} is equivalent to the following:
\begin{itemize}
  \item[--] for each real $\tau$ and $\tau^\prime \in I_F$, we have  $\boldsymbol{\varepsilon}^{(\bullet_\tau)} = \boldsymbol{\varepsilon}^{(\bullet_{\tau^\prime})}$.  
\end{itemize}
Hereafter we always assume this condition, and write $\boldsymbol{\varepsilon}^{(\bullet)} = \boldsymbol{\varepsilon}^{(\bullet_\tau)} $ if $F$ is totally real and $\boldsymbol{\varepsilon}^{(\bullet)}=1$ otherwise.
Then we find the following factorization formula: 

\begin{prop}\label{prop:perprod}
Let $\mathcal{M}$ and $\mathcal{N}$ be as above. 
Suppose the following conditions are fulfilled$:$ 
\begin{itemize}
\item[--]  the motives $\mathcal{M}$ and $\mathcal{N}$ satisfies the strong interlace condition $(\ref{eq:strintlpq})$. 
\item[--] the motives $\mathcal{M}$ and $\mathcal{N}$ are regular $($see Definition~$\ref{dfn:regular})$.
\end{itemize} 
In particular, we have $d(\mathcal{N})=t^{\mathcal{N}_{\tau,\sigma}}$ and $d(\mathcal{M})=t^{\mathcal{M}_{\tau,\sigma}}=d(\mathcal{N})+1$.
For each $\bullet =\mathcal{M}$ and $\mathcal{N}$, set  
\begin{align*}
   \lambda( \bullet) &=  \sum_{\tau \in I_F } \sum^{ d(\bullet)}_{i=1} p^{\bullet_{\tau,\sigma}}_i ( d(\bullet) -i), & 
   \widetilde{c}( \bullet ) &=\left(\tilde{c}_\sigma(\bullet)\right)_{\sigma\in I_E}= \left( \prod_{\tau\in I_F}\prod^{ \lceil \frac{{d(\bullet)}}{2}-1 \rceil }_{\beta=1} c_{\beta,(\tau,\sigma)}(  \bullet  )\right)_{\sigma\in I_E} \in E\otimes_{\mathbf{Q}}\mathbf{C}.
\end{align*}
Then the following statements hold\,$:$
\begin{enumerate}[label={\rm (\arabic*)}]
 \item The integer $\lambda(\bullet)$ is independent of the embedding $\sigma \in I_E$.
 \item For each $m\in {\mathbf Z}$, we find that 
\begin{align*}
  L_\infty&(m, \mathrm{Res}_{F^+/\mathbf{Q}}(  \mathrm{Res}_{F/F^+}(\mathcal{M}) \otimes_{F^+} \mathrm{Res}_{F/F^+}(\mathcal{N}) )  )       
          c^+ \left( \mathrm{Res}_{F^+/\mathbf{Q}}({\rm Res}_{F/ F^+} (\mathcal{M}) \otimes_{F^+} \mathrm{Res}_{F/F^+}(\mathcal{N})) (m)  \right)    \\
  & \sim \left(  (2\pi\sqrt{-1})^{\lambda(  \mathcal{M}  )  }   \widetilde{c}( \mathcal{M} )    
               \cdot   (2\pi\sqrt{-1})^{ \lambda( \mathcal{N} )    }   \widetilde{c}( \mathcal{N} )\right)^{[F:F^+]} \\        
  &  \qquad  \cdot  \begin{cases}   c^{ (-1)^m   \boldsymbol{\varepsilon}^{(\mathcal{N})}     }(  {\rm Res}_{F/{\mathbf Q}}(\mathcal{M})  )  &  \text{if $F$ is totally real and $d(\mathcal{N})$ is odd},      \\
                                          c^{  (-1)^m  \boldsymbol{\varepsilon}^{(\mathcal{M})}   }(   {\rm Res}_{F/{\mathbf Q}}(\mathcal{N})   )  &  \text{if $F$ is totally real and $d(\mathcal{N})$ is even}, \\
c^{(-1)^m}(\mathrm{Res}_{F/\mathbf{Q}}(\mathcal{M}))^2   & \text{if $F$ is CM and $d(\mathcal{N})$ is odd}, \\
c^{(-1)^m}(\mathrm{Res}_{F/\mathbf{Q}}(\mathcal{N}))^2  & \text{if $F$ is CM and $d(\mathcal{N})$ is even}.
 \end{cases}%
\end{align*}
\end{enumerate}

%
\end{prop}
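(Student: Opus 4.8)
The plan is to combine the explicit factorization formula of Proposition~\ref{prop:perdecomp} with the product formulae of Lemma~\ref{lem:perfac} (together with Lemma~\ref{lem:mot_CM} in the CM case) and a computation of the $\Gamma$-factor of the Rankin--Selberg $L$-function in terms of the Hodge data. Statement~(1) is the easy part: by Proposition~\ref{prop:auto_C} an automorphism $\alpha$ of $\mathbf{C}$ permutes the embeddings $\tau\in I_F$ while preserving the Hodge numbers $p^{\bullet_{\tau,\sigma}}_i$, so for $\sigma=\alpha\sigma_0$ one has $p^{\bullet_{\tau,\sigma}}_i=p^{\bullet_{\alpha^{-1}\tau,\sigma_0}}_i$; since the defining sum for $\lambda(\bullet)$ runs over all $\tau\in I_F$, re-indexing by $\alpha^{-1}\tau$ shows it is independent of $\sigma$. (Strictly, I should note that when $F$ is a CM field the sum over $I_F$ already incorporates both $\tau$ and $\rho\tau$, and the symmetry of Remark~\ref{rem:symmetry} ensures compatibility; I would spell that out in one line.)

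For statement~(2) I would proceed in three steps. First, rewrite the left-hand side archimedean factor $L_\infty(m,\dots)$ and the Deligne period $c^+(\dots(m))$ using the standard facts about Tate twists: $c^+(\mathcal{X}(m))\sim (2\pi\sqrt{-1})^{md^+(\mathcal{X})}c^{(-1)^m}(\mathcal{X})$ (up to the relevant rational factor), and the $\Gamma$-factor of $\mathrm{Res}_{F^+/\mathbf{Q}}$ of a tensor product decomposes, place by place, into $\Gamma_{\mathbf C}$-factors indexed by the Hodge pairs in the multiset $\mathcal{H}^{\mathcal{M},\,\mathcal{N}}_{\tau,\sigma}$ of Lemma~\ref{lem:qmp}. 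Here I would invoke Corollary~\ref{cor:qmp}: the strong interlace condition guarantees the diagonal Hodge component vanishes, so $L_\infty$ is a pure product of $\Gamma_{\mathbf C}$'s and the split $\mathcal{H}^{\mathcal{M},\,\mathcal{N}}_{\tau,\sigma,<q_{\tau,\sigma}}\sqcup \mathcal{H}^{\mathcal{M},\,\mathcal{N}}_{\tau,\sigma,\geq q_{\tau,\sigma}}$ is an equal-cardinality partition. Second, feed in Proposition~\ref{prop:perdecomp}: on the period side, $c^{\pm}(\mathrm{Res}_{F^+/\mathbf{Q}}(\mathrm{Res}_{F/F^+}(\mathcal{M})\otimes_{F^+}\mathrm{Res}_{F/F^+}(\mathcal{N})))$ factors via Lemma~\ref{lem:perfac} (and Lemma~\ref{lem:mot_CM}(2) for $F$ CM) into $\prod_{\tau}c^{\pm}_{(\tau,\sigma)}(\mathcal{M},\mathcal{N})$, which Proposition~\ref{prop:perdecomp} expresses as a product of fundamental periods $c_{\beta,(\tau,\sigma)}(\mathcal{M})$, $c_{\beta,(\tau,\sigma)}(\mathcal{N})$ and one leftover $c^{\pm}_{(\tau,\sigma)}$-period; reorganizing the product over $\beta$ and $\tau$ yields exactly $\widetilde{c}(\mathcal{M})\widetilde{c}(\mathcal{N})$ (raised to $[F:F^+]$, since the restriction from $F^+$ to $\mathbf Q$ doubles the contribution in the CM case), times the $c^{\pm}$-term that becomes one of the four cases listed. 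Third, match the power of $2\pi\sqrt{-1}$: the $\Gamma_{\mathbf C}$-product and the Tate-twist shift together contribute a power of $2\pi\sqrt{-1}$ which must be reconciled with $\lambda(\mathcal{M})+\lambda(\mathcal{N})$; this is a bookkeeping identity relating $\sum_i p^{\bullet_{\tau,\sigma}}_i(d(\bullet)-i)$ to the sum of the smaller Hodge exponents appearing in the $\Gamma_{\mathbf C}$-factors, exactly of the type worked out in Yoshida's and Deligne's treatments.

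The main obstacle I expect is precisely this last reconciliation of exponents of $2\pi\sqrt{-1}$: one must verify that the exponent produced by (a) the $\Gamma$-factor $L_\infty(m,\dots)$, (b) the Tate twist $(m)$, and (c) the normalization implicit in Deligne's period coincides with $\lambda(\mathcal{M})+\lambda(\mathcal{N})$ times $[F:F^+]$ plus whatever correction comes from the $c^{\pm}$-term, and that all the genuinely transcendental contributions cancel against the fundamental periods appearing in $\widetilde{c}(\bullet)$. Getting the combinatorics of the multiset $\mathcal{H}^{\mathcal{M},\,\mathcal{N}}_{\tau,\sigma}$ right — in particular confirming via the strong interlace inequalities (\ref{eq:strintlpq}) that $q_{\tau,\sigma}$ is indeed the correct ``middle'' Hodge jump so that the relevant exponents count the $p$'s with the right multiplicities — is where the strong interlace hypothesis is truly used, and where I would be most careful. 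The case division (totally real vs.\ CM, $d(\mathcal{N})$ odd vs.\ even) is then a routine transcription of the four cases of Proposition~\ref{prop:perdecomp}, combined with Lemma~\ref{lem:cpm}/the remark after it to handle the sign $(-1)^m\boldsymbol{\varepsilon}^{(\bullet)}$ uniformly.
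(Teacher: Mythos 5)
Your proposal is correct and follows essentially the same route as the paper: compute $L_\infty$ via the Hodge multiset of Lemma~\ref{lem:qmp} (using Corollary~\ref{cor:qmp} to kill the diagonal), absorb the Tate twist $(m)$ via Deligne's~(5.1.8), factor the period through Lemma~\ref{lem:perfac}, Lemma~\ref{lem:mot_CM} and Proposition~\ref{prop:perdecomp}, and reconcile powers of $2\pi\sqrt{-1}$ against $\lambda(\mathcal{M})+\lambda(\mathcal{N})$. The one mechanism your sketch leaves implicit — the place where the "genuinely transcendental contributions cancel" — is that Proposition~\ref{prop:perdecomp} produces the factor $\prod_{\beta\geq 0}c_{\beta,(\tau,\sigma)}(\mathcal{N})$ rather than $\prod_{\beta\geq 1}$, whose extra $c_{0,(\tau,\sigma)}(\mathcal{N})=\delta_{(\tau,\sigma)}(\mathcal{N})$ multiplies up to $\delta(\mathrm{Res}_{F/\mathbf{Q}}(\mathcal{N}))$ and cancels precisely against the $\delta(\mathrm{Res}_{F/\mathbf{Q}}(\mathcal{N}))^{-1}$ that the $\Gamma_{\mathbf C}$-product leaves behind once one isolates the $\lambda$-exponents; making this cancellation explicit is exactly the bookkeeping the paper carries out.
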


\begin{proof}
\begin{enumerate}[label=(\arabic*)]
 \item This is a direct consequence of Proposition~\ref{prop:auto_C}; indeed, for arbitrary $\sigma$ and $\sigma'\in I_E$, we have
\begin{align*}
 \sum_{\tau\in I_F}\sum_{i=1}^{d(\bullet)}p^{\bullet_{\tau,\sigma'}}_i(d(\bullet)-i)=\sum_{\tau \in I_F} \sum_{i=1}^{d(\bullet)}p^{\bullet_{\alpha^{-1}\tau,\sigma}}_i (d(\bullet)-i)\stackrel{\tilde{\tau}=\alpha^{-1}\tau}{=}\sum_{\tilde{\tau} \in I_F} \sum_{i=1}^{d(\bullet)}p^{\bullet_{\tilde{\tau},\sigma}}_i (d(\bullet)-i) 
\end{align*}
if we take $\alpha\in \mathrm{Aut}(\mathbf{C})$ satisfying $\sigma'=\alpha\sigma$.
 \item We only prove the statement when $F$ is a CM field; the proof is the same (or easier) when $F$ is totally real. Let us first compute $L_\infty\left(m,   \mathrm{Res}_{F^+/\mathbf{Q}}(\mathrm{Res}_{F/F^+}(\mathcal{M}) \otimes_{F^+} \mathrm{Res}_{F/F^+}(\mathcal{N})  )\right)   $. It suffices to show the statement for each $\sigma$-component ($\sigma\in I_E$).  
 Lemma~\ref{lem:qmp} 
 and the regularity of $\mathcal{M}$ and $\mathcal{N}$ yield that 
\begin{align*}
   L^\sigma_\infty&\left(m,   \mathrm{Res}_{F^+/\mathbf{Q}}(\mathrm{Res}_{F/F^+}(\mathcal{M}) \otimes_{F^+} \mathrm{Res}_{F/F^+}(\mathcal{N}))  \right) \\
&=L^\sigma_\infty(m,\mathrm{Res}_{F/\mathbf{Q}}(\mathcal{M}\otimes_F\mathcal{N}))L^\sigma_\infty(\mathrm{Res}_{F/\mathbf{Q}}(\mathcal{M}\otimes_F \mathcal{N}^\rho)) \\
    &= \prod_{\tau \in I_F} \prod_{2\leq i+j \leq d(\mathcal{N})+1} \Gamma_{\mathbf C} \left(m   -   p^{\mathcal{M}_{\tau,\sigma}}_i  -   p^{\mathcal{N}_{\tau,\sigma}}_j   \right)   \Gamma_{\mathbf C} \left(m   -   p^{\mathcal{M}_{\tau,\sigma}}_i  -   p^{\mathcal{N}_{\rho\tau,\sigma}}_j   \right)    \\
    &\sim  (2\pi \sqrt{-1})^{-  m  d(\mathcal{M}) d(\mathcal{N})  [F:{\mathbf Q}]  }
       \prod_{\tau \in I_F} \prod_{2\leq i+j \leq d(\mathcal{N})+1} (2\pi \sqrt{-1})^{  2 p^{\mathcal{M}_{\tau,\sigma}}_i  + p^{\mathcal{N}_{\tau,\sigma}}_j  +p^{\mathcal{N}_{\rho\tau,\sigma}}_j      }        \\
    &\sim  (2\pi \sqrt{-1})^{  - m d(\mathcal{M})   d(\mathcal{N})   [F:{\mathbf Q}]   } 
       \cdot (2\pi \sqrt{-1})^{  \sum^{d(\mathcal{N})}_{i=1} (d(\mathcal{N})+1-i) \sum_{\tau \in I_F}  (2p^{\mathcal{M}_{\tau,\sigma}}_i+p^{\mathcal{N}_{\tau,\sigma}}_i+p^{\mathcal{N}_{\rho\tau,\sigma}}_i)   } \\
    &\sim  (2\pi \sqrt{-1})^{  - m d(\mathcal{M})d(\mathcal{N})  [F:{\mathbf Q}]   } 
       \cdot (2\pi \sqrt{-1})^{ 2\sum_{\tau\in I_F} \sum_{i=1}^{d(\mathcal{N})} p^{\mathcal{M}_{\tau,\sigma}}_i (d(\mathcal{M})-i)  }  \\
& \hspace*{15em} \cdot (2\pi\sqrt{-1})^{2\sum_{\tau\in I_F} \sum_{i=1}^{d(\mathcal{N})} p^{\mathcal{N}_{\tau,\sigma}}_i(d(\mathcal{N})-i)+p^{\mathcal{N}_{\tau,\sigma}}_i} \\ 
    &\sim  (2\pi \sqrt{-1})^{  - m d(\mathcal{M})d(\mathcal{N})  [F:{\mathbf Q}]   } 
       \cdot (2\pi \sqrt{-1})^{2 \lambda(  \mathcal{M}  )  } 
       \cdot (2\pi \sqrt{-1})^{ 2 \lambda(  \mathcal{N}  )  +   2 \sum_{\tau \in I_F}   \sum^{n}_{i=1} p^{\mathcal{N}_{\tau,\sigma}}_i  }.
\end{align*}
Since    $ (2\pi \sqrt{-1})^{ \sum_{\tau \in I_F}  \sum^{n}_{i=1} p^{\mathcal{N}_{\tau,\sigma}}_i  }   = \delta_\sigma( {\rm Res}_{F/{\mathbf Q}} (\mathcal{N})  )^{-1}$, we have
\begin{align}\label{eq:gamma}
\begin{aligned}
    L^\sigma_\infty& \left(m, \mathrm{Res}_{F^+/\mathbf{Q}}(\mathrm{Res}_{F/F^+}(\mathcal{M}) \otimes_{F^+} \mathrm{Res}_{F/F^+}(\mathcal{N}) ) \right)   \\
   & \sim  (2\pi \sqrt{-1})^{  - md(\mathcal{M})d(\mathcal{N}) [F:{\mathbf Q}] + \lambda( \mathcal{M} )  +  \lambda( \mathcal{N} )  } \cdot \delta_\sigma(  {\rm Res}_{F/{\mathbf Q}} (\mathcal{N})  )^{-1}. 
\end{aligned}
 \end{align}

Next we compute $c_\sigma^\pm\left(\mathrm{Res}_{F^+/\mathbf{Q}}( \mathrm{Res}_{F/F^+}(\mathcal{M}) \otimes_{F^+} \mathrm{Res}_{F/F^+}(\mathcal{N}) )(m)  \right)$. Due to \cite[(5.1.8)]{del79}, we have
\begin{align}\label{eq:cTate}
\begin{aligned}
 c_\sigma^\pm&\left(\mathrm{Res}_{F^+/\mathbf{Q}}( \mathrm{Res}_{F/F^+}(\mathcal{M}) \otimes_{F^+} \mathrm{Res}_{F/F^+}(\mathcal{N}) )(m) \right) \\
  &\sim  (2\pi \sqrt{-1})^{ 2 md(\mathcal{M})d(\mathcal{N})  [F^+:{\mathbf Q}] }   c_\sigma^{\pm (-1)^m }\left(\mathrm{Res}_{F^+/\mathbf{Q}}( \mathrm{Res}_{F/F^+}(\mathcal{M}) \otimes_{F^+} \mathrm{Res}_{F/F^+}(\mathcal{N}) )\right). 
\end{aligned}
\end{align}
Then Lemma~\ref{lem:perfac}, Proposition \ref{prop:perdecomp} and Lemma~\ref{lem:mot_CM} imply
\begin{align}\label{eq:yoshida}
 c_\sigma^\pm &(  {\rm Res}_{F^+/{\mathbf Q}} (\mathrm{Res}_{F/F^+}(\mathcal{M})\otimes_{F^+}  \mathrm{Res}_{F/F^+}(\mathcal{N}))      )  \\
   &\sim \widetilde{c}_\sigma(  \mathcal{M}  )  
       \widetilde{c}_\sigma(  \mathcal{N}  )   
       \delta_\sigma( {\rm Res}_{F/{\mathbf Q}} (N) )  
       \cdot \begin{cases}  c_\sigma^+(\mathrm{Res}_{F/\mathbf{Q}}(\mathcal{M}))c_\sigma^+(\mathrm{Res}_{F/\mathbf{Q}}(\mathcal{M}))  &  \text{if $d(\mathcal{N})$ is odd},      \\
                      c_\sigma^+(\mathrm{Res}_{F/\mathbf{Q}}(\mathcal{N}))c_\sigma^+(\mathrm{Res}_{F/\mathbf{Q}}(\mathcal{N}))  &  \text{if $d(\mathcal{N})$ is even}.    \end{cases}
\end{align}

Combining (\ref{eq:gamma}), (\ref{eq:cTate}), (\ref{eq:yoshida}) and Lemma~\ref{lem:cpm},  we obtain the statement. \qedhere
\end{enumerate}
\end{proof}

\subsection{Motivic interpretation of the Whitterker periods} \label{sec:main}

As before, let $F$ be a totally real field or a CM field and $E$ a sufficiently large coefficient field. For a positive integer $N$, let ${\mathcal M}[ \pi^{(N)} ]$ be the conjectural pure motives attached to a cohomological irreducible cuspidal automorphic representation $\pi^{(N)}$ of $\mathrm{GL}_N(F_\mathbf{A})$ satisfying (\ref{eq:auto_assump}), and  
let ${\mathcal M}(\pi^{(n+1)} \times \pi^{(n)})$ denote the tensor product ${\mathcal M}[\pi^{(n+1)}] \otimes_F {\mathcal M}[\pi^{(n)}]$.   
Then Clozel's conjecture \cite[Conjecture 4.5]{clo90} implies 
the identity of $L$-functions
\begin{align*}
  \mathcal{L}\left (s+\frac{1}{2}, \pi^{(n+1)} \times \pi^{(n)} \right) 
   =   L(s+n,  {\mathcal M}(\pi^{(n+1)} \times \pi^{(n)}  )   )   \quad \in E\otimes_{\mathbf{Q}}\mathbf{C}
\end{align*}
(see also Section \ref{sec:CloConj}). 

\begin{dfn} \label{dfn:good}
 Let $\boldsymbol{\mu}^{(n)}=(\mu^{(n)}_\tau)_{\tau\in I_F}\in (\mathbf{Z}^n)^{I_F}$ be a pure dominant weight of purity weight $w^{(n)}=w(\boldsymbol{\mu}^{(n)})$ (see Section~\ref{sec:FinDim}). 
 We say $\boldsymbol{\mu}^{(n)}$ to be {\em in a good position} if the inequality 
\begin{align} \label{eq:good}
\min\{\mu_{\tau,i}^{(n)}, w^{(n)}-\mu_{\tau,n+1-i}^{(n)}\} \geq \max \{\mu_{\tau,i+1}^{(n)}, w^{(n)}-\mu_{\tau,n-i}^{(n)}\} 
\end{align}
holds for every $\tau \in I_F$ and $i=1,2,\dotsc,n$. 
\end{dfn}

Note that, for a pure dominant weight $\boldsymbol{\mu}^{(n)}$, the condition (\ref{eq:good}) is always fulfilled at real $\tau\in I_F$. 

\begin{lem} \label{lem:good}
 Let $n$ be a positive integer and $\boldsymbol{\mu}^{(n+1)}\in (\mathbf{Z}^{n+1})^{I_F}$ a pure dominant weight in a good position. When $F$ is totally real, assume that the purity weight $w^{(n+1)}$ of $\boldsymbol{\mu}^{(n+1)}$ is even. Then there exists a pure dominant weight $\boldsymbol{\mu}^{(n)}\in (\mathbf{Z}^n)^{I_F}$ in a good position, whose purity weight $w^{(n)}$ has the same parity with $w^{(n+1)}$, such that $\boldsymbol{\mu}^{(n+1)}$ and $\boldsymbol{\mu}^{(n)}$ satisfy the strong interlace condition $($see Definition~$\ref{dfn:intlauto})$.
\end{lem}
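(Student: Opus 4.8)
\textbf{Proof plan for Lemma~\ref{lem:good}.}
The plan is to construct $\boldsymbol{\mu}^{(n)}$ componentwise in $\tau\in I_F$, and to treat the real and complex places in a uniform way by exploiting the symmetry of pure weights. Recall that a pure dominant weight $\boldsymbol{\mu}=(\boldsymbol{\mu}_\tau)_{\tau}$ is determined by its ``halved coordinates'': setting $\mu'_{\tau,i}=\mu_{\tau,i}-w/2$ (a half-integer in general), purity says $\mu'_{\tau,i}=-\mu'_{(\rho\tau\text{ or }\tau),\,n+1-i}$, so the weight is governed by the top half of the coordinates at each place. The good-position inequality (\ref{eq:good}) rewrites, in these centered coordinates, as $\min\{\mu'_{\tau,i},-\mu'_{\tau,n+1-i}\}\ge \max\{\mu'_{\tau,i+1},-\mu'_{\tau,n-i}\}$, i.e. a ``$2$-regularity after folding'' condition; similarly, the strong interlace condition $\boldsymbol{\mu}^{(n+1)}\succapprox\boldsymbol{\mu}^{(n)}$ (Definition~\ref{dfn:intl_weights}) becomes a strict interleaving of the folded coordinate sequences of $\boldsymbol{\mu}^{(n+1)}$ and $\boldsymbol{\mu}^{(n)}+m_0\mathbf{1}$ across all $\tilde\tau,\tilde\tau'\in\{\tau,\rho\tau\}$.

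First I would fix the purity weight: choose $w^{(n)}$ of the same parity as $w^{(n+1)}$ (when $F$ is totally real with $w^{(n+1)}$ even, take $w^{(n)}$ even as well; when $F$ is CM, either parity is allowed, and one simply picks, say, $w^{(n)}\equiv w^{(n+1)}\bmod 2$). The shift $m_0$ in Definition~\ref{dfn:intlauto} will be forced by a parity/centering bookkeeping: one wants the centered coordinates of $\boldsymbol{\mu}^{(n)}+m_0\mathbf{1}$ to be integers exactly when those of $\boldsymbol{\mu}^{(n+1)}$ are half-integers (so that strict interleaving is possible), which pins down $m_0\bmod 1$ and hence, after clearing denominators, an honest integer $m_0$. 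Second, with $w^{(n)}$ and $m_0$ fixed, I would define the folded coordinates of $\boldsymbol{\mu}^{(n)}$ at each $\tau$ by placing them strictly between consecutive folded coordinates of $\boldsymbol{\mu}^{(n+1)}$ at $\tau$ and at $\rho\tau$; concretely, because $\boldsymbol{\mu}^{(n+1)}$ is in a good position, the multiset of folded top-half coordinates of $\boldsymbol{\mu}^{(n+1)}$ over $\{\tau,\rho\tau\}$ is strictly decreasing, leaving genuine gaps, and I can pick the new coordinates at the midpoints (or any interior lattice points of the right residue class) of those gaps. Extending by the purity relation to the bottom half produces $\boldsymbol{\mu}^{(n)}_\tau$, and one checks it is dominant.

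Third, I would verify that the resulting $\boldsymbol{\mu}^{(n)}$ is again in a good position: since its folded coordinates were inserted strictly between those of the good-position weight $\boldsymbol{\mu}^{(n+1)}$, the required inequalities $\min\{\cdots\}\ge\max\{\cdots\}$ for $\boldsymbol{\mu}^{(n)}$ follow from those for $\boldsymbol{\mu}^{(n+1)}$ by a direct comparison (this uses that the good-position condition is preserved under ``thinning out'' an interleaved regular sequence). Finally, the strong interlace condition $\boldsymbol{\mu}^{(n+1),\vee}\succapprox \boldsymbol{\mu}^{(n)}+m_0\mathbf{1}$ holds by construction, since the folded coordinates of $\boldsymbol{\mu}^{(n)}+m_0\mathbf{1}$ were placed strictly between those of $\boldsymbol{\mu}^{(n+1)}$ over all pairs $\tilde\tau,\tilde\tau'\in\{\tau,\rho\tau\}$ — which, after unfolding via Remark~\ref{rem:symmetry}-type symmetry, is exactly (\ref{eq:strintlpq})/Definition~\ref{dfn:intl_weights}(2). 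The main obstacle I anticipate is the bookkeeping in the CM case: one must simultaneously control the coordinates at $\tau$ and at $\rho\tau$, ensure the inserted points lie in the correct residue class modulo $1$ (so that $m_0$ can be chosen as an integer and the $\vee$-dual interleaving matches), and check that good position is not destroyed when the constraints from the two embeddings are imposed together; the totally real case is strictly easier because there the $\{\tau,\rho\tau\}$ pairs collapse and the parity hypothesis on $w^{(n+1)}$ is precisely what makes the midpoints of the gaps land on the lattice.
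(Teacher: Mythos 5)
Your plan is in the right spirit and shares the paper's core strategy: a direct componentwise construction, using the good-position inequality to guarantee room for the interleaving coordinates, then completing the weight by the purity relation. But two of the steps as you sketch them would not survive being written out in full, and a third reveals that you have not located where the parity hypothesis is actually used.

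First, the ``strictly decreasing'' / ``genuine gaps'' / ``midpoints or interior lattice points'' framing is too optimistic. The good-position condition $(\ref{eq:good})$ is a \emph{weak} inequality
\begin{align*}
\min\{\mu^{(n+1)}_{\tau,i},\,w^{(n+1)}-\mu^{(n+1)}_{\tau,n+2-i}\}\ \geq\ \max\{\mu^{(n+1)}_{\tau,i+1},\,w^{(n+1)}-\mu^{(n+1)}_{\tau,n+1-i}\},
\end{align*}
so consecutive folded coordinates may coincide and your ``gap'' can degenerate to a single point; there need be no interior lattice points (let alone midpoints) to choose. The construction still works, but only because both the closed interval
\begin{align*}
-\min\{\mu^{(n+1)}_{\tau,n+1-i},\,w^{(n+1)}-\mu^{(n+1)}_{\tau,i+1}\}\ \leq\ \mu^{(n)}_{\tau,i}\ \leq\ -\max\{\mu^{(n+1)}_{\tau,n+2-i},\,w^{(n+1)}-\mu^{(n+1)}_{\tau,i}\}
\end{align*}
is nonempty (by $(\ref{eq:good})$ applied to $\boldsymbol{\mu}^{(n+1)}$) \emph{and} the strong interlace condition in Definition~\ref{dfn:intl_weights}(2) is stated with $\leq$, so endpoint choices are allowed. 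You need to drop the strictness from the start. Relatedly, the $m_0$/residue-class/half-integer bookkeeping you anticipate is unnecessary: all the $\mu_{\tau,i}$ are integers, and taking $w^{(n)}=-w^{(n+1)}$ makes $m_0=0$ work outright in Definition~\ref{dfn:intlauto}, which is what the paper does. Introducing a centering by $w/2$ creates half-integers and the parity headaches you worry about, without any gain.

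Second, you misattribute the role of the evenness of $w^{(n+1)}$. It is not a lattice-alignment issue for midpoints. In the totally real case with $n$ odd, purity $\boldsymbol{\mu}^{(n)}_\tau=w^{(n)}\mathbf{1}+\boldsymbol{\mu}^{(n),\vee}_\tau$ forces the middle coordinate $\mu^{(n)}_{\tau,(n+1)/2}=w^{(n)}/2$, and this must be an integer; that is the only reason $w^{(n)}$ (and hence $w^{(n+1)}$, since they have the same parity) must be even, and it is why the lemma imposes the hypothesis only when $F$ is totally real. In the CM case purity links $\tau$ to $\rho\tau$ rather than pinning a middle coordinate of $\boldsymbol{\mu}^{(n)}_\tau$, so no parity constraint arises in the lemma itself. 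On the positive side, your observation that good position for $\boldsymbol{\mu}^{(n)}$ is then automatic from the interleaving bounds combined with purity is correct and is a point the paper asserts without spelling out the verification.
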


\begin{proof}
First consider the CM field case. Let $\tau\in I_F$ be a complex embedding. Then, by (\ref{eq:good}), there exists an integer $\mu_{\tau,i}^{(n)}$ for each $i=1,2,\dotsc,n$ satisfying 
\begin{align} \label{eq:goood}
 -\max\{\mu_{\tau,n+2-i}^{(n+1)}, w^{(n+1)}-\mu_{\tau,i}^{(n+1)}\} \geq \mu^{(n)}_{\tau,i} \geq  -\min\{\mu_{\tau,n+1-i}^{(n+1)}, w^{(n+1)}-\mu_{\tau,i+1}^{(n+1)}\}.
\end{align}
Set $w^{(n)}=-w^{(n+1)}$ and $\mu_{\rho\tau,i}^{(n)}=w^{(n)}-\mu_{\tau,n+1-i}^{(n)}$ for $i=1,2,\dotsc,n$. Then the inequalities (\ref{eq:goood}) imply
\begin{align*}
 -w^{(n+1)}+\min\{\mu^{(n+1)}_{\tau,i},w^{(n+1)}-\mu^{(n+1)}_{\tau,n+2-i}\}\geq w^{(n)}-\mu_{\tau,n+1-i}^{(n)} \geq -w^{(n+1)}+\max\{\mu^{(n+1)}_{\tau,i+1},w^{(n+1)}-\mu^{(n+1)}_{\tau,n+1-i}\}
\end{align*}
and we see that $\mu_{\rho\tau,i}^{(n)}$ also satisfies the inequalities (\ref{eq:goood}) for $i=1,2,\dotsc,n$. 

Next we consider the totally real field case. Let $\tau\in I_F$ be a real embedding. Since purity implies $\mu_{\tau,i}^{(n+1)}=w^{(n+1)}-\mu^{(n+1)}_{\tau,n+2-i}$ for every $i=1,2,\dotsc,n+1$, there exists an integer $\mu_{\tau,i}^{(n)}$ for each $i=1,2,\dotsc,\lfloor \frac{n}{2}\rfloor$ satisfying 
\begin{align} \label{eq:gooood}
-\mu_{\tau,n+2-i}^{(n+1)}\geq \mu^{(n)}_i\geq -\mu^{(n+1)}_{\tau,n+1-i}  
\end{align}
 due to (\ref{eq:good}). Set $w^{(n)}=-w^{(n+1)}$ and, for $i=\lfloor \frac{n}{2}\rfloor+1,\dotsc,n$, define $\mu^{(n)}_{\tau,i}$ to be
\begin{align*}
 \mu_{\tau,i}^{(n)}=
\begin{cases}
 \frac{w^{(n)}}{2}=-\frac{w^{(n+1)}}{2} & \text{for $i=\lfloor \frac{n}{2}\rfloor+1$ if $n$ is odd}, \\
w^{(n)}-\mu_{\tau,n+1-i}^{(n)}=-w^{(n+1)}-\mu^{(n)}_{\tau,n+1-i} & \text{otherwise}.
\end{cases}
\end{align*} 
Then, using the purity condition and (\ref{eq:gooood}), we readily verify that the inequalities (\ref{eq:gooood}) hold even for $i= \lfloor \frac{n}{2}\rfloor+1,\dotsc,n$. 

In both cases, the weight $\boldsymbol{\mu}^{(n)}=(\mu^{(n)}_{\tau,i})_{\tau\in I_F, 1\leq i\leq n}$ is the desired one; 
obviously $w^{(n)}=-w^{(n+1)}$ has the same parity as $w^{(n)}$, and dominancy and purity of $\boldsymbol{\mu}^{(n+1)}$ easily follow by construction.
\end{proof}

\begin{rem} \label{rem:parity}
 When $F$ is totally real and $n=2m+1$ is odd, the purity condition for $\boldsymbol{\mu}^{(n)}$ implies that $\mu^{(n)}_{\tau,m+1}$ has to satisfy $\mu^{(n)}_{\tau,m+1}=w^{(n)}/2=-w^{(n+1)}/2$. This is the only reason why we assume that $w^{(n+1)}$ is even in Lemma~\ref{lem:good} when $F$ is totally real. We note that this constraint can be weakened under some good situations. For example, suppose that $F$ is totally real, $n=2m+1$ is odd and $\boldsymbol{\mu}^{(n+1)}$ is a pure dominant weight in a good position, which has an odd purity weight but is  {\em strictly dominant}, that is, the inequalities
\begin{align*}
 \mu^{(n+1)}_{\tau,n+1}>\mu^{(n+1)}_{\tau,n}>\cdots>\mu^{(n+1)}_{\tau,2}>\mu^{(n+1)}_{\tau,1}
\end{align*}
hold for every $\tau\in I_F$. Then if, for each $\tau\in I_F$, we choose $\mu^{(n)}_{\tau,i}$ satisfying $-\mu^{(n+1)}_{\tau,n+1-i}<\mu^{(n)}_{\tau,i}\leq -\mu^{(n+1)}_{\tau,n+2-i}$ for $i=1,2,\dotsc,m$ and set 
\begin{align*}
 \mu^{(n)}_{\tau,i}=
\begin{cases}
 \frac{1-w^{(n+1)}}{2} & \text{for }i=m+1, \\
 1-w^{(n+1)}-\mu^{(n)}_{\tau,n+1-i} &  \text{for } i=m+2,\dotsc,n,
\end{cases}
\end{align*}
the resulted dominant weight $\boldsymbol{\mu}^{(n)}=(\mu^{(n)}_{\tau,i})_{\tau\in I_F, 1\leq i\leq n}$ is pure of weight $w^{(n)}=1-w^{(n+1)}$ such that $\boldsymbol{\mu}^{(n+1)}$ and $\boldsymbol{\mu}^{(n)}$ satisfy the (strong) interlace condition. 
\end{rem}

Now let us introduce the main theorem in the present article. For a CM field $F$ and a cohomological irreducible cuspidal automorphic representation $\pi^{(n)}$ of $\mathrm{GL}_n(F_{\mathbf{A}})$, let $\pi^{(n),\rho}=(\varrho_{\pi^{(n),\rho}},V_{\pi^{(n),\rho}})$ denote the $\rho$-twist of $\pi$ defined as $\varrho_{\pi^{(n),\rho}}(g)=\varrho_{\pi^{(n)}}(g^\rho)$. The motive $\mathcal{M}[\pi^{(n),\rho}]$ attached to $\pi^{(n),\rho}$ then coincides with $\mathcal{M}[\pi^{(n)}]^{\rho}$. Note that the strong interlace condition (see Definition~\ref{dfn:intlauto} (2)) for  $m_0\in \mathbf{Z}$ implies that $m_0$ is a critical point not only for $L(s,\pi^{(n+1)}\times \pi^{(n)})$ but also for $L(s,\pi^{(n+1)}\times \pi^{(n),\rho})$ (or $L(s,\pi^{(n+1),\rho}\times \pi^{(n)})$).  Recall that the constant $  \lambda({\mathcal M}[\pi^{(n)}])$ in Proposition \ref{prop:perprod} is defined to be 
\begin{align*}
   \lambda( {\mathcal M}[\pi^{(n)}] ) 
   =  \sum_{\tau \in I_F } \sum^{ n  }_{i=1} p^{( n )}_{\tau, i} ( n -i),  
\end{align*}
where $p^{( n )}_{\tau, i}$ is given in Remark \ref{rem:Hodgepq}.


\begin{thm}\label{thm:main}
Let $F$ be a totally real number field or a CM field, $E$ a sufficiently large coefficient field, and  $\pi^{(n)}$ a cohomological irreducible cuspidal automorphic representation of $\mathrm{GL}_n(F_\mathbf{A})$ satisfying $(\ref{eq:auto_assump})$. Suppose that the highest weight $\boldsymbol{\mu}^{(n)}_\pi$ attached to $\pi^{(n)}$ is in a good position 
         and the purity weight $w^{(n)}=w(\pi^{(n)})$ is even. 
Further suppose that all the following conditions are fulfilled$:$ 
\begin{itemize}
\item[--]  there exist pure motives associated with cohomological irreducible cuspidal automorphic representations of ${\rm GL}_n(F_{\mathbf A})$ $($see \cite[Conjecture 4.5]{clo90}$);$  
\item[--] Deligne's conjecture on critical values \cite[Conjecture 1.8]{del79} holds for Rankin--Selberg $L$-functions $L(s, \pi^{(n+1)}\times \pi^{(n)});$ 
\item[--] Conjecture \ref{conj:mellin} holds.
\end{itemize}
Then we have 
\begin{align*}
     \boldsymbol{p}(\pi^{(n)}, \varepsilon^{(n)})
     \sim 
     (2\pi\sqrt{-1})^{ \lambda(  {\mathcal M}[\pi^{(n)}]  )    }   \widetilde{c}(  {\mathcal M}[\pi^{(n)}]  )    
              \times 
         \begin{cases}  1 & \text{if $n$ is odd},  \\
                                c^{-\varepsilon^{(n)}}(\mathrm{Res}_{F/\mathbf{Q}}(\mathcal{M}[\pi^{(n)}]))  & \text{if $n$ is even}.
          \end{cases}
\end{align*}
\end{thm}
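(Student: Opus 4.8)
The plan is to argue by induction on $n$. The base case $n=1$ is immediate: a cohomological cuspidal automorphic representation $\pi^{(1)}$ of $\mathrm{GL}_1(F_{\mathbf A})$ is a Hecke character of prescribed infinity type, the attached motive $\mathcal{M}[\pi^{(1)}]$ has rank one, so $\lambda(\mathcal{M}[\pi^{(1)}])=0$ and $\widetilde{c}(\mathcal{M}[\pi^{(1)}])=1$ (an empty product), while the $\mathrm{GL}_1$-Whittaker period $p(\pi^{(1)},\varepsilon^{(1)})$ is algebraic; since $n=1$ is odd, both sides of the desired formula are $\sim 1$.

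For the inductive step I assume the statement for $\mathrm{GL}_{n-1}$ and deduce it for $\mathrm{GL}_n$. Given $\pi^{(n)}$ as in the theorem, its highest weight $\boldsymbol{\mu}^{(n)}_\pi$ is a pure dominant weight in a good position with even purity weight, so Lemma~\ref{lem:good} produces a pure dominant weight $\boldsymbol{\mu}^{(n-1)}$ in a good position, again with even purity weight, such that $\boldsymbol{\mu}^{(n)}_\pi$ and $\boldsymbol{\mu}^{(n-1)}$ satisfy the strong interlace condition (Definition~\ref{dfn:intlauto}). By the theorem of Bhagwat and Raghuram \cite{br17} on automorphic representations of prescribed infinity type, I can realize $\boldsymbol{\mu}^{(n-1)}$ as the highest weight of a cohomological irreducible cuspidal automorphic representation $\pi^{(n-1)}$ of $\mathrm{GL}_{n-1}(F_{\mathbf A})$, and (by perturbing the infinity type if necessary) arrange that $\pi^{(n-1)}$ satisfies $(\ref{eq:auto_assump})$. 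The induction hypothesis then applies to $\pi^{(n-1)}$, and, when $F$ is a CM field, also to its $\rho$-twist $\pi^{(n-1),\rho}$, whose attached motive is $\mathcal{M}[\pi^{(n-1)}]^\rho$ and which has the same $\lambda$, $\widetilde{c}$ and restriction of scalars to $\mathbf{Q}$ by Remark~\ref{rem:n_rho}.

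The heart of the argument is to push a critical value of $L(s,\pi^{(n)}\times\pi^{(n-1)})$ through a chain of identities. I choose a critical point $m\in\mathrm{Crit}(\pi^{(n)},\pi^{(n-1)})$ (nonempty by the strong interlace condition and Lemma~\ref{lem:crit}) and a finite-order Hecke character $\varphi$ of $F$ so that the signature condition $\varphi_v(-1)\,\varepsilon^{(n)}_v\varepsilon^{(n-1)}_v=(-1)^{m+n}$ of Theorem~\ref{thm:AlgRS} and Proposition~\ref{prop:sgn} holds at every real place $v$; this is possible since finite-order Hecke characters realize arbitrary sign patterns at the real places. Then Theorem~\ref{thm:AlgRS} gives $\mathcal{L}(\tfrac12+m,\pi^{(n)}\times\pi^{(n-1)}\times\varphi)\sim\boldsymbol{p}(\pi^{(n)},\varepsilon^{(n)})\,\boldsymbol{p}(\pi^{(n-1)},\varepsilon^{(n-1)})$; Clozel's conjecture identifies the left-hand side with a critical value of the motivic $L$-function of $\mathcal{M}[\pi^{(n)}]\otimes_F\mathcal{M}[\pi^{(n-1)}]\otimes_F\mathcal{M}[\varphi]$; Deligne's conjecture relates that value to the Deligne period $c^+$ of a suitable critical Tate twist of $\mathrm{Res}_{F^+/\mathbf{Q}}(\mathrm{Res}_{F/F^+}(\mathcal{M}[\pi^{(n)}])\otimes_{F^+}\mathrm{Res}_{F/F^+}(\mathcal{M}[\pi^{(n-1)}]\otimes_F\mathcal{M}[\varphi]))$; and Proposition~\ref{prop:perprod} — applicable because Clozel's Hodge numbers are regular, the strong interlace condition for $\pi^{(n)},\pi^{(n-1)}$ translates into the strong interlace condition $(\ref{eq:strintlpq})$ for the associated motives, and twisting by the rank-one Artin motive $\mathcal{M}[\varphi]$ does not alter Hodge numbers — rewrites the attendant $L_\infty$-factor times this period in terms of $(2\pi\sqrt{-1})^{\lambda}\widetilde{c}$ of each factor together with a single $\pm$-period $c^{\pm\varepsilon}$ of $\mathrm{Res}_{F/\mathbf{Q}}(\mathcal{M}[\pi^{(n)}])$ when $n$ is even, or of $\mathrm{Res}_{F/\mathbf{Q}}(\mathcal{M}[\pi^{(n-1)}])$ when $n$ is odd, everything occurring squared when $F$ is CM. Solving for $\boldsymbol{p}(\pi^{(n)},\varepsilon^{(n)})$, substituting the induction hypothesis for $\boldsymbol{p}(\pi^{(n-1)},\varepsilon^{(n-1)})$ (and for $\boldsymbol{p}(\pi^{(n-1),\rho},\varepsilon^{(n-1)})$ in the CM case), using Lemma~\ref{lem:cpm} to replace $c^{(-1)^m}$ by $c^{-\varepsilon^{(n)}}$ (legitimate since the diagonal Hodge component vanishes under $(\ref{eq:auto_assump})$), and — in the CM case — extracting a square root of the resulting $\sim$-identity (harmless because a square root of an algebraic number is again algebraic), one arrives at the formula in the theorem.

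The main obstacle is the precise bookkeeping of signs and of the powers of $2\pi\sqrt{-1}$. Concretely, one must track the $\varepsilon$-signatures through the cup-product construction of Section~\ref{sec:Cup} (this is exactly why Proposition~\ref{prop:sgn} is proved with care), reconcile the automorphic signature $\varepsilon^{(n-1)}$ (when $n$ is odd) or $\varepsilon^{(n)}$ (when $n$ is even) with the motivic signature $\boldsymbol{\varepsilon}^{(\mathcal{M}[\pi^{(n-1)}])}$ respectively $\boldsymbol{\varepsilon}^{(\mathcal{M}[\pi^{(n)}])}$ appearing in Proposition~\ref{prop:perprod}, check that $m$ and $\varphi$ can be chosen uniformly in the real places so that this reconciliation is consistent, and verify that the exponents of $2\pi\sqrt{-1}$ coming from $L_\infty$, from the Tate twist, and from $\delta(\mathrm{Res}_{F/\mathbf{Q}}(-))$ cancel exactly as in the proof of Proposition~\ref{prop:perprod}. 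In the CM case there is the additional, more routine point of translating between the Deligne period of $\mathrm{Res}_{F/F^+}(\mathcal{M})\otimes_{F^+}\mathrm{Res}_{F/F^+}(\mathcal{N})$ and the pair of Rankin--Selberg $L$-functions of $\pi^{(n)}\times\pi^{(n-1)}$ and $\pi^{(n)}\times\pi^{(n-1),\rho}$, which is handled by Lemmata~\ref{lem:res_ext} and \ref{lem:mot_CM}.
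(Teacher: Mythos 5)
Your proposal reproduces the paper's own strategy: induction on $n$, Lemma~\ref{lem:good} plus Bhagwat--Raghuram to manufacture an auxiliary cuspidal representation one degree lower with strongly interlacing infinity type, Theorem~\ref{thm:AlgRS} together with Clozel's and Deligne's conjectures to interpret a critical Rankin--Selberg value as a Deligne period, Proposition~\ref{prop:perprod} to factor that period, cancellation of the induction hypothesis, Lemma~\ref{lem:cpm} for the sign reconciliation, and the $\rho$-twist plus square-root extraction in the CM case. The only divergence is that you introduce a finite-order twist $\varphi$ to control the sign condition, whereas the paper takes $\varphi$ trivial and instead adjusts the auxiliary signature $\varepsilon^{(n-1)}$ (which the induction hypothesis covers uniformly), avoiding the extra bookkeeping of the Artin-motive factor $\mathcal{M}[\varphi]$ in the Deligne period; your route is not wrong since Gauss sums of finite-order characters are algebraic, but it is unnecessary.
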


\begin{proof}
We prove the statement by induction on $n$. Let $\pi^{(n+1)}$ be an irreducible cohomological cuspidal automorphic representation of ${\rm GL}_{n+1}(F_{\mathbf A})$ of highest weight $\boldsymbol{\mu}^{(n+1)}_\pi$. By Lemma~\ref{lem:good}, there exists a pure dominant weight $\boldsymbol{\mu}^{(n)}\in (\mathbf{Z}^n)^{I_F}$ in a good position, whose purity weight has the same parity with that of $\boldsymbol{\mu}^{(n+1)}_\pi$, such that $\boldsymbol{\mu}^{(n+1)}_\pi$ and $\boldsymbol{\mu}^{(n)}$ satisfy the strong interlace condition. 
Take a cohomological irreducible cuspidal automorphic representation $\pi^{(n)}$ of $\mathrm{GL}_n(F_{\mathbf{A}})$ of highest weight $\boldsymbol{\mu}^{(n)}$.   
Such a $\pi^{(n)}$ actually exists according to \cite[Theorem 2.10]{br17}. 
                   Note that, if $F$ is a CM field, the purity weight $w^{(n)}$ of $\boldsymbol{\mu}^{(n)}$ must be even to apply \cite[Theorem 2.10]{br17}.
 However, this is satisfied since  $w^{(n+1)}$ is even by assumption and the purity weight $w^{(n)}$ of $\boldsymbol{\mu}^{(n)}$ is chosen so that it has the same parity as $w^{(n+1)}$. 

We first consider the case where $F$ is a totally real field. 
Then Deligne's conjecture on critical values of Rankin--Selberg $L$-functions, combined with Theorem \ref{thm:AlgRS}, implies
\begin{align*}
\boldsymbol{p}&(\pi^{(n+1)}, \varepsilon^{(n+1)})  \boldsymbol{p}(\pi^{(n)}, \varepsilon^{(n)}) 
   \sim \mathcal{L}\left(m+\frac{1}{2},\pi^{(n+1)} \times \pi^{(n)} \right) \\
&\quad   \sim L_\infty(m+n,  {\rm Res}_{F/{\mathbf Q}}(\mathcal{M}(\pi^{(n+1)}\times \pi^{(n)}) )) c^+\left(   {\rm Res}_{F/{\mathbf Q}}( \mathcal{M}(\pi^{(n+1)}\times \pi^{(n)})  )   (m+n)  \right)
\end{align*}
for each $m\in {\rm Crit}(\pi^{(n+1)}, \pi^{(n)} )$ and signatures $\varepsilon^{(n+1)}$, $\varepsilon^{(n)}$ satisfying $\varepsilon^{(n+1)} \varepsilon^{(n)} = (-1)^{m+n+1}$.
Therefore Proposition \ref{prop:perprod} yields that 
\begin{align*}
               \boldsymbol{p}&(\pi^{(n+1)}, \varepsilon^{(n+1)})  \boldsymbol{p}(\pi^{(n)}, \varepsilon^{(n)})     \\
  & \quad \sim    (2\pi\sqrt{-1})^{\lambda(  {\mathcal M}[\pi^{(n+1)}]  )  }   \widetilde{c}(  {\mathcal M}[\pi^{(n+1)}]  ) 
               \cdot (2\pi\sqrt{-1})^{ \lambda(  {\mathcal M}[\pi^{(n)}]  )    }   \widetilde{c}(  {\mathcal M}[\pi^{(n)}]  )            \\
           & \hspace*{7em}  \cdot \begin{cases}   c^{ (-1)^{m+n}   {\varepsilon}^{(n)}    }(  {\rm Res}_{F/{\mathbf Q}}(   {\mathcal M}[\pi^{(n+1)}]   )  )  &  \text{if $n$ id odd},      \\
                                          c^{  (-1)^{m+n}  {\varepsilon}^{(  n+1  )}   }(   {\rm Res}_{F/{\mathbf Q}}(  {\mathcal M}[\pi^{(n)}]   )   )  &  \text{if $n$ is even}.    \end{cases}
\end{align*}
Then by the sign condition $\varepsilon^{(n+1)} \varepsilon^{(n)} = (-1)^{m+n+1}$, 
the above period relation is rewritten as
\begin{align}\label{eq:perprodpp}
\begin{aligned}
               \boldsymbol{p}&(\pi^{(n+1)}, \varepsilon^{(n+1)})  \boldsymbol{p}(\pi^{(n)}, \varepsilon^{(n)})      \\
  & \qquad \sim   (2\pi\sqrt{-1})^{\lambda(  {\mathcal M}[\pi^{(n+1)}]  )  }   \widetilde{c}(  {\mathcal M}[\pi^{(n+1)}]  )    
               \cdot (2\pi\sqrt{-1})^{ \lambda(  {\mathcal M}[\pi^{(n)}]  )    }   \widetilde{c}(  {\mathcal M}[\pi^{(n)}]  )         \\
           & \hspace*{7em}   \cdot \begin{cases}   c^{ -   {\varepsilon}^{(  n+1  )}     }(  {\rm Res}_{F/{\mathbf Q}}(   {\mathcal M}[\pi^{(n+1)}]   )  )  &  \text{if $n$ is odd},      \\
                                          c^{  -  {\varepsilon}^{(  n  )}   }(   {\rm Res}_{F/{\mathbf Q}}(  {\mathcal M}[\pi^{(n)}]   )   ),  &  \text{if $n$ is even}.    \end{cases}
\end{aligned}
\end{align}

By definition, we have
\begin{align*}
\boldsymbol{p}(\pi^{(1)}, \varepsilon^{(1)}  ) &\sim  1,  &
   \lambda(  {\mathcal M}[\pi^{(1)}]  )   &=  \lambda(  {\mathcal M}[\pi^{(2)}]  )  = 0,  &
   \widetilde{c}(  {\mathcal M}[\pi^{(1)}]  )
               \sim \widetilde{c}(  {\mathcal M}[\pi^{(2)}]  )
               \sim 1.
\end{align*}
Substituting these identities and $n=1$ into the equation (\ref{eq:perprodpp}), 
we obtain 
\begin{align}\label{eq:Per2}
   \boldsymbol{p}(\pi^{(2)}, \varepsilon^{(2)}  ) \sim c^{  -  {\varepsilon}^{(  2  )}   }(   {\rm Res}_{F/{\mathbf Q}}(  {\mathcal M}[\pi^{(2)}]   )   ).
\end{align}
Substituting $n=2$ and (\ref{eq:Per2}) into (\ref{eq:perprodpp}), 
we obtain 
\begin{align*}
   \boldsymbol{p}(\pi^{(3)}, \varepsilon^{(3)}  ) 
   \sim (2\pi \sqrt{-1})^{\lambda(\mathcal{M}[\pi^{(3)}])} \widetilde{c} ({\mathcal M}[\pi^{(3)}]).    
\end{align*}
Hence, by induction $n$, the statement follows when $F$ is totally real.

Next consider the case where $F$ is a CM field. Recall that, in this case, we set $\varepsilon^{(n+1)}=\varepsilon^{(n)}=1$ as convention. Again, by Deligne's conjecture for Rankin--Selberg $L$-functions and Theorem \ref{thm:AlgRS}, we have 
\begin{align*}
\boldsymbol{p}&(\pi^{(n+1)}, \varepsilon^{(n+1)})  \boldsymbol{p}(\pi^{(n)}, \varepsilon^{(n)})\boldsymbol{p}(\pi^{(n+1)}, \varepsilon^{(n+1)})  \boldsymbol{p}(\pi^{(n), \rho}, \varepsilon^{(n)})   \\
&\quad \sim \mathcal{L}\left(m+\frac{1}{2},\pi^{(n+1)}\times \pi^{(n)}\right) \mathcal{L}\left( m+\frac{1}{2},\pi^{(n+1)}\times \pi^{(n),\rho}\right) \\
&\quad   \sim L_\infty(m+n,  {\rm Res}_{F^+/{\mathbf Q}}(  \mathrm{Res}_{F/F^+}(  {\mathcal M}[\pi^{(n+1)}])  \otimes_{F^+} \mathrm{Res}_{F/F^+}(\mathcal{M}[\pi^{(n)}]  )  ) ) \\
&\hspace*{7em}
        \cdot c^+\left(   {\rm Res}_{F^+/{\mathbf Q}}(  \mathrm{Res}_{F/F^+}(  {\mathcal M}[\pi^{(n+1)}])  \otimes_{F^+} \mathrm{Res}_{F/F^+}(\mathcal{M}[\pi^{(n)}]  )  )   (m+n)  \right)
\end{align*}
for each $m\in {\rm Crit}(\pi^{(n+1)}, \pi^{(n)} )\cap \mathrm{Crit}(\pi^{(n+1)},\pi^{(n),\rho})$.
Therefore Proposition \ref{prop:perprod} implies
\begin{align*}
               \boldsymbol{p}&(\pi^{(n+1)}, \varepsilon^{(n+1)})  \boldsymbol{p}(\pi^{(n)}, \varepsilon^{(n)})       \boldsymbol{p}(\pi^{(n+1)}, \varepsilon^{(n+1)})  \boldsymbol{p}(\pi^{(n),\rho}, \varepsilon^{(n)})       \\
  & \quad \sim    (2\pi\sqrt{-1})^{2\lambda(  {\mathcal M}[\pi^{(n+1)}]  )  }   \widetilde{c}(  {\mathcal M}[\pi^{(n+1)}]  )^2    
               \cdot (2\pi\sqrt{-1})^{ 2\lambda(  {\mathcal M}[\pi^{(n)}]  )    }   \widetilde{c}(  {\mathcal M}[\pi^{(n)}]  )^2            \\
           & \hspace*{7em}  \cdot \begin{cases}   c^{ -   {\varepsilon}^{(n+1)}    }(  {\rm Res}_{F/{\mathbf Q}}(   {\mathcal M}[\pi^{(n+1)}]   )  )^2  &  \text{if $n$ is odd},      \\
                                          c^{  -{\varepsilon}^{(  n  )}   }(   {\rm Res}_{F/{\mathbf Q}}(  {\mathcal M}[\pi^{(n)}]   )   )^2  &  \text{if $n$ is even}    \end{cases}
\end{align*}(note that  $c^{+}(\mathrm{Res}_{F/\mathbf{Q}}(\mathcal{M}[\pi^{(N)}]))\sim c^{-}(\mathrm{Res}_{F/\mathbf{Q}}(\mathcal{M}[\pi^{(N)}]))$ holds by the assumption (\ref{eq:auto_assump}) and Lemma~\ref{lem:cpm}).
By definition, we have
\begin{align*}
p(\pi^{(1)}, \varepsilon^{(1)}  ) \sim  1, 
\quad    \lambda(  {\mathcal M}[\pi^{(1)}]  )   =  \lambda(  {\mathcal M}[\pi^{(2)}]  )  = 0,  
\quad     \widetilde{c}(  {\mathcal M}[\pi^{(1)}]  )
               \sim \widetilde{c}(  {\mathcal M}[\pi^{(2)}]  )
               \sim 1.
\end{align*}
Substituting these identities and $n=1$ into the equation (\ref{eq:perprodpp}), 
we obtain 
\begin{align}\label{eq:Per2c}
   \boldsymbol{p}(\pi^{(2)}, \varepsilon^{(2)}  )^2 &\sim c^{  -  {\varepsilon}^{(  2  )}   }(   {\rm Res}_{F/{\mathbf Q}}(  {\mathcal M}[\pi^{(2)}]   )^2 
   &  \therefore \;  \boldsymbol{p}(\pi^{(2)}, \varepsilon^{(2)}  ) &\sim c^{  -  {\varepsilon}^{(  2  )}   }(   {\rm Res}_{F/{\mathbf Q}}(  {\mathcal M}[\pi^{(2)}]   ).
\end{align}
Due to Remark~\ref{rem:n_rho}, we also have 
\begin{align*}
\boldsymbol{p}(\pi^{(2),\rho}, \varepsilon^{(2)})\sim  c^{  -  {\varepsilon}^{(  2  )}   }(   {\rm Res}_{F/{\mathbf Q}}(  {\mathcal M}[\pi^{(2)}]^\rho   ) \sim c^{  -  {\varepsilon}^{(  2  )}   }(   {\rm Res}_{F/{\mathbf Q}}(  {\mathcal M}[\pi^{(2)}]   ) \sim \boldsymbol{p}(\pi^{(2)}, \varepsilon^{(2)}).
\end{align*}
  Then, by substituting $n=2$, we obtain 
\begin{align*}
   \boldsymbol{p}(\pi^{(3)}, \varepsilon^{(3)}  )^2 
   &\sim (2\pi \sqrt{-1})^{2\lambda(\pi^{(3)})} \widetilde{c} ({\mathcal M}[\pi^{(3)}])^2 
   &  \therefore \;  \boldsymbol{p}(\pi^{(3)}, \varepsilon^{(3)}  ) 
   &\sim (2\pi \sqrt{-1})^{\lambda(\pi^{(3)})} \widetilde{c} ({\mathcal M}[\pi^{(3)}]).    
\end{align*}
Hence, again by induction on $n$, the statement follows when $F$ is a CM field.
\end{proof}

\begin{rem}
 When $F$ is a totally real field, we can weaken the assumption on the parity of the purity weight $w^{(n)}$ of $\pi^{(n)}$ under certain good situations, by utilizing variants of Lemma~\ref{lem:good} (see also Remark~\ref{rem:parity}). 
\end{rem}


\begin{thebibliography}{9999999}

\bibitem[Ash80]{ash80}
A.~Ash, 
{\itshape Non-square-integrable cohomology of arithmetic groups}, 
Duke Math.\ J., {\bf 47}, (1980), 435--449.

\bibitem[Bha15]{bha15}
C.~Bhagwat, 
   {\itshape On Deligne's periods for tensor product motives},  
    C.\ R.\ Acad.\ Sci.\ Paris Ser.\ I,  {\bf 353}, (2015), 191--195.
    
    \bibitem[BR17]{br17}
C.~Bhagwat and A.~Raghuram, 
   {\itshape Endoscopy and the cohomology of ${\rm GL}(n)$},  
    Bull.\ Iranian Math.\ Soc.\,  {\bf 43}, (2017), 317--335.


\bibitem[Clo90]{clo90}
L.~Clozel, 
``Motifs et formes automorphes$:$ applications du principe, de fonctorialit\'{e}'', 
in {\em Automorphic forms, Shimura varieties and $L$-functions} (L.~Clozel et J.\ S.~Milne edit.), vol.\ {\bf 1}, 77--159, Acad.\ Press (1990).

\bibitem[Del79]{del79}
P.~Deligne, 
``Valeurs de fonctions $L$ et p\'eriodes d'int\'egrales'', in {\itshape Automorphic Forms,  Representations, and $L$-functions $($Proc.\ Sympos.\ Pure Math., Oregon State Univ., Corvallis, Ore., 1977$)$}, 
Proc.\ Symp.\ Pure Math., {\bf 33} Part II, Amer. Math. Soc., Providence, (1979), 247--289.

\bibitem[GR14]{gr14}
H.~Grobner and A.~Raghuram, 
{\itshape On some arithmetic properties of automorphic forms of ${\rm GL}_m$ over a division algebra}, 
Int.\ J.\ Number Theory, {\bf 10} (2014), 963--1013.


\bibitem[HN21]{hn}
T.~Hara and K.~Namikawa, 
{\itshape A cohomological interpretation of Archimedean zeta integrals for ${\rm GL}_3 \times {\rm GL}_2$}, Res.\ Number Theory, {\bf 7}, Article Number: 68 (2021).

\bibitem[Har97]{har97}
M.~Harris,  
 {\itshape $L$-functions and periods of polarized regular motives}, 
   J.\ reine angew.\ Math., {\bf 483}, (1997), 75--161.   

\bibitem[HL17]{hl17}
M.~Harris and J.~Lin, 
``Period relations and special values of Rankin--Selberg $L$-functions'', 
in {\itshape Representation theory, number theory, and invariant theory},
 Progr.\ Math., {\bf 323}, Birkh\"{a}user/Springer, Cham, (2017), 235--264.


\bibitem[Hat02]{hat02}
A. Hatcher, 
{\itshape Algebraic topology}, Cambridge University Press (2002). 


\bibitem[Hid94]{hid94}
H.~Hida, 
{\itshape On the critical values of $L$-functions of ${\rm GL}(2)$ and ${\rm GL}(2)  \times {\rm GL}(2)$}, 
Duke Math.\ J., {\bf 74} (1994), 431--529.

\bibitem[HIM]{him}
M.~Hirano, T.~Ishii and T.~Miyazaki, 
{\itshape Archimedean zeta integrals for ${\rm GL}(3) \times {\rm GL}(2)$}, 
to appear in Memoirs A.M.S..

\bibitem[IM]{im}
T.~Ishii and T.~Miyazaki, 
{\itshape Calculus of archimedean Rankin--Selberg integrals with recurrence relations}, 
to appear in Representation Theory, 
preprint available at arXiv:2006.04095.

\bibitem[Jan]{jan}
F.~Januszewski,  
   {\itshape Non-abelian $p$-adic Rankin--Selberg $L$-functions and non-vanishing of central $L$-values}, 
   preprint available at arXiv:1708.02616.

\bibitem[KMS00]{kms00}
D.~Kazhdan, B.~Mazur and C.-G.~Schmidt,  
{\itshape Relative modular symbols and Rankin--Selberg convolutions},  
J.\ reine angew.\ Math., {\bf 519} (2000), 97--141.

\bibitem[Kna94]{kna94}
A.~W.~Knapp, 
{\itshape Local Langlands correspondence$:$ the archimedean case},
Proceedings of symposia in pure mathematics, {\bf 55}, part 2, (1994), 393--410. 

\bibitem[Mah05]{mah05} 
J.~Mahnkopf, 
{\itshape Cohomology of arithmetic groups, parabolic subgroups and the special values of $L$-functions on ${\rm GL}_n$},  
J.\ Inst.\ Math.\ Jussieu, {\bf 4} (2005), 553--637.



\bibitem[RS08]{rs08}
A.~Raghuram  and F.~Shahidi, 
{\itshape On certain period relations for cusp forms on ${\rm GL}_n$},   
Int.\ Math.\ Res.\ Not., {\bf 2008} (2008), 1--23.


\bibitem[Rag16]{rag16}
A.~Raghuram, 
{\itshape Critical values of Rankin--Selberg L-functions for ${\rm GL}_n \times {\rm GL}_{n-1}$ and the symmetric cube L-functions for ${\rm GL}_2$},   
Forum.\ Math., {\bf 28}, (2016), 457--489.

\bibitem[Shin76]{shin76} 
Takuro Shintani, {\itshape On an explicit formula for class-$1$ {\rm ``}Whittaker functions{\rm ''} on $\mathrm{GL}_n$ over $\mathfrak{P}$-adic fields}, Proc.\ Japan Acad., \textbf{52} (1976), 180--182. 


\bibitem[Sun17]{sun17}
B.~Sun, 
{\itshape The nonvanishing hypothesis at infinity for Rankin--Selberg convolutions},   
 J.\ Amer.\ Math.\ Soc.,  {\bf 30} (2017), 1--25.

\bibitem[Yos94]{yos94}
H.~Yoshida, 
{\itshape On the zeta functions of Shimura varieties and periods of Hilbert modular forms},  
Duke Math.\ J., {\bf 75}, (1994), 121--191.

\bibitem[Yos01]{yos01}
H.~Yoshida, 
{\itshape Motives and Siegel modular forms},     
Amer.\ J.\ Math.,  {\bf 123}, (2001), 1171--1197.

\bibitem[Yos16]{yos16}
H.~Yoshida, 
``On the fundamental periods of a motive'', 
in {\itshape Recent advances in Hodge theory},     
London Math.\ Soc.\ Lecture Note Ser., {\bf 427}, Cambridge Univ.\ Press, Cambridge, (2016), 393--412.

\end{thebibliography}
\end{document}